\documentclass[10pt, twoside]{amsart}
\usepackage{amsfonts, amsmath, amssymb}
\usepackage{tikz}
\usepackage{tqft}

\addtolength{\oddsidemargin}{-1cm}
\addtolength{\evensidemargin}{-1cm}
\addtolength{\textwidth}{2.5cm}

\addtolength{\topmargin}{-1cm}
\addtolength{\textheight}{5mm}
\usepackage[all,2cell,knot]{xy} \UseTwocells
\input{xy}
\xyoption{all}
\xyoption{arc} 
\def\mapright#1{\smash{\mathop{\longrightarrow}\limits^{#1}}}

\newtheorem{proposition}{Proposition}[section]
\newtheorem{lemma}[proposition]{Lemma}
\newtheorem{corollary}[proposition]{Corollary}
\newtheorem{theorem}[proposition]{Theorem}
\theoremstyle{definition}
\newtheorem{definition}[proposition]{Definition}

\theoremstyle{remark}
\newtheorem{remark}[proposition]{Remark}

\newcommand{\thlabel}[1]{\label{th:#1}}
\newcommand{\thref}[1]{Theorem~\ref{th:#1}}
\newcommand{\selabel}[1]{\label{se:#1}}
\newcommand{\seref}[1]{Section~\ref{se:#1}}
\newcommand{\lelabel}[1]{\label{le:#1}}
\newcommand{\leref}[1]{Lemma~\ref{le:#1}}
\newcommand{\prlabel}[1]{\label{pr:#1}}
\newcommand{\prref}[1]{Proposition~\ref{pr:#1}}
\newcommand{\colabel}[1]{\label{co:#1}}
\newcommand{\coref}[1]{Corollary~\ref{co:#1}}

\newcommand{\delabel}[1]{\label{de:#1}}
\newcommand{\deref}[1]{Definition~\ref{de:#1}}

\newcommand{\eqlabel}[1]{\label{eq:#1}}
\newcommand{\equref}[1]{(\ref{eq:#1})}

\def\ra{\rightarrow}

\def\Id{{\rm Id}}
\def\mfM{{\mf M}}
\def\mfN{\mf {N}}
\def\mfP{\mf {P}}

\def\ot{\otimes}
\def\va{\varepsilon}

\def\un{\underline}

\def\mf{\mathfrak}

\def\l{\lambda}

\def\r{\rho}

\def\va{\varepsilon}

\def\ra{\rightarrow}
\def\a{\alpha}
\def\b{\beta}

\def\d{\delta}

\def\ov{\overline}
\def\cal{\mathcal}
\def\un{\underline}

\newcommand{\Cc}{\cal C}
\newcommand{\Dc}{\cal D}

\def\equal#1{\smash{\mathop{=}\limits^{#1}}}


 \newcount\grcalca
 \newcount\grcalcb
 \newcount\grcalcc
 \newcount\grcalcd
 \newcount\grcalce
 \newcount\grcalcf
 \newcount\grcalcg
 \newcount\grcalch
 \newcount\grcalci
 \newcount\grrow
 \newcount\grcolumn
 \newcount\grwidth
 \newcount\factor
 \newcount\hfactor
 \newcount\qfactor
 \newcount\tfactor
 \newcount\sfactor
 \newcount\dfactor
 \factor  = 12
 \hfactor = \factor
 \divide    \hfactor by 2
 \qfactor = \hfactor
 \divide    \qfactor by 2
 \tfactor = \factor
 \divide    \tfactor by 3
 \sfactor = \factor
 \divide    \sfactor by 6
 \dfactor = \factor
 \divide    \dfactor by 12

 \newcommand{\gbeg}[2]{
   \unitlength=1pt
   \grrow = #2
   \grcolumn = 0
   \grcalca = #1
   \grcalcb = #2
   \multiply \grcalca by \factor
   \grwidth = \grcalca
   \multiply \grcalcb by \factor
   \begin{minipage}{\grcalca pt}
   \begin{picture}(\grcalca,\grcalcb)
   \advance \grcalcb by -\factor
   \put(0, \grcalcb){\line(1,0){\grwidth}} }

 \newcommand{\gend}{
   \put(0, \factor){\line(1,0){\grwidth}}
   \end{picture}
   {\vskip2.5ex}
   \end{minipage} }

 \newcommand{\gnl}{
   \advance \grrow by -1
   \grcolumn = 0}

 \newcommand{\gvac}[1]{       
   \advance \grcolumn by #1} 
 
 \newcommand{\gcl}[1]{
   \grcalca = \grcolumn
   \multiply \grcalca by \factor
   \advance \grcalca by \hfactor
   \grcalcb = \grrow
   \multiply \grcalcb by \factor
   \grcalcc = #1
   \multiply \grcalcc by \factor
   \put(\grcalca,\grcalcb) {\line(0,-1){\grcalcc}} 
   \advance \grcolumn by 1}

 \newcommand{\gcn}[4]{
   \grcalca = \grcolumn
   \multiply \grcalca by \factor
   \grcalci = #3
   \multiply \grcalci by \hfactor
   \advance \grcalca by \grcalci
   \grcalcb = \grcolumn
   \multiply \grcalcb by \factor 
   \grcalci = #3
   \advance \grcalci by #4
   \multiply \grcalci by \qfactor
   \advance \grcalcb by \grcalci
   \grcalcc = \grcolumn
   \multiply \grcalcc by \factor
   \grcalci = #4
   \multiply \grcalci by \hfactor
   \advance \grcalcc by \grcalci
   \grcalcd = \grrow
   \multiply \grcalcd by \factor 
   \grcalce = \grrow
   \multiply \grcalce by \factor 
   \grcalci = #2
   \multiply \grcalci by \tfactor
   \advance \grcalce by -\grcalci
   \grcalcf = \grrow
   \multiply \grcalcf by \factor 
   \grcalci = #2
   \multiply \grcalci by \hfactor
   \advance \grcalcf by -\grcalci
   \grcalcg = \grrow
   \multiply \grcalcg by \factor 
   \grcalci = #2
   \multiply \grcalci by \tfactor
   \multiply \grcalci by 2
   \advance \grcalcg by -\grcalci
   \grcalch = \grrow
   \advance \grcalch by -#2
   \multiply \grcalch by \factor 
   \qbezier(\grcalca,\grcalcd)(\grcalca,\grcalce)(\grcalcb,\grcalcf) 
   \qbezier(\grcalcb,\grcalcf)(\grcalcc,\grcalcg)(\grcalcc,\grcalch) 
   \advance \grcolumn by #1}

 \newcommand{\gnot}[1]{
   \grcalca = \grcolumn
   \multiply \grcalca by \factor
   \advance \grcalca by \hfactor
   \grcalcb = \grrow
   \multiply \grcalcb by \factor
   \advance \grcalcb by -\hfactor
   \put(\grcalca,\grcalcb) {\makebox(0,0){$\scriptstyle #1$}} }

 \newcommand{\got}[2]{
   \grcalca = \grcolumn
   \multiply \grcalca by \factor
   \grcalcc = #1
   \multiply \grcalcc by \hfactor
   \advance \grcalca by \grcalcc
   \grcalcb = \grrow
   \multiply \grcalcb by \factor
   \advance \grcalcb by -\tfactor
   \advance \grcalcb by -\tfactor
   \put(\grcalca,\grcalcb){\makebox(0,0)[b]{$#2$}}
   \advance \grcolumn by #1}

 \newcommand{\gob}[2]{
   \grcalca = \grcolumn
   \multiply \grcalca by \factor
   \grcalcc = #1
   \multiply \grcalcc by \hfactor
   \advance \grcalca by \grcalcc
   \put(\grcalca,0){\makebox(0,0)[b]{$#2$}}
   \advance \grcolumn by #1}

 \newcommand{\gmu}{  
   \grcalca = \grcolumn
   \advance \grcalca by 1
   \multiply \grcalca by \factor
   \grcalcb = \grrow
   \multiply \grcalcb by \factor
   \grcalcc = \factor
   \advance \grcalcc by \hfactor
   \put(\grcalca,\grcalcb){\oval(\factor,\grcalcc)[b]}
   \advance \grcalcb by -\hfactor
   \advance \grcalcb by -\qfactor
   \put(\grcalca,\grcalcb) {\line(0,-1){\qfactor}} 
   \advance \grcolumn by 2}

 \newcommand{\gcmu}{   
   \grcalca = \grcolumn
   \advance \grcalca by 1
   \multiply \grcalca by \factor
   \grcalcb = \grrow
   \advance \grcalcb by -1
   \multiply \grcalcb by \factor
   \grcalcc = \factor
   \advance \grcalcc by \hfactor
   \put(\grcalca,\grcalcb){\oval(\factor,\grcalcc)[t]}
   \advance \grcalcb by \factor
   \put(\grcalca,\grcalcb) {\line(0,-1){\qfactor}} 
   \advance \grcolumn by 2}

 \newcommand{\glm}{
   \grcalca = \grcolumn
   \multiply \grcalca by \factor
   \advance \grcalca by \hfactor
   \grcalcb = \grcalca
   \advance \grcalcb by \factor
   \grcalcc = \grrow
   \multiply \grcalcc by \factor
   \grcalcd = \grcalcc
   \advance \grcalcd by -\tfactor
   \grcalce = \grcalcd
   \advance \grcalce by -\tfactor
   \put(\grcalca, \grcalcc){\line(0,-1){\tfactor}}
   \put(\grcalca, \grcalcd){\line(1,0){\factor}}
   \put(\grcalca, \grcalcd){\line(3,-1){\factor}}
   \put(\grcalcb, \grcalcc){\line(0,-1){\factor}}
   \advance \grcolumn by 2}

 \newcommand{\grm}{
   \grcalcb = \grcolumn
   \multiply \grcalcb by \factor
   \advance \grcalcb by \hfactor
   \grcalca = \grcalcb
   \advance \grcalca by \factor
   \grcalcc = \grrow
   \multiply \grcalcc by \factor
   \grcalcd = \grcalcc
   \advance \grcalcd by -\tfactor
   \grcalce = \grcalcd
   \advance \grcalce by -\tfactor
   \put(\grcalca, \grcalcc){\line(0,-1){\tfactor}}
   \put(\grcalca, \grcalcd){\line(-1,0){\factor}}
   \put(\grcalca, \grcalcd){\line(-3,-1){\factor}}
   \put(\grcalcb, \grcalcc){\line(0,-1){\factor}}
   \advance \grcolumn by 2}

 \newcommand{\glcm}{
   \grcalca = \grcolumn
   \multiply \grcalca by \factor
   \advance \grcalca by \hfactor
   \grcalcb = \grcalca
   \advance \grcalcb by \factor
   \grcalcc = \grrow
   \advance \grcalcc by -1
   \multiply \grcalcc by \factor
   \grcalcd = \grcalcc
   \advance \grcalcd by \tfactor
   \grcalce = \grcalcd
   \advance \grcalce by \tfactor
   \put(\grcalca, \grcalcc){\line(0,1){\tfactor}}
   \put(\grcalca, \grcalcd){\line(1,0){\factor}}
   \put(\grcalca, \grcalcd){\line(3,1){\factor}}
   \put(\grcalcb, \grcalcc){\line(0,1){\factor}}
   \advance \grcolumn by 2}

 \newcommand{\grcm}{
   \grcalcb = \grcolumn
   \multiply \grcalcb by \factor
   \advance \grcalcb by \hfactor
   \grcalca = \grcalcb
   \advance \grcalca by \factor
   \grcalcc = \grrow
   \advance \grcalcc by -1
   \multiply \grcalcc by \factor
   \grcalcd = \grcalcc
   \advance \grcalcd by \tfactor
   \grcalce = \grcalcd
   \advance \grcalce by \tfactor
   \put(\grcalca, \grcalcc){\line(0,1){\tfactor}}
   \put(\grcalca, \grcalcd){\line(-1,0){\factor}}
   \put(\grcalca, \grcalcd){\line(-3,1){\factor}}
   \put(\grcalcb, \grcalcc){\line(0,1){\factor}}
   \advance \grcolumn by 2}

 \newcommand{\gwmu}[1]{    
   \grcalca = \grcolumn
   \multiply \grcalca by \factor
   \grcalcd = \hfactor
   \multiply \grcalcd by #1
   \advance \grcalca by \grcalcd
   \grcalcb = \grrow
   \multiply \grcalcb by \factor
   \grcalcc = \factor
   \advance \grcalcc by \hfactor
   \grcalcd = #1
   \advance \grcalcd by -1
   \multiply \grcalcd by \factor
   \put(\grcalca,\grcalcb){\oval(\grcalcd,\grcalcc)[b]}
   \advance \grcalcb by -\hfactor
   \advance \grcalcb by -\qfactor
   \put(\grcalca,\grcalcb) {\line(0,-1){\qfactor}} 
   \advance \grcolumn by #1}

 \newcommand{\gwcm}[1]{   
   \grcalca = \grcolumn
   \multiply \grcalca by \factor
   \grcalcd = \hfactor
   \multiply \grcalcd by #1
   \advance \grcalca by \grcalcd
   \grcalcb = \grrow
   \advance \grcalcb by -1
   \multiply \grcalcb by \factor
   \grcalcc = \factor
   \advance \grcalcc by \hfactor
   \grcalcd = #1
   \advance \grcalcd by -1
   \multiply \grcalcd by \factor
   \put(\grcalca,\grcalcb){\oval(\grcalcd,\grcalcc)[t]}
   \advance \grcalcb by \factor
   \put(\grcalca,\grcalcb) {\line(0,-1){\qfactor}} 
   \advance \grcolumn by #1}

 \newcommand{\gwmuc}[1]{    
   \grcalca = \grcolumn
   \multiply \grcalca by \factor
   \advance \grcalca by \hfactor
   \grcalcb = \grrow
   \multiply \grcalcb by \factor
   \grcalcc = #1
   \advance \grcalcc by -1
   \multiply \grcalcc by \factor
   \put(\grcalca,\grcalcb){\line(1,0){\grcalcc}}
   \advance \grcalca by -\hfactor
   \grcalcd = \hfactor
   \multiply \grcalcd by #1
   \advance \grcalca by \grcalcd
   \grcalcc = \factor
   \advance \grcalcc by \hfactor
   \grcalcd = #1
   \advance \grcalcd by -1
   \multiply \grcalcd by \factor
   \put(\grcalca,\grcalcb){\oval(\grcalcd,\grcalcc)[b]}
   \advance \grcalcb by -\hfactor
   \advance \grcalcb by -\qfactor
   \put(\grcalca,\grcalcb) {\line(0,-1){\qfactor}} 
   \advance \grcolumn by #1}

 \newcommand{\gwcmc}[1]{   
   \grcalca = \grcolumn
   \multiply \grcalca by \factor
   \advance \grcalca by \hfactor
   \grcalcb = \grrow
   \multiply \grcalcb by \factor
   \advance \grcalcb by -\factor
   \grcalcc = #1
   \advance \grcalcc by -1
   \multiply \grcalcc by \factor
   \put(\grcalca,\grcalcb){\line(1,0){\grcalcc}}
   \grcalcd = #1
   \advance \grcalcd by -1
   \multiply \grcalcd by \hfactor
   \advance \grcalca by \grcalcd
   \grcalcc = \factor
   \advance \grcalcc by \hfactor
   \grcalcd = #1
   \advance \grcalcd by -1
   \multiply \grcalcd by \factor
   \put(\grcalca,\grcalcb){\oval(\grcalcd,\grcalcc)[t]}
   \advance \grcalcb by \factor
   \put(\grcalca,\grcalcb) {\line(0,-1){\qfactor}} 
   \advance \grcolumn by #1}

 \newcommand{\gev}{  
   \grcalca = \grcolumn
   \advance \grcalca by 1
   \multiply \grcalca by \factor
   \grcalcb = \grrow
   \multiply \grcalcb by \factor
   \grcalcc = \factor
   \advance \grcalcc by \hfactor
   \put(\grcalca,\grcalcb){\oval(\factor,\grcalcc)[b]}
   \advance \grcolumn by 2}

 \newcommand{\gdb}{   
   \grcalca = \grcolumn
   \advance \grcalca by 1
   \multiply \grcalca by \factor
   \grcalcb = \grrow
   \advance \grcalcb by -1
   \multiply \grcalcb by \factor
   \grcalcc = \factor
   \advance \grcalcc by \hfactor
   \put(\grcalca,\grcalcb){\oval(\factor,\grcalcc)[t]}
   \advance \grcolumn by 2}

 \newcommand{\gwev}[1]{    
   \grcalca = \grcolumn
   \multiply \grcalca by \factor
   \grcalcd = \hfactor
   \multiply \grcalcd by #1
   \advance \grcalca by \grcalcd
   \grcalcb = \grrow
   \multiply \grcalcb by \factor
   \grcalcc = \factor
   \advance \grcalcc by \hfactor
   \grcalcd = #1
   \advance \grcalcd by -1
   \multiply \grcalcd by \factor
   \put(\grcalca,\grcalcb){\oval(\grcalcd,\grcalcc)[b]}
   \advance \grcolumn by #1}

 \newcommand{\gwdb}[1]{   
   \grcalca = \grcolumn
   \multiply \grcalca by \factor
   \grcalcd = \hfactor
   \multiply \grcalcd by #1
   \advance \grcalca by \grcalcd
   \grcalcb = \grrow
   \advance \grcalcb by -1
   \multiply \grcalcb by \factor
   \grcalcc = \factor
   \advance \grcalcc by \hfactor
   \grcalcd = #1
   \advance \grcalcd by -1
   \multiply \grcalcd by \factor
   \put(\grcalca,\grcalcb){\oval(\grcalcd,\grcalcc)[t]}
   \advance \grcolumn by #1}

 \newcommand{\gbr}{
   \grcalca = \grcolumn
   \multiply \grcalca by \factor
   \advance \grcalca by \hfactor
   \grcalcb = \grcalca
   \advance \grcalcb by \hfactor
   \grcalcc = \grcalca
   \advance \grcalcc by \factor
   \grcalcd = \grrow
   \multiply \grcalcd by \factor
   \grcalce = \grcalcd
   \advance \grcalce by -\tfactor
   \grcalcf = \grcalcd
   \advance \grcalcf by -\hfactor
   \grcalcg = \grcalce
   \advance \grcalcg by -\tfactor
   \grcalch = \grcalcd
   \advance \grcalch by -\factor
   \qbezier(\grcalca,\grcalcd)(\grcalca,\grcalce)(\grcalcb,\grcalcf) 
   \qbezier(\grcalcb,\grcalcf)(\grcalcc,\grcalcg)(\grcalcc,\grcalch) 
   \advance \grcalcf by -\dfactor
   \advance \grcalcb by -\sfactor
   \qbezier(\grcalca,\grcalch)(\grcalca,\grcalcg)(\grcalcb,\grcalcf) 
   \advance \grcalcf by \sfactor
   \advance \grcalcb by \tfactor
   \qbezier(\grcalcc,\grcalcd)(\grcalcc,\grcalce)(\grcalcb,\grcalcf) 
   \advance \grcolumn by 2}

 \newcommand{\gibr}{
   \grcalca = \grcolumn
   \multiply \grcalca by \factor
   \advance \grcalca by \hfactor
   \grcalcb = \grcalca
   \advance \grcalcb by \hfactor
   \grcalcc = \grcalca
   \advance \grcalcc by \factor
   \grcalcd = \grrow
   \multiply \grcalcd by \factor
   \grcalce = \grcalcd
   \advance \grcalce by -\tfactor
   \grcalcf = \grcalcd
   \advance \grcalcf by -\hfactor
   \grcalcg = \grcalce
   \advance \grcalcg by -\tfactor
   \grcalch = \grcalcd
   \advance \grcalch by -\factor
   \qbezier(\grcalcc,\grcalcd)(\grcalcc,\grcalce)(\grcalcb,\grcalcf) 
   \qbezier(\grcalcb,\grcalcf)(\grcalca,\grcalcg)(\grcalca,\grcalch) 
   \advance \grcalcf by -\dfactor
   \advance \grcalcb by \sfactor
   \qbezier(\grcalcc,\grcalch)(\grcalcc,\grcalcg)(\grcalcb,\grcalcf) 
   \advance \grcalcf by \sfactor
   \advance \grcalcb by -\tfactor
   \qbezier(\grcalca,\grcalcd)(\grcalca,\grcalce)(\grcalcb,\grcalcf) 
   \advance \grcolumn by 2}

\newcommand{\gsy}{
   \grcalca = \grcolumn
   \multiply \grcalca by \factor
   \advance \grcalca by \hfactor
   \grcalcb = \grcalca
   \advance \grcalcb by \hfactor
   \grcalcc = \grcalca
   \advance \grcalcc by \factor
   \grcalcd = \grrow
   \multiply \grcalcd by \factor
   \grcalce = \grcalcd
   \advance \grcalce by -\tfactor
   \grcalcf = \grcalcd
   \advance \grcalcf by -\hfactor
   \grcalcg = \grcalce
   \advance \grcalcg by -\tfactor
   \grcalch = \grcalcd
   \advance \grcalch by -\factor
   \qbezier(\grcalcc,\grcalcd)(\grcalcc,\grcalce)(\grcalcb,\grcalcf) 
   \qbezier(\grcalcb,\grcalcf)(\grcalca,\grcalcg)(\grcalca,\grcalch) 
   \advance \grcalcf by -\dfactor
   \advance \grcalcb by \sfactor
   \qbezier(\grcalcc,\grcalch)(\grcalcc,\grcalcg)(\grcalcb,\grcalcf) 
   \qbezier(\grcalca,\grcalcd)(\grcalca,\grcalce)(\grcalcb,\grcalcf) 
   \advance \grcolumn by 2}

 \newcommand{\gbrc}{
   \grcalca = \grcolumn
   \multiply \grcalca by \factor
   \advance \grcalca by \hfactor
   \grcalcb = \grcalca
   \advance \grcalcb by \hfactor
   \grcalcc = \grcalca
   \advance \grcalcc by \factor
   \grcalcd = \grrow
   \multiply \grcalcd by \factor
   \grcalce = \grcalcd
   \advance \grcalce by -\tfactor
   \grcalcf = \grcalcd
   \advance \grcalcf by -\hfactor
   \grcalcg = \grcalce
   \advance \grcalcg by -\tfactor
   \grcalch = \grcalcd
   \advance \grcalch by -\factor
   \put(\grcalcb,\grcalcf){\circle{\hfactor}}
   \qbezier(\grcalca,\grcalcd)(\grcalca,\grcalce)(\grcalcb,\grcalcf) 
   \qbezier(\grcalcb,\grcalcf)(\grcalcc,\grcalcg)(\grcalcc,\grcalch) 
   \advance \grcalcf by -\dfactor
   \advance \grcalcb by -\sfactor
   \qbezier(\grcalca,\grcalch)(\grcalca,\grcalcg)(\grcalcb,\grcalcf) 
   \advance \grcalcf by \sfactor
   \advance \grcalcb by \tfactor
   \qbezier(\grcalcc,\grcalcd)(\grcalcc,\grcalce)(\grcalcb,\grcalcf) 
   \advance \grcolumn by 2}

 \newcommand{\gibrc}{
   \grcalca = \grcolumn
   \multiply \grcalca by \factor
   \advance \grcalca by \hfactor
   \grcalcb = \grcalca
   \advance \grcalcb by \hfactor
   \grcalcc = \grcalca
   \advance \grcalcc by \factor
   \grcalcd = \grrow
   \multiply \grcalcd by \factor
   \grcalce = \grcalcd
   \advance \grcalce by -\tfactor
   \grcalcf = \grcalcd
   \advance \grcalcf by -\hfactor
   \grcalcg = \grcalce
   \advance \grcalcg by -\tfactor
   \grcalch = \grcalcd
   \advance \grcalch by -\factor
   \put(\grcalcb,\grcalcf){\circle{\hfactor}}
   \qbezier(\grcalcc,\grcalcd)(\grcalcc,\grcalce)(\grcalcb,\grcalcf) 
   \qbezier(\grcalcb,\grcalcf)(\grcalca,\grcalcg)(\grcalca,\grcalch) 
   \advance \grcalcf by -\dfactor
   \advance \grcalcb by \sfactor
   \qbezier(\grcalcc,\grcalch)(\grcalcc,\grcalcg)(\grcalcb,\grcalcf) 
   \advance \grcalcf by \sfactor
   \advance \grcalcb by -\tfactor
   \qbezier(\grcalca,\grcalcd)(\grcalca,\grcalce)(\grcalcb,\grcalcf) 
   \advance \grcolumn by 2}

 \newcommand{\gu}[1]{
   \grcalca = \grcolumn
   \multiply \grcalca by \factor
   \grcalcd = \hfactor
   \multiply \grcalcd by #1
   \advance \grcalca by \grcalcd
   \grcalcb = \grrow
   \advance \grcalcb by -1
   \multiply \grcalcb by \factor
   \put(\grcalca,\grcalcb) {\line(0,1){\hfactor}} 
   \advance \grcalcb by \hfactor
   \put(\grcalca,\grcalcb) {\circle*{3}}
   \advance \grcolumn by #1}

 \newcommand{\gcu}[1]{
   \grcalca = \grcolumn
   \multiply \grcalca by \factor
   \grcalcd = \hfactor
   \multiply \grcalcd by #1
   \advance \grcalca by \grcalcd
   \grcalcb = \grrow
   \multiply \grcalcb by \factor
   \put(\grcalca,\grcalcb) {\line(0,-1){\hfactor}} 
   \advance \grcalcb by -\hfactor
   \put(\grcalca,\grcalcb) {\circle*{3}}
   \advance \grcolumn by #1}

 \newcommand{\gmp}[1]{
   \grcalca = \grcolumn
   \multiply \grcalca by \factor
   \advance \grcalca by \hfactor
   \grcalcb = \grrow
   \multiply \grcalcb by \factor
   \put(\grcalca,\grcalcb) {\line(0,-1){\dfactor}} 
   \advance \grcalcb by -\factor
   \put(\grcalca,\grcalcb) {\line(0,1){\dfactor}} 
   \advance \grcalcb by \hfactor
   \grcalcc = \factor
   \advance \grcalcc by -\qfactor
   \put(\grcalca,\grcalcb) {\circle{\grcalcc}}
   \put(\grcalca,\grcalcb) {\makebox(0,0){$\scriptstyle #1$}}
   \advance \grcolumn by 1}
   
\newcommand{\gmpcu}[1]{
   \grcalca = \grcolumn
   \multiply \grcalca by \factor
   \advance \grcalca by \hfactor
   \grcalcb = \grrow
   \multiply \grcalcb by \factor
   \put(\grcalca,\grcalcb) {\line(0,-1){\dfactor}} 
   \advance \grcalcb by -\factor
   \advance \grcalcb by \hfactor
   \grcalcc = \factor
   \advance \grcalcc by -\qfactor
   \put(\grcalca,\grcalcb) {\circle{\grcalcc}}
   \put(\grcalca,\grcalcb) {\makebox(0,0){$\scriptstyle #1$}}
   \advance \grcolumn by 1}
   
\newcommand{\gmpu}[1]{
   \grcalca = \grcolumn
   \multiply \grcalca by \factor
   \advance \grcalca by \hfactor
   \grcalcb = \grrow
   \multiply \grcalcb by \factor
   \advance \grcalcb by -\factor
   \put(\grcalca,\grcalcb) {\line(0,1){\dfactor}} 
   \advance \grcalcb by \hfactor
   \grcalcc = \factor
   \advance \grcalcc by -\qfactor
   \put(\grcalca,\grcalcb) {\circle{\grcalcc}}
   \put(\grcalca,\grcalcb) {\makebox(0,0){$\scriptstyle #1$}}
   \advance \grcolumn by 1}      
 \newcommand{\gbmp}[1]{
   \grcalca = \grcolumn
   \multiply \grcalca by \factor
   \advance \grcalca by \hfactor
   \grcalcb = \grrow
   \multiply \grcalcb by \factor
   \put(\grcalca,\grcalcb) {\line(0,-1){\dfactor}} 
   \advance \grcalcb by -\factor
   \put(\grcalca,\grcalcb) {\line(0,1){\dfactor}} 
   \advance \grcalca by -\hfactor
   \advance \grcalca by \dfactor
   \advance \grcalcb by \dfactor
   \grcalcc = \factor
   \advance \grcalcc by -\sfactor
   \put(\grcalca,\grcalcb) {\framebox(\grcalcc,\grcalcc){$\scriptstyle #1$}}
   \advance \grcolumn by 1}

 \newcommand{\gbmpt}[1]{
   \grcalca = \grcolumn
   \multiply \grcalca by \factor
   \advance \grcalca by \hfactor
   \grcalcb = \grrow
   \multiply \grcalcb by \factor
   \put(\grcalca,\grcalcb) {\line(0,-1){\dfactor}} 
   \advance \grcalcb by -\factor
   \advance \grcalca by -\hfactor
   \advance \grcalca by \dfactor
   \advance \grcalcb by \dfactor
   \grcalcc = \factor
   \advance \grcalcc by -\sfactor
   \put(\grcalca,\grcalcb) {\framebox(\grcalcc,\grcalcc){$\scriptstyle #1$}}
   \advance \grcolumn by 1}

 \newcommand{\gbmpb}[1]{
   \grcalca = \grcolumn
   \multiply \grcalca by \factor
   \advance \grcalca by \hfactor
   \grcalcb = \grrow
   \multiply \grcalcb by \factor
   \advance \grcalcb by -\factor
   \put(\grcalca,\grcalcb) {\line(0,1){\dfactor}} 
   \advance \grcalca by -\hfactor
   \advance \grcalca by \dfactor
   \advance \grcalcb by \dfactor
   \grcalcc = \factor
   \advance \grcalcc by -\sfactor
   \put(\grcalca,\grcalcb) {\framebox(\grcalcc,\grcalcc){$\scriptstyle #1$}}
   \advance \grcolumn by 1}

 \newcommand{\gbmpn}[1]{
   \grcalca = \grcolumn
   \multiply \grcalca by \factor
   \advance \grcalca by \hfactor
   \grcalcb = \grrow
   \multiply \grcalcb by \factor
   \advance \grcalcb by -\factor
   \advance \grcalca by -\hfactor
   \advance \grcalca by \dfactor
   \advance \grcalcb by \dfactor
   \grcalcc = \factor
   \advance \grcalcc by -\sfactor
   \put(\grcalca,\grcalcb) {\framebox(\grcalcc,\grcalcc){$\scriptstyle #1$}}
   \advance \grcolumn by 1}

 \newcommand{\glmptb}{    
   \grcalca = \grcolumn
   \multiply \grcalca by \factor
   \advance \grcalca by \hfactor
   \grcalcb = \grrow
   \multiply \grcalcb by \factor
   \put(\grcalca,\grcalcb) {\line(0,-1){\dfactor}} 
   \advance \grcalcb by -\factor
   \put(\grcalca,\grcalcb) {\line(0,1){\dfactor}} 
   \advance \grcalca by -\hfactor
   \advance \grcalca by \dfactor
   \advance \grcalcb by \dfactor
   \put(\grcalca,\grcalcb) {\line(1,0){\factor}} 
   \advance \grcalcb by \factor
   \advance \grcalcb by -\sfactor
   \put(\grcalca,\grcalcb) {\line(1,0){\factor}} 
   \grcalcc = \factor
   \advance \grcalcc by -\sfactor
   \put(\grcalca,\grcalcb) {\line(0,-1){\grcalcc}} 
   \advance \grcolumn by 1}

 \newcommand{\glmpt}{    
   \grcalca = \grcolumn
   \multiply \grcalca by \factor
   \advance \grcalca by \hfactor
   \grcalcb = \grrow
   \multiply \grcalcb by \factor
   \put(\grcalca,\grcalcb) {\line(0,-1){\dfactor}} 
   \advance \grcalca by -\hfactor
   \advance \grcalca by \dfactor
   \advance \grcalcb by -\dfactor
   \put(\grcalca,\grcalcb) {\line(1,0){\factor}} 
   \advance \grcalcb by -\factor
   \advance \grcalcb by \sfactor
   \put(\grcalca,\grcalcb) {\line(1,0){\factor}} 
   \grcalcc = \factor
   \advance \grcalcc by -\sfactor
   \put(\grcalca,\grcalcb) {\line(0,1){\grcalcc}} 
   \advance \grcolumn by 1}

 \newcommand{\glmpb}{    
   \grcalca = \grcolumn
   \multiply \grcalca by \factor
   \advance \grcalca by \hfactor
   \grcalcb = \grrow
   \multiply \grcalcb by \factor
   \advance \grcalcb by -\factor
   \put(\grcalca,\grcalcb) {\line(0,1){\dfactor}} 
   \advance \grcalca by -\hfactor
   \advance \grcalca by \dfactor
   \advance \grcalcb by \dfactor
   \put(\grcalca,\grcalcb) {\line(1,0){\factor}} 
   \advance \grcalcb by \factor
   \advance \grcalcb by -\sfactor
   \put(\grcalca,\grcalcb) {\line(1,0){\factor}} 
   \grcalcc = \factor
   \advance \grcalcc by -\sfactor
   \put(\grcalca,\grcalcb) {\line(0,-1){\grcalcc}} 
   \advance \grcolumn by 1}

 \newcommand{\glmp}{    
   \grcalca = \grcolumn
   \multiply \grcalca by \factor
   \advance \grcalca by \dfactor
   \grcalcb = \grrow
   \multiply \grcalcb by \factor
   \advance \grcalcb by -\dfactor
   \put(\grcalca,\grcalcb) {\line(1,0){\factor}} 
   \advance \grcalcb by -\factor
   \advance \grcalcb by \sfactor
   \put(\grcalca,\grcalcb) {\line(1,0){\factor}} 
   \grcalcc = \factor
   \advance \grcalcc by -\sfactor
   \put(\grcalca,\grcalcb) {\line(0,1){\grcalcc}} 
   \advance \grcolumn by 1}

 \newcommand{\gcmptb}{    
   \grcalca = \grcolumn
   \multiply \grcalca by \factor
   \advance \grcalca by \hfactor
   \grcalcb = \grrow
   \multiply \grcalcb by \factor
   \put(\grcalca,\grcalcb) {\line(0,-1){\dfactor}} 
   \advance \grcalcb by -\factor
   \put(\grcalca,\grcalcb) {\line(0,1){\dfactor}} 
   \advance \grcalca by -\hfactor
   \advance \grcalcb by \dfactor
   \put(\grcalca,\grcalcb) {\line(1,0){\factor}} 
   \advance \grcalcb by \factor
   \advance \grcalcb by -\sfactor
   \put(\grcalca,\grcalcb) {\line(1,0){\factor}} 
   \advance \grcolumn by 1}

 \newcommand{\gcmpt}{    
   \grcalca = \grcolumn
   \multiply \grcalca by \factor
   \advance \grcalca by \hfactor
   \grcalcb = \grrow
   \multiply \grcalcb by \factor
   \put(\grcalca,\grcalcb) {\line(0,-1){\dfactor}} 
   \advance \grcalcb by -\factor
   \advance \grcalca by -\hfactor
   \advance \grcalcb by \dfactor
   \put(\grcalca,\grcalcb) {\line(1,0){\factor}} 
   \advance \grcalcb by \factor
   \advance \grcalcb by -\sfactor
   \put(\grcalca,\grcalcb) {\line(1,0){\factor}} 
   \advance \grcolumn by 1}

 \newcommand{\gcmpb}{    
   \grcalca = \grcolumn
   \multiply \grcalca by \factor
   \advance \grcalca by \hfactor
   \grcalcb = \grrow
   \multiply \grcalcb by \factor
   \advance \grcalcb by -\factor
   \put(\grcalca,\grcalcb) {\line(0,1){\dfactor}} 
   \advance \grcalca by -\hfactor
   \advance \grcalcb by \dfactor
   \put(\grcalca,\grcalcb) {\line(1,0){\factor}} 
   \advance \grcalcb by \factor
   \advance \grcalcb by -\sfactor
   \put(\grcalca,\grcalcb) {\line(1,0){\factor}} 
   \advance \grcolumn by 1}

 \newcommand{\gcmp}{    
   \grcalca = \grcolumn
   \multiply \grcalca by \factor
   \grcalcb = \grrow
   \multiply \grcalcb by \factor
   \advance \grcalcb by -\factor
   \advance \grcalcb by \dfactor
   \put(\grcalca,\grcalcb) {\line(1,0){\factor}} 
   \advance \grcalcb by \factor
   \advance \grcalcb by -\sfactor
   \put(\grcalca,\grcalcb) {\line(1,0){\factor}} 
   \advance \grcolumn by 1}

 \newcommand{\grmptb}{    
   \grcalca = \grcolumn
   \multiply \grcalca by \factor
   \advance \grcalca by \hfactor
   \grcalcb = \grrow
   \multiply \grcalcb by \factor
   \put(\grcalca,\grcalcb) {\line(0,-1){\dfactor}} 
   \advance \grcalcb by -\factor
   \put(\grcalca,\grcalcb) {\line(0,1){\dfactor}} 
   \advance \grcalca by \hfactor
   \advance \grcalca by -\dfactor
   \advance \grcalcb by \dfactor
   \put(\grcalca,\grcalcb) {\line(-1,0){\factor}} 
   \advance \grcalcb by \factor
   \advance \grcalcb by -\sfactor
   \put(\grcalca,\grcalcb) {\line(-1,0){\factor}} 
   \grcalcc = \factor
   \advance \grcalcc by -\sfactor
   \put(\grcalca,\grcalcb) {\line(0,-1){\grcalcc}} 
   \advance \grcolumn by 1}

 \newcommand{\grmpt}{    
   \grcalca = \grcolumn
   \multiply \grcalca by \factor
   \advance \grcalca by \hfactor
   \grcalcb = \grrow
   \multiply \grcalcb by \factor
   \put(\grcalca,\grcalcb) {\line(0,-1){\dfactor}} 
   \advance \grcalca by \hfactor
   \advance \grcalca by -\dfactor
   \advance \grcalcb by -\dfactor
   \put(\grcalca,\grcalcb) {\line(-1,0){\factor}} 
   \advance \grcalcb by -\factor
   \advance \grcalcb by \sfactor
   \put(\grcalca,\grcalcb) {\line(-1,0){\factor}} 
   \grcalcc = \factor
   \advance \grcalcc by -\sfactor
   \put(\grcalca,\grcalcb) {\line(0,1){\grcalcc}} 
   \advance \grcolumn by 1}

 \newcommand{\grmpb}{    
   \grcalca = \grcolumn
   \multiply \grcalca by \factor
   \advance \grcalca by \hfactor
   \grcalcb = \grrow
   \multiply \grcalcb by \factor
   \advance \grcalcb by -\factor
   \put(\grcalca,\grcalcb) {\line(0,1){\dfactor}} 
   \advance \grcalca by \hfactor
   \advance \grcalca by -\dfactor
   \advance \grcalcb by \dfactor
   \put(\grcalca,\grcalcb) {\line(-1,0){\factor}} 
   \advance \grcalcb by \factor
   \advance \grcalcb by -\sfactor
   \put(\grcalca,\grcalcb) {\line(-1,0){\factor}} 
   \grcalcc = \factor
   \advance \grcalcc by -\sfactor
   \put(\grcalca,\grcalcb) {\line(0,-1){\grcalcc}} 
   \advance \grcolumn by 1}

 \newcommand{\grmp}{    
   \grcalca = \grcolumn
   \multiply \grcalca by \factor
   \advance \grcalca by \factor
   \advance \grcalca by -\dfactor
   \grcalcb = \grrow
   \multiply \grcalcb by \factor
   \advance \grcalcb by -\dfactor
   \put(\grcalca,\grcalcb) {\line(-1,0){\factor}} 
   \advance \grcalcb by -\factor
   \advance \grcalcb by \sfactor
   \put(\grcalca,\grcalcb) {\line(-1,0){\factor}} 
   \grcalcc = \factor
   \advance \grcalcc by -\sfactor
   \put(\grcalca,\grcalcb) {\line(0,1){\grcalcc}} 
   \advance \grcolumn by 1}

 \newcommand{\gwmuh}[3]{    
   \grcalca = \grcolumn
   \multiply \grcalca by \factor
   \grcalcb = #2
   \advance \grcalcb by #3
   \multiply \grcalcb by \qfactor
   \advance \grcalca by \grcalcb
   \grcalcb = \grrow
   \multiply \grcalcb by \factor
   \grcalcc = #3
   \advance \grcalcc by -#2
   \multiply \grcalcc by \hfactor
   \grcalcd = \factor
   \advance \grcalcd by \hfactor
   \put(\grcalca,\grcalcb){\oval(\grcalcc,\grcalcd)[b]}
   \grcalca = \grcolumn
   \multiply \grcalca by \factor
   \grcalcc = #1
   \multiply \grcalcc by \hfactor
   \advance \grcalca by \grcalcc
   \advance \grcalcb by -\hfactor
   \advance \grcalcb by -\qfactor
   \put(\grcalca,\grcalcb) {\line(0,-1){\qfactor}} 
   \advance \grcolumn by #1}

 \newcommand{\gwcmh}[3]{   
   \grcalca = \grcolumn
   \multiply \grcalca by \factor
   \grcalcb = #2
   \advance \grcalcb by #3
   \multiply \grcalcb by \qfactor
   \advance \grcalca by \grcalcb
   \grcalcb = \grrow
   \advance \grcalcb by -1
   \multiply \grcalcb by \factor
   \grcalcc = #3
   \advance \grcalcc by -#2
   \multiply \grcalcc by \hfactor
   \grcalcd = \factor
   \advance \grcalcd by \hfactor
   \put(\grcalca,\grcalcb){\oval(\grcalcc,\grcalcd)[t]}
   \grcalca = \grcolumn
   \multiply \grcalca by \factor
   \grcalcc = #1
   \multiply \grcalcc by \hfactor
   \advance \grcalca by \grcalcc
   \advance \grcalcb by \factor
   \put(\grcalca,\grcalcb) {\line(0,-1){\qfactor}} 
   \advance \grcolumn by #1}

 \newcommand{\gsbox}[1]{
   \grcalca = \grcolumn
   \multiply \grcalca by \factor
   \grcalcb = \grrow
   \multiply \grcalcb by \factor
   \advance \grcalcb by -\factor
   \grcalcc = #1
   \multiply \grcalcc by \factor
   \grcalcd = \factor
   \put(\grcalca,\grcalcb){\framebox(\grcalcc,\grcalcd){}}}

\allowdisplaybreaks[4]

\begin{document}
\title[Frobenius and separable algebras]
{On Frobenius and separable algebra extensions in monoidal categories. Applications to wreaths}
\dedicatory{Dedicated to Margaret Beattie on the occasion of her retirement}
\thanks{{\it Key words and phrases.}~~{\rm 
monoidal category, 2-category, Frobenius algebra, separable algebra, 
Nakayama automorphism, wreath product}}
\author{Daniel Bulacu}
\address{Faculty of Mathematics and Informatics, University
of Bucharest, Str. Academiei 14, RO-010014 Bucharest 1, Romania}
\email{daniel.bulacu@fmi.unibuc.ro}
\author{Blas Torrecillas}
\address{Department of Algebra and Analysis\\
Universidad de Almer\'{\i}a\\
E-04071 Almer\'{\i}a, Spain}
\email{btorreci@ual.es}
\thanks{
The first author was supported by the UEFISCDI Grant PN-II-ID-PCE-2011-
3-0635, contract no. 253/5.10.2011 of CNCSIS. 
The second author was partially supported by FQM 3128 from Junta 
Andaluc\'{\i}a MTM2011-27090 from MCI.
The first author thanks the Universidad de Almer\'{\i}a for its worm hospitality.
The authors also thank Bodo Pareigis for sharing his ``diagrams" program.}

\subjclass[2010]{Primary 16W30; Secondaries 16T05, 18D05, 18D10, 16S34}

\begin{abstract}
We characterize Frobenius and separable monoidal algebra extensions $i: R\ra S$ 
in terms given by $R$ and $S$. For instance, under some conditions, we show that the extension is Frobenius, respectively 
separable, if and only if $S$ is a Frobenius, respectively separable, algebra in the category 
of bimodules over $R$. In the case when $R$ is separable we show that the extension is separable 
if and only if $S$ is a separable algebra. Similarly, in the case when $R$ is Frobenius and separable in a sovereign monoidal 
category we show that the extension is Frobenius if and only if $S$ is a Frobenius algebra and 
the restriction at $R$ of its Nakayama automorphism is equal to the Nakayama automorphism of $R$.      
As applications, we obtain several characterizations for an algebra extension 
associated to a wreath to be Frobenius, respectively separable.    
\end{abstract}
\maketitle

\section{Introduction}
Frobenius algebras appeared for the first time in the work of Frobenius on representation theory. 
These are finite dimensional algebras over a field $k$ having particularly nice duality 
properties (see for instance the Frobenius equation below). 
The study of Frobenius algebras was started in the thirties of the last century by Brauer
and Nesbitt \cite{brauern} which named these algebras after Frobenius. 
Nakayama discovered the duality property of a Frobenius algebra in 
\cite{nak1, nak3}, and Dieudonn\'e used this to characterize Frobenius algebras in
\cite{dio}. Nakayama also studied symmetric algebras in \cite{nak2} but the automorphism that carries
out his name was defined in \cite{nak3}. Besides 
representation theory, Frobenius algebras play an important role in number theory, algebraic geometry, 
combinatorics, coding theory, geometry, Hopf algebra and quantum group theory, in finding solutions 
for the quantum Yang-Baxter equation, the Jones polynomials, etc. More details about the 
connections between Frobenius algebras and the classical, 
respectively modern, directions mentioned above can be found in the books of Lam \cite{lam} 
and Kadison \cite{kad}.  

Recently, the interest for Frobenius algebras has been renewed 
due to connections to monoidal categories and topological quantum field theory (TQFT for short). 
Roughly speaking, if $\mathbf{nCob}$ is the category 
of $n$ cobordisms then a TQFT is a (symmetric) monoidal functor from $\mathbf{nCob}$ to ${}_k\mathcal{M}$, the 
category of $k$-vector spaces. 
For $n=2$ there exists a complete classification of surfaces, and so the cobordism category $\mathbf{2Cob}$ 
is described completely. Furthermore, the relations that hold in $\mathbf{2Cob}$ correspond exactly 
to the axioms of a commutative Frobenius algebra and this leads to the fact that $\mathbf{2TQFT}$ is equivalent 
to the category of commutative Frobenius algebras. For more details on this topic we invite the reader 
to consult \cite{kock}. We also note that the Frobenius equation (that follows from the fact that both surfaces 
are homeomorphic to a sphere with four holes) 
\[
\footnotesize{ \xy
(-9,6)*\ellipse(3,1){-};
(-9,0)*\ellipse(3,1){.};
(-9,0)*\ellipse(3,1)__,=:a(-180){-};
(-21,12)*{}="TL";
(-15,12)*{}="TR";
(-21,0)*{}="BL";
(-15,0)*{}="BR";
"TL"; "BL" **\dir{-};
"TR"; "BR" **\dir{-};
(3,-6)*\ellipse(3,1){.};
(3,-6)*\ellipse(3,1)____,=:a(-180){-};
(0,6)*\ellipse(3,1){-};
(-3,0)*{}="1";
(3,0)*{}="2";
(-9,0)*{}="A2";
(9,0)*{}="B2";
"1";"2" **\crv{(-3,7) & (3,7)};
(-3,12)*{}="A";
(3,12)*{}="B";
(-3,11)*{}="A1";
(3,11)*{}="B1";
"A";"A1" **\dir{-};
"B";"B1" **\dir{-};
"B2";"B1" **\crv{(8,7) & (3,5)};
"A2";"A1" **\crv{(-8,7) & (-3,5)};
(-6,-6)*\ellipse(3,1){.};
(-6,-6)*\ellipse(3,1)__,=:a(-180){-};
(-3,0)*\ellipse(3,1){.};
(-3,0)*\ellipse(3,1)__,=:a(-180){-};
(-15,0)*{}="1"; 
(-9,0)*{}="2"; 
(-21,0)*{}="A2";
(-3,0)*{}="B2"; 
"1";"2" **\crv{(-15,-7) & (-9,-7)};
(-15,-12)*{}="A";
(-9,-12)*{}="B"; 
(-15,-11)*{}="A1";
(-9,-11)*{}="B1"; 
"A";"A1" **\dir{-};
"B";"B1" **\dir{-}; 
"B2";"B1" **\crv{(-3,-5) & (-8,-7)};
"A2";"A1" **\crv{(-21,-5) & (-16,-7)};
(3,0)*\ellipse(3,1){.};
(3,0)*\ellipse(3,1)____,=:a(-180){-};
(3,0)*{}="TL";
(9,0)*{}="TR";
(3,-12)*{}="BL";
(9,-12)*{}="BR";
"TL"; "BL" **\dir{-};
"TR"; "BR" **\dir{-};
\endxy
}
=
\footnotesize{ \xy
(0,0)*\ellipse(3,1){.};
(0,0)*\ellipse(3,1)__,=:a(-180){-};
(-3,-6)*\ellipse(3,1){.};
(3,-6)*\ellipse(3,1){.};
(-3,-6)*\ellipse(3,1)__,=:a(-180){-};
(3,-6)*\ellipse(3,1)__,=:a(-180){-};
(-3,-12)*{}="1";
(3,-12)*{}="2";
(-9,-12)*{}="A2";
(9,-12)*{}="B2";
"1";"2" **\crv{(-3,-7) & (3,-7)};
(-3,0)*{}="A";
(3,0)*{}="B";
(-3,-1)*{}="A1";
(3,-1)*{}="B1";
"A";"A1" **\dir{-};
"B";"B1" **\dir{-};
"B2";"B1" **\crv{(8,-7) & (3,-5)};
"A2";"A1" **\crv{(-8,-7) & (-3,-5)};
(-3,6)*\ellipse(3,1){-};
(3,6)*\ellipse(3,1){-};
(-3,12)*{}="1";
(3,12)*{}="2";
(-9,12)*{}="A2";
(9,12)*{}="B2";
"1";"2" **\crv{(-3,7) & (3,7)};
(-3,0)*{}="A";
(3,0)*{}="B";
(-3,1)*{}="A1";
(3,1)*{}="B1";
"A";"A1" **\dir{-};
"B";"B1" **\dir{-};
"B2";"B1" **\crv{(8,7) & (3,5)};
"A2";"A1" **\crv{(-8,7) & (-3,5)};
\endxy 
}
=
\footnotesize{
\xy
(0,6)*\ellipse(3,1){-};
(-3,0)*\ellipse(3,1){.};
(-3,0)*\ellipse(3,1)__,=:a(-180){-};
(3,0)*\ellipse(3,1){.};
(3,0)*\ellipse(3,1)__,=:a(-180){-};
(-3,0)*{}="1";
(3,0)*{}="2";
(-9,0)*{}="A2";
(9,0)*{}="B2";
"1";"2" **\crv{(-3,7) & (3,7)};
(-3,12)*{}="A";
(3,12)*{}="B";
(-3,11)*{}="A1";
(3,11)*{}="B1";
"A";"A1" **\dir{-};
"B";"B1" **\dir{-};
"B2";"B1" **\crv{(8,7) & (3,5)};
"A2";"A1" **\crv{(-8,7) & (-3,5)};
(9,6)*\ellipse(3,1){-};
(9,0)*\ellipse(3,1){.};
(9,0)*\ellipse(3,1)__,=:a(-180){-};
(15,12)*{}="TL";
(21,12)*{}="TR";
(15,0)*{}="BL";
(21,0)*{}="BR";
"TL"; "BL" **\dir{-};
"TR"; "BR" **\dir{-};
(-3,-6)*\ellipse(3,1){.};
(-3,-6)*\ellipse(3,1)____,=:a(-180){-};
(-9,0)*{}="TL";
(-3,0)*{}="TR";
(-9,-12)*{}="BL";
(-3,-12)*{}="BR";
"TL"; "BL" **\dir{-};
"TR"; "BR" **\dir{-};
(6,-6)*\ellipse(3,1){.}; 
(6,-6)*\ellipse(3,1)____,=:a(-180){-};
(9,0)*{}="1";
(15,0)*{}="2";
(3,0)*{}="A2";
(21,0)*{}="B2";
"1";"2" **\crv{(9,-7) & (15,-7)};
(9,-12)*{}="A";
(15,-12)*{}="B";
(9,-11)*{}="A1";
(15,-11)*{}="B1";
"A";"A1" **\dir{-};
"B";"B1" **\dir{-};
"B2";"B1" **\crv{(20,-7) & (15,-5)};
"A2";"A1" **\crv{(4,-7) & (9,-5)};
\endxy
}\] 
expresses the compatibility between the algebra and coalgebra structure on a Frobenius algebra. It makes sense 
in any monoidal category, and therefore the notion of Frobenius algebra can be defined in any such 
category. This has already been done, see for instance \cite{fust,kock,lauda}. Furthermore, 
in the monoidal categorical framework Frobenius algebras appear in different contexts. Apart from 
the TQFT case mentioned above, we have a correspondence between Frobenius algebras 
in monoidal categories and weak monoidal Morita equivalence of monoidal categories \cite{mu1}, 
Frobenius functors and Frobenius algebras in the category of endofunctors, 
and Frobenius monads in 2-categories and Frobenius 
algebras in some suitable monoidal categories, respectively. Note also that Cayley-Dickson and 
Clifford algebras are example of Frobenius algebras in certain monoidal categories of 
graded vector spaces \cite{bulacu1, bulacu2}.         

Kasch \cite{kasch} extended the notion of Frobenius algebra to an arbitrary algebra extension. 
A $k$-algebra morphism $i: R\ra S$ is called Frobenius if $S$ is finitely generated and projective as right $R$-module 
and ${\rm Hom}_R(S, R)$, the set of right $R$-linear maps from $S$ to $R$, is isomorphic to 
$S$ as $(R, S)$-bimodule. The notion of Frobenius algebra is recovered when $R=k$ and 
$i$ is the unit map of $S$. We should point out that Frobenius extensions have 
a well-developed theory of induced representations, 
investigated in papers by Kasch \cite{kasch0,kasch,kasch1}, Pareigis \cite{par0,par1}, 
Nakayama and Tzuzuku \cite{naktz1, naktz2, naktz3}, Morita \cite{mor1, mor2}, 
and the list may continue. Frobenius extensions, and so Frobenius algebras as well, can be 
characterized in terms of Frobenius functors, first introduced by Morita in \cite{mor1}. Recall that a Frobenius functor 
is a functor having left and right adjoints which are naturally equivalent, and that the terminology is based on 
the fact that an algebra extension $i: R\ra S$ is Frobenius if and only if the restriction of scalars functor 
is Frobenius.   

Due to a famous result of Eilenberg and Nakayama \cite{EiNak}, particular examples of Frobenius 
$k$-algebras are given by separable $k$-algebras. Later on, the result was generalized by Endo and 
Watanabe \cite{EnWa} to the case of algebras over a commutative ring, which are projective 
as modules. More precisely, they showed that if such an algebra is separable then it is symmetric, and 
therefore Frobenius. Although the separability notion extends easily to the algebra extension setting, it is still 
an open question when a separable algebra extension is Frobenius. In this direction the answer is known to be positive 
in some particular cases, see \cite{suga1,suga2}. As far as we are concerned, the separability notion can be 
restated in terms given by separable functors. These were introduced in \cite{nbo} and, similar to the Frobenius case, 
their name is justified by the fact that an algebra 
extension $i: R\ra S$ is separable if and only if the restriction of scalars functor is separable.   

In this paper, which can be seen as a sequel of \cite{bc4} and a predecessor of \cite{bc5}, we have as final 
goal the study of the algebra extensions associated to wreaths in monoidal categories from the Frobenius, and 
respectively separable, point of view. Our motivation is two-fold. On one hand we want to unify most of the Frobenius 
type theories performed for algebra extensions obtained from different types of entwining structures coming from actions 
and coactions of Hopf algebras and their generalizations; corroborated with the results and the set of examples presented 
in \cite{bc4} this leaded us to the study of wreaths in monoidal categories, and then of algebra extensions produced by them. 
On the other hand, we wish to give a monoidal categorical interpretation for the conditions under which these algebra 
extensions are Frobenius, respectively separable, and so to replace a bunch of conditions with one monoidal property. We shall 
explain this better in what follows by presenting the content of this paper.     

Since wreaths in $2$-categories are actually monoidal algebras and the monad extensions produced by them are 
in fact monoidal algebra extensions we first study when a monoidal algebra extension is 
Frobenius, respectively separable. Having in mind the functorial interpretation that exists in the classical 
case, we started by investigating when the restriction of scalars functor and the induction functor are Frobenius, respectively 
separable. To make our theory work, besides the natural conditions that we need to impose (existence of 
coequalizers, coflatnes, robustness), we have to assume that the unit object $\un{1}$ is a $\ot$-generator, too. For short, 
this extra condition is the substitute of the fact that giving an element of a set is equivalent to giving a map 
from a singleton to the set, or giving a vector is equivalent to giving a linear map from the 
base field to the vector space where it resides. Otherwise stated, we can avoid working with ``elements" by  
considering morphisms in a category having the unit object as source, providing that $\un{1}$ is a 
$\ot$-generator. So under these conditions we give in \thref{FrobSepExt} necessary and sufficient conditions for the 
restriction of scalars functor to be Frobenius and in \thref{SepExtMon} the ones for which it is separable, generalizing 
in this way \cite[Theorem 27]{cmz} to the monoidal categorical setting. As expected, we obtain as a consequence 
that an algebra $A$ in a monoidal category $\Cc$ is Frobenius/separable if and only if 
the forgetful functor $F: \Cc_A\ra \Cc$ is so, of course, providing that $\un{1}$ is a $\ot$-generator in $\Cc$. Motivated by 
these results we define in \seref{FrobSepAlgMon} the notion of a Frobenius/separable algebra extension in $\Cc$ in such a way 
that it becomes equivalent to the fact that the restriction of scalars functor is Frobenius/separable, providing again 
that $\un{1}$ is a $\ot$-generator in $\Cc$. 

If $\un{1}$ is a projective object in $\Cc$ then an algebra $A$ in $\Cc$ is separable if and only if it is projective 
as an $A$-bimodule in $\Cc$ (\prref{sepversusproj}). Furthermore, in \prref{Frobalgextcharact} we show that 
an algebra extension $i: R\ra S$ is Frobenius/separable if and only if $S$ is a Frobenius/separable algebra 
in the category of $R$-bimodules in $\Cc$, ${}_R\Cc_R$, and therefore the study of Frobenius/separable algebra extensions 
reduce to the study of monoidal Frobenius/separable algebras. Consequently, we get that when $\un{1}$ is projective or 
$\ot$-generator and $R$ is separable the extension $i: R\ra S$ is separable if and only if $S$ is a separable 
algebra in $\Cc$ (\coref{SepExtVsSep}). Note that \prref{Frobalgextcharact} gives also a new approach for 
dealing with the problem of when a separable algebra extension is Frobenius. More exactly, if we can answer to question 
\begin{itemize}
\item[]{\it What are the monoidal categories for which any separable algebra is Frobenius?}
\end{itemize} 
then one can uncover the conditions under which a separable algebra extension is Frobenius. Nevertheless, we can always handle the 
converse of the above question. More exactly, \prref{WhenFrobisSep} measures how far is a Frobenius monoidal algebra from being 
separable, and consequently how far is a Frobenius algebra extension from being separable (\coref{WhenFrobExtIsSep}). 

The purpose of \seref{FurtherCharFrobExt} is to present new characterizations of Frobenius algebra extensions. 
For an algebra $A$ in a monoidal category, we have several conditions equivalent to $A$ being Frobenius, we collected 
them in \thref{charactFrobalg}. Since an algebra extension $i: R\ra S$ is Frobenius if and only if $S$ is a Frobenius 
algebra in ${}_R\Cc_R$, we get as an immediate consequence of \thref{charactFrobalg} a list of characterizations 
for $i: R\ra S$ to be Frobenius (\coref{CharFrobAlgExt}). The natural problems that show up now are: (1). When is ${}_R\Cc_R$ 
rigid monoidal?; (2). If ${}_R\Cc_R$ is rigid then is $\Cc$ so?; and (3). If (2) is true, then how can we relate the 
dual of an object in ${}_R\Cc_R$ with the dual of the same object regarded now as an object of $\Cc$ via the forgetful functor? 
To answer these questions we adapted the techniques used in \cite[Section 5]{yam}, where it is proved that 
${}_R\Cc_R$ is rigid in the case when $\Cc$ is so and $R$ is a special Frobenius algebra (recall that 
a Frobenius algebra $R$ is called special if $\un{m}_R\un{\Delta}_R=d_R\Id_R$ and $\un{\va}_R\un{\eta}_R=d_R\Id_R$, for some 
``non-zero scalar" $d_R$, where $(R, \un{m}_R, \un{\eta}_R)$ and $(R, \un{\Delta}_R, \un{\va}_R)$ denote the 
algebra and coalgebra structures of the Frobenius algebra $R$). To be more precise, we walked backwards through 
these questions. Firstly, it is well-known that a strong monoidal functor preserves dual objects, so we might have an answer for 
(3) in the case when the forgetful functor $\mathfrak{U}: {}_R\Cc_R\ra \Cc$ is strong monoidal. But this condition on $\mathfrak{U}$ 
is very restrictive, so we replaced it with the Frobenius monoidal one, a weaker condition under which $\mathfrak{U}$ still 
preserves dual situations, cf. \cite[Theorem 2]{DayPastro}. $\mathfrak{U}$ has a trivial monoidal structure and this is part 
of a Frobenius monoidal structure if and only if $R$ is a Frobenius algebra. Furthermore, if this is the case, then 
the opmonoidal structure of $\mathfrak{U}$ is completely determined by the Frobenius structure of $R$, see \thref{ForgFrobMonFunctor}. 
Thus, if $R$ is Frobenius and ${}_R\Cc_R$ is rigid then $\Cc$ is rigid as well, and this answers partially to (3). 
Secondly, we prove in \prref{DualAreDualBimod} that the converse remains true if we assume, in addition, that $R$ is 
separable (a situation different from the one considered in \cite{yam}, and more appropriate to the topic of this paper).  
In particular, if $R$ is a separable Frobenius algebra, we have that ${}_R\Cc_R$ is rigid if and only if $\Cc$ is so, 
and if this is the case, the dual objects coincide (only the evaluation and coevaluation morphisms are different). 
This answers partially the questions (1) and (2). Furthermore, using these results we are able to show that 
if $R$ is a separable Frobenius algebra, then an algebra extension $i: R\ra S$ is Frobenius if and only if $S$ is a Frobenius algebra and 
a condition involving the Frobenius structures of $R$ and $S$ holds (\thref{FrobExtBaseFS}). When $\Cc$ is sovereign monoidal, that is, 
$\Cc$ is rigid and the left dual functor is equal to the right dual functor, then this condition is equivalent to the 
the fact that the restriction at $R$ of the Nakayama automorphism of $S$ coincides with the Nakayama automorphism of $R$, see 
\thref{FrobExtBaseFSsov}.

In \seref{ApplWreaths} we give some applications. Particular examples of monoidal algebras are given by monads 
in an arbitrary category (algebras in a category of endofunctors) and monads in $2$-categories. Thus if we 
specialize our results to these particular situations we get for free necessary and sufficient conditions for 
which a monad extension or a $2$-monad extension is Frobenius, respectively separable. Since the former is a particular 
case of the latter, we restrict ourselves 
in working only in the context provided by $2$-categories. If $\mathbb{A}=(A, t, \mu, \eta)$ is a monad in 
a $2$-category $\mathcal{K}$ then $(t, \mu, \eta)$ is an algebra in the monoidal category $\mathcal{K}(A)$ and, 
moreover, $\mathbb{A}$ is Frobenius/separable in the 2-categorical sense if and only if $(t, \mu, \eta)$ is 
a monoidal Frobenius/separable algebra in $\mathcal{K}(A)$. With this observation in mind we easily deduce 
necessary and sufficient conditions for which a wreath, i.e., a monad in the $2$-category ${\rm EM}(\mathcal{K})$ 
(the Eilenberg-Moore category associated to 
$\mathcal{K}$), is a Frobenius/separable $2$-monad. Besides these characterizations, we give also new ones, providing 
that $\mathcal{K}(A)$ admits coequalizers and any object of it is coflat. More exactly, to any wreath in $\mathcal{K}$ 
we can associate an algebra extension in $\mathcal{K}(A)$ that we call the canonical monad extension associated to the wreath. 
Then the wreath is Frobenius/separable if and only if the associated canonical monad extension is so (Theorems 
\ref{charactextFrob2categ} and \ref{charactextSep2categ}). Finally, all the results obtained throughout the paper can be 
applied to the (monad) algebra extension associated to a wreath in a monoidal category. We summed up all these in 
\coref{CharFrobWreatsMon} for the Frobenius case, and respectively in \coref{CharSepWreatsMon} for the separable case. 
In this way we achieved our main goal. Furthermore, we will see in \cite{bc5} that the Frobenius/separable properties of 
a monoidal wreath play an important role in establishing Frobenius properties and Maschke type theorems for the 
generalized entwined module categories that were introduced in \cite{bc4}.  
      
\section{Preliminaries}
\subsection{Rings and corings in monoidal categories}
Throughout this paper $\Cc$ is a monoidal category with unit object $\un{1}$. Without loss 
of generality we assume that $\Cc$ is strict, this means that the associativity and the left and right 
unit constraints are defined by identity morphisms in $\Cc$. If $A$ is an algebra in $\Cc$ we then denote by 
$\un{m}_A$ and $\un{\eta}_A$ its multiplication and unit morphisms, and by ${}_A\Cc$ ($\Cc_A$) the category of left 
(right) $A$-modules and left (right) $A$-morphisms in $\Cc$. For more details about algebras in a monoidal category 
we invite the reader to consult \cite{bulacu, k, m3, mc}.       

Assume now that $\Cc$ has coequalizers. 
Take an algebra $A$ in $\Cc$, $\mfM\in \Cc_A$ and $X\in {}_A\Cc$,
with structure morphisms $\mu_X^A:\ A\ot X\to X$ and $\nu_\mfM^A: \mfM\ot A\ra \mfM$, respectively. We consider the
coequalizer $(M\ot_AX, q^A_{{\mfM}, X})$ of the parallel morphisms  
$\nu_{\mfM}^A\ot\Id_X$ and $\Id_\mfM\ot \mu_{X}^A$ in $\Cc$:
\[
\xymatrix{
\mfM\ot A\ot X\ar@<-1ex>[rr]_(.54){\Id_\mfM\ot \mu^A_{X}} 
\ar@<1ex>[rr]^(.55){{\nu_{\mfM}^A\ot\Id_X}}&&
M\ot X\ar[r]^-{q^A_{{\mfM}, X}}& M\ot_AX .
}
\]

For a left $A$-linear morphism $f:\ X\ra Y$ in $\Cc$, 
let $\tilde{f}:\ \mfM\ot_AX\ra \mfM\ot_AY$ be the unique morphism 
in $\Cc$ satisfying the equation 
\begin{equation}\eqlabel{coeqmorph}
\tilde{f}q^A_{\mfM, X}=q^A_{\mfM, Y}(\Id_\mfM\ot f).
\end{equation}
Take $X\mapright{f}Y\mapright{f}Z$ in ${}_A\Cc$. It is easily verified that
$\widetilde{gf}=\tilde{g}\tilde{f}$.

Now let $g:\mfM\ra {\mf N}$ in $\Cc_A$ and $Y\in {}_A\Cc$. Then 
$\hat{g}: \mfM\ot_AY\ra {\mf N}\ot_AY$ denotes the unique morphism in 
$\Cc$ obeying   
\begin{equation}\eqlabel{coeqmorph2}
\hat{g} q^A_{{\mfM}, Y}=q^A_{{\mf N}, Y}(g\ot\Id_Y).
\end{equation}
For $\mfM \mapright{f} \mfN\mapright{g}\mfP$ in $\Cc_A$, we have that
$\widehat{gf}=\hat{g}\hat{f}$.

For $\mfM\in \Cc_A$, $X\in \Cc$ and $Y\in {}_A\Cc$, we have canonical
isomorphisms $\Upsilon_\mfM$, $\Upsilon_{\mfM, X}$ and $\Upsilon'_Y$:
\begin{itemize}
\item[-]$\Upsilon_\mfM: \mfM\ot_AA\mapright{\cong} \mfM$, uniquely determined
by the property $\Upsilon_\mfM q^A_{\mfM, A}=\nu_\mfM^A$;
\item[-]$\Upsilon_{\mfM, X}: \mfM\ot_A(A\ot X)\mapright{\cong} \mfM\ot X$, 
uniquely determined by the property 
$\Upsilon_{\mfM, X}q^A_{\mfM, A\ot X}=\nu_\mfM^A\ot \Id_X$; 
\item[-]$\Upsilon'_Y: A\ot _AY\mapright{\cong} Y$, uniquely determined by the property
$\Upsilon'_Yq^A_{A, Y}=\mu_Y^A$.
\end{itemize}
The following properties are now easily verified:
\begin{eqnarray}
\eqlabel{invcanisom1}
\Upsilon^{-1}_\mfM=q^A_{\mfM, A}(\Id_\mfM\ot \un{\eta}_A)&;&
\Upsilon'^{-1}_Y=q^A_{A, Y}(\un{\eta}_A\ot \Id_Y);\\
\eqlabel{rel1}
&&\hspace*{-3cm}
\Upsilon^{-1}_{\mfM, X}=q^A_{\mfM, A\ot X}(\Id_\mfM\ot \un{\eta}_A\ot \Id_X).
\end{eqnarray}   

Next, recall that an object $X$ of $\Cc$ is called right (left) coflat 
if the functor $X\ot-$ (respectively $-\ot X$) preserves coequalizers. An 
object of $\Cc$ is called coflat if it is both left and right coflat.  

Let now $\Cc$ be a monoidal category with coequalizers and $A, R$ algebras in $\Cc$. 
By \cite[Lemma 2.4]{bc3} we have the following.

$(a)$ If $A$ is right coflat then for any $X\in {}_A\Cc_R$ and $Y\in {}_R\Cc$ 
the morphism $\mu^A_{X\ot_RY}: A\ot X\ot_RY\ra X\ot_RY$ uniquely determined by 
$\mu^A_{X\ot_RY}(\Id_A\ot q^R_{X, Y})=q^R_{X, Y}(\mu^A_X\ot\Id_Y)$ defines on $X\ot_RY$ a left 
$A$-module structure in $\Cc$, where, in general, by $\mu^A_X: A\ot X\ra X$ we denote 
a left $A$-module structure on an object $X$ of $\Cc$;

$(b)$ Likewise, if $A$ is left coflat then for any $X\in \Cc_R$ and $Y\in {}_R\Cc_A$ 
the morphism $\nu^A_{X\ot_RY}: X\ot_RY\ot A\ra X\ot_RY$ uniquely determined by 
$\nu^A_{X\ot_RY}(q^R_{X, Y}\ot \Id_A)=q^R_{X, Y}(\Id_X\ot \nu^A_Y)$ defines on $X\ot_RY$ a 
right $A$-module structure in $\Cc$, where, in general, by $\nu^A_Y: Y\ot A\ra Y$ we denote 
the morphism structure of a right $A$-module $Y$ in $\Cc$.

If $R$ is an algebra in $\Cc$ we then say that an object $Y\in {}_R\Cc$ is left robust if for 
any $\mfM\in \Cc$, $X\in \Cc_R$ the morphism $\theta'_{\mfM, X, Y}: (\mfM\ot X)\ot_RY\ra \mfM\ot(X\ot_RY)$ defined 
by the commutativity of the diagram
\[
\xymatrix{
\mfM\ot X\ot R\ot Y\ar@<1ex>[rr]^(.55){\nu^R_{\mfM\ot X}\ot \Id_Y} 
\ar@<-1ex>[rr]_-{\Id_\mfM\ot \Id_X\ot \mu^R_Y}&&
\mfM\ot X\ot Y \ar[dr]_-{\Id_\mfM\ot q^R_{X, Y}} 
\ar[r]^-{q^R_{\mfM\ot X, Y}}&
(\mfM\ot X)\ot_RY\ar@{.>}[d]^-{\theta'_{\mfM, X, Y}}\\
&&&\mfM\ot (X\ot _RY)
}
\]
is an isomorphism. If $R$ is left coflat the it is well-known that 
the category of $R$-bimodules in $\Cc$ that are left coflat and left robust, denoted by 
${}_R^!\Cc_R$, is a monoidal category (see for instance \cite{bc3, par, sch}). 
Notice that the left robustness of an object $Y\in {}_R\Cc$ is needed in order to define 

$(1)$ a left $R$-module structure on $X\ot_RY$ in $\Cc$, for any $X\in{}_R\Cc_R$. Namely, 
if we define $\ov{\mu}_{X\ot_RY}: (R\ot X)\ot_RY\ra X\ot_RY$ as being the unique morphism in $\Cc$ making the 
diagram 
\[
\xymatrix{
R\ot X\ot R\ot Y\ar@<1ex>[rr]^(.55){\Id_R\ot \nu^R_X\ot \Id_Y}\ar@<-1ex>[rr]_-{\Id_{R\ot X}\ot \mu^R_Y}
&&R\ot X\ot Y\ar[rd]_-{\hspace{-6mm}q^R_{X, Y}(\mu^R_X\ot \Id_Y)}\ar[r]^-{q^R_{R\ot X, Y}}&(R\ot X)\ot_RY\ar@{.>}[d]^-{\ov{\mu}_{X\ot_RY}}\\
&&&X\ot_RY
}
\]  
commutative then the morphism $\mu^R_{X\ot_RY}:=\ov{\mu}_{X\ot_RY}\theta'^{-1}_{\mfM, X, Y}$ defines on 
$X\ot_RY$ a left $R$-module structure in $\Cc$. Furthermore, $X\ot_RY$ becomes in this way 
an $R$-bimodule, providing that $Y\in {}_R^!\Cc_R$ and $R$ is left coflat. 
We should also remark that this left $R$-module structure coincide with 
that when $R$ is right coflat, see $(a)$ above; 

$(2)$ a morphism $\Gamma'_{\mfM, X, Y}: \mfM\ot_R(X\ot_RY)\ra (\mfM\ot_RX)\ot_RY$, for all 
$\mfM, X, Y\in {}_R\Cc_R$, uniquely determined by $\Gamma'_{\mfM, X, Y}q^R_{\mfM, X\ot_RY}=
\widehat{q^R_{\mfM, X}}\theta'^{-1}_{\mfM, X, Y}$. It is actually an isomorphism with 
inverse $\Sigma'_{\mfM, X, Y}$ uniquely determined by the property that 
\[
\Sigma'_{\mfM, X, Y}q^R_{\mfM\ot_RX, Y}(q^R_{\mfM, X}\ot\Id_Y)=q^R_{\mfM, X\ot_RY}(\Id_\mfM\ot q^R_{X, Y}).
\]
So when $R$ is left coflat the category ${}^!_R\Cc_R$ is monoidal with tensor product 
$\ot_R$, associativity constraint $\Sigma'_{-, -, -}$, unit object $R$,
and left and right unit constraints $\Upsilon'_{-}$ and $\Upsilon_{-}$. Once more, the 
full details can be found in \cite{bc3, sch, par}. Note also that in most of the cases we assume 
that $\Cc$ is monoidal and such that any object of it is coflat and left robust, thus 
${}^!_R\Cc_R$ identifies with ${}_R\Cc_R$ in this case. 
\subsection{Monads in 2-categories}
Let ${\cal K}$ be a $2$-category; its objects (or $0$-cells) will be denoted by capital letters. $1$-cell between
two 0-cells $U$ and $V$ will be denoted as 
$
\xymatrix{
U\ar[r]^{f}&V
}
$, 
the identity morphism of a $1$-cell $f$ by $1_f$ and, more generally, a $2$-cell by 
$
\xymatrix{
f\ar@2{->}[r]^\rho&f'
}
$. We also denote by $\circ$ the vertical composition of $2$-cells 
$
\xymatrix{
f\ar@2{->}[r]^\rho&f'\ar@2{->}[r]^-{\tau}&f{''}
}
$ in ${\cal K}(U, V)$, 
by $\odot$ the horizontal composition of $2$-cells 
\[
\xymatrix{
U\rtwocell^f_{f'}{\rho} &V\rtwocell^g_{g'}{\rho'}&W 
},~~
\xymatrix{
g\circ f\ar@2{->}[r]^-{\rho'\odot\rho}&g'\circ f', 
}
\] 
and by 
$
(
\xymatrix{
U\ar[r]^{1_U}&U
}, 
\xymatrix{
1_U\ar@2{->}[r]^{i_U}&1_U
}
)
$ 
the pair defined by the image of the unit functor from ${\bf 1}$ to ${\cal K}(U, U)$, where 
${\bf 1}$ is the terminal object of the category of small categories.  
For more detail on 2-categories, we refer the reader to \cite[Ch. 7]{Borceux} or \cite[Ch. XII]{mc}.   

A monad in ${\cal K}$ is a quadruple $(A, t, \mu, \eta)$ consisting in an object $A$ of ${\cal K}$, a $1$-cell 
$
\xymatrix{
A\ar[r]^t&A
}
$  
in ${\cal K}$ and $2$-cells 
$
\xymatrix{
t\circ t\ar@2{->}[r]^{\mu}&t
}
$ 
and 
$
\xymatrix{
1_A\ar@2{->}[r]^\eta&t
}
$ 
in ${\cal K}$ such that 
\[
\mu\circ(\mu \odot 1_t)=\mu\circ (1_t\odot \mu)~,~\mu\circ (1_t\odot \eta)=1_t=\mu\circ (\eta\odot 1_t)~. 
\] 
If $\mathbb{A}=(A, t, \mu_t, \eta_t)$ and 
$\mathbb{B}=(B, s, \mu_s, \eta_s)$ are monads in ${\cal K}$ then a monad morphism between $\mathbb{A}$ and $\mathbb{B}$ 
is a pair $(f, \psi)$ with 
$
\xymatrix{
A\ar[r]^f&B
}
$ 
a $1$-cell in ${\mathbb K}$ and 
$\xymatrix{
s\circ f\ar@2{->}[r]^{\psi}&f\circ t
}
$   
a $2$-cell in ${\mathcal K}$ such that the following equalities hold:
\[
(1_f\odot \mu_t)\circ (\psi\odot 1_t)\circ (1_s\odot \psi)=\psi\circ (\mu_s\odot 1_f)~,~
\psi\circ(\eta_s\odot 1_f)=1_f\odot \eta_t.
\]
\subsection{Frobenius and separable functors}\selabel{FrobSepFunct}
Let ${\cal F}$ be a functor between two arbitrary categories ${\cal D}$ and ${\cal E}$. Recall that 
${\cal F}$ is called Frobenius if it has a right adjoint functor which is also left adjoint, and that 
${\cal F}$ is called separable if for any two objects $X, Y$ of $\Dc$ there exists a map 
${\cal P}_{X, Y}: {\rm Hom}_{\cal E}({\cal F}(X), {\cal F}(Y))\ra {\rm Hom}_\Dc(X, Y)$ such that 
${\cal P}_{X, Y}({\cal F}(f))=f$, for all $f: X\ra Y$ in $\Dc$, and 
${\cal P}_{Y_1, Y_2}(g_2)\circ f_1=f_2\circ {\cal P}_{X_1, X_2}(g_1)$ for every commutative diagram in ${\cal E}$ of type 
\[
\xymatrix{
F(X_1)\ar[r]^-{g_1}\ar[d]_-{F(f_1)}&F(X_2)\ar[d]^-{F(f_2)}\\
F(Y_1)\ar[r]^-{g_2}&F(Y_2)
}.
\]  
When ${\cal F}$ has a right adjoint ${\cal G}: {\cal E}\ra \Dc$ the Rafael's theorem \cite{rafael} gives 
necessary and sufficient conditions for ${\cal F}$ or ${\cal G}$ to be separable. More exactly, if $1_\Dc$ and 
$1_{\cal E}$ are the identity functors on $\Dc$ and ${\cal E}$, and  
$\eta: 1_\Dc\ra {\cal G}{\cal F}$ and $\va: {\cal F}{\cal G}\ra 1_{\cal E}$ are the unit and the counit of the 
adjunction, respectively, then 

$\bullet$ ${\cal F}$ is separable if and only if the unit $\eta$ splits, that is there exists a natural 
transformation $\nu: {\cal G}{\cal F}\ra 1_\Dc$ such that $\nu\circ\eta$ is the identity natural transformation of $1_\Dc$;

$\bullet$ ${\cal G}$ is separable if and only if the counit $\va$ cosplits, that is there exists a natural 
transformation $\mu: 1_{\cal E}\ra {\cal F}{\cal G}$ such that $\va\circ\mu$ is the identity natural transformation 
of $1_{\cal E}$.   
\section{Frobenius and separable type properties for the restriction of scalars functor and the induction functor}\selabel{ResIndFun}
\setcounter{equation}{0}
In the literature there are several Frobenius or separable theories developed for different kind of algebras. 
All these are based on the fact that a certain canonical functor is Frobenius, respectively separable. 
Usually, this canonical functor is a forgetful functor or, more generally, a functor restriction of scalars. 

As far as we are concerned, we are interested to study when the extension defined by a wreath in a monoidal 
category $\Cc$ is Frobenius, respectively separable. This can be done in two ways, depending on the 
point of view: monoidal or 2-categorical. As we will see, both cases 
require actually the study of Frobenius (respectively separable) algebra extensions in a monoidal category, and this is why 
we shall study this problem first. More precisely, for an algebra extension $S/R$ in $\Cc$, that is for 
an algebra morphism $i: R\ra S$ in a monoidal category $\Cc$, we will 
give necessary and sufficient conditions for which the functor restriction of scalars $F: \Cc_S\ra \Cc_R$ 
is Frobenius, and respectively separable. 

If $S$ is left coflat then the functor restriction of scalars $F$ has always a left adjoint functor $G$. 
Namely, $G$ is the induction functor $-\ot_RS: \Cc_R\ra \Cc_S$ which is defined as follows. If $X\in \Cc_R$ then $(-\ot_RS)(X)=X\ot_RS$ endowed with the 
right $S$-module structure induced by the multiplication of $S$, and if $f:X\ra Y$ is a morphism in $\Cc_R$ then $(-\ot_RS)(f)=\hat{f}$. 
The unit and the counit of the adjunction are described as follows, for all $X\in \Cc_R$ and 
$\mfM\in \Cc_S$,
\begin{equation}\eqlabel{indleftadjrestscal}
\eta_X:=q^R_{X, S}(\Id_X\ot \un{\eta}_S): X\to X\ot_RS~~,~~\va_\mfM:=\ov{\nu^S_\mfM}: \mfM\ot_RS\ra \mfM,
\end{equation}
the latest being determined uniquely by the property that $\ov{\nu^S_\mfM}q^R_{\mfM, S}=\nu^S_\mfM$. 

So $F$ is a Frobenius functor if and only if $G=-\ot_RS$ is a right adjoint functor of $F$, and $F$ is separable if 
and only if the counit of the adjunction defined above splits. To see when these conditions hold we first describe 
the sets ${\rm Nat}(F\circ (-\ot_RS), -)$ and ${\rm Nat}(-, (-\ot_RS)\circ F)$, where, in general, 
if $\mathcal{F}, \mathcal{G}: \Dc\ra \mathcal{E}$ are two functors we then denote by ${\rm Nat}(\mathcal{F}, \mathcal{G})$ 
the set of natural transformations from $\mathcal{F}$ to $\mathcal{G}$. To this end we use the techniques performed in 
\cite{cmz}. To make them work in the setting of a monoidal category $\Cc$ we have to assume that $\Cc$ is left $\ot$-generated by 
its unit object. This means the following.

\begin{definition}\delabel{leftotgen}
Let $\Cc$ be a monoidal category. We say that an object $P$ of $\Cc$ is a left $\ot$-generator of 
$\Cc$ if wherever we consider two morphisms 
$
\xymatrix{
Y\ot Z\ar@<1ex>[r]^-{f}\ar@<-1ex>[r]_-{g}& W
}
$ 
in $\Cc$ such that $f(\epsilon\ot \Id_Z)=g(\epsilon\ot\Id_Z)$, for all $\epsilon: P\ra Y$ in $\Cc$, we then have $f=g$. 
\end{definition}    

Observe that, by taking $Z=\un{1}$ in the above definition we get that a left $\ot$-generator of 
a monoidal category $\Cc$ is necessarily a generator for $\Cc$, and this justifies our terminology. 
  
\begin{lemma}\lelabel{Frobenius1}
Let $\Cc$ be a monoidal category with coequalizers and assume that $\un{1}$ is a 
left $\ot$-generator for $\Cc$. If $i: R\ra S$ 
is an algebra morphism in $\Cc$ with $S$ a left coflat object, $F: \Cc_S\ra \Cc_R$ is the restriction 
of scalars functor induced by $i$ and $-\ot_RS: \Cc_R\ra \Cc_S$ is the induction functor, 
then there exists an isomorphism 
\[
{\rm Nat}(F\circ (-\ot_RS), -)\cong {}_R{\rm Hom}_R(S, R),
\]
where ${}_R{\rm Hom}_R(S, R)$ stands for the set of $R$-bimodule morphisms from $S$ to $R$ in $\Cc$.
\end{lemma}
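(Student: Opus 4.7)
The plan is to construct explicit mutually inverse assignments between $\mathrm{Nat}(F\circ(-\ot_R S),\mathrm{Id}_{\Cc_R})$ and ${}_R\mathrm{Hom}_R(S,R)$, using the universal property of $q^R_{X,S}$ together with the $\ot$-generator hypothesis on $\un{1}$ at precisely the points where a classical argument would ``plug in elements''.

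For the forward direction, given an $R$-bimodule morphism $\nu\colon S\ra R$, I would define $\alpha^\nu_X\colon X\ot_R S\ra X$ by the universal property of the coequalizer: the composite $\nu^R_X\circ(\Id_X\ot\nu)\colon X\ot S\ra X$ coequalizes the pair $\nu^R_X\ot\Id_S$ and $\Id_X\ot\mu^R_S$. Indeed, associativity of the right $R$-action on $X$ rewrites the first branch as $\nu^R_X(\Id_X\ot\un{m}_R(\Id_R\ot\nu))$, while left $R$-linearity of $\nu$ rewrites the second branch to the same expression. Right $R$-linearity of $\alpha^\nu_X$ is then inherited from right $R$-linearity of $\nu$, and naturality in $X\in\Cc_R$ is a routine epimorphism argument based on $q^R_{X,S}$.

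For the backward direction, given a natural transformation $\alpha$ I would set $\nu^\alpha := \alpha_R\circ(\Upsilon')^{-1}_S = \alpha_R\circ q^R_{R,S}\circ(\un{\eta}_R\ot\Id_S)\colon S\ra R$, using \equref{invcanisom1}. Right $R$-linearity of $\nu^\alpha$ is essentially automatic: using the characterization of the right $R$-action on $R\ot_R S$ inherited from the induction functor, one checks that $\alpha_R\circ q^R_{R,S}(\un{\eta}_R\ot\Id_S)\circ\nu^R_S$ equals $\un{m}_R\circ(\nu^\alpha\ot\Id_R)$ by pulling the right $R$-linearity of $\alpha_R$ through. The left $R$-linearity $\nu^\alpha\circ\mu^R_S=\un{m}_R\circ(\Id_R\ot\nu^\alpha)$ is where the $\ot$-generator hypothesis enters substantively. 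For every $\epsilon\colon\un{1}\ra R$, I would precompose both sides with $\epsilon\ot\Id_S\colon S\ra R\ot S$; the LHS simplifies by the identity $q^R_{R,S}\circ(\un{\eta}_R\ot\Id_S)\circ\mu^R_S=q^R_{R,S}$ (which is just $(\Upsilon')^{-1}_S\circ\Upsilon'_S\circ q^R_{R,S}=q^R_{R,S}$) down to $\alpha_R\circ q^R_{R,S}\circ(\epsilon\ot\Id_S)$; the RHS simplifies, via naturality of $\alpha$ applied to the right $R$-linear morphism $\ell_\epsilon:=\un{m}_R\circ(\epsilon\ot\Id_R)\colon R\ra R$ together with $\ell_\epsilon\circ\un{\eta}_R=\epsilon$, to the same expression. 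The $\ot$-generator property of $\un{1}$ then forces the original equality.

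Finally, I would verify the two assignments are mutually inverse. The identity $\nu^{\alpha^\nu}=\nu$ is immediate from the defining coequalizer relation of $\alpha^\nu_R$ and the unit axiom of $R$. For $\alpha^{\nu^\alpha}=\alpha$, since $q^R_{X,S}$ is epi it suffices to verify $\nu^R_X\circ(\Id_X\ot\nu^\alpha)=\alpha_X\circ q^R_{X,S}$; precomposing with $\epsilon\ot\Id_S$ for an arbitrary $\epsilon\colon\un{1}\ra X$ and applying naturality of $\alpha$ to the right $R$-linear map $g_\epsilon:=\nu^R_X\circ(\epsilon\ot\Id_R)\colon R\ra X$ (which satisfies $g_\epsilon\circ\un{\eta}_R=\epsilon$) reduces both sides to $\alpha_X\circ q^R_{X,S}\circ(\epsilon\ot\Id_S)$, and the $\ot$-generator hypothesis concludes. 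The main obstacle is thus the left $R$-linearity of $\nu^\alpha$: naturality can only be tested against honest morphisms in $\Cc_R$, so without some substitute for ``evaluating at an arbitrary $r\in R$'' one is stuck, and the hypothesis that $\un{1}$ is a left $\ot$-generator is precisely the monoidal-categorical replacement that unblocks the argument.
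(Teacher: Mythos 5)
Your proposal is correct and follows essentially the same route as the paper's proof: the same assignment $u\mapsto u_R\Upsilon'^{-1}_S$ in one direction, the same coequalizer-induced components $\alpha^\nu_X$ (characterized by $\alpha^\nu_Xq^R_{X,S}=\nu^R_X(\Id_X\ot\nu)$) in the other, and the same use of the test morphisms $\un{m}_R(\epsilon\ot\Id_R)$ and $\nu^R_X(\epsilon\ot\Id_R)$ together with the left $\ot$-generator hypothesis to establish left $R$-linearity and the identity $\alpha^{\nu^\alpha}=\alpha$. The small verifications you defer (naturality and right $R$-linearity of $\alpha^\nu_X$) are treated at the same level of detail in the paper.
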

\begin{proof}
Follows the same line as the proof of \cite[Theorem 2.7, 3.]{cmz}. We first show 
that the desired isomorphism is produced by 
\[
\a: {\rm Nat}(F\circ (-\ot_RS), -)\cong {}_R{\rm Hom}_R(S, R),
\]
defined as follows. If $u=\left(u_X: X\ot_RS\ra X\right)_{X\in \Cc_R}$ is 
in ${\rm Nat}(F\circ (-\ot_RS), -)$ 
we then define $\a(u)=u_R\Upsilon'^{-1}_S$. To show that $\a(u)$ is a right $R$-linear morphism 
is sufficient to show that $\Upsilon'^{-1}_S$ is so. Indeed, on one hand we have 
\begin{eqnarray*}
\mu^R_{R\ot_RS}(\Id_R\ot \Upsilon'^{-1}_S)&=&
\mu^R_{R\ot_RS}(\Id_R\ot q^R_{R, S}(\un{\eta}_R\ot\Id_S))\\
&=&q^R_{R, S}(\un{m}_R(\Id_R\ot\un{\eta}_R)\ot \Id_S)=q^R_{R, S}.
\end{eqnarray*} 
On the other hand, 
\begin{eqnarray*}
\Upsilon'^{-1}_S\mu^R_S&=&\Upsilon'^{-1}_S\un{m}_S(i\ot \Id_S)\\
&=&q^R_{R, S}(\un{\eta}_R\ot \Id_S)\un{m}_S(i\ot\Id_S)\\
&=&q^R_{R, S}(\un{m}_R(\un{\eta}_R\ot \Id_R)\ot\Id_S)=q^R_{R, S},
\end{eqnarray*}
as required. 

Take now $\epsilon: \un{1}\ra R$ an arbitrary morphism in $\Cc$ and define 
$f_\epsilon: R\ra R$ by $f_\va=\un{m}_R(\epsilon\ot \Id_R)$. Since 
$\un{m}_R$ is associative we get that $f_\epsilon$ is a morphism in 
$\Cc_R$, and therefore the naturality of $u$ implies that 
$f_\epsilon u_R=u_R\widehat{f_\epsilon}$. This is equivalent to 
$f_\epsilon u_R\Upsilon'^{-1}_S=u_R\widehat{f_\epsilon}\Upsilon'^{-1}_S$, and so 
with 
\[
\un{m}_R(\Id_R\ot \a(u))(\epsilon\ot \Id_S)=u_Rq^R_{R, S}(\epsilon\ot\Id_S).
\]
Since 
\begin{eqnarray*}
\a(u)\un{m}_S(i\ot\Id_S)&=&u_R \Upsilon'^{-1}_S\un{m}_S(i\ot \Id_S)\\
&=&u_Rq^R_{R, S}(\un{\eta}_R\ot\Id_S)\un{m}_S(i\ot \Id_S)\\
&=&u_R q^R_{R, S}(\Id_R\ot \un{m}_S(i\ot \Id_S))(\un{\eta}_R\ot \Id_{R\ot S})\\
&=&u_Rq^R_{R, S}(\un{m}_R(\un{\eta}_R\ot \Id_R)\ot \Id_S)=u_R q^R_{R, S},
\end{eqnarray*}
where in the last by one equality we used that $(q^R_{R, S}, R\ot_RS)$ is a coequalizer, 
we obtain that 
\[
\un{m}_R(\Id_R\ot \a(u))(\epsilon\ot \Id_S)=\a(u)\un{m}_S(i\ot \Id_S)(\epsilon\ot \Id_S),
\]
for all $\epsilon:\un{1}\ra R$. The fact that $\un{1}$ is a left $\ot$-generator for $\Cc$ 
implies now that $\a(u)$ is left $R$-linear as well, and thus an $R$-bimodule morphism 
in $\Cc$. 

We claim now that $\a$ is an isomorphism with inverse defined by 
\[
\a^{-1}(\vartheta)=\left(v_X=\Upsilon_X\tilde{\vartheta}: X\ot_RS\ra X\right)_{X\in \Cc_R}~,~
\vartheta\in {}_R{\rm Hom}_R(S, R).
\]
One can easily see that $v=(v_X)_{X\in \Cc_R}$ is completely determined by the property 
$v_Xq^R_{X, S}=\nu^R_X(\Id_X\ot \vartheta)$, for any $X\in \Cc_R$, and this 
allows us to prove easily that $v$ is indeed a natural transformation. Thus $\a^{-1}$ is well defined. 
Furthermore, $\a^{-1}\a (u)=\a^{-1}(u_R\Upsilon'^{-1}_S)=(v_X=\Upsilon_Xu_R\Upsilon'^{-1}_S)_{X\in \Cc_R}$ 
with $v_X$ characterized by the fact that $v_Xq^R_{X, S}=\nu^R_X(\Id_X\ot \a(u))$. 

Take now $\epsilon: \un{1}\ra X$ an arbitrary morphism in $\Cc$ and define 
$g_\epsilon=\nu^R_X(\epsilon \ot\Id_R): R\ra X$. It is immediate that $g_\epsilon$ is right 
$R$-linear, hence by the naturality of $u$ we have 
\[
g_\epsilon\a(u)=g_\epsilon u_R\Upsilon'^{-1}_S=u_X \widehat{g_\epsilon}\Upsilon'^{-1}_S
=u_X q^R_{X, S}(g_\epsilon\un{\eta}_R\ot\Id_S)=u_X q^R_{X, S}(\epsilon\ot\Id_S).
\]
But $g_\epsilon \a(u)=\nu^R_X(\epsilon\ot \Id_R)\a(u)=\nu^R_X(\Id_X\ot \a(u))(\epsilon\ot \Id_S)$, 
so using again that $\un{1}$ is a left $\ot$-generator in $\Cc$ we deduce that 
$\nu^R_X(\Id_X\ot \a(u))=u_Xq^R_{X, S}$, for all $X$ in $\Cc$. This implies 
$v_Xq^R_{X, S}=u_Xq^R_{X, S}$ and since $q^R_{X, S}$ is an epimorphism in $\Cc$ 
we conclude that $v_X=u_X$, for all $X$ in $\Cc$, and thus $\a^{-1}\a(u)=u$. 

For $\vartheta\in {}_R{\rm Hom}_R(S, R)$ one have  
\begin{eqnarray*}
\a\a^{-1}(\vartheta)&=&\a\left((v_X=\Upsilon_X\tilde{\vartheta}: X\ot_RS\ra X)_{X\in \Cc_R}\right)\\
&=&v_R\Upsilon'^{-1}_S=v_Rq^R_{R, S}(\un{\eta}_R\ot\Id_S)=\nu^R_R(\Id_R\ot \vartheta)(\un{\eta}_R\ot\Id_S)=\vartheta.
\end{eqnarray*}   
Thus $\a$ and $\a^{-1}$ are inverses, as claimed.
\end{proof}

\begin{lemma}\lelabel{Frobenius2}
Let $\Cc$ be a monoidal category with coequalizers and $i: R\ra S$ an algebra extension in $\Cc$ with $S$ a left 
coflat object. If $\un{1}$ is a left $\ot$-generator and any object of $\Cc$ is right coflat then  
\[
\b: {\rm Nat}(-, (-\ot_RS)\circ F)\ra {\cal W}:=\{e:\un{1}\ra S\ot_RS\mid 
\mu^S_{S\ot_RS}(\Id_S\ot e)=\nu^S_{S\ot_RS}(e\ot\Id_S)\}
\]
given by $\b(\zeta)=\zeta_S\un{\eta}_S$ is well defined and an isomorphism. 
\end{lemma}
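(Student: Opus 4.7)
The plan is to mirror the strategy used in the proof of \leref{Frobenius1}: first verify that $\b(\zeta)=\zeta_S\un{\eta}_S$ lies in $\mathcal{W}$, and then construct an explicit inverse of $\b$. The $\ot$-generator hypothesis on $\un{1}$ will be used to upgrade ``element-wise'' identities to morphism identities, while right coflatness of every object of $\Cc$ is what will allow me to produce morphisms out of $\mfM\ot(S\ot_RS)$ via the universal property of coequalizers.

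For well-definedness, given $\zeta\in {\rm Nat}(-, (-\ot_RS)\circ F)$ and an arbitrary $s:\un{1}\ra S$, the morphism $l_s:=\un{m}_S(s\ot \Id_S):S\ra S$ is right $S$-linear by associativity of $\un{m}_S$. A short computation using the defining property of $\mu^S_{S\ot_RS}$ identifies $\widehat{l_s}$ with $\mu^S_{S\ot_RS}(s\ot \Id_{S\ot_RS})$, so precomposing the naturality identity $\widehat{l_s}\circ\zeta_S=\zeta_S\circ l_s$ with $\un{\eta}_S$ produces $\mu^S_{S\ot_RS}(\Id_S\ot \b(\zeta))\circ s=\zeta_S\circ s$. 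On the other hand, right $S$-linearity of $\zeta_S$ precomposed with $\un{\eta}_S\ot s$ yields $\zeta_S\circ s=\nu^S_{S\ot_RS}(\b(\zeta)\ot\Id_S)\circ s$. Together the two identities show that $\mu^S_{S\ot_RS}(\Id_S\ot \b(\zeta))$ and $\nu^S_{S\ot_RS}(\b(\zeta)\ot\Id_S)$ agree after precomposition with every $s:\un{1}\ra S$, hence coincide because $\un{1}$ is a left $\ot$-generator (take $Y=S$, $Z=\un{1}$ in \deref{leftotgen}). This places $\b(\zeta)$ in $\mathcal{W}$.

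To invert $\b$, given $e\in\mathcal{W}$ and $\mfM\in\Cc_S$, right coflatness of $\mfM$ makes $(\mfM\ot(S\ot_RS), \Id_\mfM\ot q^R_{S, S})$ into a coequalizer of $\Id_\mfM\ot \nu^R_S\ot \Id_S$ and $\Id_\mfM\ot\Id_S\ot \mu^R_S$. A direct check using associativity of $\nu^S_\mfM$ together with $\nu^R_S=\un{m}_S(\Id_S\ot i)$ and $\mu^R_S=\un{m}_S(i\ot \Id_S)$ shows that $q^R_{\mfM, S}\circ (\nu^S_\mfM\ot \Id_S)$ coequalizes the same parallel pair, so the universal property produces a unique $\Theta_\mfM:\mfM\ot(S\ot_RS)\ra \mfM\ot_RS$ characterized by $\Theta_\mfM\circ (\Id_\mfM\ot q^R_{S, S})=q^R_{\mfM, S}\circ (\nu^S_\mfM\ot\Id_S)$. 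I then set $\b^{-1}(e)_\mfM:=\Theta_\mfM\circ (\Id_\mfM\ot e)$. Naturality in $\mfM$ is automatic from the universal property, and the centrality condition encoded by $e\in \mathcal{W}$ is exactly what makes each $\b^{-1}(e)_\mfM$ right $S$-linear, by a calculation parallel in spirit to the well-definedness argument above.

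The main obstacle, as in \leref{Frobenius1}, is verifying $\b^{-1}\circ\b=\Id$. The key trick is to observe that for every $m:\un{1}\ra \mfM$ the morphism $r_m:=\nu^S_\mfM\circ(m\ot\Id_S):S\ra \mfM$ is right $S$-linear and satisfies $r_m\circ\un{\eta}_S=m$. A short computation shows $\widehat{r_m}=\Theta_\mfM\circ(m\ot \Id_{S\ot_RS})$ as morphisms $S\ot_RS\ra \mfM\ot_RS$, so naturality of $\zeta$ combined with the definition of $\Theta_\mfM$ yields $\zeta_\mfM\circ m=\widehat{r_m}\circ\b(\zeta)=\Theta_\mfM\circ(m\ot \b(\zeta))=\b^{-1}(\b(\zeta))_\mfM\circ m$. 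A final appeal to the $\ot$-generator property (now with $Y=\mfM$, $Z=\un{1}$) promotes this pointwise equality to the identity of morphisms $\zeta_\mfM=\b^{-1}(\b(\zeta))_\mfM$. The reverse composite $\b\circ\b^{-1}$ is easier: it reduces immediately to the identity from the defining relation for $\Theta_S$, using that $\nu^S_S=\un{m}_S$ and $\un{\eta}_S$ is its unit.
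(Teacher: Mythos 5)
Your proposal is correct and follows essentially the same route as the paper's proof: the same well-definedness argument via the right $S$-linear maps $\un{m}_S(s\ot\Id_S)$ and naturality, the same construction of the inverse through the coequalizer presentation of $\mfM\ot(S\ot_RS)$ furnished by right coflatness (your $\Theta_\mfM$ is the paper's $\ov{\zeta}_\mfM$), and the same point-evaluation trick with $\nu^S_\mfM(m\ot\Id_S)$ plus the $\ot$-generator hypothesis to prove $\b^{-1}\b=\Id$. The only differences are cosmetic (e.g.\ you invoke the generator property with $Z=\un{1}$ after precomposing with $\un{\eta}_S$, where the paper applies it with $Z=S$ first), and the steps you leave as sketches (naturality and right $S$-linearity of $\b^{-1}(e)$) are exactly the routine verifications the paper writes out.
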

\begin{proof}
Similar to the one of \leref{Frobenius1}.  
First, observe that since $\zeta_S$ is right $S$-linear we have
\[
\nu^S_{S\ot_RS}(\b(\zeta)\ot\Id_S)=\nu^S_{S\ot_RS}(\zeta_S\ot \Id_S)(\un{\eta}_S\ot \Id_S)=
\zeta_S\un{m}_S(\un{\eta}_S\ot \Id_S)=\zeta_S.
\]
Let now $\epsilon: \un{1}\ra S$ be an arbitrary morphism in $\Cc$ and define 
$h_\epsilon=\un{m}_S(\epsilon\ot \Id_S): S\ra S$. Clearly $h_\epsilon$ is right 
$S$-linear, so by the naturality of $\zeta$ we deduce that $\widehat{h_\epsilon}\zeta_S=\zeta_Sh_\epsilon$. 
A direct computation ensures us that 
\[
\widehat{h_\epsilon}q^R_{S, S}=q^R_{S, S}(\un{m}_S(\epsilon\ot \Id_S)\ot \Id_S)=\mu^S_{S\ot_RS}(\epsilon 
\ot \Id_{S\ot_RS})q^R_{S, S},
\]
thus $\widehat{h_\epsilon}=\mu^S_{S\ot_RS}(\epsilon \ot \Id_{S\ot_RS})$. From here we obtain 
\[
\zeta_S\un{m}_S(\epsilon\ot \Id_S)=\zeta_Sh_\epsilon=\widehat{h_\epsilon}\zeta_S=\mu^S_{S\ot_RS}(\Id_S\ot\zeta_S)(\epsilon\ot \Id_S), 
\]
for all $\epsilon: \un{1}\ra S$ in $\Cc$. Using again that $\un{1}$ is a left $\ot$-generator in $\Cc$ 
we conclude that $\zeta_S$ is also left $S$-linear, and hence 
\[
\mu^S_{S\ot_RS}(\Id_S\ot \b(\zeta))=\mu^S_{S\ot_RS}(\Id_S\ot \zeta_S)(\Id_S\ot \un{\eta}_S)
=\zeta_S\un{m}_S(\Id_S\ot\un{\eta}_S)=\zeta_S=\nu^S_{S\ot_RS}(\b(\zeta)\ot \Id_S),
\]   
proving that $\b$ is well defined. We assert now that $\b$ is an isomorphism. To construct 
its inverse we proceed as follows. If $\mfM$ is a right $S$-module in $\Cc$ then since $\mfM$ is 
right coflat we have that 
\[
\xymatrix{
\mfM\ot S\ot R\ot S\ar@<1ex>[rr]^-{\Id_{\mfM\ot S}\ot \un{m}_S(i\ot \Id_S)}\ar@<-1ex>[rr]_-{\Id_\mfM\ot \un{m}_S(\Id_S\ot i)\ot \Id_S}&&\mfM \ot S\ot S
~~\ar[r]^-{\Id_\mfM\ot q^R_{S, S}}&~~\mfM\ot S\ot_RS 
}
\]  
is a coequalizer in $\Cc$. Furthermore, the morphism $q^R_{\mfM, S}(\nu^S_\mfM\ot \Id_S)$ fits 
in the universal property of this coequalizer, and so there is a unique morphism 
$\ov{\zeta}_\mfM: \mfM\ot S\ot_RS\ra \mfM\ot_RS$ such that $\ov{\zeta}_\mfM(\Id_\mfM\ot q^R_{S, S})=
q^R_{\mfM, S}(\nu^S_\mfM\ot \Id_S)$. 

We claim now that $\b^{-1}: {\cal W}\ra {\rm Nat}(-, (-\ot_RS)\circ F)$ given by 
\[
\b^{-1}(e)=\left(\zeta^e_\mfM=\ov{\zeta}_\mfM(\Id_\mfM\ot e): \mfM\ra \mfM\ot_RS\right)_{\mf{M}\in \Cc_S}
\]
is well defined and the inverse of $\beta$. To see that $\zeta^e_\mfM$ is right $S$-linear observe that 
\begin{eqnarray*}
\ov{\zeta}_\mfM(\nu^S_\mfM\ot \Id_{S\ot_RS})(\Id_{\mfM\ot S}\ot q^R_{S, S})
&=&\ov{\zeta}_\mfM(\Id_\mfM\ot q^R_{S, S})(\nu^S_\mfM\ot \Id_{S\ot S})\\
&=&q^R_{\mfM, S}(\nu^S_\mfM(\nu^S_\mfM\ot \Id_S)\ot \Id_S)\\
&=&q^R_{\mfM, S}(\nu^S_\mfM\ot \Id_S)(\Id_\mfM\ot \un{m}_S\ot \Id_S)\\
&=&\ov{\zeta}_\mfM(\Id_\mfM\ot \mu^S_{S\ot_RS})(\Id_{\mfM\ot S}\ot q^R_{S, S}),
\end{eqnarray*}
and since $\Id_{\mfM\ot S}\ot q^R_{S, S}$ is an epimorphism ($\mfM\ot S$ is right coflat) 
this shows that $\ov{\zeta}_\mfM(\nu^S_\mfM\ot \Id_{S\ot_RS})=\ov{\zeta}_\mfM(\Id_\mfM\ot \mu^S_{S\ot_RS})$. 
The latest equality allows us to compute 
\begin{eqnarray*}
\zeta^e_\mfM\nu^S_\mfM&=&\ov{\zeta}_\mfM(\Id_\mfM\ot e)\nu^S_\mfM=\ov{\zeta}_\mfM(\nu^S_\mfM\ot \Id_{S\ot_RS})
(\Id_{\mfM\ot S}\ot e)\\
&=&\ov{\zeta}_\mfM(\Id_\mfM\ot \mu^S_{S\ot_RS}(\Id_S\ot e))=
\ov{\zeta}_\mfM(\Id_\mfM\ot \nu^S_{S\ot_RS}(e\ot \Id_S)). 
\end{eqnarray*}  
Similar arguments leads us to 
\[
\nu^S_{\mfM\ot_RS}(\ov{\zeta}_\mfM\ot \Id_S)(\Id_\mfM\ot q^R_{S, S}\ot \Id_S)=
\ov{\zeta}_\mfM(\Id_\mfM\ot \nu^S_{S\ot_RS})(\Id_\mfM\ot q^R_{S, S}\ot \Id_S)
\]
and since $\mfM$ is right coflat and $S$ is left coflat we deduce that 
$\nu^S_{\mfM\ot_RS}(\ov{\zeta}_\mfM\ot \Id_S)=\ov{\zeta}_\mfM(\Id_\mfM\ot \nu^S_{S\ot_RS})$. 
From here it is immediate that 
\[
\nu^S_{\mfM\ot_RS}(\zeta^e_\mfM\ot \Id_S)=\ov{\zeta}_\mfM(\Id_\mfM\ot \nu^S_{S\ot_RS}(e\ot \Id_S)),
\]
and so $\zeta^e_\mfM$ is right $S$-linear, as desired. Next, if $f:\mfM\ra \mfN$ is a morphism in $\Cc_S$ 
then by arguments similar to the ones above we get that $\widehat{f}~\ov{\zeta}_\mfM=\ov{\zeta}_\mfN(f\ot\Id_{S\ot_RS})$. 
Thus $\hat{f}\zeta^e_\mfM=\ov{\zeta}_\mfN(f\ot\Id_{S\ot_RS})(\Id_\mfM\ot e)=\ov{\zeta}_\mfN(\Id_\mfN\ot e)f=\zeta^e_\mfN f$, 
and this ends the fact that $\b^{-1}$ is well defined. 

Let now $\zeta\in {\rm Nat}(-, (-\ot_RS)\circ F)$ and $e=\b^{-1}(\zeta)$, that is $e=\zeta_S\un{\eta}_S$. 
If $\mfM$ is a right $S$-module and $\epsilon: \un{1}\ra \mfM$ an arbitrary morphism then 
$\hbar_\epsilon=\nu^S_\mfM(\epsilon\ot \Id_S): S\ra \mfM$ is right $S$-linear. 
By the naturality of $\zeta$ we obtain that $\zeta_\mfM\hbar_\epsilon =\widehat{\hbar_\epsilon}\zeta_S$. Together with  
\[
\widehat{\hbar_\epsilon}q^R_{S, S}=q^R_{\mfM, S}(\hbar_\epsilon\ot \Id_S)=q^R_{\mfM, S}(\nu^S_\mfM(\epsilon\ot \Id_S)\ot \Id_S)
=\ov{\zeta}_\mfM(\epsilon\ot \Id_{S\ot_RS})q^R_{S, S},
\]  
this implies that 
\[
\zeta_\mfM\epsilon=\zeta_\mfM\hbar_\epsilon\un{\eta}_S=\widehat{\hbar_\epsilon}\zeta_S\un{\eta}_S=
\ov{\zeta}_\mfM(\epsilon\ot \Id_{S\ot_RS})e=\ov{\zeta}_\mfM(\Id_\mfM\ot e)\epsilon =\zeta^e_\mfM\epsilon.
\]
By the assumption that $\un{1}$ is a left $\ot$-generator for $\Cc$ we conclude that $\zeta=\zeta^e$ or, 
otherwise stated, $\b^{-1}\b(\zeta)=\zeta$. $\b^{-1}$ is also a right inverse for $\b$ since for any $e\in {\cal W}$ we have 
\[
\b\b^{-1}(e)=\b(\zeta^e)=\zeta^e_S\un{\eta}_S=\ov{\zeta}_S(\un{\eta}_S\ot \Id_{S\ot_RS})e=e,
\]
the last equality being a consequence of the following computation 
\[
\ov{\zeta}_S(\un{\eta}_S\ot \Id_{S\ot_RS})q^R_{S, S}=\ov{\zeta}_S(\Id_S\ot q^R_{S, S})(\un{\eta}_S\ot \Id_{S\ot S})
=q^R_{S, S}(\nu^S_S(\un{\eta}_S\ot \Id_S)\ot \Id_S)=q^R_{S, S}.
\] 
So our proof is finished. 
\end{proof}

One can prove now one of the main results of this section. 

\begin{theorem}\thlabel{FrobSepExt}
Let $\Cc$ be a monoidal category with coequalizers and assume that $\un{1}$ is a 
left $\ot$-generator and that any object of $\Cc$ is right coflat. If $i: R\ra S$ is an algebra morphism 
in $\Cc$ with $S$ a left coflat object then the following assertions are equivalent:
\begin{itemize}
\item[(i)] The functor restriction of scalars $F: \Cc_S\ra \Cc_R$ is a Frobenius functor;
\item[(ii)] There exist an $R$-bimodule morphism $\vartheta: S\ra R$ and a morphism 
$e:\un{1}\ra S\ot_RS$ in $\Cc$ such that the following diagrams are commutative
\[ 
\xymatrix{
S\ar[r]^-{\Id_S\ot e}\ar[d]_-{e\ot \Id_S}&S\ot (S\ot_RS)\ar[d]^{\mu^S_{S\ot_RS}}\\
(S\ot_RS)\ot S\ar[r]^-{\nu^S_{S\ot_RS}}&S\ot_RS
}~,~
\xymatrix{
\un{1}\ar[r]^-{e}\ar[d]_{\un{\eta}_S}&S\ot_RS\ar[d]^-{\tilde{\vartheta}}\\
S\ar[r]^{\Upsilon^{-1}_S}&S\ot_RR
}~,~
\xymatrix{
\un{1}\ar[r]^-{e}\ar[d]_{\un{\eta}_S}&S\ot_RS\ar[d]^-{\hat{\vartheta}}\\
S\ar[r]^{\Upsilon'^{-1}_S}&R\ot_RS
}~. 
\] 
\end{itemize}
\end{theorem}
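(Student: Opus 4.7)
The plan is to exploit the fact that, as $S$ is left coflat, the induction functor $G=-\ot_RS:\Cc_R\ra \Cc_S$ is already a left adjoint of $F$ with unit and counit given by \equref{indleftadjrestscal}. Consequently $F$ is Frobenius if and only if $G$ is also a right adjoint of $F$; that is, if and only if one can produce natural transformations $\eta':\Id_{\Cc_S}\ra GF$ and $\va':FG\ra \Id_{\Cc_R}$ satisfying the two triangular identities for the adjunction $F\dashv G$.

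Invoking \leref{Frobenius1} and \leref{Frobenius2} reduces this to purely algebraic data: such a $\va'$ is the same thing as an $R$-bimodule morphism $\vartheta:S\ra R$, with $\va'_X=\Upsilon_X\tilde{\vartheta}$, while such an $\eta'$ is the same as a morphism $e:\un{1}\ra S\ot_RS$ lying in the set $\mathcal{W}$, with $\eta'_\mfM=\ov{\zeta}_\mfM(\Id_\mfM\ot e)$ and, in particular, $\eta'_S\un{\eta}_S=e$. The defining relation $e\in\mathcal{W}$ is precisely the first of the three commutative diagrams in (ii).

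To match the remaining two diagrams to the triangular identities, specialize the first identity $\va'_{F\mfM}\circ F(\eta'_\mfM)=\Id_{F\mfM}$ at $\mfM=S$: pre-composing with $\un{\eta}_S$ and substituting $\eta'_S\un{\eta}_S=e$ and $\va'_S=\Upsilon_S\tilde{\vartheta}$ rewrites it as $\Upsilon_S\tilde{\vartheta}e=\un{\eta}_S$, i.e., $\tilde{\vartheta}e=\Upsilon_S^{-1}\un{\eta}_S$, which is the second diagram. For the second triangular identity $G(\va'_X)\circ\eta'_{GX}=\Id_{GX}$ specialized at $X=R$, a short calculation using $\Upsilon'^{-1}_S=q^R_{R, S}(\un{\eta}_R\ot\Id_S)$ together with the defining relation of $\tilde{\vartheta}$ gives $\va'_R\Upsilon'^{-1}_S=\vartheta$; applying the naturality of $\eta'$ along $\Upsilon'_S\in \Cc_S$ and pre-composing with $\un{\eta}_S$ then turns the identity into $\hat{\vartheta}e=\Upsilon'^{-1}_S\un{\eta}_S$, which is exactly the third diagram.

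The main subtlety, and the place where the $\ot$-generator hypothesis on $\un{1}$ is essential, is lifting these specialized versions back to the full triangular identities for the converse implication. For any $\mfN\in \Cc_S$ and any $\epsilon:\un{1}\ra \mfN$ in $\Cc$, the morphism $h_\epsilon=\nu^S_\mfN(\epsilon\ot\Id_S):S\ra \mfN$ is right $S$-linear with $h_\epsilon\un{\eta}_S=\epsilon$; naturality of $\va'$ and $\eta'$ yields $\va'_\mfN\eta'_\mfN h_\epsilon=h_\epsilon\va'_S\eta'_S$, whence the second diagram gives $\va'_\mfN\eta'_\mfN\epsilon=\epsilon$, and the $\ot$-generator property applied with $Z=\un{1}$ forces $\va'_\mfN\eta'_\mfN=\Id_\mfN$, establishing TI1 in full generality. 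An analogous argument, using $g_\epsilon=\nu^R_X(\epsilon\ot\Id_R):R\ra X$ in $\Cc_R$, the easily checked relation $\hat{g_\epsilon}\Upsilon'^{-1}_S=q^R_{X, S}(\epsilon\ot\Id_S)$, the $\ot$-generator property applied with $Z=S$, and the fact that $q^R_{X, S}$ is epimorphic as a coequalizer, lifts the third diagram to the full TI2. The implication (i) $\Rightarrow$ (ii) is then obtained simply by specializing the triangular identities supplied by the Frobenius structure and reading off the diagrams through the same manipulations; thus the bulk of the work is the careful bookkeeping between $\vartheta$, $e$, the canonical isomorphisms $\Upsilon$ and $\Upsilon'$, and the universal property of the coequalizers.
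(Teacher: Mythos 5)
Your proposal is correct and takes essentially the same route as the paper: reduce, via \leref{Frobenius1} and \leref{Frobenius2}, to the two triangular identities for the adjunction $F\dashv(-\ot_RS)$, identify the first diagram with the condition $e\in\mathcal{W}$, and match the remaining two diagrams with the triangular identities evaluated at $S$ (resp.\ $R$) and precomposed with $\un{\eta}_S$. The one step you should make explicit in lifting the third diagram back to the full second triangular identity is the passage from $\hat{\vartheta}e=\Upsilon'^{-1}_S\un{\eta}_S$ to $\hat{\vartheta}\eta'_S=\Upsilon'^{-1}_S$ (needed to get $U_R=\Id_{R\ot_RS}$ before your naturality-along-$g_\epsilon$ argument): it holds because both sides are right $S$-linear morphisms out of $S$ and hence are determined by precomposition with $\un{\eta}_S$. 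Aside from that, your lifting technique (naturality of the composites plus a second use of the left $\ot$-generator hypothesis) differs mildly from the paper's, which instead reduces to the single object $\mfM=S$ via the module structures and unitality; both arguments are valid.
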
 
\begin{proof}
From the comments made before \deref{leftotgen} we have that $F$ is a Frobenius functor 
if and only if $-\ot_RS: \Cc_R\ra \Cc_S$ is a right adjoint for $F$, and 
from the previous two lemmas we have that the functor $-\ot_RS$ is a right adjoint for $F$ if and only if 
there exist $\vartheta\in {}_R{\rm Hom}_R(S, R)$ and $e\in {\cal W}$ such that 
\[
v_{F(\mfM)}F(\zeta_\mfM)=\Id_\mfM~,~\forall~\mfM\in \Cc_S~\mbox{\rm and}~
\widehat{v_X}\zeta_{X\ot_RS}=\Id_{X\ot_RS}~,~\forall~X\in \Cc_R,
\] 
where $v=\a^{-1}(\vartheta)=\left(v_X=\Upsilon_X\tilde{\vartheta}: X\ot_RS\ra X\right)_{X\in \Cc_R}$ 
in ${\rm Nat}(F\circ (-\ot_RS), -)$ and 
$\zeta:=\b^{-1}(e)=\left(\zeta^e_\mfM=\ov{\zeta}_\mfM(\Id_\mfM\ot e): \mfM\ra \mfM\ot_RS\right)_{\mf{M}\in \Cc_S}$ 
in ${\rm Nat}(-, (-\ot_RS)\circ F)$ are the natural transformations defined by 
$\vartheta$ and $e$, respectively.

We next prove that the first equality above is equivalent to the fact that the second 
diagram in (ii) is commutative. That the second equality above is equivalent to the fact that 
the last diagram in (ii) is commutative can be proved in a similar way, the details are left to the reader. 

Let us start by noting that $v_{F(\mfM)}F(\zeta_\mfM)=\Id_\mfM$ for all $\mfM\in \Cc_S$ is equivalent to 
$\Upsilon_\mfM\tilde{\vartheta}\ov{\zeta}_\mfM(\Id_\mfM\ot e)=\Id_\mfM$, for all $\mfM\in \Cc_S$, 
and since $\Upsilon_\mfM$ is an isomorphism the latest is equivalent to 
$\tilde{\vartheta}\ov{\zeta}_\mfM(\Id_\mfM\ot e)\Upsilon_\mfM=\Id_{\mfM\ot_RR}$ or, equivalently, to 
$\tilde{\vartheta}\ov{\zeta}_\mfM(\Id_\mfM\ot e)\nu^R_\mfM=q^R_{\mfM, R}$, for all $\mfM\in \Cc_S$. 
It is immediate that all these equivalent conditions are also equivalent to 
\begin{equation}\eqlabel{equiv1rightadj}
\tilde{\vartheta}\ov{\zeta}_\mfM(\Id_\mfM\ot e)(\nu^S_\mfM(\Id_\mfM\ot i)\ot \Id_{S\ot_RS})
(\Id_{\mfM\ot R}\ot e)=q^R_{\mfM, R}~,~\forall~\mfM\in \Cc_S. 
\end{equation}  
We claim that \equref{equiv1rightadj} holds if and only if $\Upsilon_S\tilde{\vartheta}e=\un{\eta}_S$. 
Indeed, observe first that a direct computation ensures us that 
\[
\tilde{\vartheta}\ov{\zeta}_\mfM(\nu^S_\mfM\ot \Id_{S\ot_RS})(\Id_{\mfM\ot S}\ot q^R_{S, S})
=q^R_{\mfM, R}(\nu^S_\mfM(\Id_\mfM\ot\un{m}_S)\ot \vartheta)=
\tilde{\vartheta}\ov{\zeta}_\mfM(\Id_\mfM\ot \ov{\zeta}_S)(\Id_{\mfM\ot S}\ot q^R_{S, S}).
\] 
Since every object of $\Cc$ is right coflat we get 
$\tilde{\vartheta}\ov{\zeta}_\mfM(\nu^S_\mfM\ot \Id_{S\ot_RS})=\tilde{\vartheta}\ov{\zeta}_\mfM(\Id_\mfM\ot \ov{\zeta}_S)$, 
and therefore \equref{equiv1rightadj} is equivalent to 
\[
\tilde{\vartheta}\ov{\zeta}_\mfM(\Id_\mfM\ot \ov{\zeta}_S(\Id_S\ot e)i)=q^R_{\mfM, R}~,~
\forall~\mfM\in \Cc_S.
\] 
One can check easily that 
\[
\Upsilon_\mfM\tilde{\vartheta}\ov{\zeta}_\mfM(\Id_\mfM\ot q^R_{S, S})
=\nu^R_\mfM(\nu^S_\mfM\ot \vartheta)=\nu^S_\mfM(\Id_\mfM\ot \Upsilon_S\tilde{\vartheta})(\Id_\mfM\ot q^R_{S, S}),
\]
and so $\Upsilon_\mfM\tilde{\vartheta}\ov{\zeta}_\mfM=\nu^S_\mfM(\Id_\mfM\ot \Upsilon_S\tilde{\vartheta})$. 
Hence \equref{equiv1rightadj} is actually equivalent to 
\begin{equation}\eqlabel{equiv2rightadj}
\nu^S_\mfM(\Id_\mfM\ot \Upsilon_S\tilde{\vartheta}\ov{\zeta}_S(\Id_S\ot e)i)=\nu^R_\mfM~,~
\forall~\mfM\in \Cc_S.
\end{equation}
Notice that if $\Upsilon_S\tilde{\vartheta}\ov{\zeta}_S(\Id_S\ot e)=\Id_S$ then \equref{equiv2rightadj} 
is satisfied since $\nu^S_\mfM(\Id_\mfM\ot i)=\nu^R_\mfM$, for all $\mfM\in \Cc_S$. The converse 
is also true since if we get $\mfM=S$ in \equref{equiv2rightadj} with $\nu^S_S=\un{m}_S$ and compose it 
to the right with $\Id_S\ot \un{\eta}_R$ we obtain 
\[
\Id_S=\un{m}_S(\Id_S\ot \Upsilon_S\tilde{\vartheta}\ov{\zeta}_S(\Id_S\ot e)\un{\eta}_S).
\]
Straightforward computations lead us to 
\begin{eqnarray*}
&&\un{m}_S(\Id_S\ot \Upsilon_S)(\Id_S\ot q^R_{S, R})=\un{m}_S(\un{m}_S\ot i)=\Upsilon_S\mu^S_{S\ot_RR}(\Id_S\ot q^R_{S, R}){~\rm and}\\
&&\mu^S_{S\ot_RR}(\Id_S\ot \tilde{\vartheta})(\Id_S\ot q^R_{S, S})=q^R_{S, R}(\un{m}_S\ot\vartheta)=
\tilde{\vartheta}\ov{\zeta}_S(\Id_S\ot q^R_{S, S}),
\end{eqnarray*}
and since $S$ is right coflat, too, we obtain 
\begin{equation}\eqlabel{equiv3rightadj}
(\un{m}_S(\Id_S\ot \Upsilon_S)=\Upsilon_S\mu^S_{S\ot_RR}~~{\rm and}~~
\mu^S_{S\ot_RR}(\Id_S\ot \tilde{\vartheta})=\tilde{\vartheta}\ov{\zeta}_S.
\end{equation}  
Notice also that 
\[
\ov{\zeta}_S(\Id_S\ot e)\un{\eta}_S=\zeta^e_S\un{\eta}_S=\beta^{-1}(\zeta^e)=\b^{-1}(\b(e))=e.
\]
Summing up we get  
\[
\Id_S=\un{m}_S(\Id_S\ot \Upsilon_S\tilde{\vartheta}e)=\Upsilon_S\mu^R_{S\ot_RR}(\Id_S\ot \tilde{\vartheta}e)
=\Upsilon_S\tilde{\vartheta}\ov{\zeta}_S(\Id_S\ot e),
\]
as needed. Otherwise stated, we have shown that \equref{equiv2rightadj} is equivalent to 
$\Upsilon_S\tilde{\vartheta}\ov{\zeta}_S(\Id_S\ot e)=\Id_S$. If we compose it 
to the right with $\un{\eta}_S$ we get 
\[
\un{\eta}_S=\Upsilon_S\tilde{\vartheta}\ov{\zeta}_S(\Id_S\ot e)\un{\eta}_S=
\Upsilon_S\tilde{\vartheta}\zeta^e_S\un{\eta}_S=\Upsilon_S\tilde{\vartheta}e,
\]
i.e., the second diagram in the statement is commutative. Finally, if 
$\Upsilon_S\tilde{\vartheta}e=\un{\eta}_S$ we then have 
\[
\Upsilon_S\tilde{\vartheta}\ov{\zeta}_S(\Id_S\ot e)
\equal{\equref{equiv3rightadj}}\Upsilon_S\mu^S_{S\ot_RR}(\Id_S\ot\tilde{\vartheta})(\Id_S\ot e)
\equal{\equref{equiv3rightadj}}\un{m}_S(\Id_S\ot \Upsilon_S\tilde{\vartheta}e)=
\un{m}_S(\Id_S\ot \un{\eta}_S)=\Id_S,
\] 
and this finishes the proof of the theorem. 
\end{proof}

The next result generalizes \cite[Theorem 27 1.\&2.]{cmz} to an algebra extension in a monoidal category. 

\begin{theorem}\thlabel{SepExtMon}
Let $\Cc$ be a monoidal category with equalizers and assume that $\un{1}$ is a left $\ot$-generator for 
$\Cc$. Let $i: R\ra S$ be an algebra morphism in $\Cc$ 
with $S$ a left coflat object. Then the following assertions hold:
\begin{itemize}
\item[(i)] If any object of $\Cc$ is right coflat then the restriction 
of scalars functor $F: \Cc_S\ra \Cc_R$ is separable if and only if there 
exists a morphism $e\in {\cal W}=\{e: \un{1}\ra S\ot_RS\mid \mu^S_{S\ot_RS}(\Id_S\ot e)=\nu^S_{S\ot_RS}(e\ot \Id_S)\}$ 
such that $\un{m}_S^Re=\un{\eta}_S$, where $\un{m}^R_S: S\ot_RS\ra S$ is the unique morphism in $\Cc$ obeying $\un{m}^R_Sq^R_{S, S}=\un{m}_S$;
\item[(ii)] The induction functor $-\ot_RS: \Cc_R\ra \Cc_S$ is separable if and only if there exists an $R$-bimodule 
morphism $\vartheta: S\ra R$ such that $\vartheta\un{\eta}_S=\un{\eta}_R$.    
\end{itemize} 
\end{theorem}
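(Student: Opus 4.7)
The plan is to apply Rafael's theorem (recalled in \seref{FrobSepFunct}) combined with the correspondences of Lemmas \ref{le:Frobenius1} and \ref{le:Frobenius2}. Recall from \equref{indleftadjrestscal} that $G=-\ot_RS\dashv F$ with unit $\eta_X=q^R_{X,S}(\Id_X\ot\un{\eta}_S)$ and counit $\va_\mfM=\ov{\nu^S_\mfM}$. Consequently $F$, being the right adjoint, is separable iff $\va$ cosplits; and $G$, being the left adjoint, is separable iff $\eta$ splits. In both cases the relevant natural transformation lives in one of the sets computed in the previous lemmas, and the splitting/cosplitting identity can be translated into a single morphism identity in $\Cc$.

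For (i), I would use \leref{Frobenius2} to parametrize a natural transformation $\mu:1_{\Cc_S}\to GF$ by a morphism $e\in{\cal W}$ via $\mu_\mfM=\zeta^e_\mfM=\ov{\zeta}_\mfM(\Id_\mfM\ot e)$. The key computation is $\va_\mfM\mu_\mfM$: precomposing with the epimorphism $\Id_\mfM\ot q^R_{S,S}$ (which is epi because every object of $\Cc$ is right coflat) and using $\ov{\nu^S_\mfM}q^R_{\mfM,S}=\nu^S_\mfM$, $\ov{\zeta}_\mfM(\Id_\mfM\ot q^R_{S,S})=q^R_{\mfM,S}(\nu^S_\mfM\ot\Id_S)$ together with $\un{m}^R_Sq^R_{S,S}=\un{m}_S$, one obtains $\va_\mfM\mu_\mfM=\nu^S_\mfM(\Id_\mfM\ot\un{m}^R_Se)$. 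Sufficiency of $\un{m}^R_Se=\un{\eta}_S$ is then immediate from the right $S$-module unit axiom; necessity follows by specialising to $\mfM=S$, which gives $\un{m}_S(\Id_S\ot\un{m}^R_Se)=\Id_S$, and then precomposing with $\un{\eta}_S$ so that the left unit axiom forces $\un{m}^R_Se=\un{\eta}_S$.

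For (ii), I would use \leref{Frobenius1} to parametrize a natural transformation $\nu:FG\to 1_{\Cc_R}$ by an $R$-bimodule morphism $\vartheta:S\to R$ via $\nu_X=v_X=\Upsilon_X\tilde{\vartheta}$. Combining $\tilde{\vartheta}q^R_{X,S}=q^R_{X,R}(\Id_X\ot\vartheta)$ and $\Upsilon_Xq^R_{X,R}=\nu^R_X$ with the formula for $\eta_X$, a direct calculation gives $\nu_X\eta_X=\nu^R_X(\Id_X\ot\vartheta\un{\eta}_S)$. The splitting condition $\nu\eta=\Id$ therefore amounts to $\nu^R_X(\Id_X\ot\vartheta\un{\eta}_S)=\Id_X$ for every $X\in\Cc_R$: sufficiency of $\vartheta\un{\eta}_S=\un{\eta}_R$ follows from the right $R$-module unit axiom, whereas necessity comes from taking $X=R$ (so $\nu^R_R=\un{m}_R$) and precomposing with $\un{\eta}_R$.

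The principal difficulty is not conceptual but rather the bookkeeping with the universal property of the coequalizers $q^R_{-,-}$: at several steps one must read off a pointwise identity in $\Cc$ from an identity valid only after composition with an epimorphism, which is where the coflatness hypothesis is consumed. Note that (ii) does not require that every object of $\Cc$ be right coflat, as \leref{Frobenius1} only needs $\un{1}$ to be a left $\ot$-generator and $S$ to be left coflat; this matches the absence of the extra hypothesis in part (ii) of the statement.
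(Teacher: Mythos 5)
Your proposal is correct and follows the paper's own route exactly: Rafael's theorem plus the parametrizations of Lemmas \ref{le:Frobenius1} and \ref{le:Frobenius2}, with the counit/unit identities reduced to $\nu^S_\mfM(\Id_\mfM\ot\un{m}^R_Se)=\Id_\mfM$ and $\nu^R_X(\Id_X\ot\vartheta\un{\eta}_S)=\Id_X$ and the converse obtained by specializing to $\mfM=S$ (resp.\ $X=R$) and precomposing with the unit. Part (ii), which the paper leaves to the reader, is worked out correctly, including the accurate observation that the right-coflatness hypothesis is not needed there.
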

\begin{proof}
(i) The functor $-\ot_RS$ is a left adjoint of $F$, so by the Rafael's theorem (see \seref{FrobSepFunct}) 
it follows that $F$ is separable if and only if the counit $\va$ defined in \equref{indleftadjrestscal} cosplits. 
By \leref{Frobenius2} this happens if and only if there is $e\in {\cal W}$ such that if 
\[
\zeta^e=\left(\zeta^e_\mfM=\ov{\zeta}_\mfM(\Id_\mfM\ot e): \mfM\ra \mfM\ot_RS\right)_{\mf{M}\in \Cc_S}
\]  
is the natural transformation associated to $e$ as in \leref{Frobenius2} then $\va\circ \zeta^e$ is the 
identity natural transformation of $1_{\Cc_S}$. Clearly this is equivalent to the existence of an element 
$e\in {\cal W}$ such that 
\[
\ov{\nu^S_\mfM}\ov{\zeta}_\mfM(\Id_\mfM\ot e)=\Id_\mfM~,~\forall~\mfM\in \Cc_S~.
\]
Since $\ov{\nu^S_\mfM}\ov{\zeta}_\mfM(\Id_\mfM\ot q^R_{S, S})=\ov{\nu^S_\mfM}q^R_{\mfM, S}
(\nu^S_\mfM\ot\Id_S)=\nu^S_\mfM(\nu^S_\mfM\ot \un{m}_S)=\nu^S_\mfM(\Id_\mfM\ot \un{m}^R_Sq^R_{S, S})$, 
by the right coflatness of $\mfM$ we deduce that $\ov{\nu^S_\mfM}\ov{\zeta}_\mfM=\nu^S_\mfM(\Id_\mfM\ot \un{m}^R_S)$. 
Thus $F$ is separable if and only if there exists $e\in {\cal W}$ such that 
$\nu^S_\mfM(\Id_\mfM\ot \un{m}^R_Se)=\Id_\mfM$, for all $\mfM\in \Cc_S$. If $\un{m}^R_Se=\un{\eta}_S$ 
then the latest condition is clearly satisfied. For the converse take $\mfM=S$ and $\nu^S_S=\un{m}_S$. We then have 
$\un{m}_S(\Id_S\ot \un{m}^R_Se)=\Id_S$, and therefore 
\[
\un{\eta}_S=\un{m}_S(\Id_S\ot \un{m}^R_Se)\un{\eta}_S=\un{m}_S(\un{\eta}_S\ot \Id_S)\un{m}^R_Se=\un{m}^R_Se,
\]
as needed. 

(ii) Similar to the one given for (i), so we leave it to the reader. 
\end{proof}

\section{Frobenius and separable algebra extensions in monoidal categories}\selabel{FrobSepAlgMon}
\setcounter{equation}{0} 
Let $k$ be a field and $A$ a $k$-algebra. We say that $A$ is a Frobenius algebra iff $A$ is isomorphic 
to its $k$-dual $A^*:={\rm Hom}_k(A, k)$ as a left or right $A$-module, and we call $A$ separable if the 
multiplication of $A$ cosplits in the category of $A$-bimodules. It is immediate that these definitions can be 
adapted for an algebra in a monoidal category; note that in the Frobenius case the existence of the dual object of 
$A$ can be avoided if we use the characterization of a Frobenius algebra given in terms of a Frobenius pair or the  
one given in the language of Frobenius functors. Concretely, we have the following notions. 

\begin{definition}\delabel{defFrobext}
Let $\Cc$ be a monoidal category and $A$ an algebra in $\Cc$. 

(i) $A$ is called Frobenius if there exists a pair $(\vartheta, e)$ with 
$\vartheta: A\ra \un{1}$ and $e:\un{1}\ra A\ot A$ morphisms in $\Cc$ such that 
\[
(\un{m}_A\ot \Id_A)(\Id_A\ot e)=(\Id_A\ot \un{m}_A)(e\ot \Id_A)~,~(\vartheta\ot\Id_A)e=\un{\eta}_A=
(\Id_A\ot \vartheta)e.
\] 

(ii) $A$ is called separable if there exists a morphism $\gamma: A\ra A\ot A$ of $A$-bimodules such that 
$\un{m}_A\gamma=\Id_A$, where both $A$ and $A\ot A$ are viewed as $A$-bimodules via the multiplication $\un{m}_A$ of $A$. 
\end{definition}

It is clear that $A$ is Frobenius in $\Cc$ if and only if the forgetful functor $F: \Cc_A\ra \Cc$ is 
Frobenius, providing that $\un{1}$ is a left $\ot$-generator for $\Cc$. Note that in this case the coflatness 
condition and the existence of the coequalizers in $\Cc$ can be deleted from the statement of \thref{FrobSepExt}, 
since when the algebra extension is given by the unit morphism $\un{\eta}_A: \un{1}\ra A$ the tensor product 
over the source algebra is just the tensor product $\ot$ of $\Cc$.  

In the separable case we have a similar result. 

\begin{proposition}\prlabel{SepAlgSepFun}
Let $\Cc$ be a monoidal category such that $\un{1}$ is a left $\ot$-generator for $\Cc$. If $A$ is an algebra 
in $\Cc$ then $A$ is separable if and only if the forgetful functor $F: \Cc_A\ra \Cc$ is a separable functor.    
\end{proposition}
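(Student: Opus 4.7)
The plan is to deduce the proposition from \thref{SepExtMon}(i) by specialising the algebra extension to the unit morphism $\un{\eta}_A:\un{1}\ra A$. For this extension the restriction of scalars functor is exactly the forgetful functor $F:\Cc_A\ra \Cc$, and $R=\un{1}$ forces $\ot_R=\ot$, so the coequalizer and coflatness hypotheses of \thref{SepExtMon}(i) are vacuous; the only remaining assumption, namely that $\un{1}$ is a left $\ot$-generator, is part of our hypothesis. Under this specialisation, the set $\mathcal{W}$ becomes
\[
\{e:\un{1}\ra A\ot A\mid (\un{m}_A\ot\Id_A)(\Id_A\ot e)=(\Id_A\ot\un{m}_A)(e\ot\Id_A)\},
\]
since $\mu^A_{A\ot A}=\un{m}_A\ot\Id_A$ and $\nu^A_{A\ot A}=\Id_A\ot\un{m}_A$, and the morphism $\un{m}^R_S$ reduces to $\un{m}_A$ because $q^R_{S,S}=\Id_{A\ot A}$. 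So \thref{SepExtMon}(i) gives that $F$ is separable iff there exists a ``Casimir'' morphism $e:\un{1}\ra A\ot A$ with $\un{m}_Ae=\un{\eta}_A$.

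It then remains to translate this into the bimodule-section description of \deref{defFrobext}(ii). Given such an $e$, I would put
\[
\gamma=(\un{m}_A\ot\Id_A)(\Id_A\ot e)=(\Id_A\ot\un{m}_A)(e\ot\Id_A):A\ra A\ot A,
\]
the two expressions agreeing by the Casimir condition. The first expression makes left $A$-linearity of $\gamma$ immediate from associativity of $\un{m}_A$, while the second gives right $A$-linearity, so $\gamma$ is an $A$-bimodule morphism. Moreover, using associativity and $\un{m}_Ae=\un{\eta}_A$,
\[
\un{m}_A\gamma=\un{m}_A(\un{m}_A\ot\Id_A)(e\ot\Id_A)=\un{m}_A(\un{\eta}_A\ot\Id_A)=\Id_A.
\]
Conversely, from a bimodule splitting $\gamma$ of $\un{m}_A$ I set $e=\gamma\un{\eta}_A$; then $\un{m}_Ae=\un{m}_A\gamma\un{\eta}_A=\un{\eta}_A$, and applying left (resp.\ right) $A$-linearity of $\gamma$ to $\Id_A\ot\un{\eta}_A$ (resp.\ $\un{\eta}_A\ot\Id_A$) recovers $\gamma=(\un{m}_A\ot\Id_A)(\Id_A\ot e)$ and $\gamma=(\Id_A\ot\un{m}_A)(e\ot\Id_A)$, so equating these yields the Casimir identity for $e$.

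The real content is contained in \thref{SepExtMon}(i), so the only obstacle here is bookkeeping: checking that the theorem's ambient hypotheses trivialize when $R=\un{1}$, and then verifying that the correspondence $e\leftrightarrow \gamma$ exchanges the Casimir-plus-trace data with the bimodule-splitting data. Both assignments are mutually inverse by inspection (since $\gamma\un{\eta}_A=e$ and $\gamma$ is determined by its value on the unit via bimodule linearity), so the equivalence follows.
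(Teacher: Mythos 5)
Your proof is correct and follows essentially the same route as the paper: the paper also reduces the statement to \thref{SepExtMon}(i) applied to the unit extension $\un{\eta}_A:\un{1}\ra A$ (noting, as you do, that coequalizers and coflatness become vacuous since $\ot_{\un{1}}=\ot$), and establishes the same correspondence $e\mapsto\gamma=(\un{m}_A\ot\Id_A)(\Id_A\ot e)$, $\gamma\mapsto e=\gamma\un{\eta}_A$ between Casimir morphisms with $\un{m}_Ae=\un{\eta}_A$ and $A$-bimodule splittings of $\un{m}_A$. Your write-up is in fact slightly more explicit than the paper's (which leaves the bimodule-linearity verifications to the reader), and your form of the Casimir identity corrects an evident typo in the paper's displayed formula for $\gamma$.
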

\begin{proof}
It is straightforward to see that an algebra $A$ is separable if and only if 
there exists a morphism $e: \un{1}\ra A\ot A$ such that 
$(\un{m}_A\ot \Id_A)(\Id_A\ot e)=(\Id_A\ot \un{m}_A)(e\ot \Id_A)$ and $\un{m}_Ae=\un{\eta}_A$. 
Indeed, for $e$ as above if we define $\gamma=(\un{m}_A\ot \Id_A)(\Id_A\ot e)=(\Id_A\ot e)\un{m}_A$ it then 
follows that $\gamma$ is $A$-bilinear and $\un{m}_A\gamma=\Id_A$. Conversely, if $\gamma$ is an $A$-bimodule 
morphism in $\Cc$ then $e=\gamma \un{\eta}_A$ has the desired properties, we leave the details to the reader.  

But when $\un{1}$ is a left $\ot$-generator in $\Cc$ the existence of such a morphism $e$ is equivalent 
to the fact that the functor $F$ is separable, cf. \thref{SepExtMon}. So our proof is complete.  
\end{proof}

\begin{definition}
If $A$ is a separable algebra in a monoidal category $\Cc$ we then call the morphism $e: \un{1}\ra A\ot A$ 
satisfying $(\un{m}_A\ot \Id_A)(\Id_A\ot e)=(\Id_A\ot e)\un{m}_A$ and $\un{m}_Ae=\un{\eta}_A$ 
the separability morphism of $A$. 
\end{definition}

Another characterization for a separable algebra can be given if we assume that 
the unit object of the category is projective. The result below is a straightforward 
generalization of the classical result asserting that an algebra $A$ over a commutative ring $k$ 
is separable if and only if $A$ is a projective $A$-bimodule. 

\begin{proposition}\prlabel{sepversusproj} 
Let $\Cc$ be a monoidal category having the unit $\un{1}$ a projective object. Then an algebra 
$A$ in $\Cc$ is separable if and only if $A$ is projective as an $A$-bimodule. 
\end{proposition}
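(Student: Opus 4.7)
The plan is to view $A\ot A$ as the free $A$-bimodule on $\un{1}$ via the standard adjunction
\[
A\ot (-)\ot A : \Cc \rightleftarrows {}_A\Cc_A : U,
\]
where $U$ is the forgetful functor, and to exploit the fact that the multiplication $\un{m}_A: A\ot A\to A$ is a canonical bimodule morphism admitting the $\Cc$-linear section $\Id_A\ot \un{\eta}_A$. The first thing I would establish is that projectivity of $\un{1}$ in $\Cc$ transfers to projectivity of $A\ot A$ in ${}_A\Cc_A$: using the natural isomorphism ${}_A{\rm Hom}_A(A\ot A, M)\cong {\rm Hom}_\Cc(\un{1}, U(M))$ coming from the adjunction, any lifting problem for $A\ot A$ against a bimodule epimorphism $f: M\to N$ transports to the adjoint lifting problem for $\un{1}$ against $U(f)$ in $\Cc$; provided $U(f)$ is an epi in $\Cc$, projectivity of $\un{1}$ yields a lift whose adjunct solves the original problem.

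For the direction $(\Rightarrow)$, I would use that the separability morphism $\gamma: A\to A\ot A$ is a bimodule section of $\un{m}_A$, so $A$ is a retract of the projective bimodule $A\ot A$; since retracts of projectives are projective, $A$ is projective as an $A$-bimodule.

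For the direction $(\Leftarrow)$, I would first observe that $\un{m}_A$ is a split epi in $\Cc$ via $\Id_A\ot \un{\eta}_A$, hence an epi in $\Cc$; since $U$ reflects epis by an immediate diagram chase, $\un{m}_A$ is also an epi in ${}_A\Cc_A$. Projectivity of $A$ then lifts $\Id_A: A\to A$ through $\un{m}_A$ to a bimodule morphism $\gamma: A\to A\ot A$ satisfying $\un{m}_A\gamma=\Id_A$, which is precisely the separability morphism required by \deref{defFrobext}(ii).

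The hard part will be the management of the notion of projectivity used in ${}_A\Cc_A$: I need the class of epimorphisms against which $A$ lifts to be broad enough to include $\un{m}_A$, yet compatible with transferring $\un{1}$-projectivity from $\Cc$ through the adjunction. The cleanest resolution is to work with the class of $U$-epimorphisms (bimodule morphisms whose underlying $\Cc$-morphism is epi); under this convention both directions go through without further hypotheses, and the more common situations where colimits in ${}_A\Cc_A$ are created by $U$ are covered as a special case.
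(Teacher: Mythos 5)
Your proof is correct, and the backward direction coincides with the paper's (there, as in your argument, the key point is that $\un{m}_A$ is split epi in $\Cc$ via $\Id_A\ot\un{\eta}_A$, and the paper likewise notes that projectivity of $\un{1}$ is not needed for this implication). The forward direction is organized differently: the paper constructs the lift $g:A\ra\mfM$ explicitly as $\nu^A_\mfM(\mu^A_\mfM\ot\Id_A)(\Id_A\ot\tilde{g}\ot\Id_A)(\un{m}_A\ot\Id_A)(\Id_A\ot e)$ from a $\Cc$-lift $\tilde{g}$ of $f\un{\eta}_A$ and then verifies bilinearity and $pg=f$ by hand, whereas you factor the argument through two standard lemmas — the free bimodule $A\ot A\cong A\ot\un{1}\ot A$ is projective because $\un{1}$ is and the adjunction $A\ot(-)\ot A\dashv U$ transports lifting problems, and $A$ is a retract of $A\ot A$ via the separability morphism, so inherits projectivity. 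Unwinding your adjunction-plus-retract argument produces exactly the paper's formula for $g$, so the content is the same, but your version replaces the computational verification with naturality and is arguably cleaner. You also make explicit a point the paper leaves implicit: projectivity here must be taken relative to bimodule morphisms whose underlying $\Cc$-morphism is epi ($U$-epimorphisms), since the paper's own lifting of $f\un{\eta}_A$ through $p$ in $\Cc$ silently assumes $p$ is epi in $\Cc$, and the backward direction needs the lifting class to contain $\un{m}_A$; fixing this convention, as you do, makes both directions airtight.
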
 
\begin{proof}
Assume that $A$ is a separable algebra and let $e: \un{1}\ra A\ot A$ be its separability morphism. 
Consider $\mfM\stackrel{p}{\rightarrow} \mfN\ra 0$ a short exact sequence 
of $A$-bimodules in $\Cc$ and $f: A\ra \mfN$ a morphism in ${}_A\Cc_A$. We have to show that there is 
an $A$-bimodule morphism $g: A\ra \mfM$ such that $pg=f$.  

If $\tilde{f}:=f\un{\eta}_A$ then clearly $f=\mu^A_\mfN(\Id_A\ot \tilde{f})=\nu^A_\mfN(\tilde{f}\ot \Id_A)$. 
Since $\un{1}$ is projective it follows that there exists a morphism $\tilde{g}: \un{1}\ra \mfM$ in $\Cc$ such that 
$p\tilde{g}=\tilde{f}$. So if we define $g: A\ra \mfM$ given by 
\[
g=\nu^A_\mfM(\mu^A_\mfM\ot \Id_A)(\Id_A\ot \tilde{g}\ot \Id_A)(\un{m}_A\ot \Id_A)(\Id_A\ot e) 
\]  
then $g$ is $A$-bilinear since $\un{m}_A$ is associative and $(\un{m}_A\ot \Id_A)(\Id_A\ot e)=(\Id_A\ot e)\un{m}_A$, and 
\begin{eqnarray*}
pg&=&\nu^A_\mfM(\mu^A_\mfM\ot \Id_A)(\Id_A\ot f\un{\eta}_A\ot \Id_A)(\un{m}_A\ot \Id_A)(\Id_A\ot e)\\
&=&\nu^A_\mfM(f\ot \Id_A)(\un{m}_A\ot \Id_A)(\Id_A\ot e)\\
&=&f\un{m}_A(\Id_A\ot \un{m}_Ae)=f\un{m}_A(\Id_A\ot \un{\eta}_A)=f, 
\end{eqnarray*}
as required. 

Conversely, if $A$ is projective in ${}_A\Cc_A$ then since $\un{m}_A: A\ot A\ra A$ is an 
epimorphism in $\Cc$ it follows that there exists an $A$-bilinear morphism 
$\gamma: A\ra A\ot A$ such that $\un{m}_A\gamma=\Id_A$, and so $A$ is a separable algebra. 
Observe that for this implication we do not need the unit object to be projective. 
\end{proof}

Motivated by the above results and the theory of Frobenius (respectively separable) algebra extensions in a category 
of vector spaces we propose the following terminology. 

\begin{definition}
Let $\Cc$ be a monoidal category with coequalizers and such that any object of it is coflat, and let 
$i: R\ra S$ be an algebra extension in $\Cc$. 

(i) We call the algebra extension $i: R\ra S$ Frobenius if there exist an $R$-bimodule morphism 
$\vartheta: S\ra R$ and a morphism $e: \un{1}\ra S\ot_RS$ in $\Cc$ such that the three conditions 
stated in \thref{FrobSepExt} (ii) are satisfied. If this is 
the case then we call the pair $(\vartheta, e)$ the Frobenius pair of the Frobenius extension $i: R\ra S$. 
Furthermore, we call $\vartheta$ the Frobenius morphism and $e$ the Casimir morphism of the Frobenius 
algebra extension $i$.

(ii) We call the algebra extension $i: R\ra S$ separable if there exists a morphism 
$e\in {\cal W}=\{e: \un{1}\ra S\ot_RS\mid \mu^S_{S\ot_RS}(\Id_S\ot e)=\nu^S_{S\ot_RS}(e\ot \Id_S)\}$ 
such that $\un{m}_S^Re=\un{\eta}_S$, where $\un{m}^R_S: S\ot_RS\ra S$ is the unique morphism in $\Cc$ 
obeying $\un{m}^R_Sq^R_{S, S}=\un{m}_S$. 
\end{definition}
  
In the case when $\un{1}$ is a left $\ot$-generator for $\Cc$ the definitions above are equivalent to 
the fact that the functor restriction of scalars is Frobenius, respectively separable.  
We next show that these notions reduce to the notions of 
Frobenius, respectively separable, algebra in a suitable monoidal category, provided that an extra condition is fulfilled. 
To this end we first need a preliminary result.

\begin{lemma}\lelabel{FrobSepExtMonInt}
Let $\Cc$ be a monoidal category with coequalizers and $i:R\ra S$ an algebra 
extension in $\Cc$ such that $R$ is coflat and $S$ is left coflat and left robust, so that 
$S\in {}^!_R\Cc_R$. Denote by $\mu'^S_{S\ot_RS}=\widehat{\un{m}^R_S}\Gamma'_{S, S, S}$ and $\nu'^S_{S\ot_RS}=
\widetilde{\un{m}^R_S}\Sigma'_{S, S, S}$ the canonical morphisms in ${}_R\Cc_R$ 
that give on $S\ot_RS$ an $S$-bimodule structure in ${}_R\Cc_R$, and by $\un{m}^R_S$ 
the multiplication of the algebra $S$ in ${}^!_R\Cc_R$. 
Then there exists a one to one correspondence between ${\cal W}'$, 
the set of morphisms $\mf{e}: R\ra S\ot_RS$ in ${}_R\Cc_R$ obeying 
\[
\mu'^S_{S\ot_RS}\tilde{\mf{e}}\Upsilon^{-1}_S=\nu'^S_{S\ot_RS}\hat{\mf{e}}\Upsilon'^{-1}_S~,~
\] 
and 
${\cal W}=\{e: \un{1}\ra S\ot_RS\mid \mu^S_{S\ot_RS}(\Id_S\ot e)=\nu^S_{S\ot_RS}(e\ot \Id_S)\}$. 
\end{lemma}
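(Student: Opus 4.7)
The key observation is that, since $R$ is the unit object of the monoidal category ${}^!_R\Cc_R$, any $R$-bilinear morphism $\mf{e}: R \ra S \ot_R S$ is completely determined by its composition $\mf{e}\,\un{\eta}_R: \un{1} \ra S \ot_R S$; and the centrality conditions defining ${\cal W}$ and ${\cal W}'$ are then two equivalent formulations of the Casimir-type condition for $S \ot_R S$ viewed as an $S$-bimodule.

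The plan is to define $\Phi: {\cal W}' \ra {\cal W}$ by $\Phi(\mf{e}) := \mf{e}\,\un{\eta}_R$, and $\Psi: {\cal W} \ra {\cal W}'$ by
\[
\Psi(e) := \mu^R_{S\ot_R S} \circ (\Id_R \ot e),
\]
using the strictness identification $R \ot \un{1} = R$ and the left $R$-action $\mu^R_{S\ot_R S}$ on $S \ot_R S$. I would then verify (a) $\Psi(e)$ is $R$-bilinear; (b) the ${\cal W}'$-equation on $\mf{e}$ is equivalent to the ${\cal W}$-equation on $\Phi(\mf{e})$; and (c) $\Phi$ and $\Psi$ are mutually inverse. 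For (a), left $R$-linearity of $\Psi(e)$ is automatic from associativity of the left $R$-action on $S \ot_R S$; right $R$-linearity reduces, via the bimodule axiom, to the identity $\mu^R_{S\ot_R S}(\Id_R \ot e) = \nu^R_{S\ot_R S}(e \ot \Id_R)$, which follows by pre-composing the ${\cal W}$-equation with $i: R \ra S$ (since both $R$-actions on $S \ot_R S$ are obtained from the $S$-actions via $i$). For (c), both triangle identities collapse to unit axioms: $\Phi\Psi(e) = \mu^R_{S\ot_R S}(\un{\eta}_R \ot e) = e$ by the unit axiom of the $R$-action, and $\Psi\Phi(\mf{e}) = \mu^R_{S\ot_R S}(\Id_R \ot \mf{e})(\Id_R \ot \un{\eta}_R) = \mf{e}\,\un{m}_R(\Id_R \ot \un{\eta}_R) = \mf{e}$ by left $R$-linearity of $\mf{e}$.

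The main obstacle is step (b). The strategy is to reduce both sides of the ${\cal W}'$-equation to the honest $S$-actions $\mu^S_{S\ot_R S}$ and $\nu^S_{S\ot_R S}$ of $\Cc$. From \equref{invcanisom1} and \equref{coeqmorph}--\equref{coeqmorph2} one computes directly
\[
\tilde{\mf{e}}\Upsilon^{-1}_S = q^R_{S, S\ot_R S}(\Id_S \ot e) \quad\text{and}\quad \hat{\mf{e}}\Upsilon'^{-1}_S = q^R_{S\ot_R S, S}(e \ot \Id_S),
\]
where $e = \Phi(\mf{e})$. The crucial identity I expect to need is
\[
\mu'^S_{S\ot_R S} \circ q^R_{S, S\ot_R S} = \mu^S_{S\ot_R S},
\]
expressing that the $S$-action on $S \ot_R S$ coming from the monoidal structure of ${}^!_R\Cc_R$ descends, via the coequalizer projection, to the honest $S$-action in $\Cc$. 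To establish it, one composes both sides with $\Id_S \ot q^R_{S, S}$ (which is an epimorphism by left coflatness of $S$), then unfolds $\mu'^S_{S\ot_R S} = \widehat{\un{m}^R_S}\Gamma'_{S, S, S}$ using the defining property of $\Gamma'$, the left robustness identity $\theta'^{-1}_{S, S, S}(\Id_S \ot q^R_{S, S}) = q^R_{S \ot S, S}$, and the equation $\un{m}^R_S q^R_{S, S} = \un{m}_S$; the composite simplifies to $q^R_{S, S}(\un{m}_S \ot \Id_S) = \mu^S_{S\ot_R S}(\Id_S \ot q^R_{S, S})$, and cancelling the epi yields the claim. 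The symmetric identity $\nu'^S_{S\ot_R S} \circ q^R_{S\ot_R S, S} = \nu^S_{S\ot_R S}$ is proved in the same way. Combining everything, both sides of the ${\cal W}'$-equation become the corresponding sides of the ${\cal W}$-equation for $e$, yielding the desired equivalence and completing the correspondence.
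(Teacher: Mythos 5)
Your proposal is correct and takes essentially the same route as the paper: the inverse map there is exactly your $\Psi(e)=\mu^S_{S\ot_RS}(i\ot e)$, and the equivalence of the two Casimir conditions is likewise obtained from $\tilde{\mf{e}}\Upsilon^{-1}_S=q^R_{S,S\ot_RS}(\Id_S\ot e)$ and $\hat{\mf{e}}\Upsilon'^{-1}_S=q^R_{S\ot_RS,S}(e\ot\Id_S)$ together with the identities $\mu'^S_{S\ot_RS}q^R_{S,S\ot_RS}=\mu^S_{S\ot_RS}$ and $\nu'^S_{S\ot_RS}q^R_{S\ot_RS,S}=\nu^S_{S\ot_RS}$, which the paper simply records as immediate from the definitions. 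The only small slip is in your extra justification of the first of these identities: in the paper's terminology the epimorphy of $\Id_S\ot q^R_{S,S}$ comes from $S\ot-$ preserving coequalizers, i.e.\ from \emph{right} coflatness of $S$, not left coflatness.
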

\begin{proof}
Let us start by noting that $S$ admits an algebra structure in 
${}^!_R\Cc_R$ if and only if $S$ admits an algebra structure in $\Cc$ 
such that the unit morphism $i$ of $S$ in ${}^!_R\Cc_R$ becomes an algebra 
morphism in $\Cc$. Then the $R$-bimodule structure of $S$ identifies to the one induced by $i$. 

Also, directly from the definitions it follow that the two structure morphisms $\mu'^S_{S\ot_RS}$ and 
$\nu'^S_{S\ot_RS}$ are completely determined by the equalities  
\[
\mu'^S_{S\ot_RS}q^R_{S, S\ot_RS}=\mu^S_{S\ot_RS}~~{\rm and}~~
\nu'^S_{S\ot_RS}q^R_{S\ot_RS, S}=\nu^S_{S\ot_RS}~,
\]
respectively. 

Now, if $\mf{e}$ is an element of ${\cal W}'$ we then claim that $e=\mf{e}\un{\eta}_R: \un{1}\ra S\ot_RS$ 
belongs to ${\cal W}$. Indeed,  
\[
\mu'^S_{S\ot_RS}\tilde{\mf{e}}\Upsilon^{-1}_S=\mu'^S_{S\ot_RS}\tilde{\mf{e}}q^R_{R, S}(\Id_S\ot \un{\eta}_R)
=\mu'^S_{S\ot_RS}q^R_{S, S\ot_RS}(\Id_S\ot \mf{e}\un{\eta}_R)=\mu^S_{S\ot_RS}(\Id_S\ot e),
\]
and, similarly, $\nu'^S_{S\ot_RS}\hat{\mf{e}}\Upsilon'^{-1}_S=\nu^S_{S\ot_RS}(e\ot \Id_S)$, 
proving the commutativity of the first diagram in the statement (ii) of \thref{FrobSepExt}.

Conversely, if $e$ is in ${\cal W}$ we then define $\mf{e}=\mu^S_{S\ot_RS}(i\ot e)$. 
That $\mf{e}$ is left $R$-linear follows easily from the fact that $\mu^S_{S\ot_RS}$ defines a left 
$S$-module structure on $S\ot_RS$ and since $i$ is an algebra map, the details are left to the reader. 
It is also right $R$-linear since 
\begin{eqnarray*}
\nu^R_{S\ot_RS}(\mf{e}\ot\Id_R)&=&\nu^S_{S\ot_RS}(\mu^S_{S\ot_RS}(\Id_S\ot e)i\ot i)\\
&=&\nu^S_{S\ot_RS}(\nu^S_{S\ot_RS}(e\ot\Id_S)i\ot i)\\
&=&\nu^S_{S\ot_RS}(e\ot\Id_S)\un{m}_S(i\ot i)\\
&=&\mu^S_{S\ot_RS}(\Id_S\ot e)i\un{m}_R=\mf{e}\un{m}_R,
\end{eqnarray*}
as needed. Furthermore, since ${\mf e}\un{\eta}_R=\mu^S_{S\ot_RS}(i\ot e)\un{\eta}_R=\mu^S_{S\ot_RS}
(\un{\eta}_S\ot \Id_{S\ot_RS})e=e$, as in the case of the first correspondence one have that
\[
\mu'^S_{S\ot_RS}\tilde{\mf{e}}\Upsilon^{-1}_S=\mu^S_{S\ot_RS}(\Id_S\ot e)=
\nu^S_{S\ot_RS}(e\ot \Id_S)=\nu'^S_{S\ot_RS}\hat{\mf{e}}\Upsilon'^{-1}_S,
\] 
and so $\mf{e}$ is an element in ${\cal W}'$. We also have shown that ${\cal W}\ni e\mapsto \mf{e}\mapsto \mf{e}\un{\eta}_R=e$. 
Using that an element $\mf{e}$ is left $R$-linear we get that 
\[
{\cal W}'\ni \mf{e}\mapsto e=\mf{e}\un{\eta}_R\mapsto \mu^S_{S\ot_RS}(i\ot \mf{e})(\Id_R\ot \un{\eta}_R)=
\mf{e}\un{m}_R(\Id_R\ot \un{\eta}_R)=\mf{e},
\]
and this finishes our proof.
\end{proof}

\begin{proposition}\prlabel{Frobalgextcharact}
Let $\Cc$ be a monoidal category with coequalizers and $i:R\ra S$ an algebra 
extension in $\Cc$ such that $R$ is coflat and $S$ is coflat and left robust, so that 
$S\in {}^!_R\Cc_R$. Then the following assertions hold.
\begin{itemize}
\item[(i)] The extension $i: R\ra S$ is Frobenius if and only if $S$ is a Frobenius algebra in ${}^!_R\Cc_R$.
\item[(ii)] The extension $i: R\ra S$ is separable if and only if $S$ is a separable algebra in ${}^!_R\Cc_R$.
\end{itemize} 
\end{proposition}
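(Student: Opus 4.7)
The plan is to establish a dictionary between the defining data of a Frobenius (resp.\ separable) algebra extension and the Frobenius (resp.\ separable) algebra axioms for $S$ viewed as an algebra in the monoidal category ${}^!_R\Cc_R$, with the bijection $e \leftrightarrow \mf{e}$ of \leref{FrobSepExtMonInt} as the central tool. Throughout, one uses that for $S$ regarded as an algebra in ${}^!_R\Cc_R$ the unit object is $R$, the tensor product is $\ot_R$, the algebra unit is $i:R\to S$, and the multiplication is $\un{m}^R_S:S\ot_R S\to S$.

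For (i), I would start with a Frobenius pair $(\vartheta, e)$ of the extension and set $\mf{e}=\mu^S_{S\ot_RS}(i\ot e):R\to S\ot_RS$ as in the proof of \leref{FrobSepExtMonInt}. The first diagram of \thref{FrobSepExt}(ii) is exactly the condition placing $\mf{e}$ in $\cal{W}'$; combined with the defining equalities $\mu'^S_{S\ot_RS}q^R_{S, S\ot_RS}=\mu^S_{S\ot_RS}$ and $\nu'^S_{S\ot_RS}q^R_{S\ot_RS, S}=\nu^S_{S\ot_RS}$, it translates term-by-term into the Frobenius-associativity axiom
\[
(\un{m}^R_S\ot_R\Id_S)(\Id_S\ot_R\mf{e})=(\Id_S\ot_R\un{m}^R_S)(\mf{e}\ot_R\Id_S)
\]
for $\mf{e}$ as a Casimir morphism in ${}^!_R\Cc_R$. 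Next, unpacking the definitions of $\tilde{\vartheta}$ and $\hat{\vartheta}$ through their defining coequalizers and the canonical isomorphisms $\Upsilon_S$, $\Upsilon'_S$, the second and third diagrams of \thref{FrobSepExt}(ii) become the counit-type identities $(\vartheta\ot_R\Id_S)\mf{e}=i$ and $(\Id_S\ot_R\vartheta)\mf{e}=i$ for the pair $(\vartheta,\mf{e})$ in ${}^!_R\Cc_R$, which together say exactly that $(\vartheta,\mf{e})$ is a Frobenius pair for $S$ there. For the converse, $e:=\mf{e}\un{\eta}_R$ lies in $\cal{W}$ by \leref{FrobSepExtMonInt}, and the same dictionary, read in reverse, recovers the three diagrams of \thref{FrobSepExt}(ii).

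For (ii) the same dictionary applies with $\vartheta$ suppressed. By the $e$-form of separability exhibited in the proof of \prref{SepAlgSepFun}, $S$ is separable in ${}^!_R\Cc_R$ iff there exists $\mf{e}\in\cal{W}'$ with $\un{m}^R_S\mf{e}=i$. Under the bijection of \leref{FrobSepExtMonInt} this matches an $e\in\cal{W}$ with $\un{m}^R_Se=\un{\eta}_S$: the equality $\mf{e}=\mu^S_{S\ot_RS}(i\ot e)$ together with associativity gives $\un{m}^R_S\mf{e}=\un{m}_S(i\ot\un{m}^R_Se)$, so $\un{m}^R_Se=\un{\eta}_S$ implies $\un{m}^R_S\mf{e}=i$, while precomposition with $\un{\eta}_R$ gives the converse. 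Since $\un{m}^R_Se=\un{\eta}_S$ is precisely the defining property of a separable extension, (ii) follows. The most delicate bookkeeping step, shared by both parts, is identifying the balanced condition on $e$ with the Casimir axiom in ${}^!_R\Cc_R$ through the defining coequalizers; once this is in place, the remaining verifications reduce to epimorphism chases enabled by the coflatness and left robustness hypotheses assumed on $R$ and $S$.
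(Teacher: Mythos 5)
Your proposal is correct and follows essentially the same route as the paper: both directions rest on the dictionary $e=\mf{e}\un{\eta}_R$, $\mf{e}=\mu^S_{S\ot_RS}(i\ot e)$ from \leref{FrobSepExtMonInt}, with the first diagram of \thref{FrobSepExt}(ii) matching membership in $\cal{W}'$ and the counit/splitting conditions checked by precomposition with $\un{\eta}_R$ in one direction and a coflatness-enabled epimorphism chase in the other. The paper simply writes out the computations (e.g.\ $\Upsilon_S\tilde{\vartheta}\mf{e}=i$ via $\un{m}_S(i\ot\Id_S)(\Id_R\ot\Upsilon_S\tilde{\vartheta})=\Upsilon_S\tilde{\vartheta}\mu^R_{S\ot_RS}$) that you describe at the level of a sketch.
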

\begin{proof}
(i) By \deref{defFrobext} applied to $S\in {}^!_R\Cc_R$ we have that $S$ 
is a Frobenius algebra in ${}^!_R\Cc_R$ if and only if there 
exists a pair $(\vartheta, \mf{e})$ with $\vartheta: S\ra R$ and 
$\mf{e}: R\ra S\ot_RS$ morphisms in ${}_R\Cc_R$ such that 
\[
\mu'^S_{S\ot_RS}\tilde{\mf{e}}\Upsilon^{-1}_S=\nu'^S_{S\ot_RS}\hat{\mf{e}}\Upsilon'^{-1}_S~,~
\Upsilon_S\tilde{\vartheta}\mf{e}=i~{\rm and}~\Upsilon'_S\hat{\vartheta}\mf{e}=i, 
\] 
where $\mu'^S_{S\ot_RS}$ and $\nu'^S_{S\ot_RS}$ are the morphisms in ${}_R\Cc_R$ defined in 
the statement of \leref{FrobSepExtMonInt}. 

We show now that to give a pair $(\vartheta, \mf{e})$ as above is equivalent 
to give a pair $(\vartheta, e)$ with $\vartheta: S\ra R$ morphism in ${}_R\Cc_R$ and 
$e: \un{1}\ra S\ot_RS$ morphism in $\Cc$ such that the three diagrams in \thref{FrobSepExt} (ii) 
are commutative, and this would end the proof. 

Indeed, $e$ can be obtained from $\mf{e}$ as $e=\mf{e}\un{\eta}_R$. Then by \leref{FrobSepExtMonInt} 
the first diagram in \thref{FrobSepExt} (ii) is commutative; the commutativity of the other two follow easily 
by composing the remaining properties of $\mf{e}$ to the right with $\un{\eta}_R$.  

Conversely, to a pair $(\vartheta, e)$ corresponds $(\vartheta, \mf{e})$ with $\mf{e}=\mu^S_{S\ot_RS}(i\ot e)$. 
The first required property on $\mf{e}$ is satisfied because of \leref{FrobSepExtMonInt}. Moreover, 
by a direct computation we can check that 
\[
\un{m}_S(i\ot \Id_S)(\Id_R\ot \Upsilon_S\tilde{\vartheta})(\Id_R\ot q^R_{S, S})
=\un{m}_S(\un{m}_S(i\ot\Id_S)\ot i\vartheta)=\Upsilon_S\tilde{\vartheta}\mu^R_{S\ot_RS}(\Id_R\ot q^R_{S, S}),
\]
and thus $\un{m}_S(i\ot \Id_S)(\Id_R\ot \Upsilon_S\tilde{\vartheta})=\Upsilon_S\tilde{\vartheta}\mu^R_{S\ot_RS}$, 
since $R$ is right coflat, too. One then have 
\[
\Upsilon_S\tilde{\vartheta}\mf{e}=\Upsilon_S\tilde{\vartheta}\mu^R_{S\ot_RS}(\Id_R\ot e)
=\un{m}_S(i\ot\Id_S)(\Id_R\ot \Upsilon_S\tilde{\vartheta}e)=\un{m}_S(\Id_S\ot \un{\eta}_S)i=i,
\] 
as required. In a similar manner we can show that $\Upsilon'_S\hat{\vartheta}\mf{e}=i$, 
we leave the verification of the details to the reader. 

(ii) Similar to the one above. By \deref{defFrobext} $S$ is a separable algebra in ${}_R^!\Cc_R$ 
if and only if there exists $\mf{e}\in {\cal W}'$ such that $\un{m}^R_S\mf{e}=i$. 
Hence it is sufficient to show that the bijection in \leref{FrobSepExtMonInt} behaves well with 
respect to the extra property of $\mf{e}$. So we show that giving an element $\mf{e}\in {\cal W}'$ 
such that $\un{m}^R_S\mf{e}=i$ is equivalent to giving an element 
$e\in {\cal W}$ such that $\un{m}^R_Se=\un{\eta}_S$. 

Indeed, on one hand, to $\mf{e}\in {\cal W}'$ corresponds $e=\mf{e}\un{\eta}_R$, and so 
$\un{m}^R_Se=\un{m}^R_S\mf{e}\un{\eta}_R=i\un{\eta}_R=\un{\eta}_S$, as 
desired. On the other hand, if $e\in {\cal W}$ such that $\un{m}^R_Se=\un{\eta}_S$ then 
since $\mf{e}=\mu^S_{S\ot_RS}(i\ot e)$ we have 
\[
\un{m}^R_S\mf{e}=\un{m}^R_S\mu^S_{S\ot_RS}(i\ot e)\equal{(*)}\un{m}_S(\Id_S\ot \un{m}^R_Se)i=
\un{m}_S(\Id_S\ot \un{\eta}_S)i=i,
\] 
as needed. Note that $(*)$ is a consequence of the fact that $S$ is right coflat and 
$\un{m}^R_S\mu^S_{S\ot_RS}(\Id_S\ot q^R_{S, S})=\un{m}_S(\un{m}_S\ot \Id_S)=\un{m}_S(\Id_S\ot \un{m}^R_Sq^R_{S, S})$. 
\end{proof}

\begin{corollary}\colabel{SepExtVsSep}
Let $\Cc$ be a monoidal category for which any object is coflat and left robust and such that 
$\un{1}$ is either a projective object or a left $\ot$-generator for $\Cc$. If $i: R\ra S$ is an algebra extension 
in $\Cc$ with $R$ separable then $S$ is separable if and only if the algebra extension $i: R\ra S$ is separable. 
\end{corollary}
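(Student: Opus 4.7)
The plan is to exploit the separability morphism of $R$ to build an $S$-bilinear section of the canonical epimorphism $q^R_{S,S}\colon S\ot S\to S\ot_R S$; once this section is in place, both implications will follow by simply transporting the separability data across it.

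\textbf{Construction of the section.} Let $e_R\colon\un{1}\to R\ot R$ be the separability morphism of $R$, so that $\un{m}_R e_R=\un{\eta}_R$ and $\mu^R_{R\ot R}(\Id_R\ot e_R)=\nu^R_{R\ot R}(e_R\ot \Id_R)$. Define
\[
p=(\nu^R_S\ot\mu^R_S)(\Id_S\ot e_R\ot\Id_S)\colon S\ot S\to S\ot S,
\]
where $S$ is regarded as an $R$-bimodule through $i$. The identity $\un{m}_R e_R=\un{\eta}_R$ together with the coequalizer property gives $q^R_{S,S}\,p=q^R_{S,S}$, while the centralizer property of $e_R$ yields $p(\nu^R_S\ot\Id_S)=p(\Id_S\ot\mu^R_S)$; consequently $p$ descends uniquely to a morphism $\bar{p}\colon S\ot_R S\to S\ot S$ with $\bar{p}\,q^R_{S,S}=p$, and since $q^R_{S,S}$ is an epimorphism we deduce $q^R_{S,S}\bar{p}=\Id_{S\ot_R S}$. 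Associativity of $\un{m}_S$ and the fact that $i$ is an algebra map readily show that $p$, hence $\bar{p}$, is $S$-bilinear for the $S$-actions induced by $\un{m}_S$. Finally, $\un{m}_S\bar{p}=\un{m}^R_S$ follows from $\un{m}_R e_R=\un{\eta}_R$ combined with coflatness of $S$.

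\textbf{The two implications.} If the extension $i\colon R\to S$ is separable, choose $e\in {\cal W}$ with $\un{m}^R_S e=\un{\eta}_S$ and set $\tilde{e}=\bar{p}\,e\colon\un{1}\to S\ot S$. Then $\un{m}_S\tilde{e}=\un{m}^R_S e=\un{\eta}_S$, and $S$-bilinearity of $\bar{p}$ combined with the defining property of ${\cal W}$ gives
\[
(\un{m}_S\ot\Id_S)(\Id_S\ot\tilde{e})=\bar{p}\,\mu^S_{S\ot_RS}(\Id_S\ot e)=\bar{p}\,\nu^S_{S\ot_RS}(e\ot\Id_S)=(\Id_S\ot\un{m}_S)(\tilde{e}\ot\Id_S),
\]
so $\tilde{e}$ is a separability morphism of $S$ in $\Cc$. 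Conversely, if $S$ is separable in $\Cc$ with separability morphism $e\colon \un{1}\to S\ot S$, then $e'=q^R_{S,S}\,e$ lies in ${\cal W}$, using $\mu^S_{S\ot_RS}(\Id_S\ot q^R_{S,S})=q^R_{S,S}(\un{m}_S\ot\Id_S)$ and its right analogue together with the centralizer condition on $e$; moreover $\un{m}^R_S e'=\un{m}_S e=\un{\eta}_S$, so the extension is separable.

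The main technical point is verifying the $R$-balancedness and $S$-bilinearity of $p$ at the categorical level rather than on elements; each reduces, via coflatness and the universal property of the coequalizer defining $\ot_R$, to an identity of parallel morphisms which follows from the centralizer property of $e_R$ or from associativity of $\un{m}_S$. The hypothesis that $\un{1}$ be projective or a left $\ot$-generator does not appear explicitly in this construction; it enters only via the background characterizations in \prref{SepAlgSepFun} and \thref{SepExtMon} that underwrite the two notions of separability being compared.
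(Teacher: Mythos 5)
Your proof is correct, and it takes a genuinely different route from the paper's. The paper argues indirectly, giving one proof for each hypothesis on $\un{1}$: when $\un{1}$ is projective it invokes \prref{sepversusproj} to translate separability into projectivity as a bimodule and then identifies ${}_S({}_R\Cc_R)_S$ with ${}_S\Cc_S$; when $\un{1}$ is a left $\ot$-generator it passes to separable functors and uses the composition properties of \cite[Proposition 46]{cmz} applied to $F''=F'\circ F$. You instead construct, directly from the separability morphism $e_R$ of $R$, an $S$-bilinear section $\bar{p}$ of $q^R_{S,S}$ with $\un{m}_S\bar{p}=\un{m}^R_S$ --- the categorical incarnation of the classical fact that $S\ot_R S$ is an $S$-bimodule direct summand of $S\ot S$ when $R$ is separable --- and then transport Casimir elements along $\bar{p}$ and $q^R_{S,S}$. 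All the verifications you sketch (that $p$ coequalizes the defining pair, that $\bar{p}$ is $S$-bilinear after cancelling the epimorphisms $\Id_S\ot q^R_{S,S}$ and $q^R_{S,S}\ot\Id_S$, and that $\un{m}_S\bar{p}=\un{m}^R_S$) do go through as claimed, using only coflatness and the Casimir and normalization identities for $e_R$. What your approach buys is self-containedness and, as you correctly observe, a slight strengthening: measured against the paper's own Definitions of separable algebra and separable extension (and the element-free Casimir characterization established in the first paragraph of the proof of \prref{SepAlgSepFun}, which needs no hypothesis on $\un{1}$), your argument never uses that $\un{1}$ is projective or a left $\ot$-generator; that hypothesis is only needed to identify these notions with separability of the corresponding functors. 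It is also worth noting explicitly --- and consistent with both proofs --- that the direction ``$S$ separable $\Rightarrow$ the extension is separable'' does not use separability of $R$ at all. The paper's proofs are shorter given the surrounding machinery and keep the functorial picture in view; yours is the more elementary and more general argument.
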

\begin{proof}
Assume first that $\un{1}$ is projective. 
By \prref{sepversusproj} we know that $R$, the unit object of ${}_R\Cc_R$, is projective in ${}_R\Cc_R$. 
Hence the algebra extension $i: R\ra S$ is separable if and only if $S$ is a separable algebra in ${}_R\Cc_R$, 
if and only if $S$ is projective as an $S$-bimodule in ${}_R\Cc_R$. But ${}_S({}_R\Cc_R)_S$ identifies with 
${}_S\Cc_S$, as any object of ${}_S({}_R\Cc_R)_S$ inherits the $R$-module structures from the $S$-module ones 
(due to the $R$-balanced conditions). Thus the algebra extension $i: R\ra S$ is separable if and only if $S$ 
is projective in ${}_S\Cc_S$, and since $\un{1}$ is projective this is equivalent to the fact that 
$S$ is a separable algebra.   

Assume now that $\un{1}$ is a left $\ot$-generator for $\Cc$. Since $R$ is a separable algebra in $\Cc$ 
it follows by \prref{SepAlgSepFun} that the forgetful functor $F': \Cc_R\ra \Cc$ is separable. Now, 
the extension $i: R\ra S$ is separable if and only if the restriction of scalars functor 
$F: \Cc_S\ra \Cc_R$ is separable, cf. \thref{SepExtMon}, and again by \prref{SepAlgSepFun} we have that 
$S$ is a separable algebra in $\Cc$ if and only if the forgetful functor $F{''}: \Cc_S\ra \Cc$ is separable. 

Thus, if the extension $i: R\ra S$ is separable then according to \cite[Proposition 46 (1)]{cmz} 
the functor $F{''}=F'\circ F$ is separable, and so $S$ is a separable algebra in $\Cc$. Conversely, if 
$S$ is separable then $F{''}$ is a separable functor and since $F{''}=F'\circ F$ by 
\cite[Proposition 46 (2)]{cmz} we get that $F$ is a separable functor, and hence the extension $i: R\ra S$ is 
separable. 
\end{proof}

A result of Eilenberg and Nakayama asserts that any separable algebra over a field is a Frobenius algebra 
(even more, it is a symmetric algebra, see \cite{EiNak}). As we explained in Introduction, 
it is an open (and quite difficult) problem to see if this result remains true in the setting of monoidal categories. 
Nevertheless, what one can prove is a sort of converse for the above result. It measures how far is a Frobenius 
algebra to be separable.

\begin{proposition}\prlabel{WhenFrobisSep}
Let $A$ be a Frobenius algebra in a monoidal category $\Cc$ and $(\nu, e)$ a Frobenius 
pair for $A$. Then $A$ is separable if and only if there exists a morphism $\a: \un{1}\ra A$ 
such that $\un{m}_A(\un{m}_A\ot\Id_A)(\Id_A\ot \a\ot \Id_A)e=\un{\eta}_A$.
\end{proposition}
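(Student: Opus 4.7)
I propose to prove both directions by direct construction, working in Sweedler-like notation $e = e^1 \ot e^2$. In this notation the Frobenius pair $(\nu, e)$ is characterized by the Casimir identity $a e^1 \ot e^2 = e^1 \ot e^2 a$ for every $a : \un{1} \to A$, together with the counit axioms $\nu(e^1) e^2 = \un{\eta}_A = e^1 \nu(e^2)$; the hypothesis of the proposition rewrites as $e^1 \a e^2 = \un{\eta}_A$.

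For the direction $(\Leftarrow)$, my plan is to take as separability morphism
\[ \gamma := (\un{m}_A \ot \Id_A)(\Id_A \ot \a \ot \Id_A) \circ e, \]
that is, $\gamma = e^1 \a \ot e^2$. The $A$-bilinearity $a(e^1 \a) \ot e^2 = (e^1 \a) \ot e^2 a$ follows by right-multiplying the Casimir identity for $e$ componentwise in $A \ot A$ by $\a \ot \un{\eta}_A$, and the normalization $\un{m}_A \gamma = e^1 \a e^2 = \un{\eta}_A$ is the hypothesis. By the reformulation of separability used in the proof of \prref{SepAlgSepFun}, this witnesses that $A$ is separable.

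For $(\Rightarrow)$, assuming a separability morphism $\gamma$, my candidate is
\[ \a := (\nu \ot \Id_A)(\gamma) = \nu(\gamma^1)\, \gamma^2. \]
The verification $e^1 \a e^2 = \un{\eta}_A$ will combine three ingredients: (i) the auxiliary identity $e^1 \nu(e^2 b) = b$ valid for any $b : \un{1} \to A$, obtained by applying $\Id_A \ot \nu$ to $b e^1 \ot e^2 = e^1 \ot e^2 b$ and invoking $e^1 \nu(e^2) = \un{\eta}_A$; (ii) the Casimir identity for $\gamma$ applied with $a = e^2$ then paired with $\nu \ot \Id_A$, which gives $\nu(\gamma^1)\, \gamma^2 e^2 = \nu(e^2 \gamma^1)\, \gamma^2$; and (iii) the separability normalization $\gamma^1 \gamma^2 = \un{\eta}_A$. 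The computation then reads
\[ e^1 \a e^2 = \nu(\gamma^1)\, e^1 \gamma^2 e^2 = e^1 \nu(e^2 \gamma^1)\, \gamma^2 = \gamma^1 \gamma^2 = \un{\eta}_A. \]

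The only obstacle is careful bookkeeping in the strict monoidal setting: one must interpret each Sweedler manipulation as an equality of genuine morphisms, using the unit isomorphisms to slide scalar morphisms $\un{1} \to \un{1}$ (such as $\nu(\gamma^1)$ and $\nu(e^2 \gamma^1)$) past other factors. No additional hypothesis on $\Cc$ (projectivity, $\ot$-generation, coflatness) is required, since the argument is purely a symbolic calculation with the Frobenius and separability structure on $A$.
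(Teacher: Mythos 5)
Your proof is correct and follows essentially the same route as the paper: for the converse you build the separability morphism $(\un{m}_A\ot\Id_A)(\Id_A\ot\a\ot\Id_A)e$ exactly as the paper does, and for the direct implication you contract the separability morphism with $\nu$ on the left, taking $\a=(\nu\ot\Id_A)\gamma$, whereas the paper uses the mirror-image choice $(\Id_A\ot\nu)\gamma$. This is an immaterial difference — the verification is the same Casimir-identity computation reflected left-to-right — and there are no gaps.
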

\begin{proof}
Assume that $A$ is separable and let ${\bf e}:\un {1}\ra A\ot A$ be a separability morphism for it. 
If we define $\a:=(\Id_A\ot \nu){\bf e}: \un{1}\ra A$ then 
\begin{eqnarray*}
(\un{m}_A\ot\Id_A)(\Id_A\ot \a\ot \Id_A)e&=&(\un{m}_A\ot \Id_A)(\Id_{A\ot A}\ot \nu\ot \Id_A)(\Id_A\ot {\bf e}\ot \Id_A)e\\
&=&(\Id_A\ot \nu\ot \Id_A)((\un{m}_A\ot \Id_A)(\Id_A\ot {\bf e})\ot \Id_A)e\\
&=&(\Id_A\ot \nu\ot \Id_A)(\Id_A\ot \un{m}_A\ot \Id_A)(\Id_{A\ot A}\ot e){\bf e}\\
&=&(\Id_A\ot (\nu\ot \Id_A)(\Id_A\ot \un{m}_A)(e\ot \Id_A)){\bf e}\\
&=&(\Id_A\ot \un{m}_A((\nu\ot \Id_A)e\ot \Id_A)){\bf e}\\
&=&(\Id_A\ot \un{m}_A(\un{\eta}_A\ot \Id_A)){\bf e}={\bf e},
\end{eqnarray*}
from where we get that $\un{m}_A(\un{m}_A\ot\Id_A)(\Id_A\ot \a\ot \Id_A)e=\un{m}_A{\bf e}=\un{\eta}_A$, as desired. 

Conversely, let $\a: \un{1}\ra A$ be a morphism in $\Cc$ satisfying the condition in the statement. Then a simple computation 
ensures us that ${\bf e}:=(\un{m}_A\ot \Id_A)(\Id_A\ot \a\ot \Id_A)e: \un{1}\ra A\ot A$ is a separability 
morphism for $A$, and so $A$ is separable. 
\end{proof}

\begin{corollary}\colabel{WhenFrobExtIsSep}
Let $\Cc$ be a monoidal category with coequalizers and $i: R\ra S$ a Frobenius algebra extension
in $\Cc$ such that $R$ is coflat and $S$ is coflat and left robust. Then the extension $i: R\ra S$ is separable 
if and only if there exists a morphism $\alpha_0: \un{1}\ra S$ in $\Cc$ such that 
\[
\un{m}_S(i\ot \a_0)=\un{m}_S(\a_0\ot i)~~\mbox{and}~~\un{m}^R_S~\widehat{\un{m}_S}~\widehat{\Id_S\ot \a_0}~e=\un{\eta}_S,
\] 
where $e:\un{1}\ra S\ot_RS$ is the morphism in $\Cc$ associated to the Frobenius extension $i: R\ra S$ as in the proof of 
\prref{Frobalgextcharact}. 
\end{corollary}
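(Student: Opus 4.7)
The plan is to recast the question as an application of \prref{WhenFrobisSep} inside the monoidal category ${}_R^!\Cc_R$, and then translate the resulting condition back to $\Cc$ via the correspondences of \prref{Frobalgextcharact}. By the latter proposition, the Frobenius extension $i: R\ra S$ in $\Cc$ corresponds to $S$ being a Frobenius algebra in ${}_R^!\Cc_R$; the Frobenius pair $(\vartheta,\mf{e})$ in ${}_R^!\Cc_R$ is linked to $(\vartheta,e)$ in $\Cc$ by $e = \mf{e}\un{\eta}_R$ (equivalently, $\mf{e}=\mu^S_{S\ot_R S}(i\ot e)$), and separability of the extension in $\Cc$ is equivalent to separability of $S$ as an algebra in ${}_R^!\Cc_R$.

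Applying \prref{WhenFrobisSep} inside ${}_R^!\Cc_R$, whose unit object is $R$ and whose algebra unit for $S$ is $i$, yields: $S$ is separable in ${}_R^!\Cc_R$ iff there exists an $R$-bimodule morphism $\a: R\ra S$ such that
\[
\un{m}^R_S\,(\un{m}^R_S\ot_R\Id_S)\,(\Id_S\ot_R\a\ot_R\Id_S)\,\mf{e}=i,
\]
after the canonical identification $S\ot_R S\cong S\ot_R R\ot_R S$. To produce the form stated in the corollary, I will convert the datum $\a: R\ra S$ in ${}_R\Cc_R$ into a morphism $\a_0:\un{1}\ra S$ in $\Cc$. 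Setting $\a_0 = \a\un{\eta}_R$, the two expressions $i(-)\a_0$ and $\a_0 i(-)$ obtained from left respectively right $R$-linearity of $\a$ must coincide, which is exactly the centrality condition $\un{m}_S(i\ot\a_0)=\un{m}_S(\a_0\ot i)$; conversely any such $\a_0$ determines $\a = \un{m}_S(i\ot\a_0) = \un{m}_S(\a_0\ot i)$ in ${}_R\Cc_R$.

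Precomposing the displayed separability condition with $\un{\eta}_R$ and using $\mf{e}\un{\eta}_R = e$ together with $i\un{\eta}_R = \un{\eta}_S$, the equation becomes $\un{m}^R_S\,(\un{m}^R_S\ot_R \Id_S)\,(\Id_S\ot_R\a\ot_R\Id_S)\, e = \un{\eta}_S$. Since $\rho_{\a_0}:=\un{m}_S\,(\Id_S\ot\a_0):S\ra S$ is right $R$-linear (indeed right $S$-linear), and the composite $(\un{m}^R_S\ot_R\Id_S)\,(\Id_S\ot_R\a\ot_R\Id_S)$ coincides, on the image of $q^R_{S,S}$, with $\widehat{\rho_{\a_0}} = \widehat{\un{m}_S}\,\widehat{\Id_S\ot\a_0}$ (invoking the functoriality $\widehat{gf}=\hat g\hat f$ noted right after \equref{coeqmorph2}, together with right coflatness of $S$ to pass from the equality on $q^R_{S,S}$ to the underlying morphism), we recover precisely the stated equation $\un{m}^R_S\,\widehat{\un{m}_S}\,\widehat{\Id_S\ot\a_0}\,e = \un{\eta}_S$. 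The converse direction is read off by reversing this chain. The main hurdle is this last bookkeeping step: correctly matching the tensor-over-$R$ operations in ${}_R^!\Cc_R$ with the $\widehat{\cdot}$-construction on coequalizers in $\Cc$, without needing hypotheses beyond the coflatness and left robustness already in force.
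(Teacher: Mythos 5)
Your proposal is correct and follows essentially the same route as the paper: reduce via \prref{Frobalgextcharact} to separability of $S$ as a Frobenius algebra in ${}_R^!\Cc_R$, apply \prref{WhenFrobisSep} there to get an $R$-bimodule morphism $\a: R\ra S$, and translate $(\a,\mf{e})$ back to $(\a_0,e)$ by evaluating at $\un{\eta}_R$, identifying the middle composite with $\widehat{\un{m}_S(\Id_S\ot\a_0)}$. The only point worth making explicit is that passing from the condition on $\mf{e}$ to the one on $e=\mf{e}\un{\eta}_R$ is an equivalence (not just an implication) because both sides of the original equation are left $R$-linear morphisms out of $R$.
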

\begin{proof}
By \prref{Frobalgextcharact} we know that $i: R\ra S$ is a Fobenius/separable extension if and only if 
$S$ is a Frobenius/separable algebra in ${}^!_R\Cc_R$. If we denote by $(\nu , {\bf e})$ the 
Frobenius system of $S$ in ${}^!_R\Cc_R$ then $S$ is separable in ${}^!_R\Cc_R$ if and only 
if there exists $\a: R\ra S$ an $R$-bimodule morphism in 
$\Cc$ such that 
\[
\un{m}^R_S\widehat{\un{m}^R_S\widetilde{\alpha}\Upsilon_S^{-1}}{\bf e}=i.
\]
Clearly $\a$ is uniquely determined by a morphism $\a_0: \un{1}\ra S$ obeying $\un{m}_S(i\ot \a_0)=\un{m}_S(\a_0\ot i)$. 
Also, by \leref{FrobSepExtMonInt} we have that ${\bf e}$ is completely determined by $e={\bf e}\un{\eta}_R: \un{1}\ra S\ot_RS$, 
an element in ${\cal W}$. Finally, since $\un{m}^R_S\tilde{\alpha}\Upsilon^{-1}_S=\un{m}_S(\Id_S\ot \a_0)$ it follows that 
$S$ is a separable algebra in ${}^!_R\Cc_R$ if and only if there exists $\a_0: \un{1}\ra S$ a morphism in $\Cc$ 
satisfying the two conditions in the statement. So we are done. 
\end{proof}

\section{Further characterizations for monoidal Frobenius algebra extensions}\selabel{FurtherCharFrobExt}
\setcounter{equation}{0}
In view of \prref{Frobalgextcharact} it is clear that characterizing Frobenius or separable algebra extensions 
is equivalent to characterizing Frobenius or separable algebras, of course if some coflatness and 
robustness properties are satisfied. We will do this in the next results. In the Frobenius case we do not have 
to assume from the beginning that the category is rigid (see the definition below). As we will see 
the existence of the dual object for a Frobenius algebra in a monoidal catgory $\Cc$ is automatic.  
The result can be viewed as the analogue of the classical result asserting that a Frobenius algebra 
in a category of vector spaces is always finite dimensional. 

Recall that an object $X$ of a monoidal category $\Cc$ admits a left dual if there exist 
an object $X^*$ in $\Cc$ and morphisms  
${\rm ev}_X: X^*\ot X\ra \un{1}$ and ${\rm coev}_X: \un{1}\ra X\ot X^*$ in $\Cc$ such that  
\begin{equation}\eqlabel{lrigid1}
(\Id_X\ot {\rm ev}_X)({\rm coev}_X\ot \Id_X)=\Id_X~~{\rm and}~~
({\rm ev}_X\ot \Id_{X^*})(\Id_{X^*}\ot {\rm coev}_X)=\Id_{X^*}.
\end{equation} 
In what follows we denote ${\rm ev}_X=
\footnotesize{\gbeg{2}{3}
\got{1}{X^*}\got{1}{X}\gnl
\gev\gnl
\gob{2}{\un{1}}
\gend}
$ and ${\rm coev}_X=
{\footnotesize
\gbeg{2}{3}
\got{2}{\un{1}}\gnl
\gdb\gnl
\gob{1}{X}\gob{1}{X^*}
\gend}
$. 
Hence, the following relations hold:
\begin{equation}\eqlabel{lrigid2}
{\footnotesize
\gbeg{3}{4}
\gvac{2}\got{1}{X}\gnl
\gdb\gcl{1}\gnl
\gcl{1}\gev\gnl
\gob{1}{X}
\gend} =
{\footnotesize 
\gbeg{1}{3}
\got{1}{X}\gnl
\gcl{1}\gnl
\gob{1}{X}
\gend}
\hspace{3mm}{\rm and}\hspace{3mm}
{\footnotesize 
\gbeg{3}{4}
\got{1}{X^*}\gnl
\gcl{1}\gdb\gnl
\gev\gcl{1}\gnl
\gvac{2}\gob{1}{X^*}
\gend} =
{\footnotesize 
\gbeg{1}{3}
\got{1}{X^*}\gnl
\gcl{1}\gnl
\gob{1}{X^*}
\gend}\hspace{2mm}.
\end{equation} 
If any object of $\Cc$ admits a left dual we then say that $\Cc$ is left rigid. 

Likewise, $\Cc$ is right rigid if for any $X\in \Cc$ there exist 
an object ${}^*X\in \Cc$ and morphisms  
${\rm ev}'_X:\ X\ot {}^*X\to \un{1}$ and ${\rm coev}'_X:\ \un{1}\to {}^*X\ot X$ 
such that 
\begin{equation}\eqlabel{rrigid1}
({\rm ev}'_X\ot \Id_X)(\Id_X\ot {\rm coev}'_X)=\Id_X~~{\rm and}~~
(\Id_{{}^*X}\ot {\rm ev}'_X)({\rm coev}'_X\ot \Id_{{}^*X})=\Id_{{}^*X}.
\end{equation}
In what follows we will denote 
${\rm ev}'_X:=
\footnotesize{
\gbeg{2}{3}
\got{1}{X}\got{1}{{}^*X}\gnl
\gvac{1}\gnot{\hspace*{-4mm}\vspace*{-2mm}\bullet}\gvac{-1}\gev\gnl
\gob{2}{\un{1}}
\gend}
$ 
and ${\rm coev}'_X:=
\footnotesize{\gbeg{2}{3}
\got{2}{\un{1}}\gnl
\gvac{1}\gnot{\hspace*{-4mm}\vspace*{2mm}\bullet}\gvac{-1}\gdb\gnl
\gob{1}{{}^*X}\gob{1}{X}
\gend \hspace{2mm}}
$. Then the relations above can be written as 
\begin{equation}\eqlabel{rrigid2}
{\footnotesize
\gbeg{3}{4}
\got{1}{X}\gnl
\gcl{1}\gvac{1}\gnot{\hspace*{-4mm}\vspace*{2mm}\bullet}\gvac{-1}\gdb\gnl
\gvac{1}\gnot{\hspace*{-4mm}\vspace*{-2mm}\bullet}\gvac{-1}\gev\gcl{1}\gnl
\gob{5}{X}
\gend} =
{\footnotesize 
\gbeg{1}{3}
\got{1}{X}\gnl
\gcl{1}\gnl
\gob{1}{X}
\gend} 
\hspace{3mm}\mbox{~~and~~}\hspace{3mm}
{\footnotesize 
\gbeg{3}{4}
\got{5}{{}^*X}\gnl
\gvac{1}\gnot{\hspace*{-4mm}\vspace*{2mm}\bullet}\gvac{-1}\gdb\gcl{1}\gnl
\gcl{1}\gvac{1}\gnot{\hspace*{-4mm}\vspace*{-2mm}\bullet}\gvac{-1}\gev\gnl
\gob{1}{{}^*X}
\gend }=
{\footnotesize 
\gbeg{1}{3}
\got{1}{{}^*X}\gnl
\gcl{1}\gnl
\gob{1}{{}^*X}
\gend} \hspace{2mm},
\end{equation} 
respectively. Thus a right dual for $X$ in $\Cc$ is nothing than a left dual for $X$ 
in $\ov{\Cc}$, the reverse monoidal category associated to $\Cc$ ($\ov{\Cc}$ is the category 
$\Cc$ endowed with the reverse monoidal structure of $\Cc$, that is, with the tensor 
product $\ov{\ot}=\ot\circ \tau$, where $\tau: \Cc\times \Cc\ra \Cc\times \Cc$ is the switch 
functor). In what follows by 
$(\rho, \l): Y\dashv X$ we denote the fact that $Y$ with $\rho: \un{1}\ra X\ot Y$ 
and $\l: Y\ot X\ra \un{1}$ is a left dual for $X$ or, equivalently, that $(X, \r, \l)$ is a 
right dual for $Y$. The pair $(\r, \l)$ is called an adjunction between $Y$ and $X$.  

Consider $A$ an algebra in a monoidal category $\Cc$ that has a left dual object $A^*$ (respectively a right 
dual object ${}^*A$). Then it is well known that $A^*$ (respectively ${}^*A$) is a right (left) $A$-module via the structure morphism 
\[{\footnotesize
\gbeg{3}{3}
\got{1}{A^*}\gvac{1}\got{1}{~~A}\gnl
\gwmu{3}\gnl
\gvac{1}\gob{1}{A^*}
\gend} =
{\footnotesize 
\gbeg{4}{6}
\got{1}{A^*}\got{1}{~~A}\gnl
\gcl{1}\gcl{1}\gdb\gnl
\gcl{1}\gmu\gcl{3}\gnl
\gcl{1}\gcn{1}{1}{2}{1}\gnl
\gev\gnl
\gvac{3}\gob{1}{A^*}
\gend}
\hspace{1cm}
\left(\mbox{respectively}\hspace{1cm}
{\footnotesize 
\gbeg{3}{3}
\got{1}{A}\gvac{1}\got{1}{{}^*A}\gnl
\gwmuh{2}{1}{5}\gnl
\gob{2}{{}^*A}
\gend} =
{\footnotesize  
\gbeg{4}{6}
\gvac{2}\got{1}{A}\got{1}{{}^*A}\gnl
\gvac{1}\gnot{\hspace*{-4mm}\vspace*{2mm}\bullet}\gvac{-1}\gdb\gcl{1}\gcl{1}\gnl
\gcl{1}\gmu\gcl{1}\gnl
\gcl{1}\gcn{1}{1}{2}{3}\gvac{1}\gcl{1}\gnl
\gcl{1}\gvac{1}\gvac{1}\gnot{\hspace*{-4mm}\vspace*{-2mm}\bullet}\gvac{-1}\gev\gnl
\gob{1}{{}^*A}
\gend}
\right)\hspace{2mm}.
\] 
Finally, if $X$ is an arbitrary object of $\Cc$ and $B: A\ot A\ra X$ is a morphism in $\Cc$ 
we then say that $B$ is associative if and only if $B(\un{m}_A\ot \Id_A)=B(\Id_A\ot \un{m}_A)$. 

One can now present characterizations for Frobenius algebras in arbitrary monoidal categories, 
so not necessarily rigid monoidal. Most of them are collected from \cite{fust, street, street2, yam}. 

\begin{theorem}\thlabel{charactFrobalg}
Let $\Cc$ be a monoidal category and $A$ an algebra in $\Cc$. Then the following assertions 
are equivalent:
\begin{itemize}
\item[(i)] $A$ is a Frobenius algebra;
\item[(ii)] $A$ admits a left dual $A^*$ and $A$ is isomorphic to $A^*$ as a right $A$-module;
\item[(iii)] $A$ admits a right dual ${}^*A$ and $A$ is isomorphic to ${}^*A$ as a left $A$-module;
\item[(iv)] $A$ admits a coalgebra structure $(A, \un{\Delta}_A, \un{\va}_A)$ in $\Cc$ 
such that $\un{\Delta}_A$ is an $A$-bimodule map, where both $A$ and $A\ot A$ are considered 
bimodules via the multiplication $\un{m}_A$ of $A$;
\item[(v)] $A$ admits a right dual ${}^*A$ and there exists a morphism $B: A\ot A\ra \un{1}$ 
that is associative and such that $\Phi^r_B:=(\Id_{{}^*A}\ot B)({\rm coev}'_A\ot \Id_A): A\ra {}^*A$ 
is an isomorphism in $\Cc$;
\item[(vi)] $A$ admits a left dual $A^*$ and there exists a morphism $B: A\ot A\ra \un{1}$ in $\Cc$ 
that is associative and such that $\Phi^l_B:=(B\ot \Id_{A^*})(\Id_A\ot {\rm coev}_A): A\ra A^*$ 
is an isomorphism;
\item[(vii)] There exists an adjunction $(\rho, \l): A\dashv A$ for which $\l: A\ot A\ra \un{1}$ 
is associative;
\item[(viii)] There exists an adjunction $(\rho, \l): A\dashv A$ such that $\l=\vartheta\un{m}_A$ 
for some $\vartheta: A\ra \un{1}$ morphism in $\Cc$.  
\end{itemize}
Furthermore, if $\un{1}$ is a left $\ot$-generator then the above 
assertions are also equivalent to 
\begin{itemize}
\item[(ix)] $-\ot S: \Cc\ra \Cc_S$ is a right adjoint for the forgetful functor $F: \Cc_S\ra \Cc$ 
or, in other words, $F$ is a Frobenius functor.  
\end{itemize}
\end{theorem}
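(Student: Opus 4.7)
The plan is to organise (i)--(viii) as a star pattern around (i), citing the references for the classical equivalences, and then deduce (ix) as a direct application of \thref{FrobSepExt} to the unit algebra extension.

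First I would show (i) $\Leftrightarrow$ (vii) $\Leftrightarrow$ (viii) essentially by unwinding definitions. Given a Frobenius pair $(\vartheta,e)$ as in \deref{defFrobext}, I set $\rho := e$ and $\l := \vartheta\un{m}_A$; the two triangle identities for an adjunction $A\dashv A$ reduce, via the unit axiom of $A$, to the counit equations $(\vartheta\ot\Id_A)e=\un{\eta}_A=(\Id_A\ot\vartheta)e$, while the Frobenius compatibility $(\un{m}_A\ot\Id_A)(\Id_A\ot e)=(\Id_A\ot\un{m}_A)(e\ot\Id_A)$ is exactly the naturality required to build the adjunction. The map $\l$ is then associative because $\un{m}_A$ is, and is of the form $\vartheta\un{m}_A$ by construction, so (viii) is visible. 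Conversely, starting from an adjunction $(\rho,\l)$ with $\l$ associative (respectively with $\l=\vartheta\un{m}_A$), put $\vartheta := \l(\un{\eta}_A\ot\Id_A)$ and $e := \rho$; associativity of $\l$ forces $\l=\vartheta\un{m}_A$ by evaluating against $\un{\eta}_A\ot\Id_A$ and using the triangle identity, and the Frobenius equations are then the triangle identities themselves.

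For (i)~$\Leftrightarrow$~(ii)~$\Leftrightarrow$~(iii) I would note that the adjunction just constructed exhibits $A$ as its own left dual and its own right dual. The right $A$-module structure induced on $A^{*}\cong A$ coincides with the regular right $A$-module structure. Conversely, given any left dual $A^{*}$ with adjunction $({\rm coev}_A,{\rm ev}_A)$ and any right $A$-linear isomorphism $\phi:A\to A^{*}$, the transported data $\rho:=(\Id_A\ot\phi^{-1}){\rm coev}_A$ and $\l:={\rm ev}_A(\phi\ot\Id_A)$ yield an adjunction $A\dashv A$, and the right $A$-linearity of $\phi$ is equivalent to the associativity of $\l$, landing back in (vii). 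The equivalence with (iv) is classical: set $\un{\Delta}_A:=(\un{m}_A\ot\Id_A)(\Id_A\ot e)$ and $\un{\va}_A:=\vartheta$; the Frobenius equation gives the second expression for $\un{\Delta}_A$, which immediately entails the bimodule property, while coassociativity and counitality follow from associativity of $\un{m}_A$ and the counit equations on $(\vartheta,e)$. Conversely set $e:=\un{\Delta}_A\un{\eta}_A$ and $\vartheta:=\un{\va}_A$. For (v) and (vi), any associative $B:A\ot A\to\un{1}$ is determined by $\vartheta_B:=B(\un{\eta}_A\ot\Id_A)=B(\Id_A\ot\un{\eta}_A)$ via $B=\vartheta_B\un{m}_A$, and invertibility of $\Phi^{r}_B$ (respectively $\Phi^{l}_B$) exactly expresses the non-degeneracy that promotes $\vartheta_B$ to the Frobenius morphism of a Frobenius pair, whose Casimir is obtained by applying $(\Phi^{l}_B)^{-1}\ot\Id_A$ to ${\rm coev}_A$. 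All of these arguments are either routine or recorded in the cited literature.

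Finally, for (i)~$\Leftrightarrow$~(ix) I would specialise \thref{FrobSepExt} to the algebra extension $\un{\eta}_A:\un{1}\to A$. Here $R=\un{1}$, the tensor product $\ot_R$ coincides with $\ot$, coequalizers and coflatness are automatic, the restriction of scalars functor is the forgetful functor $F:\Cc_A\to\Cc$, and the induction functor becomes $-\ot A$. The three diagrams of \thref{FrobSepExt}(ii) then degenerate, via the canonical isomorphisms $\Upsilon$ and $\Upsilon'$, into precisely the two counit equations and the Frobenius equation of \deref{defFrobext}; the hypothesis that $\un{1}$ is a left $\ot$-generator is exactly what is needed to invoke \thref{FrobSepExt}. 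The main obstacle in the whole argument is bookkeeping in the transport arguments for (ii),(iii),(v),(vi), where one must check that the module-theoretic isomorphism is compatible with the adjunction data; but since these steps are already in the literature, the genuinely new content here is the direct reduction of (ix) to \thref{FrobSepExt}.
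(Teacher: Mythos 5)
Your proposal is correct and follows essentially the same route as the paper: the Frobenius pair $(\vartheta,e)$ is converted into the adjunction/duality data via ${\rm ev}_A=\vartheta\un{m}_A$, ${\rm coev}_A=e$, the coalgebra structure is transported along the right $A$-linear isomorphism for (iv), the associative forms $B$ in (v)--(vi) are matched with Frobenius morphisms, and (ix) is obtained by specializing \thref{FrobSepExt} to the unit extension $\un{\eta}_A:\un{1}\ra A$, exactly as in the remarks preceding \prref{SepAlgSepFun}. The only point where your sketch is looser than it should be is the claim that, conversely, ``the Frobenius equations are then the triangle identities themselves'': deducing $(\un{m}_A\ot\Id_A)(\Id_A\ot e)=(\Id_A\ot\un{m}_A)(e\ot\Id_A)$ from an adjunction with associative counit requires the non-degeneracy argument of \cite[Theorem 1.6]{street} (transporting both sides through the adjunction bijection), which the paper also invokes at that step.
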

\begin{proof}
We sketch the proof. For each implication below the complete proof can be found in 
the quoted references or can be done directly by the reader.

$(i)\Leftrightarrow (ii)$. This is pointed out in \cite[Proposition 2.1]{yam}. Consider $(\vartheta, e)$ a Frobenius 
system for $A$ and define ${\rm ev}_A=\vartheta\un{m}_A: A\ot A\ra \un{1}$ and ${\rm coev}_A=e: \un{1}\ra A\ot A$. 
Then one can see easily that $(A, {\rm ev}_A, {\rm coev}_A)$ is a left dual for $A$ and, moreover, 
that the right action of $A$ on this left dual coincides with the multiplication of $A$. Then 
$A$ has a left dual and is isomorphic to it as a right $A$-module.  

For the converse, let $\Psi: A\ra A^*$ be a right $A$-linear isomorphism in $\Cc$ and define 
\[\vartheta=
\footnotesize{
\gbeg{2}{4}
\got{1}{A}\gnl
\gmp{\Psi}\gu{1}\gnl
\gev\gnl
\gob{2}{\un{1}}\\
\gend}
~~{\rm and~~}
e={\footnotesize 
\gbeg{2}{4}
\got{2}{\un{1}}\gnl
\gdb\gnl
\gcl{1}\gmp{\psi}\gnl
\gob{1}{A}\gob{1}{A}
\gend}~,
\] 
where $\psi$ is the inverse of $\Psi$. Then $(\vartheta, e)$ is a Frobenius pair for $A$, 
and therefore $A$ is a Frobenius algebra.

$(ii)\Leftrightarrow (iii)$. Follows from \cite[Lemma 5]{fust}. 

$(ii)\Leftrightarrow (iv)$. See for instance \cite[Propositions 8\& 9]{fust}. A direct proof, based 
on a monoidal approach, is the following. 
 
Let $A^*$ be a left dual object for $A$ and $\Psi: A\ra A^*$ a right 
$A$-module isomorphism in $\Cc$. Since the left dual functor $()^*: \Cc\ra \ov{\Cc}^{\rm opp}$ is 
monoidal the left dual $A^*$ admits a coalgebra structure in $\ov{\Cc}$, and therefore in $\Cc$ as well. 
If we transport this coalgebra structure on $A$ through the isomorphism $\Psi$ we get that $A$ admits a 
coalgebra structure in $\Cc$. More precisely, with 
\[
\un{\Delta}_A={\footnotesize
\gbeg{5}{8}
\got{1}{A}\gnl
\gcl{1}\gvac{1}\gdb\gnl
\gmp{\Psi}\gvac{1}\gcn{1}{1}{1}{-1}\gcn{1}{1}{1}{3}\gnl
\gcl{1}\gcl{1}\gdb\gcl{1}\gnl
\gcl{1}\gmu\gcl{1}\gcl{1}\gnl
\gcl{1}\gcn{1}{1}{2}{1}\gvac{1}\gmp{\psi}\gmp{\psi}\gnl
\gev\gvac{1}\gcl{1}\gcl{1}\gnl
\gvac{3}\gob{1}{A}\gob{1}{A}
\gend}
~~\mbox{~~and~~}~~
\un{\va}_A={\footnotesize 
\gbeg{2}{4}
\got{1}{A}\gnl
\gcl{1}\gu{1}\gnl
\gev\gnl
\gob{2}{\un{1}}
\gend}
\]
$A$ becomes a coalgebra in $\Cc$ where, as before, $\psi$ stands for the inverse of $\Psi$. 
Using that $\Psi$ is right $A$-linear we get that $\un{\Delta}_A$ is an $A$-bimodule morphism in $\Cc$, as desired.

Conversely, if $A$ admits a coalgebra structure $(A, \un{\Delta}_A, \un{\va}_A)$ 
such that $\un{\Delta}_A$ is an $A$-bilinear morphism then $A$ itself together with 
${\rm ev}_A=\un{\va}_A\un{m}_A: A\ot A\ra \un{1}$ and 
${\rm coev}_A=\un{\Delta}_A\un{\eta}_A: \un{1}\ra A\ot A$ is a left dual for $A$. Furthermore, the right action of $A$ 
on this left dual of it is just $\un{m}_A$. Thus $A$ admits a left dual and is isomorphic to it as 
a right $A$-module. 

$(iii)\Leftrightarrow (v)$ and $(ii)\Leftrightarrow (vi)$ follow from \cite[Proposition 9]{fust}.  

The implication $(vii)\Rightarrow (vi)$ is trivial. To prove $(vi)\Rightarrow (vii)$ we proceed as 
in the proof of \cite[Theorem 1.6]{street} or \cite[Theorem]{street2}. 
Namely, if $A^*$ is a left dual object for $A$ and $\Psi: A\ra A^*$ is a right $A$-module 
isomorphism with inverse $\psi$ then it can be easily verified that 
$((\Id_A\ot \psi){\rm coev}_A, {\rm ev}_A(\Psi\ot \Id_A)): A\dashv A$ is an adjunction 
for which ${\rm ev}_A(\Psi\ot \Id_A): A\ot A\ra \un{1}$ is associative.

$(vii)\Rightarrow (viii)$. If $\l$ is associative then $\vartheta:=\l(\un{\eta}_A\ot \Id_A)=
\l(\Id_A\ot \un{\eta}_A)$ is the desired morphism since $\vartheta\un{m}_A=\l$. The converse 
is also true because $\vartheta\un{m}_A$ is clearly associative. 
 
The equivalence between (i) and (ix) follows from the comments made before \prref{SepAlgSepFun}, so our proof is finished.  
\end{proof}

Since Frobenius algebra extensions are particular cases of Frobenius algebras in 
monoidal categories we get the following list of characterizations for a Frobenius 
algebra extension.

\begin{corollary}\colabel{CharFrobAlgExt}
Let $\Cc$ be a monoidal category for which any object is coflat and left robust. Then for an algebra extension 
$i: R\ra S$ in $\Cc$ the following assertions are equivalent:
\begin{itemize}
\item[(i)] The extension $i: R\ra S$ is Frobenius;
\item[(ii)] $S$ admits a left dual object $S^\surd$ in ${}_R\Cc_R$ and $S$ and $S^\surd$ are isomorphic as right 
$S$-modules in ${}_R\Cc_R$;
\item[(iii)] $S$ admits a right dual object ${}^\surd S$ in ${}_R\Cc_R$ and $S$ and ${}^\surd S$ are isomorphic 
as left $S$-modules in ${}_R\Cc_R$;
\item[(iv)] $S$ admits a coalgebra structure in ${}_R\Cc_R$, that is an $R$-coring structure, such that the 
comultiplication morphism is $S$-bilinear in ${}_R\Cc_R$.
\item[(v)] $S$ admits a right dual ${}^\surd S$ in ${}_R\Cc_R$ and there exists a morphism 
$B: S\ot_R S\ra R$ in ${}_R\Cc_R$ that is associative and such 
$\Phi^r_B:=\Upsilon_{{}^\surd S}\widetilde{B}\Sigma'_{{}^\surd S, S, S}\widehat{\rm{coev}'_S}\Upsilon^{'-1}_S: 
S\ra {}^\surd S$ is an isomorphism in ${}_R\Cc_R$;  
\item[(vi)] $S$ admits a left dual $S^\surd$ in ${}_R\Cc_R$ and there exists a morphism 
$B: S\ot_R S\ra R$ in ${}_R\Cc_R$ that is associative and such that 
$\Phi^l:=\Upsilon'_{S^\surd }\widehat{B}\Gamma'_{S, S, S^\surd}\widetilde{\rm{coev}_S}\Upsilon^{-1}_S: 
S\ra S^\surd$ is an isomorphism in ${}_R\Cc_R$;
\item[(vii)] There exists an adjunction $(\r, \l): S\dashv S$ in ${}_R\Cc_R$ 
for which $\l: S\otimes_R S\ra R$ is associative; 
\item[(viii)] There exists an adjunction $(\r, \l): S\dashv S$ in ${}_R\Cc_R$ such that 
$\l=\vartheta \un{m}_S^R$ for some $\vartheta: S\ra R$ morphism in ${}_R\Cc_R$.   
\end{itemize}
Furthermore, if $R$ is a left $\ot_R$-generator for ${}_R\Cc_R$ then the above assertions are also 
equivalent to 
\begin{itemize}
\item[(ix)] $-\ot_R S: {}_R\Cc_R\ra {}_R\Cc_S$ is a right adjoint of the forgetful functor 
$F: {}_R\Cc_S\ra {}_R\Cc_R$ or, otherwise stated, $F$ is a Frobenius functor. 
\end{itemize}
\end{corollary}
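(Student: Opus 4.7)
My plan is to deduce the corollary directly from the two results already established in the paper, with only bookkeeping in between. By the hypothesis that every object of $\Cc$ is coflat and left robust, the category ${}_R\Cc_R$ coincides with ${}^!_R\Cc_R$, which the preliminaries recall is monoidal with tensor product $\ot_R$, unit object $R$, left/right unit constraints $\Upsilon'$ and $\Upsilon$, and associativity constraint $\Sigma'$. Hence \prref{Frobalgextcharact} applies and tells us that the extension $i:R\to S$ is Frobenius if and only if $S$ is a Frobenius algebra in the monoidal category ${}_R\Cc_R$. So the whole task reduces to applying \thref{charactFrobalg} to $S$ viewed as an algebra in $({}_R\Cc_R,\ot_R,R)$ and translating each of its items (i)--(ix) into the language of the corollary.

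Most of the translation is nothing more than renaming. In (ii) and (iii) the duals are internal duals in ${}_R\Cc_R$, which we denote $S^\surd$ and ${}^\surd S$ to distinguish them from $\Cc$-duals. In (iv) a coalgebra in ${}_R\Cc_R$ is by definition an $R$-coring, and the bilinearity of the comultiplication with respect to the algebra structure $(S,\un{m}^R_S,i)$ becomes $S$-bilinearity in ${}_R\Cc_R$. In (vii), (viii) the adjunctions and associativity condition are read inside the monoidal category ${}_R\Cc_R$, with $\un{m}_A$ replaced by $\un{m}^R_S$. For (ix), the equivalence (i)$\Leftrightarrow$(ix) in \thref{charactFrobalg} requires the unit object of the ambient monoidal category to be a left $\ot$-generator; here the ambient category is ${}_R\Cc_R$ and its unit is $R$, so we need precisely the stated hypothesis that $R$ is a left $\ot_R$-generator for ${}_R\Cc_R$, and the category of $S$-modules in ${}_R\Cc_R$ is exactly ${}_R\Cc_S$.

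The only step that requires genuine, if routine, computation is matching the explicit formulas for $\Phi^r_B$ and $\Phi^l_B$ in items (v) and (vi). In \thref{charactFrobalg} these are written in a strict monoidal category, so $\Phi^l_B=(B\ot\Id_{A^*})(\Id_A\ot\mathrm{coev}_A)$ and $\Phi^r_B=(\Id_{{}^*A}\ot B)(\mathrm{coev}'_A\ot\Id_A)$. When transported into the non-strict monoidal category ${}_R\Cc_R$, tensoring a morphism on the left (resp.\ right) with an identity over $R$ is realized by $\widehat{(-)}$ (resp.\ $\widetilde{(-)}$), the associativity isomorphisms $\Sigma'$ or $\Gamma'$ must be inserted to reassociate the triple tensor product, and the left/right unit constraints $\Upsilon'$ and $\Upsilon$ must be inserted to identify $R\ot_R X$ and $X\ot_R R$ with $X$. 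Carrying this out for (vi) one obtains
\[
\Phi^l_B \;=\; \Upsilon'_{S^\surd}\,\widehat{B}\,\Gamma'_{S,S,S^\surd}\,\widetilde{\mathrm{coev}_S}\,\Upsilon^{-1}_S,
\]
which is the formula in the statement, and an entirely analogous unwinding yields the formula for $\Phi^r_B$ in (v).

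The only point where anything could go wrong is this last bookkeeping in (v) and (vi): one must be careful that the non-strictness of ${}_R\Cc_R$ is absorbed exactly by the $\Upsilon$, $\Upsilon'$, $\Sigma'$, $\Gamma'$ that appear, and that the morphisms $\widehat{B}$ and $\widetilde{\mathrm{coev}_S}$ are defined with respect to the correct module structures (the associativity constraint $\Gamma'$ is precisely what exchanges the left $R$-structure on $S\ot_R S^\surd$ with the one on $(S\ot_R S)\ot_R S^\surd$ used to form $\widehat{B}$). Once this identification is made, the corollary is an immediate specialization of \thref{charactFrobalg} via \prref{Frobalgextcharact}.
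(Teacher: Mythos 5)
Your proposal is correct and follows exactly the route the paper intends: the corollary is stated as an immediate consequence of \prref{Frobalgextcharact} (reducing the Frobenius property of the extension to $S$ being a Frobenius algebra in ${}_R\Cc_R$) combined with \thref{charactFrobalg} applied inside the monoidal category $({}_R\Cc_R,\ot_R,R)$, and your unwinding of the constraints $\Upsilon$, $\Upsilon'$, $\Sigma'$, $\Gamma'$ in items (v) and (vi) matches the formulas in the statement. Nothing further is needed.
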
 

So if an algebra extension $i: R\ra S$ in $\Cc$ is Frobenius then $S$ has left and 
right dual objects in ${}_R\Cc_R$. In the sequel we go further with this observation, by 
investigating when the existence of the dual object of $S$ in ${}_R\Cc_R$ implies the existence of the 
dual object in $\Cc$, and vice-versa. The final aim is to characterize the Frobenius 
property of an algebra extensions $i: R\ra S$ in terms given by the algebras $R$ and $S$. We shall 
see that this is possible in the case when $R$ is Frobenius and separable. 

We start by presenting connections between the existence of the dual of an object $X$ in ${}_R\Cc_R$ and the existence 
of the dual of the same object $X$, considered now as an object in $\Cc$ via 
the canonical forgetful functor $\mathfrak{U}: {}_R\Cc_R\ra \Cc$. In this direction, 
it is well-known that a strong monoidal 
functor preserves duals. Even more, it has been proved in \cite[Theorem 2]{DayPastro} that this remains 
true in the weaker hypothesis when in place of a strong monoidal functor we consider a Frobenius monoidal one. 
So a partial answer to our problem is offered by the case when $\mathfrak{U}$ is a Frobenius monoidal functor. 
This is why we start by giving a necessary and sufficient condition for which $\mathfrak{U}$ is Frobenius monoidal. 

Recall first from \cite[Definition 6.1]{sz} and \cite[Definition 1]{DayPastro} the concept of 
(separable) Frobenius monoidal functor. 

\begin{definition}
Let $(\Cc, \ot, \un{1})$ and $(\Dc, \square, \un{I})$ be (strict) monoidal categories and 
$\mathfrak{F}: \Cc\ra \Dc$ a functor. 
\begin{itemize}
\item[(i)] $\mathfrak{F}$ is called monoidal if there exist a family of morphisms 
$\phi_2=(\phi_{X, Y}: \mathfrak{F}(X)\square \mathfrak{F}(Y)\ra \mathfrak{F}(X\ot Y))_{X, Y\in \Cc}$, natural in $X$ and $Y$, 
and $\phi_0: \un{I}\ra \mathfrak{F}(\un{1})$ morphism in $\Dc$ such that, for all $X, Y, Z\in \Cc$, 
the corresponding diagrams in \equref{FrobMonFunctor1} 
are commutative.
\item[(ii)] $\mathfrak{F}$ is called opmonoidal if there exist a family of morphisms 
$\psi_2=(\psi_{X, Y}: \mathfrak{F}(X\otimes Y)\ra \mathfrak{F}(X)\square \mathfrak{F}(Y))_{X, Y\in \Cc}$, 
natural in $X$ and $Y$, and $\psi_0: \mathfrak{F}(\un{1})\ra \un{I}$ morphism in $\Dc$ such that, for all 
$X, Y, Z\in \Cc$, the corresponding diagrams in \equref{FrobMonFunctor1} are commutative.   
\begin{eqnarray}
&&\hspace*{1cm}\xymatrix{
\mathfrak{F}(X)\square \mathfrak{F}(Y)\square \mathfrak{F}(Z)
\ar@<1ex>[rr]^-{\phi_{X, Y}\square \Id_{\mathfrak{F}(Y)}}\ar@<-1ex>[d]_-{\Id_{\mathfrak{F}(X)}\square \phi_{Y, Z}}&& 
\ar@<1ex>[ll]^-{\psi_{X, Y}\square \Id_{\mathfrak{F}(Z)}}\mathfrak{F}(X\ot Y)\square \mathfrak{F}(Z)
\ar@<1ex>[d]^-{\phi_{X\ot Y, Z}}\\
\mathfrak{F}(X)\square \mathfrak{F}(Y\otimes Z)
\ar@<-1ex>[rr]_-{\phi_{X, Y\ot Z}}\ar@<-1ex>[u]_-{\Id_{\mathfrak{F}(X)}\square \psi_{Y, Z}}&&
\ar@<-1ex>[ll]_-{\psi_{X, Y\ot Z}}\mathfrak{F}(X\ot Y\ot Z)\ar@<1ex>[u]^-{\psi_{X\ot Y, Z}}
}~,~\nonumber\\
&&\eqlabel{FrobMonFunctor1}\\
&&\xymatrix{
\un{I}\square \mathfrak{F}(X)\ar@<1ex>[rr]^-{\phi_0\square \Id_{\mathfrak{F}(X)}}\ar[d]_-{l_{\mathfrak{F}(X)}}&&
\ar@<1ex>[ll]^-{\psi_0\square \Id_{\mathfrak{F}(X)}}\mathfrak{F}(\un{1})\square \mathfrak{F}(X)
\ar@<1ex>[d]^-{\phi_{\un{1}, X}}\\
\mathfrak{F}(X)&&\ar[ll]^-{\mathfrak{F}(l_X)}\mathfrak{F}(\un{1}\otimes X)\ar@<1ex>[u]^-{\psi_{\un{1}, X}}
}~,~
\xymatrix{
\mathfrak{F}(X)\square \un{I}\ar@<1ex>[rr]^-{\Id_{\mathfrak{F}(X)}\square \phi_0}\ar[d]_-{r_{\mathfrak{F}(X)}}&&
\ar@<1ex>[ll]^-{\Id_{\mathfrak{F}(X)}\square \psi_0}\mathfrak{F}(X)\square \mathfrak{F}(\un{1})
\ar@<1ex>[d]^-{\phi_{X, \un{1}}}\\
\mathfrak{F}(X)&&\ar[ll]^-{\mathfrak{F}(r_X)}\mathfrak{F}(X\otimes \un{1})\ar@<1ex>[u]^-{\psi_{X, \un{1}}}
}~.\nonumber
\end{eqnarray}
\item[(iii)] We call $\mathfrak{F}$ Frobenius monoidal if $\mathfrak{F}$ is equipped with a monoidal 
$(\phi_2, \phi_0)$ and comonoidal $(\psi_2, \psi_0)$ structure such that, for all $X, Y, Z\in \Cc$, the diagrams 
\begin{equation}\eqlabel{FrobMonFunctor2}
\xymatrix{
\mathfrak{F}(X\ot Y)\square \mathfrak{F}(Z)\hspace{3mm}\ar[r]^-{\psi_{X, Y}\square \Id_{\mathfrak{F}(Z)}}
\ar[d]_-{\phi_{X\ot Y, Z}}&\hspace{3mm}
\mathfrak{F}(X)\square\mathfrak{F}(Y)\square\mathfrak{F}(Z)\ar[d]^-{\Id_{\mathfrak{F}(X)}\square \phi_{Y, Z}}\\
\mathfrak{F}(X\ot Y\ot Z)\ar[r]^-{\psi_{X, Y\ot Z}}&\mathfrak{F}(X)\square \mathfrak{F}(Y\ot Z)
}~,~
\xymatrix{
\mathfrak{F}(X)\square \mathfrak{F}(Y\otimes Z)\hspace{3mm}\ar[r]^-{\Id_{\mathfrak{F}(X)}\square \psi_{Y, Z}}
\ar[d]_-{\phi_{X, Y\otimes Z}}&\hspace{3mm}\mathfrak{F}(X)\square \mathfrak{F}(Y)\square \mathfrak{F}(Z)
\ar[d]^-{\phi_{X, Y}\square \Id_{\mathfrak{F}(Z)}}\\
\mathfrak{F}(X\ot Y\ot Z)\ar[r]^-{\psi_{X\otimes Y, Z}}&\mathfrak{F}(X\ot Y)\square \mathfrak{F}(Z)
}
\end{equation}
are commutative. If, furthermore, $\phi_{X, Y}\psi_{X, Y}=\Id_{\mathfrak{F}(X\ot Y)}$, for all 
$X, Y\in \Cc$, we then say that $\mathfrak{F}$ is a separable Frobenius monoidal functor. 
\item[(iv)] $\mathfrak{F}$ is called strong monoidal if it is a separable Frobenius 
monoidal functor, $\phi_{X, Y}$ is an isomorphism for all $X, Y\in \Cc$ (and thus 
$\phi_{X, Y}^{-1}=\psi_{X, Y}$), and $\phi_0$ and $\psi_0$ are inverses of each other.   
\end{itemize} 
\end{definition}  

We leave to the reader to check that $\phi_2=(\phi_{X, Y}=q^R_{X, Y}: X\ot Y\ra X\ot_R Y)_{X, Y\in {}_R\Cc_R}$ and 
$\phi_0=\un{\eta}_R: \un{1}\ra R$ define on the forgetful functor $\mathfrak{U}: {}_R\Cc_R\ra \Cc$ a monoidal structure. 
We refer to it as being the trivial monoidal structure of the functor $\mathfrak{U}$.   

The next result gives the connection between the notions of Frobenius monoidal functor 
and Frobenius monoidal algebra. It improves and at the same time generalizes \cite[Lemmas 6.3 \& 6.4]{sz}.  

\begin{theorem}\thlabel{ForgFrobMonFunctor}
Let $\Cc$ be a monoidal category with coequalizers and such that any object of it is 
coflat and left robust. If $R$ is an algebra in $\Cc$ then the forgetful functor 
$\mathfrak{U}: {}_R\Cc_R\ra \Cc$ endowed with the trivial monoidal structure 
$(q^R_{-, -}, \un{\eta}_R)$ is Frobenius if and only if $R$ is a Frobenius algebra. 
Furthermore, if this is the case then the opmonoidal $(\psi_2, \psi_0)$ structure of $\mathfrak{U}$ is 
uniquely determined by a Frobenius structure of $R$, in the sense that 
there exists $(\vartheta, e)$ a Frobenius pair for $R$ such that  
$\psi_0=\vartheta$ and 
\begin{equation}\eqlabel{ComStrForgFunctor}
\psi_2q^R_{-,-}:=(\psi_{X, Y}q^R_{X, Y})_{X, Y\in {}_R\Cc_R}=\left((\nu^R_X\ot \mu^R_Y)(\Id_X\ot e\ot \Id_Y)\right)_{X, Y\in {}_R\Cc_R}.
\end{equation}   
\end{theorem}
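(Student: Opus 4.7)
The plan is to establish a bijective correspondence between Frobenius pairs $(\vartheta, e)$ of $R$ and extensions of the trivial monoidal structure $(q^R_{-,-}, \un{\eta}_R)$ on $\mathfrak{U}$ to a Frobenius monoidal structure, given explicitly by the formula in \equref{ComStrForgFunctor}. Both directions then follow by checking that the three Frobenius identities for $(\vartheta,e)$ (centrality of $e$, and the two counit identities) correspond precisely to the two halves of \equref{FrobMonFunctor2} specialized to the unit object and to the opmonoidal unit axioms of $\mathfrak{U}$, respectively.

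For the ``if'' direction, start from a Frobenius pair $(\vartheta,e)$ and set $\psi_0:=\vartheta$. For each $X,Y\in{}_R\Cc_R$ introduce the candidate in $\Cc$
\[
\widetilde{\psi}_{X,Y}:=(\nu^R_X\ot\mu^R_Y)(\Id_X\ot e\ot\Id_Y):X\ot Y\ra X\ot Y.
\]
First I would verify that $\widetilde{\psi}_{X,Y}$ coequalizes the parallel pair
$\nu^R_X\ot\Id_Y,\,\Id_X\ot\mu^R_Y:X\ot R\ot Y\rightrightarrows X\ot Y$: after rewriting each composition by associativity of the $R$-actions, the equality reduces exactly to the Casimir identity $(\un{m}_R\ot\Id_R)(\Id_R\ot e)=(\Id_R\ot\un{m}_R)(e\ot\Id_R)$. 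The universal property of $(X\ot_RY,q^R_{X,Y})$ then produces the required $\psi_{X,Y}$. Naturality in $X,Y$ follows immediately from bimodule linearity of morphisms. The two opmonoidal unit axioms reduce, after precomposition with $q^R_{R,X}$ and $q^R_{X,R}$ and using $\Upsilon^{-1}$, $\Upsilon'^{-1}$ from \equref{invcanisom1}, to $(\vartheta\ot\Id_R)e=\un{\eta}_R=(\Id_R\ot\vartheta)e$. The coassociativity axiom and both Frobenius compatibility diagrams \equref{FrobMonFunctor2}, after precomposing with the appropriate tensor powers of $q^R_{-,-}$, reduce to applying the Casimir identity once and then using associativity of the $R$-actions on $X,Y,Z$; this is the step that looks tedious but is essentially diagram bending.

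For the ``only if'' direction, assume $(\phi_2,\phi_0)=(q^R_{-,-},\un{\eta}_R)$ extends to a Frobenius monoidal structure with opmonoidal data $(\psi_2,\psi_0)$. Put $\vartheta:=\psi_0:R\ra\un{1}$ and
\[
e:=\psi_{R,R}\,q^R_{R,R}(\un{\eta}_R\ot\un{\eta}_R):\un{1}\ra R\ot R.
\]
Specializing the two opmonoidal unit axioms of \equref{FrobMonFunctor1} to $X=R$ and using \equref{invcanisom1} immediately yields the two counit identities for $e$. For centrality of $e$, specialize one of the Frobenius compatibility squares \equref{FrobMonFunctor2} to $X=Y=Z=R$: because $\phi=q^R$ and $e$ has been defined from $\psi_{R,R}$, both sides of the square, after composing with $\un{\eta}_R\ot\un{\eta}_R\ot\un{\eta}_R$, collapse to $(\un{m}_R\ot\Id_R)(\Id_R\ot e)$ and $(\Id_R\ot\un{m}_R)(e\ot\Id_R)$ respectively. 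Hence $(\vartheta,e)$ is a Frobenius pair, and running the ``if'' construction on it recovers the original $(\psi_2,\psi_0)$ by uniqueness in the universal property of the coequalizers $q^R_{X,Y}$, giving the claimed bijection and hence the uniqueness statement.

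The main obstacle I anticipate is the verification of the Frobenius compatibility squares \equref{FrobMonFunctor2} in the ``if'' direction, which involves three bimodules $X,Y,Z$ and a fourfold tensor product with two insertions of $e$; coflatness and left robustness are needed so that the relevant coequalizers factor through one another, and the identity is obtained by using the Casimir equation once to slide $e$ across $\nu^R_Y$ or $\mu^R_Y$, followed by associativity. All other checks, including naturality, opmonoidal axioms and the forward-direction extraction of $(\vartheta,e)$, are routine applications of the coequalizer universal property and the three Frobenius identities.
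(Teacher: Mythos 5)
Your overall architecture (a bijection between Frobenius pairs for $R$ and Frobenius monoidal extensions of the trivial monoidal structure) and your entire ``if'' direction agree with the paper's proof: the candidate $(\nu^R_X\ot\mu^R_Y)(\Id_X\ot e\ot\Id_Y)$ coequalizes by the Casimir identity, descends to $\psi_{X,Y}$, and the axioms are routine. The converse, however, has a genuine gap concentrated in one place: you never express $\psi_{X,Y}$ for general $X,Y$ --- nor even the composite $\psi_{R,R}q^R_{R,R}$ --- in terms of the $e$ you extract, and both the Casimir identity and the uniqueness claim depend on exactly that.

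Concretely, composing a square of \equref{FrobMonFunctor2} at $X=Y=Z=R$ with $\un{\eta}_R\ot\un{\eta}_R\ot\un{\eta}_R$ cannot yield $(\un{m}_R\ot\Id_R)(\Id_R\ot e)=(\Id_R\ot\un{m}_R)(e\ot\Id_R)$: those morphisms have source $R$ while your composite has source $\un{1}$, and once all three units are inserted both paths of either square collapse to $e$, giving only the tautology $e=e$. One must keep a leg free and invoke naturality of $\psi$ with respect to the unit constraints: the first square, with units in the first two slots and the $Z$-input free, gives $(\Id_R\ot\un{m}_R)(e\ot\Id_R)=\psi_{R,R}\Upsilon'^{-1}_R$, the second square with the $X$-input free gives $(\un{m}_R\ot\Id_R)(\Id_R\ot e)=\psi_{R,R}\Upsilon^{-1}_R$, and only combining the two (with $\Upsilon_R=\Upsilon'_R$) produces the Casimir identity; this is, in disguise, the paper's argument that $\psi_{R,R}$ is $R$-bilinear, so that $\Delta=\psi_{R,R}\Upsilon_R^{-1}$ is a bimodule map and $e=\Delta\un{\eta}_R$ is central. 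Likewise, ``running the `if' construction recovers the original $(\psi_2,\psi_0)$ by the universal property of the coequalizers'' begs the question: the universal property only says that $\psi_{X,Y}$ is determined by $\psi_{X,Y}q^R_{X,Y}$, and you still must prove that this composite equals $(\nu^R_X\ot\mu^R_Y)(\Id_X\ot e\ot\Id_Y)$ for \emph{all} $X,Y$. That is the substance of the ``furthermore'' part of the theorem, and it requires the paper's computation taking $Y=R$ in each square of \equref{FrobMonFunctor2} together with naturality for $\Upsilon'_Z$ and $\Upsilon_X$, which yields $\psi_{X,Z}q^R_{X,Z}=(\Id_X\ot\mu^R_Z)(\psi_{X,R}\Upsilon_X^{-1}\ot\Id_Z)=(\nu^R_X\ot\Id_Z)(\Id_X\ot\psi_{R,Z}\Upsilon'^{-1}_Z)$ and hence reduces everything to $\psi_{R,R}$, i.e.\ to $e$. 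Without that step the uniqueness statement, and with it your derivation of centrality of $e$, is unproved.
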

\begin{proof}
Assume that the functor $\mathfrak{U}$ admits a Frobenius monoidal structure. Then by 
\cite[Corollary 5]{DayPastro} $\mathfrak{U}$ carries Frobenius algebras to Frobenius algebras. Since $R$ 
is a Frobenius algebra in ${}_R\Cc_R$ (as it is the unit object of a monoidal category) 
we get that $R$ is a Frobenius algebra in $\Cc$, as needed. Note that an alternative proof for 
this implication can be obtained from \cite[Lemma 6.3]{sz}. It has the advantage that we can obtain 
the coalgebra structure $(\Delta, \va)$ of $R$ in $\Cc$ as follows. If $(\psi_2, \psi)$ is the comonoidal structure 
of $\mathfrak{U}$ that together with the trivial monoidal structure gives on $\mathfrak{U}$ a 
Frobenius monoidal functor structure then $\Delta=\psi_{R, R}\Upsilon^{-1}_R: R\ra R\ot R$ and 
$\va=\psi_0: R\ra \un{1}$, respectively. 

Conversely, suppose that $R$ is a Frobenius algebra in $\Cc$ and let $(\vartheta, e)$ be a Frobenius system for $R$.  
For $X, Y\in {}_R\Cc_R$ denote by $\psi_{X, Y}$ the morphism in $\Cc$ uniquely determined by $\psi_{X, Y}q^R_{X, Y}=
(\nu^R_X\ot \mu^R_Y)(\Id_X\ot e\ot \Id_Y)$. Note that $(\nu^R_X\ot \mu^R_Y)(\Id_X\ot e\ot \Id_Y): X\ot Y\ra X\ot Y$ 
fits in the universal property of the cooequalizer 
\[
\xymatrix{
X\ot R\ot Y\ar@<1ex>[rr]^-{\nu^R_X\ot \Id_Y}\ar@<-1ex>[rr]_-{\Id_X\ot \mu^R_Y}&&X\ot Y\ar[r]^-{q^R_{X, Y}}&X\ot_RY
}
\]
since $(\un{m}_R\ot \Id_R)(\Id_R\ot e)=(\Id_R\ot \un{m}_R)(e\ot \Id_R)$. Now a simple computation shows that the diagrams 
in \equref{FrobMonFunctor1} and \equref{FrobMonFunctor2} are commutative, and so $\mathfrak{U}$ is a Frobenius monoidal functor. 

It remains to prove that if $(q^R_{-,-}, \un{\eta}_R, \psi_2, \psi_0)$ defines a Frobenius monoidal functor structure 
on $\mathfrak{U}$ then there exists $(\vartheta, e)$ a Frobenius system for $R$ such that the comonoidal 
structure $(\psi_2, \psi_0)$ of $\mathfrak{U}$ is completely determined by it, in the sense that $\psi_0=\vartheta$ 
and $\psi_2$ is defined by \equref{ComStrForgFunctor}. To this end we first show that $\psi_{X, Y}$ is $R$-bilinear 
for any $X, Y\in {}_R\Cc_R$. 

Indeed, if we take $Z=R$ in the first diagram of \equref{FrobMonFunctor2} we then have 
\[
(\Id_X\ot q^R_{Y, R})(\psi_{X, Y}\ot \Id_R)=\psi_{X, Y\ot_RR}\Sigma'_{X, Y, R}q^R_{X\ot_RY, R}.
\]  
Composing the both sides of the above equality to the right with $\Id_X\ot \Upsilon_Y$, and taking 
in consideration that the diagram 
\[
\xymatrix{
X\ot_R(Y\ot_RR)\ar[rr]^-{\psi_{X, Y\ot_RR}}\ar[d]_-{\widetilde{\Upsilon_Y}}&&
X\ot (Y\ot_RR)\ar[d]^-{\Id_X\ot \Upsilon_Y}\\
X\ot_RY\ar[rr]^-{\psi_{X, Y}}&&X\ot Y
}
\]
is commutative ($\psi_{X, -}$ is natural and $\Upsilon_Y: Y\ot_RR\ra Y$ is a morphism 
in ${}_R\Cc_R$) we deduce that 
\begin{eqnarray*}
(\Id_X\ot \nu^R_Y)(\psi_{X, Y}q^R_{X, Y}\ot \Id_R)&=&
\psi_{X, Y}\widetilde{\Upsilon_Y}q^R_{X, Y\ot_RR}(\Id_X\ot q^R_{Y, R})\\
&=&\psi_{X, Y}q^R_{X, Y}(\Id_X\ot \nu^R_Y)\\
&=&\psi_{X, Y}\nu^R_{X\ot_RY}(q^R_{X, Y}\ot \Id_R).
\end{eqnarray*}
Hence $(\Id_X\ot \nu^R_Y)(\psi_{X, Y}\ot \Id_R)=\psi_{X, Y}\nu^R_{X\ot_RY}$, and this shows that 
$\psi_{X, Y}$ is right $R$-linear in $\Cc$. 

Likewise, if we take $X=R$ in the second diagram in \equref{FrobMonFunctor2} and use the fact 
that $(\Upsilon'_Y\ot \Id_Z)\psi_{R\ot_RY, Z}=\psi_{Y, Z}\widehat{\Upsilon'_Y}$ ($\psi_{-, Z}$ is natural 
and $\Upsilon'_Y: R\ot_RY\ra Y$ is a morphism in ${}_R\Cc_R$), by a computation similar to the one above 
we get that $\psi_{Y, Z}$ is left $R$-linear, for all $Y, Z\in {}_R\Cc_R$. Therefore $\psi_2$ is defined 
by a family of $R$-bilinear morphisms in $\Cc$, as desired. 

Now, if we define $\Delta:=\psi_{R, R}\Upsilon^{-1}_R=\psi_{R, R}\Upsilon^{'-1}_R: R\ra R\ot R$ 
it then follows that $\Delta$ is $R$-bilinear. So $\Delta$ is uniquely determined by 
$e:=\Delta\un{\eta}_R: \un{1}\ra R\ot R$, a morphism in $\Cc$ which has the property that 
$(\un{m}_R\ot \Id_R)(\Id_R\ot e)=(\Id_R\ot \un{m}_R)(e\ot \Id_R)$. 

We claim that $e$ determines 
completely $\psi_2$. Indeed, if we take $Y=R$ in the first diagram of \equref{FrobMonFunctor2} 
and use the naturalness of $\psi_{X, -}$ for the morphism 
$\Upsilon'_Z:R\ot_RZ\ra Z$ in ${}_R\Cc_R$ we obtain 
\begin{eqnarray*}
(\Id_X\ot \mu^R_Z)(\psi_{X, R}q^R_{X, R}\ot \Id_Z)&=&(\Id_X\ot \Upsilon'_Z)(\Id_X\ot q^R_{R, Z})(\psi_{X, R}q^R_{X, R}\ot \Id_Z)\\
&=&\psi_{X, Z}\widetilde{\Upsilon'_Z}q^R_{X, R\ot_RZ}(\Id_X\ot q^R_{R, Z})\\
&=&\psi_{X, Z}q^R_{X, Z}(\Id_X\ot \mu^R_Z).   
\end{eqnarray*}
Composing the both sides of the latest equality to the right with $\Id_X\ot \un{\eta}_R\ot \Id_Z$ we 
deduce that $\psi_{X, Z}q^R_{X, Z}=(\Id_X\ot \mu^R_Z)(\psi_{X, R}\Upsilon^{-1}_X\ot \Id_Z)$, for all $X, Z\in {}_R\Cc_R$. 

Similarly, take $Y=R$ in the second diagram of \equref{FrobMonFunctor2} and use 
$(\Upsilon_X\ot \Id_Z)\psi_{X\ot_RR, Z}=\psi_{X, Z}\widehat{\Upsilon_X}$ to get 
$\psi_{X, Z}q^R_{X, Z}=(\nu^R_X\ot \Id_Z)(\Id_X\ot \psi_{R, Z}\Upsilon^{'-1}_Z)$, 
for all $X, Z\in {}_R\Cc_R$. Combining these two equalities we get 
\begin{eqnarray*}
\psi_{X, Z}q^R_{X, Z}&=&(\nu^R_X\ot \Id_Z)(\Id_X\ot \psi_{R, Z}q^R_{R, Z}(\un{\eta}_R\ot \Id_Z))\\
&=&(\nu^R_X\ot \Id_Z)(\Id_X\ot (\Id_R\ot \mu^R_Z)(\psi_{R, R}\Upsilon^{-1}_R\un{\eta}_R\ot \Id_Z))\\
&=&(\nu^R_X\ot \Id_Z)(\Id_X\ot (\Id_R\ot \mu^R_Z)(e\ot \Id_Z))\\
&=&(\nu^R_X\ot \mu^R_Z)(\Id_X\ot e\ot \Id_Z),
\end{eqnarray*} 
for all $X, Z\in {}_R\Cc_R$, as claimed. Furthermore, it is immediate now that the commutativity of 
the two square diagrams in \equref{FrobMonFunctor1} is equivalent to 
$(\psi_0\ot \Id_R)e=\un{\eta}_R$ and $(\Id_R\ot \psi_0)e=\un{\eta}_R$, respectively. In conclusion, 
$(\vartheta=\psi_0, e=\psi_{R, R}\Upsilon^{-1}_R\eta_R)$ is a Frobenius system for $R$ and determines 
completely the opmonoidal structure $(\psi_2, \psi_0)$ of $\mathfrak{U}$. This finishes the proof.  
\end{proof}

\begin{corollary}\colabel{FrobExtVsFrobAlg}
Let $\Cc$ be a category with coequalizers and such that any object of it is coflat and left robust. 
If $i: R\ra S$ is a Frobenius algebra extension in $\Cc$ and $R$ is a Frobenius algebra in $\Cc$ 
then $S$ is a Frobenius algebra in $\Cc$, too.
\end{corollary}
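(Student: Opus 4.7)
The plan is to chain together two results already established in the paper, using the forgetful functor $\mathfrak{U}: {}_R\Cc_R \to \Cc$ as the transport vehicle. Under the standing hypotheses that every object of $\Cc$ is coflat and left robust, we have the identification ${}^!_R\Cc_R = {}_R\Cc_R$, so all the general machinery about $R$-bimodules is available without qualification.

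First I would invoke \prref{Frobalgextcharact}(i): since $i: R \to S$ is a Frobenius algebra extension in $\Cc$, the object $S$ is a Frobenius algebra in the monoidal category ${}_R\Cc_R$. Concretely, this gives a Frobenius pair $(\vartheta_S, \mf{e}_S)$ with $\vartheta_S : S \to R$ an $R$-bimodule morphism and $\mf{e}_S : R \to S \ot_R S$ a morphism in ${}_R\Cc_R$ satisfying the Frobenius equations inside ${}_R\Cc_R$.

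Next I would apply \thref{ForgFrobMonFunctor} to the algebra $R$: since $R$ is assumed to be a Frobenius algebra in $\Cc$, the forgetful functor $\mathfrak{U}: {}_R\Cc_R \to \Cc$ equipped with its trivial monoidal structure $(q^R_{-,-}, \un{\eta}_R)$ admits a comonoidal structure $(\psi_2, \psi_0)$ (determined by any choice of Frobenius pair $(\vartheta_R, e_R)$ for $R$ via $\psi_0 = \vartheta_R$ and the formula \equref{ComStrForgFunctor}) turning $\mathfrak{U}$ into a Frobenius monoidal functor.

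Finally I would invoke the fact, already used in the proof of \thref{ForgFrobMonFunctor} and established in \cite[Corollary 5]{DayPastro}, that any Frobenius monoidal functor carries Frobenius algebras to Frobenius algebras. Applying this to $\mathfrak{U}$ and to the Frobenius algebra $S \in {}_R\Cc_R$ obtained in the first step yields that $\mathfrak{U}(S) = S$ is a Frobenius algebra in $\Cc$. There is no genuine obstacle here: the only thing to be slightly careful about is that one uses the \emph{same} monoidal structure on $\mathfrak{U}$ in both \thref{ForgFrobMonFunctor} and the Day--Pastro transfer, but this is automatic since the trivial monoidal structure is canonical and the comonoidal structure was built precisely to pair with it into a Frobenius monoidal structure. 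One could, if desired, also make the resulting Frobenius pair for $S$ in $\Cc$ explicit by combining $(\vartheta_S, \mf{e}_S)$ with $(\vartheta_R, e_R)$ through the formulas of \thref{ForgFrobMonFunctor}, but this is not needed for the stated conclusion.
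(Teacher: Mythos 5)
Your proposal is correct and follows exactly the route the paper takes: apply \prref{Frobalgextcharact} to identify $S$ as a Frobenius algebra in ${}_R\Cc_R$, use \thref{ForgFrobMonFunctor} to make the forgetful functor $\mathfrak{U}:{}_R\Cc_R\ra \Cc$ Frobenius monoidal from the Frobenius structure of $R$, and transport via the Day--Pastro result that Frobenius monoidal functors preserve Frobenius algebras. The only difference is that you spell out the intermediate data more explicitly than the paper does; the argument itself is the same.
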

\begin{proof}
Since $R$ is a Frobenius algebra in $\Cc$ we have that the forgetful functor $\mathfrak{U}: {}_R\Cc_R\ra \Cc$ 
is Frobenius monoidal, so it carries Frobenius algebras in ${}_R\Cc_R$ to Frobenius algebras in $\Cc$. 
The fact that $i: R\ra S$ is a Frobenius algebra extension is equivalent to the fact that $S$ is a 
Frobenius algebra in ${}_R\Cc_R$, cf. \prref{Frobalgextcharact}. Thus $S=\mathfrak{U}(S)$ is a Frobenius algebra in $\Cc$. 
\end{proof}

\begin{corollary}\colabel{DualRbimAredual}
Let $R$ be a Frobenius algebra in a monoidal category $\Cc$ with coequalizers and for which 
any object is coflat and left robust. If $X\in {}_R\Cc_R$ admits a (left) right object in ${}_R\Cc_R$ 
then it admits a (left) right dual object in $\Cc$, too.
\end{corollary}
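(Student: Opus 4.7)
The plan is to combine \thref{ForgFrobMonFunctor} with the well-known fact that Frobenius monoidal functors preserve dual objects. Since $R$ is a Frobenius algebra in $\Cc$, by \thref{ForgFrobMonFunctor} the forgetful functor $\mathfrak{U}: {}_R\Cc_R \to \Cc$, equipped with its trivial monoidal structure $(q^R_{-,-}, \un{\eta}_R)$ together with the opmonoidal structure $(\psi_2, \psi_0)$ determined by a Frobenius pair $(\vartheta, e)$ of $R$ via $\psi_0 = \vartheta$ and $\psi_{X,Y} q^R_{X,Y} = (\nu^R_X \ot \mu^R_Y)(\Id_X \ot e \ot \Id_Y)$, becomes a Frobenius monoidal functor. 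By \cite[Theorem 2]{DayPastro} any Frobenius monoidal functor preserves left and right duals, so applying $\mathfrak{U}$ to a left (right) dual of $X$ in ${}_R\Cc_R$ yields a left (right) dual of $\mathfrak{U}(X) = X$ in $\Cc$.

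More explicitly, for the left dual case, suppose $X^\surd \in {}_R\Cc_R$ is a left dual of $X$ in ${}_R\Cc_R$ with evaluation $\mathrm{ev}^R_X: X^\surd \ot_R X \to R$ and coevaluation $\mathrm{coev}^R_X: R \to X \ot_R X^\surd$. Then the transported morphisms
\[
\mathrm{ev}_X := \vartheta \circ \mathrm{ev}^R_X \circ q^R_{X^\surd, X}: X^\surd \ot X \to \un{1}
\]
and
\[
\mathrm{coev}_X := \psi_{X, X^\surd} \circ \mathrm{coev}^R_X \circ \un{\eta}_R: \un{1} \to X \ot X^\surd
\]
should provide the desired left dual structure on $X^\surd$ in $\Cc$. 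The right dual case is symmetric, using ${}^\surd X$ in place of $X^\surd$.

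The only non-routine point is verifying the snake identities \equref{lrigid2} for these transported morphisms, but this is precisely what the Frobenius monoidal compatibility \equref{FrobMonFunctor2} together with the unit conditions in \equref{FrobMonFunctor1} guarantee; this is the content of \cite[Theorem 2]{DayPastro} applied to $\mathfrak{U}$. Hence no further computation is needed beyond citing that result and verifying the hypotheses via \thref{ForgFrobMonFunctor}, which we have already done. The main conceptual obstacle, namely identifying the correct monoidal and opmonoidal structures on $\mathfrak{U}$, has already been handled in \thref{ForgFrobMonFunctor}; once those are in place the corollary follows immediately.
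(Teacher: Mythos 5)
Your proof is correct and follows essentially the same route as the paper: both invoke \thref{ForgFrobMonFunctor} to equip $\mathfrak{U}$ with a Frobenius monoidal structure determined by a Frobenius pair $(\vartheta,e)$ of $R$, cite \cite[Theorem 2]{DayPastro} for preservation of duals, and write down the identical transported evaluation $\psi_0\circ\lambda\circ q^R$ and coevaluation $\psi_{X,Y}\circ\rho\circ\un{\eta}_R$. No gaps.
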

\begin{proof}
Follows from the fact that a Frobenius monoidal functor behaves well with respect to dual objects, 
see \cite[Theorem 2]{DayPastro}. Actually, if $(\rho, \lambda): Y\dashv X$ is an adjunction in 
${}_R\Cc_R$ then 
\[
\rho':=\xymatrix{
(\un{1}\ar[r]^-{\un{\eta}_R}&R\ar[r]^-{\rho}&X\ot_R Y\ar[r]^-{\psi_{X, Y}}&X\ot Y)
}~\mbox{and}~ 
\lambda'=\xymatrix{
(X\ot Y\ar[r]^-{q^R_{X, Y}}&X\ot_RY\ar[r]^-{\lambda}&R\ar[r]^-{\psi_0}&\un{1})
}
\]
defines an adjunction $(\rho', \l'): Y\dashv X$ in $\Cc$, where $(\psi_2, \psi_0)$ is uniquely determined 
by a Frobenius system $(\vartheta, e)$ of $R$ as in \thref{ForgFrobMonFunctor}.  
\end{proof} 

\begin{corollary}
In the hypothesis and notions of \thref{ForgFrobMonFunctor} we have that 
$\mathfrak{U}: {}_R\Cc_R\ra \Cc$ is a separable 
Frobenius monoidal functor if and only if $R$ is a Frobenius separable algebra in $\Cc$.  
\end{corollary}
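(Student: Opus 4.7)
The plan is to apply \thref{ForgFrobMonFunctor}, which identifies compatible Frobenius monoidal structures on $\mathfrak{U}$ (with trivial monoidal part $(q^R_{-,-},\un{\eta}_R)$) with Frobenius pairs $(\vartheta,e)$ on $R$ via $\psi_0=\vartheta$ and formula \equref{ComStrForgFunctor}. The extra content of separability of the Frobenius monoidal functor is the condition $q^R_{X,Y}\psi_{X,Y}=\Id_{X\ot_RY}$ for every $X,Y\in{}_R\Cc_R$, so the task reduces to showing that this identity is equivalent to $\un{m}_Re=\un{\eta}_R$, i.e., to $e$ being simultaneously a separability morphism for $R$.

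The key computation is the calculation of $q^R_{X,Y}\psi_{X,Y}q^R_{X,Y}$. Substituting \equref{ComStrForgFunctor}, factoring $\nu^R_X\ot\mu^R_Y=(\nu^R_X\ot\Id_Y)(\Id_{X\ot R}\ot\mu^R_Y)$, invoking the coequalizer relation $q^R_{X,Y}(\nu^R_X\ot\Id_Y)=q^R_{X,Y}(\Id_X\ot\mu^R_Y)$, and applying the left $R$-module axiom $\mu^R_Y(\Id_R\ot\mu^R_Y)=\mu^R_Y(\un{m}_R\ot\Id_Y)$ on $Y$, one arrives at
\[
q^R_{X,Y}\psi_{X,Y}q^R_{X,Y}=q^R_{X,Y}\bigl(\Id_X\ot\mu^R_Y(\un{m}_Re\ot\Id_Y)\bigr).
\]
Since $q^R_{X,Y}$ is epic, the separability condition is equivalent to the right-hand side equalling $q^R_{X,Y}$ for every $X,Y$. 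The implication $\un{m}_Re=\un{\eta}_R\Rightarrow q^R_{X,Y}\psi_{X,Y}=\Id$ is then immediate from the unit law $\mu^R_Y(\un{\eta}_R\ot\Id_Y)=\Id_Y$. For the converse, I specialize $X=Y=R$, compose on the right with $\un{\eta}_R\ot\un{\eta}_R$, and apply the isomorphism $\Upsilon_R\colon R\ot_RR\to R$ (which satisfies $\Upsilon_Rq^R_{R,R}=\nu^R_R=\un{m}_R$); the unit laws then extract $\un{m}_Re=\un{\eta}_R$ directly.

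Hence $\mathfrak{U}$ is separable Frobenius monoidal if and only if $R$ admits a Frobenius pair $(\vartheta,e)$ whose Casimir $e$ is simultaneously a separability morphism. Such a pair clearly exhibits $R$ as both Frobenius and separable. Conversely, given $R$ Frobenius with any pair $(\vartheta_0,e_0)$ and separable, \prref{WhenFrobisSep} supplies a morphism $\alpha_0\colon\un{1}\to R$ witnessing the separability identity; the element $z:=\un{m}_Re_0\colon\un{1}\to R$ is central (by associativity of $\un{m}_R$ together with the Casimir property of $e_0$), and $\alpha_0$ forces $z$ to be invertible. A standard rescaling of $(\vartheta_0,e_0)$ by $z^{-1}$ then produces the desired Frobenius pair with $\un{m}_Re=\un{\eta}_R$.

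The main obstacle is this last rescaling step: while the argument is transparent in the classical setting, one must verify diagrammatically that $z$ is indeed invertible in $R$ from the given separability data and that the rescaled data genuinely satisfy the Frobenius axioms in the abstract monoidal context. The coequalizer-based calculation in the middle paragraph, although routine, is also where care is needed to correctly track the interplay between $\ot$ and $\ot_R$ and the module structures.
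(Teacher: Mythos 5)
Your first three paragraphs are correct and coincide with the paper's own (one-line) argument: the paper merely records that the extra condition $\phi_{X,Y}\psi_{X,Y}=\Id_{X\ot_RY}$ is equivalent to $\un{m}_Re=\un{\eta}_R$, and your coequalizer computation — reducing $q^R_{X,Y}\psi_{X,Y}q^R_{X,Y}$ to $q^R_{X,Y}(\Id_X\ot\mu^R_Y(\un{m}_Re\ot\Id_Y))$ and then specializing to $X=Y=R$ and composing with $\un{\eta}_R\ot\un{\eta}_R$ — is a correct verification of exactly that claim. At that point you have proved: $\mathfrak{U}$ is separable Frobenius monoidal if and only if $R$ admits a Frobenius pair $(\vartheta,e)$ whose Casimir morphism is simultaneously a separability morphism. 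This is where the paper stops, implicitly taking that to be the meaning of ``Frobenius separable algebra''.

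The gap is in your final rescaling step. The morphism $z=\un{m}_Re_0$ is indeed central, but separability of $R$ does not make $z$ invertible, and the morphism $\alpha_0$ of \prref{WhenFrobisSep} does not deliver this: that proposition only gives $\un{m}_R(\un{m}_R\ot\Id_R)(\Id_R\ot\alpha_0\ot\Id_R)e_0=\un{\eta}_R$, i.e.\ that the trace map attached to $(\vartheta_0,e_0)$ hits the unit, not that its value on the unit is invertible. Concretely, take $\Cc$ the category of $\mathbb{F}_p$-vector spaces and $R=M_p(\mathbb{F}_p)$ with $\vartheta_0$ the ordinary trace: then $e_0=\sum_{i,j}E_{ij}\ot E_{ji}$ and $z=\un{m}_Re_0=p\cdot 1=0$, although $R$ is both Frobenius and separable (one may take $\alpha_0=E_{11}$). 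So rescaling by $z^{-1}$ is unavailable; in this example a compatible pair does exist for a different Frobenius form, but producing it is not a rescaling of the given one. Consequently, if ``Frobenius separable'' is read, as elsewhere in the paper (e.g.\ \prref{DualAreDualBimod}), as the conjunction of the two properties with independent witnesses, the implication attempted in your last paragraph is genuinely nontrivial and is established neither by your argument nor by the paper's; if it is read as the existence of a single Frobenius pair with $\un{m}_Re=\un{\eta}_R$, your last paragraph is superfluous and the proof is already complete after the third.
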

\begin{proof}
It is obvious. Note only that the condition $\phi_{X, Y}\psi_{X, Y}=\Id_{X\ot_RY}$, for all 
$X, Y\in {}_R\Cc_R$, is equivalent to $\un{m}_Re=\un{\eta}_R$.
\end{proof}

We focus now on the opposite direction. As we will see we need this 
time $\mathfrak{U}$ to be a separable Frobenius monoidal functor. 
 
\begin{proposition}\prlabel{DualAreDualBimod}
Consider $R$ a Frobenius separable algebra in a monoidal category $\Cc$ with coequalizers and with 
the property that any object of it is coflat and left robust. Let $(\vartheta, e)$ be a Frobenius pair for $R$, 
$\alpha: \un{1}\ra R$ as in \prref{WhenFrobisSep} and $(\rho, \l): Y\dashv X$ an adjunction in $\Cc$.
\begin{itemize}
\item[(i)] If $Y$ has an $R$-bimodule structure in $\Cc$ then $X$ has an $R$-bimodule 
structure in $\Cc$, too, and via these structures the morphisms     
\[
\rho_0=(\xymatrix{
R\ar[r]^-{\rho\ot \Id_R}&X\ot Y\ot R\ar[rr]^-{\Id_X\ot \alpha \ot \Id_{Y\ot R}}&&X\ot R\ot Y\ot R
\ar[rr]^-{\Id_X\ot \mu^R_Y\nu^R_Y}&&X\ot Y\ar[r]^-{q^R_{X, Y}}&X\ot_RY
})
\]
and $\l_0: Y\ot_R X\ra R$ uniquely determined by 
\[
\l_0q^R_{Y, X}=(\xymatrix{
Y\ot X\ar[rr]^-{e\ot \Id_{Y\ot X}}&&R\ot R\ot Y\ot X\ar[rr]^-{\Id_R\ot \mu^R_Y\ot \Id_X}&&R\ot Y\ot X\ar[r]^-{\Id_R\ot \lambda}&R
}) 
\]
define an adjunction $(\rho_0, \l_0): Y\dashv X$ in ${}_R\Cc_R$.
\item[(ii)] Similarly, if $X$ admits an $R$-bimodule structure in $\Cc$ then $Y$ admits 
an $R$-bimodule structure in $\Cc$ as well, and via these structures the morphisms 
\[
\rho^0=(\xymatrix{
R\ar[r]^-{\Id_R\ot \rho}&R\ot X\ot Y\ar[rr]^-{\Id_{R\ot X}\ot\alpha\ot\Id_Y}&&R\ot X\ot R\ot Y
\ar[rr]^-{\mu^R_X\nu^R_X\ot \Id_Y}&&X\ot Y
\ar[r]^-{q^R_{X, Y}}&X\ot_RY
})
\]
and $\l^0: Y\ot_RX\ra R$ uniquely determined by 
\[
\l^0q^R_{Y, X}=(\xymatrix{
Y\ot X\ar[rr]^-{\Id_{Y\ot X}\ot e}&&Y\ot X\ot R\ot R\ar[rr]^-{\Id_Y\ot \nu^R_X\ot \Id_R}&&
Y\ot X\ot R\ar[r]^-{\lambda \ot \Id_R}&R
})
\]
define an adjunction $(\rho^0, \l^0): Y\dashv X$ in ${}_R\Cc_R$. 
\end{itemize} 
\end{proposition}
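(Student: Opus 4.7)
The approach is to adapt Yamagami's argument for the special Frobenius case to the Frobenius-separable setting, with the morphism $\alpha$ playing the role that $1/d_R$ plays in the special Frobenius setting. Parts (i) and (ii) are formally dual, so I concentrate on (i). The strategy is threefold: first, equip $X$ with an $R$-bimodule structure; second, verify that $\rho_{0}$ and $\l_{0}$ are well defined $R$-bilinear morphisms at the level of $\ot_R$; third, check the two triangle identities for $(\rho_{0},\l_{0}): Y\dashv X$ in ${}_R\Cc_R$.

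For the first step, the left $R$-action on $X$ is transported from the right $R$-action on $Y$ across the given adjunction in the natural way, namely
\[
\mu^R_X := (\Id_X\ot \l)(\Id_X\ot \nu^R_Y\ot \Id_X)(\rho\ot \Id_{R\ot X}).
\]
A right $R$-action on $X$ cannot be obtained from the left $R$-action on $Y$ using $(\rho,\l)$ alone; here I invoke the Frobenius data of $R$ (namely $e$ and $\vartheta$) to bridge the gap, with $\alpha$ ensuring that the interaction between the two actions is compatible with the bimodule axioms. Checking that these morphisms are module actions and commute with each other reduces to a combination of the module axioms for $\mu^R_Y,\nu^R_Y$, the snake identities for $(\rho,\l)$, the Casimir relation $(\un{m}_R\ot\Id_R)(\Id_R\ot e) = (\Id_R\ot\un{m}_R)(e\ot\Id_R)$, and the counit relations $(\vartheta\ot\Id_R)e = \un{\eta}_R = (\Id_R\ot\vartheta)e$.

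For the second step, the factorisation of the right-hand side of the formula defining $\rho_{0}$ through $q^R_{X,Y}$ is immediate from the $R$-bilinearity of $\nu^R_Y,\mu^R_Y$ together with the Casimir property of $e$. For $\l_{0}$, well definedness on $Y\ot_R X$ amounts to coequalising $\nu^R_Y\ot\Id_X$ with $\Id_Y\ot\mu^R_X$, which follows after applying the counit identities for $(\vartheta,e)$. The $R$-bilinearity of $\rho_{0}$ and $\l_{0}$ with respect to the bimodule structure on $X$ constructed in step one is then a direct diagram computation, during which the separability identity $\un{m}_R(\un{m}_R\ot\Id_R)(\Id_R\ot\alpha\ot\Id_R)e = \un{\eta}_R$ enters precisely to cancel the extra $R$-factors introduced by $\alpha$ and $e$.

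For the third step, each triangle identity in ${}_R\Cc_R$ is reduced to an equation in $\Cc$ by precomposing with $\Upsilon'^{-1}_X$ (respectively $\Upsilon^{-1}_Y$), by postcomposing with the appropriate $\Upsilon_X$ or $\Upsilon'_Y$, and by unwinding $\Gamma'_{X,Y,X}$ and $\Sigma'_{Y,X,Y}$ through their universal characterisations together with the relevant coequalisers $q^R_{-,-}$. The resulting identities in $\Cc$ then fall out from the original snake relations $(\Id_X\ot\l)(\rho\ot\Id_X)=\Id_X$ and $(\l\ot\Id_Y)(\Id_Y\ot\rho)=\Id_Y$, combined with the Frobenius and separability relations that cancel the extraneous $e$, $\vartheta$ and $\alpha$ factors. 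Part (ii) is settled by a formally dual argument, swapping the roles of the two adjunction data. The main obstacle, and the place where this proposition differs non-trivially from its special-Frobenius analogue, is pinning down the right $R$-action on $X$ (and symmetrically the left $R$-action on $Y$ in part (ii)) in step one: its definition must interleave the adjunction $(\rho,\l)$ with the data $(\vartheta,e,\alpha)$ in precisely the right way to secure both the $R$-bilinearity needed in step two and the snake identities needed in step three. Once this module structure is correctly identified, the remaining verifications are technical but essentially formal diagram chases.
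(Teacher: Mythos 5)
Your architecture matches the paper's proof exactly: transport a bimodule structure onto $X$, check that $\rho_0$ and $\l_0$ are well defined and $R$-bilinear, then verify the triangle identities by unwinding $\Upsilon$, $\Gamma'$, $\Sigma'$ through the coequalizers. Your formula for the left action, $\mu^R_X=(\Id_X\ot\l)(\Id_X\ot\nu^R_Y\ot\Id_X)(\rho\ot\Id_{R\ot X})$, is precisely the one the paper uses. However, there is a genuine gap at the point you yourself flag as ``the main obstacle'': you never construct the right $R$-action on $X$. Saying that its definition ``must interleave the adjunction with $(\vartheta,e,\alpha)$ in precisely the right way'' identifies the problem without solving it, and without this morphism none of the subsequent verifications (that it is an action, that it commutes with $\mu^R_X$, that $\rho_0$ and $\l_0$ are right $R$-linear, and the second triangle identity) can even be stated. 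The construction the paper uses is
\[
\nu^R_X=(\Id_X\ot\vartheta\,\un{m}_R)\,(\Id_{X\ot R}\ot\l\ot\Id_R)\,(\Id_{X\ot R}\ot\mu^R_Y\ot\Id_{X\ot R})\,(\Id_X\ot e\ot\Id_{Y\ot X\ot R})\,(\rho\ot\Id_{X\ot R}),
\]
i.e.\ one inserts the Casimir element $e$ between the two legs of $\rho$, lets its second leg act on $Y$ through $\mu^R_Y$ before evaluating against the incoming $X$, and closes the first leg against the incoming $R$ via $\un{m}_R$ followed by $\vartheta$. That this is associative and unital, and that it commutes with $\mu^R_X$, is then a nontrivial diagram chase resting on the Casimir relation and the counit relations for $(\vartheta,e)$ — it is not a routine consequence of the axioms you list, and it is the heart of the proposition.

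A secondary inaccuracy: you suggest that $\alpha$ enters into the definition of the right action and into the bilinearity checks. In the paper's proof $\alpha$ plays no role in the bimodule structure on $X$ at all; the actions are built from $(\rho,\l)$, $e$ and $\vartheta$ alone. The separability datum $\alpha$ appears only inside $\rho_0$ (through the morphism $\delta$ given in the statement), and the identity $\un{m}_R(\un{m}_R\ot\Id_R)(\Id_R\ot\alpha\ot\Id_R)e=\un{\eta}_R$ from \prref{WhenFrobisSep} is consumed exactly once, in the verification of the triangle identity $\Upsilon_X\widetilde{\l_0}\Sigma'_{X,Y,X}\widehat{\rho_0}\Upsilon'^{-1}_X=\Id_X$. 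So your plan points the reader toward looking for $\alpha$ in the wrong place. The rest of your outline (well-definedness of $\l_0$ on $Y\ot_RX$ via the coequalizer, reduction of the snake identities to identities in $\Cc$) is consistent with the paper and would go through once $\nu^R_X$ is supplied.
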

\begin{proof}
We prove only (i). The proof of (ii) is similar, and can be obtained from that of (i) 
by reversing the structures, and so the diagrammatic computations as well, through a mirror. 
In other words the statement (ii) is the statement (i) for the monoidal category $\ov{\Cc}$, 
where $\ov{\Cc}$ is the reverse monoidal category associated to $\Cc$. 

(i) From now on, for $Z$ a left $R$-module in $\Cc$ we denote by 
$\footnotesize{
\gbeg{2}{3}
\got{1}{R}\got{1}{Z}\gnl
\glm\gnl
\gvac{1}\gob{1}{Z}
\gend} 
$ 
the structure morphism $\mu^R_Z$. Similarly, if $Z$ is a right $R$-module in $\Cc$ then 
the structure morphism $\nu^R_Z$ will be denoted by 
$\footnotesize{
\gbeg{2}{3}
\got{1}{Z}\got{1}{R}\gnl
\grm\gnl
\gob{1}{Z}
\gend} 
$.  
Also, for the Frobenius separable system $(e, \vartheta, \alpha)$ of $R$, and respectively for the 
morphisms of the adjunction $(\r, \l): Y\dashv X$, we will use the following 
diagrammatic notations,
\[
e={\footnotesize
\gbeg{2}{5}
\got{1}{\hspace{5mm}\un{1}}\gnl
\gnl
\gsbox{2}\gnot{\hspace{5mm}e}\gnl
\gcl{1}\gcl{1}\gnl
\gob{1}{R}\gob{1}{R}
\gend}
~,~
\vartheta=
{\footnotesize
\gbeg{1}{3}
\got{1}{R}\gnl
\gmpcu{\vartheta}\gnl
\gob{1}{\un{1}}
\gend}
~,~
\alpha=
{\footnotesize 
\gbeg{1}{3}
\got{1}{\un{1}}\gnl
\gmpu{\alpha}\gnl
\gob{1}{R}
\gend}
~\mbox{, and}~
\rho={\footnotesize
\gbeg{2}{3}
\got{1}{\hspace{5mm}\un{1}}\gnl
\gdb\gnl
\gob{1}{X}\gob{1}{Y}
\gend} 
~,~
\lambda=
{\footnotesize 
\gbeg{2}{3}
\got{1}{Y}\got{1}{X}\gnl
\gev\gnl
\gob{1}{\hspace{5mm}\un{1}}
\gend}
~\mbox{, respectively.}
\]

With their help define the morphisms $\d: R\ra X\ot Y$ and $\gamma: Y\ot X\ra R$ given by 
\[
{\footnotesize
\d=\gbeg{4}{6}
\gvac{3}\got{1}{R}\gnl
\gdb\gvac{1}\gcl{1}\gnl
\gcl{1}\gcn{1}{1}{1}{3}\gvac{1}\gcl{1}\gnl
\gcl{1}\gmpu{\alpha}\grm\gnl
\gcl{1}\glm\gnl
\gob{1}{X}\gvac{1}\gob{1}{Y}
\gend
}
~~\mbox{and}~~
\gamma=
{\footnotesize
\gbeg{4}{6}
\gvac{2}\got{1}{Y}\got{1}{X}\gnl
\gvac{2}\gcl{1}\gcl{1}\gnl
\gsbox{2}\gnot{\hspace{5mm}e}\gvac{2}\gcl{1}\gcl{1}\gnl
\gcl{1}\glm\gcl{1}\gnl
\gcl{1}\gvac{1}\gev\gnl
\gob{1}{R}
\gend}~.
\] 
Furthermore, if $Y$ is an $R$-bimodule in $\Cc$ we then define on $X$ the following left and right $R$-actions,
\[
\mu^R_X={\footnotesize
\gbeg{4}{5}
\gvac{2}\got{1}{R}\got{1}{X}\gnl
\gdb\gcl{1}\gcl{1}\gnl
\gcl{1}\grm\gcn{1}{1}{1}{-1}\gnl
\gcl{1}\gev\gnl
\gob{1}{X}
\gend}
~\mbox{and}~
\nu^R_X=
{\footnotesize
\gbeg{6}{10}
\gvac{4}\got{1}{X}\got{1}{R}\gnl
\gvac{1}\gdb\gvac{1}\gcl{1}\gcl{1}\gnl
\gvac{1}\gcn{1}{1}{1}{-1}\gcn{1}{1}{1}{3}\gvac{1}\gcl{3}\gcl{3}\gnl
\gcl{1}\gsbox{2}\gnot{\hspace{5mm}e}\gvac{2}\gcl{1}\gnl
\gcl{1}\gcl{1}\glm\gcl{1}\gcl{1}\gnl
\gcl{1}\gcn{1}{1}{1}{5}\gvac{1}\gev\gcn{1}{1}{1}{-1}\gnl
\gcl{1}\gvac{2}\gmu\gnl
\gcl{1}\gvac{2}\gcn{1}{1}{2}{1}\gnl
\gcl{1}\gvac{2}\gmpcu{\vartheta}\gnl
\gob{1}{X}
\gend}~,
\]
respectively. In fact, it can be easily checked that $\mu^R_X$ yields a left $R$-module structure on $X$ and since 
\begin{eqnarray*}
&&\hspace*{-2cm}
{\footnotesize
\gbeg{11}{14}
\gvac{8}\got{1}{X}\got{1}{R}\got{1}{R}\gnl
\gvac{5}\gdb\gvac{1}\gcl{4}\gcl{4}\gcl{6}\gnl
\gvac{1}\gdb\gvac{2}\gcn{1}{1}{1}{-1}\gcn{1}{1}{1}{3}\gnl
\gvac{1}\gcn{1}{1}{1}{-1}\gcn{1}{1}{1}{3}\gvac{1}\gcl{1}\gsbox{2}\gnot{\hspace{5mm}e}\gvac{2}\gcl{1}\gnl
\gcl{1}\gsbox{2}\gnot{\hspace{5mm}e}\gvac{2}\gcl{1}\gcl{1}\gcl{1}\glm\gnl
\gcl{5}\gcl{1}\glm\gcl{1}\gcn{1}{1}{1}{5}\gvac{1}\gev\gcn{1}{1}{1}{-1}\gnl
\gvac{1}\gcl{1}\gvac{1}\gev\gvac{2}\gmu\gnl
\gvac{3}\gcn{1}{3}{-3}{5}\gvac{4}\gcn{1}{1}{0}{-3}\gvac{1}\gcn{1}{3}{1}{-7}\gnl
\gvac{6}\gmpcu{\vartheta}\gnl
\gcl{4}\gnl
\gvac{5}\gmu\gnl
\gvac{5}\gcn{1}{1}{2}{1}\gnl
\gvac{5}\gmpcu{\vartheta}\gnl
\gob{1}{X}
\gend
=
\gbeg{9}{18}
\gvac{6}\got{1}{X}\got{1}{R}\got{1}{R}\gnl
\gvac{2}\gdb\gvac{2}\gcl{8}\gcl{8}\gcl{8}\gnl
\gvac{2}\gcn{1}{1}{1}{-3}\gcn{1}{1}{1}{5}\gnl
\gcl{1}\gvac{2}\gsbox{2}\gnot{\hspace{5mm}e}\gvac{2}\gcl{5}\gnl
\gcl{1}\gvac{2}\gcn{1}{1}{1}{-3}\gcl{2}\gnl
\gcl{1}\gcl{1}\gsbox{2}\gnot{\hspace{5mm}e}\gnl
\gcl{3}\gcl{3}\gcl{3}\gmu\gnl
\gvac{3}\gcn{1}{1}{2}{3}\gnl
\gvac{4}\glm\gnl
\gcl{8}\gcl{3}\gcn{1}{1}{1}{7}\gvac{2}\gev\gcn{1}{1}{1}{-1}\gcl{3}\gnl
\gvac{5}\gmu\gnl
\gvac{5}\gcn{1}{1}{2}{1}\gnl
\gvac{2}\gcn{1}{2}{-1}{5}\gvac{2}\gmpcu{\vartheta}\gvac{1}\gcn{1}{2}{3}{-3}\gnl
\gvac{4}\gnl
\gvac{4}\gmu\gnl
\gvac{4}\gcn{1}{1}{2}{1}\gnl
\gvac{4}\gmpcu{\vartheta}\gnl
\gob{1}{X}
\gend}\\
&&\hspace*{2cm}
{\footnotesize
=
\gbeg{9}{18}
\gvac{6}\got{1}{X}\got{1}{R}\got{1}{R}\gnl
\gvac{2}\gdb\gvac{2}\gcl{6}\gcl{6}\gcl{6}\gnl
\gvac{2}\gcn{1}{1}{1}{-3}\gcn{1}{1}{1}{5}\gnl
\gcl{1}\gsbox{2}\gnot{\hspace{5mm}e}\gvac{4}\gcl{1}\gnl
\gcl{1}\gcl{1}\gcl{1}\gvac{2}\gcl{1}\gnl
\gcl{1}\gcl{1}\gcl{1}\gsbox{2}\gnot{\hspace{5mm}e}\gvac{2}\gcl{1}\gnl
\gcl{1}\gcl{1}\gcl{1}\gcl{1}\glm\gnl
\gcl{1}\gcl{1}\gcl{1}\gcn{1}{1}{1}{5}\gvac{1}\gev\gcn{1}{1}{1}{-1}\gcl{6}\gnl
\gcl{1}\gcl{1}\gcl{1}\gvac{2}\gmu\gnl
\gcl{1}\gcl{1}\gcn{1}{1}{1}{5}\gvac{2}\gcn{1}{1}{2}{1}\gnl
\gcl{2}\gcl{2}\gvac{2}\gmu\gnl
\gcl{2}\gcl{2}\gvac{2}\gcn{1}{1}{2}{1}\gnl
\gvac{4}\gmpcu{\vartheta}\gnl
\gcl{1}\gcn{1}{1}{1}{5}\gvac{4}\gcn{1}{1}{5}{-3}\gnl
\gcl{1}\gvac{2}\gmu\gnl
\gcl{1}\gvac{2}\gcn{1}{1}{2}{1}\gnl
\gcl{1}\gvac{2}\gmpcu{\vartheta}\gnl
\gob{1}{X}
\gend
=
\gbeg{7}{11}
\gvac{4}\got{1}{X}\got{1}{R}\got{1}{R}\gnl
\gvac{1}\gdb\gvac{1}\gcl{1}\gmu\gnl
\gvac{1}\gcn{1}{1}{1}{-1}\gcn{1}{1}{1}{3}\gvac{1}\gcl{1}\gcn{1}{1}{2}{1}\gnl
\gcl{1}\gsbox{2}\gnot{\hspace{5mm}e}\gvac{2}\gcl{1}\gcl{1}\gcl{1}\gnl
\gcl{1}\gcl{1}\glm\gcl{1}\gcl{1}\gnl
\gcl{1}\gcl{1}\gvac{1}\gev\gcl{1}\gnl
\gcl{1}\gcn{1}{1}{1}{5}\gvac{3}\gcn{1}{1}{1}{-1}\gnl
\gcl{1}\gvac{2}\gmu\gnl
\gcl{1}\gvac{2}\gcn{1}{1}{2}{1}\gnl
\gcl{1}\gvac{2}\gmpcu{\vartheta}\gnl
\gob{1}{X}
\gend}
\end{eqnarray*}
it follows that $X$ is a right $R$-module in $\Cc$ via $\nu^R_X$, too. It is, moreover, an $R$-bimodule 
because of the defining properties of an adjunction and since $Y$ is so, we leave to the reader the 
verification of these details.  

It is clear that $\r_0=\d q^R_{X, Y}$. A simple computation based on the properties of an 
adjunction and on the fact that $Y$ is an $R$-bimodule ensures us that $\gamma$ fits in the 
universal property of the coequalizer 
\[
\xymatrix{
Y\ot R\ot X\ar@<1ex>[rr]^-{\nu^R_Y\ot \Id_X}\ar@<-1ex>[rr]_-{\Id_Y\ot \mu^R_X}&&Y\ot X
\ar[r]^-{q^R_{Y, X}}&Y\ot_R X~. 
}
\]
Thus there is a unique morphism $\l_0: Y\ot_RX\ra R$ such that $\l_0q^R_{Y, X}=\gamma$. 
 
The morphisms $\rho_0$ and $\l_0$ constructed above are morphisms in ${}_R\Cc_R$. For instance, 
\begin{eqnarray*}
\l_0\mu^R_{Y\ot_RX}(\Id_R\ot q^R_{Y, X})&=&\l_0q^R_{Y, X}(\mu^R_Y\ot \Id_X)=\gamma (\mu^R_Y\ot \Id_X)\\
&=&\footnotesize{
\gbeg{4}{6}
\gvac{1}\got{1}{R}\got{1}{Y}\got{1}{X}\gnl
\gvac{1}\glm\gcl{1}\gnl
\gsbox{2}\gnot{\hspace{5mm}e}\gvac{2}\gcl{1}\gcl{1}\gnl
\gcl{1}\glm\gcl{1}\gnl
\gcl{1}\gvac{1}\gev\gnl
\gob{1}{R}
\gend}
=
\footnotesize{
\gbeg{5}{8}
\gvac{2}\got{1}{R}\got{1}{Y}\got{1}{X}\gnl
\gvac{2}\gcl{1}\gcl{1}\gcl{1}\gnl
\gsbox{2}\gnot{\hspace{5mm}e}\gvac{2}\gcl{1}\gcl{1}\gcl{1}\gnl
\gcl{1}\gmu\gcl{1}\gcl{1}\gnl
\gcl{1}\gcn{1}{1}{2}{3}\gvac{1}\gcl{1}\gcl{1}\gnl
\gcl{1}\gvac{1}\glm\gcl{1}\gnl
\gcl{1}\gvac{2}\gev\gnl
\gob{1}{R}
\gend
}
=
\footnotesize{
\gbeg{5}{6}
\got{1}{R}\gvac{2}\got{1}{Y}\got{1}{X}\gnl
\gcl{1}\gvac{2}\gcl{1}\gcl{1}\gnl
\gcl{1}\gsbox{2}\gnot{\hspace{5mm}e}\gvac{2}\gcl{1}\gcl{1}\gnl
\gmu\glm\gcl{1}\gnl
\gcn{1}{1}{2}{2}\gvac{2}\gev\gnl
\gob{2}{R}
\gend
}\\
&=&\un{m}_R(\Id_R\ot \gamma)=\un{m}_R(\Id_R\ot \l_0)(\Id_R\ot q^R_{Y, X}),
\end{eqnarray*}
and this shows that $\l_0$ is left $R$-linear in $\Cc$. Similarly,
\begin{eqnarray*}
\l_0\nu^R_{Y\ot_RX}(q^R_{Y, X}\ot \Id_R)
&=&\l_0q^R_{Y, X}(\Id_X\ot \mu^R_X)=\gamma(\Id_X\ot \mu^R_X)\\
&=&{\footnotesize
\gbeg{9}{10}
\gvac{2}\got{1}{Y}\gvac{4}\got{1}{X}\got{1}{R}\gnl
\gvac{2}\gcl{1}\gvac{1}\gdb\gvac{1}\gcl{1}\gcl{1}\gnl
\gsbox{2}\gnot{\hspace{5mm}e}\gvac{2}\gcl{1}\gcn{1}{1}{3}{1}\gvac{1}\gcn{1}{1}{1}{3}\gvac{1}\gcl{1}\gcl{1}\gnl
\gcl{1}\glm\gcl{1}\gsbox{2}\gnot{\hspace{5mm}e}\gvac{2}\gcl{1}\gcl{1}\gcl{1}\gnl
\gcl{1}\gvac{1}\gev\gcl{1}\glm\gcl{1}\gcl{1}\gnl
\gcl{1}\gvac{3}\gcn{1}{1}{1}{5}\gvac{1}\gev\gcn{1}{1}{1}{-1}\gnl
\gcl{1}\gvac{5}\gmu\gnl
\gcl{1}\gvac{5}\gcn{1}{1}{2}{1}\gnl
\gcl{1}\gvac{5}\gmpcu{\vartheta}\gnl
\gob{1}{R}
\gend
=
\gbeg{7}{13}
\gvac{4}\got{1}{Y}\got{1}{X}\got{1}{R}\gnl
\gvac{4}\gcl{1}\gcl{1}\gcl{1}\gnl
\gvac{2}\gsbox{2}\gnot{\hspace{5mm}e}\gvac{2}\gcl{1}\gcl{1}\gcl{1}\gnl
\gvac{2}\gcn{1}{1}{1}{-3}\gcl{1}\gcl{1}\gcl{1}\gcl{1}\gnl
\gcl{1}\gsbox{2}\gnot{\hspace{5mm}e}\gvac{2}\gcl{1}\gcl{1}\gcl{1}\gcl{1}\gnl
\gcl{1}\gcl{1}\gmu\gcl{1}\gcl{1}\gcl{1}\gnl
\gcl{1}\gcl{1}\gcn{1}{1}{2}{3}\gvac{1}\gcl{1}\gcl{1}\gcl{1}\gnl
\gcl{1}\gcl{1}\gvac{1}\glm\gcl{1}\gcl{1}\gnl
\gcl{1}\gcn{1}{1}{1}{7}\gvac{2}\gev\gcn{1}{1}{1}{-1}\gnl
\gcl{1}\gvac{3}\gmu\gnl
\gcl{1}\gvac{3}\gcn{1}{1}{2}{1}\gnl
\gcl{1}\gvac{3}\gmpcu{\vartheta}\gnl
\gob{1}{R}
\gend
=
\gbeg{7}{11}
\gvac{4}\got{1}{Y}\got{1}{X}\got{1}{R}\gnl
\gvac{4}\gcl{1}\gcl{1}\gcl{1}\gnl
\gvac{2}\gsbox{2}\gnot{\hspace{5mm}e}\gvac{2}\gcl{1}\gcl{1}\gcl{1}\gnl
\gvac{2}\gcl{1}\glm\gcl{1}\gcl{1}\gnl
\gsbox{2}\gnot{\hspace{5mm}e}\gvac{2}\gcn{1}{1}{1}{5}\gvac{1}\gev\gcn{1}{1}{1}{-1}\gnl
\gcl{1}\gcl{1}\gvac{2}\gmu\gnl
\gcl{1}\gcn{1}{1}{1}{3}\gvac{2}\gcn{1}{1}{2}{-1}\gnl
\gcl{1}\gvac{1}\gmu\gnl
\gcl{1}\gvac{1}\gcn{1}{1}{2}{1}\gnl
\gcl{1}\gvac{1}\gmpcu{\vartheta}\gnl
\gob{1}{R}
\gend
}\\
&=&{\footnotesize 
\gbeg{5}{7}
\gvac{2}\got{1}{Y}\got{1}{X}\got{1}{R}\gnl
\gvac{2}\gcl{1}\gcl{1}\gcl{1}\gnl
\gsbox{2}\gnot{\hspace{5mm}e}\gvac{2}\gcl{1}\gcl{1}\gcl{1}\gnl
\gcl{1}\glm\gcl{1}\gcl{1}\gnl
\gcn{1}{1}{1}{5}\gvac{1}\gev\gcn{1}{1}{1}{-1}\gnl
\gvac{2}\gmu\gnl
\gvac{2}\gob{2}{R}
\gend
}
=\un{m}_R(\gamma\ot \Id_R)=\un{m}_R(\l_0q^R_{Y, X}\ot \Id_X),
\end{eqnarray*}
and so $\l_0$ is right $R$-linear, too. In a similar manner we can show that $\r_0$ is $R$-bilinear,  
once more we leave the details to the reader.  

It remains to prove that $(\rho_0, \l_0): Y\dashv X$ is an adjunction in ${}_R\Cc_R$. Towards this 
end we compute 
\begin{eqnarray*}
\Upsilon_X\widetilde{\lambda_0}\Sigma'_{X, Y, X}\widehat{\rho_0}\Upsilon^{'-1}_X&=&
\Upsilon_X\widetilde{\l_0}\Sigma'_{X, Y, X}q^R_{X\ot_RY, X}(q^R_{X, Y}\delta\un{\eta}_R\ot \Id_X)\\
&=&\Upsilon_Xq^R_{X, R}(\Id_X\ot \l_0q^R_{Y, X})(\d\un{\eta}_R\ot \Id_X)=\nu^R_X(\Id_X\ot \gamma)(\delta\un{\eta}_R\ot \Id_X)\\
&=&{\footnotesize
\gbeg{6}{10}
\gvac{5}\got{1}{X}\gnl
\gvac{1}\gdb\gvac{2}\gcl{1}\gnl
\gvac{1}\gcn{1}{1}{1}{-1}\gcn{1}{1}{1}{5}\gvac{2}\gcl{1}\gnl
\gcl{1}\gsbox{2}\gnot{\hspace{5mm}e}\gvac{3}\gcl{1}\gcl{1}\gnl
\gcl{1}\gcl{1}\gcl{1}\gmpu{\alpha}\gcl{1}\gcl{1}\gnl
\gcl{1}\gcl{1}\gmu\gcl{1}\gcl{1}\gnl
\grm\gcn{1}{1}{2}{3}\gvac{1}\gcl{1}\gcl{1}\gnl
\gcl{1}\gvac{2}\glm\gcl{1}\gnl
\gcl{1}\gvac{3}\gev\gnl
\gob{1}{X}
\gend
=\gbeg{7}{13}
\gvac{6}\got{1}{X}\gnl
\gvac{3}\gdb\gvac{1}\gcl{1}\gnl
\gvac{3}\gcn{1}{1}{1}{-5}\gcn{1}{1}{1}{3}\gvac{1}\gcl{1}\gnl
\gcl{1}\gvac{2}\gsbox{2}\gnot{\hspace{5mm}e}\gvac{2}\gcl{1}\gcl{1}\gnl
\gcl{1}\gvac{2}\gcn{1}{1}{1}{-3}\glm\gcl{2}\gnl
\gcl{1}\gcl{1}\gvac{1}\gsbox{2}\gnot{\hspace{5mm}e}\gvac{2}\gcl{1}\gnl
\gcl{1}\gcl{1}\gmpu{\alpha}\gcl{1}\glm\gcl{1}\gnl
\gcl{1}\gcl{1}\gmu\gvac{1}\gev\gnl
\gcl{1}\gcl{1}\gcn{1}{1}{2}{1}\gnl
\gcl{1}\gmu\gnl
\gcl{1}\gcn{1}{1}{2}{1}\gnl
\gcl{1}\gmpcu{\vartheta}\gnl
\gob{1}{X}
\gend
=
\gbeg{8}{14}
\gvac{7}\got{1}{X}\gnl
\gvac{3}\gdb\gvac{2}\gcl{1}\gnl
\gvac{3}\gcn{1}{1}{1}{-5}\gcn{1}{1}{1}{5}\gvac{2}\gcl{1}\gnl
\gcl{1}\gsbox{2}\gnot{\hspace{5mm}e}\gvac{5}\gcl{4}\gcl{4}\gnl
\gcl{1}\gcl{1}\gcn{1}{1}{1}{3}\gnl
\gcl{1}\gcl{1}\gmpu{\alpha}\gcl{1}\gsbox{2}\gnot{\hspace{5mm}e}\gnl
\gcl{1}\gmu\gcl{1}\gcl{4}\glm\gnl
\gcl{1}\gcn{1}{1}{2}{3}\gvac{1}\gcl{1}\gcl{1}\gvac{1}\gev\gnl
\gcl{1}\gvac{1}\gmu\gnl
\gcl{1}\gvac{1}\gcn{1}{1}{2}{3}\gnl
\gcl{1}\gvac{2}\gmu\gnl
\gcl{1}\gvac{2}\gcn{1}{1}{2}{1}\gnl
\gcl{1}\gvac{2}\gmpcu{\vartheta}\gnl
\gob{1}{R}
\gend}\\
&=&{\footnotesize
\gbeg{5}{7}
\gvac{4}\got{1}{X}\gnl
\gvac{1}\gdb\gvac{1}\gcl{1}\gnl
\gvac{1}\gcn{1}{1}{1}{-1}\gcn{1}{1}{1}{3}\gvac{1}\gcl{1}\gnl
\gcl{1}\gsbox{2}\gnot{\hspace{5mm}e}\gvac{2}\gcl{1}\gcl{1}\gnl
\gcl{1}\gcl{1}\glm\gcl{1}\gnl
\gcl{1}\gmpcu{\vartheta}\gvac{1}\gev\gnl
\gob{1}{X}
\gend
=
\gbeg{3}{4}
\gvac{2}\got{1}{X}\gnl
\gdb\gcl{1}\gnl
\gcl{1}\gev\gnl
\gob{1}{X}
\gend
=\Id_X,
} 
\end{eqnarray*}
as required. Analogously we have $\Upsilon'_Y\widehat{\l_0}\Gamma'_{Y, X, Y}\widetilde{\rho_0}\Upsilon^{-1}_Y=\Id_Y$. 
This ends the proof.
   
\end{proof}
 
\begin{corollary}\colabel{DualBimequalDual}
Let $R$ be a Frobenius separable algebra in a monoidal category $\Cc$ with coequalizers and such that 
any object of it is coflat and left robust, and let $X$ be an $R$-bimodule in $\Cc$. Then $X$ admits a (left) right 
dual in ${}_R\Cc_R$ if and only if $X$ admits a (left) right dual in $\Cc$. 
\end{corollary}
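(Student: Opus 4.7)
The corollary splits naturally into two directions, and both are essentially bookkeeping on top of results already established in the preceding subsection. My plan is to harvest \coref{DualRbimAredual} for the ``only if'' direction and \prref{DualAreDualBimod} for the ``if'' direction, paying attention to which half of Proposition \ref{pr:DualAreDualBimod} applies in each of the left/right dual cases.

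For the forward implication, suppose $X\in {}_R\Cc_R$ admits a left (resp.\ right) dual in ${}_R\Cc_R$. Since $R$ is Frobenius, \coref{DualRbimAredual} applies verbatim and produces a left (resp.\ right) dual of $X$ in $\Cc$; more concretely, if $(\rho,\lambda)\colon Y\dashv X$ is an adjunction in ${}_R\Cc_R$ then the proof of \coref{DualRbimAredual} gives an explicit adjunction $(\rho',\lambda')\colon Y\dashv X$ in $\Cc$ built from the opmonoidal structure $(\psi_2,\psi_0)$ of $\mathfrak{U}$ coming from a fixed Frobenius pair of $R$. Note that this half uses only that $R$ is Frobenius; separability is not required.

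For the reverse implication, assume $X$ admits a dual in $\Cc$. Here I need separability of $R$ in order to invoke \prref{DualAreDualBimod}, which supplies the canonical morphism $\alpha\colon\un{1}\to R$ from \prref{WhenFrobisSep}. If $X$ has a left dual in $\Cc$, there exists an adjunction $(\rho,\lambda)\colon Y\dashv X$ in $\Cc$; since $X$ itself carries an $R$-bimodule structure, part (ii) of \prref{DualAreDualBimod} transports the bimodule structure to $Y$ and promotes $(\rho,\lambda)$ to an adjunction $(\rho^0,\lambda^0)\colon Y\dashv X$ in ${}_R\Cc_R$, giving the desired left dual inside ${}_R\Cc_R$. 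If instead $X$ has a right dual in $\Cc$, there exists an adjunction $X\dashv Z$ in $\Cc$, i.e.\ an adjunction of the form $(\rho,\lambda)\colon Y\dashv Z$ with $Y=X$; now the bimodule hypothesis on $X$ plays the role of the bimodule on the first slot, so part (i) of \prref{DualAreDualBimod} applies and equips $Z$ with an $R$-bimodule structure together with an adjunction $X\dashv Z$ in ${}_R\Cc_R$, which is the required right dual of $X$ in ${}_R\Cc_R$.

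There is no real obstacle; the only mildly delicate point is keeping the conventions from the line ``$(\rho,\lambda)\colon Y\dashv X$ means $Y$ is a left dual of $X$ and $X$ is a right dual of $Y$'' straight, so that the correct half of \prref{DualAreDualBimod} is invoked in the left-dual versus right-dual cases. Once this dictionary is fixed the corollary follows immediately by concatenation.
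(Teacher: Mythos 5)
Your proposal is correct and follows essentially the same route as the paper: the forward direction is exactly \coref{DualRbimAredual} (only Frobeniusness of $R$ needed), and the converse applies \prref{DualAreDualBimod}, with part (ii) handling the left-dual case (bimodule structure on the second slot of $Y\dashv X$) and part (i) the right-dual case (bimodule structure on the first slot of $X\dashv Z$), which matches the paper's use of the superscript-$0$ versus subscript-$0$ adjunctions.
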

\begin{proof}
The if part follows from \coref{DualRbimAredual}. For the converse, if $X$ has a (left) right dual ($X^*$) ${}^*X$ 
then we have an adjunction ($({\rm ev}_X, {\rm coev}_X): X^*\dashv X$) 
$({\rm ev}'_X, {\rm coev}'_X): X\dashv {}^*X$ in $\Cc$. Since $X$ is an $R$-bimodule in $\Cc$ it follows that 
we have an adjunction ($(({\rm ev}_X)^0, ({\rm coev}_X)^0): X^*\dashv X$) 
$(({\rm ev}'_X)_0, ({\rm coev}'_X)_0): X\dashv {}^*X$ in ${}_R\Cc_R$, cf. \prref{DualAreDualBimod}. 
This finishes the proof.
\end{proof}

We are now in position to prove one of the main results of the paper.

\begin{theorem}\thlabel{FrobExtBaseFS}
Let $\Cc$ be a monoidal category with coequalizers and such that any object of it is coflat and left robust. 
Let $R$ be a Frobenius separable algebra and $i: R\ra S$ an algebra extension in $\Cc$. Then $i: R\ra S$ 
is Frobenius if and only if $S$ is a Frobenius algebra in $\Cc$ and the following equality holds
\begin{equation}\eqlabel{CharFrobExt}
{\footnotesize
\gbeg{2}{6}
\got{1}{S}\got{1}{R}\gnl
\gcl{1}\gmp{i}\gnl
\gmu\gnl
\gcn{1}{1}{2}{1}\gnl
\gmpcu{\widetilde{\vartheta}}\gnl
\gob{1}{\un{1}}
\gend
=
\gbeg{4}{12}
\gvac{2}\got{1}{S}\got{1}{R}\gnl
\gvac{2}\gcl{1}\gcl{1}\gnl
\gsbox{2}\gnot{\hspace{5mm}e}\gvac{2}\gcl{1}\gcl{1}\gnl
\gcl{1}\gmp{i}\gcl{1}\gcl{1}\gnl
\gcl{1}\gmu\gcl{1}\gnl
\gcl{1}\gcn{1}{1}{2}{1}\gvac{1}\gcl{1}\gnl
\gcl{1}\gmpcu{\widetilde{\vartheta}}\gvac{1}\gcn{1}{2}{1}{-3}\gnl
\gcl{1}\gnl
\gmu\gnl
\gcn{1}{1}{2}{3}\gnl
\gvac{1}\gmpcu{\vartheta}\gnl
\gvac{1}\gob{1}{\un{1}}
\gend
},
\end{equation} 
where $(\vartheta, e)$ is a Frobenius pair for $R$ and $(\widetilde{\vartheta}, \widetilde{e})$ 
is a Frobenius pair for $S$, respectively.
\end{theorem}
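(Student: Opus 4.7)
The plan is to use \prref{Frobalgextcharact} to reformulate the statement: the extension $i:R\ra S$ is Frobenius if and only if $S$ is a Frobenius algebra in the bimodule category ${}_R\Cc_R$. By \coref{FrobExtVsFrobAlg} the forward implication already yields that $S$ is Frobenius in $\Cc$, so the substance of the theorem is to show that the compatibility condition \equref{CharFrobExt} is the precise extra data needed to translate between bimodule Frobenius pairs for $S$ and ordinary Frobenius pairs.

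For the forward direction, fix a Frobenius pair $(\vartheta', e')$ for $S$ in ${}_R\Cc_R$. By \thref{ForgFrobMonFunctor} the forgetful functor $\mathfrak{U}:{}_R\Cc_R\ra \Cc$ carries a Frobenius monoidal structure completely determined by $(\vartheta, e)$, and since Frobenius monoidal functors preserve Frobenius algebras this transports $(\vartheta', e')$ to a Frobenius pair $(\widetilde{\vartheta}, \widetilde{e})$ for $S$ in $\Cc$ with $\widetilde{\vartheta}=\vartheta\circ \vartheta'$ and $\widetilde{e}$ obtained by applying the opmonoidal structure of $\mathfrak{U}$ to $e'$. The verification of \equref{CharFrobExt} is then a direct computation: the left-hand side evaluates to $\vartheta(\vartheta'(s)\cdot r)$ using the right $R$-linearity of $\vartheta'$, and the right-hand side evaluates to the same expression after applying the Casimir commutation $(\Id_R\ot\un{m}_R)(e\ot\Id_R)=(\un{m}_R\ot\Id_R)(\Id_R\ot e)$ together with the counit identity $(\Id_R\ot\vartheta)e=\un{\eta}_R$.

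For the reverse direction, assume $S$ is Frobenius in $\Cc$ via $(\widetilde{\vartheta}, \widetilde{e})$ satisfying \equref{CharFrobExt}. Since $R$ is Frobenius, the pair $(\vartheta, e)$ produces a bijection between left $R$-linear morphisms $S\ra R$ in $\Cc$ and morphisms $S\ra \un{1}$ in $\Cc$, namely $f\mapsto \vartheta\circ f$, whose inverse sends $g$ to the composition
\[
(\Id_R\ot g)(\Id_R\ot \un{m}_S)(\Id_R\ot i\ot \Id_S)(e\ot \Id_S):S\ra R;
\]
this follows from $(\vartheta\ot\Id_R)e=\un{\eta}_R$ and the Casimir commutation for $R$. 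Applying the inverse to $\widetilde{\vartheta}$ produces a candidate $\vartheta':S\ra R$, automatically left $R$-linear. Equation \equref{CharFrobExt} says exactly that the two left $R$-linear morphisms $s\mapsto \vartheta'(s\cdot i(r))$ and $s\mapsto \vartheta'(s)\cdot r$ have the same composition with $\vartheta$, so by the injectivity of the above bijection they coincide and $\vartheta'$ is right $R$-linear as well, hence an $R$-bimodule map.

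Finally, set $e':=q^R_{S,S}\widetilde{e}:\un{1}\ra S\ot_RS$. The centrality property of $e'$ in ${\cal W}$ is inherited from the Casimir identity of $\widetilde{e}$ in $\Cc$ via the $R$-bilinearity of $q^R_{S,S}$. It remains to check the two Frobenius identities $\Upsilon_S\widetilde{\vartheta'}e'=\un{\eta}_S$ and $\Upsilon'_S\widehat{\vartheta'}e'=\un{\eta}_S$ of \thref{FrobSepExt}(ii) for the pair $(\vartheta', e')$. Unwinding the definitions, these reduce to identities of the form $\un{m}_S(\Id_S\ot i\vartheta')\widetilde{e}=\un{\eta}_S$, which follow by combining the $S$-side identity $(\Id_S\ot\widetilde{\vartheta})\widetilde{e}=\un{\eta}_S$ with the Casimir commutation for $\widetilde{e}$, the $R$-side identities for $(\vartheta, e)$, and \equref{CharFrobExt}. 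The main obstacle is orchestrating this interplay between the two Casimirs—of $R$ and of $S$—so that everything collapses onto $\un{\eta}_S$; this is where the coflatness and robustness hypotheses are used, to justify commuting the coequalizer maps $q^R_{-,-}$ past the various structure morphisms.
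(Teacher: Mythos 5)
Your forward direction is essentially the paper's: transport a Frobenius pair for $S$ in ${}_R\Cc_R$ through the Frobenius monoidal structure of $\mathfrak{U}$ determined by $(\vartheta,e)$, and check \equref{CharFrobExt} from the $R$-bilinearity of the extension's Frobenius morphism. Your construction of $\vartheta'$ in the converse is also sound: the correspondence $f\mapsto\vartheta f$ between left $R$-linear maps into $R$ and maps into $\un{1}$ is a genuine bijection (its inverse uses $(\vartheta\ot\Id_R)e=\un{\eta}_R$ and the Casimir relation), one checks $\vartheta\vartheta'=\widetilde{\vartheta}$, and then \equref{CharFrobExt} forces right $R$-linearity of $\vartheta'$ by injectivity. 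This is a reasonable substitute for the paper's argument, which instead shows that the left $S$-linear isomorphism $\Phi:S\ra{}^*S$ is right $R$-linear and then invokes \coref{CharFrobAlgExt}(iii) via the dual-transport machinery of \prref{DualAreDualBimod}.

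The gap is in your choice of Casimir element. Setting $e':=q^R_{S,S}\widetilde{e}$ does \emph{not} in general satisfy $\Upsilon_S\widetilde{\vartheta'}e'=\un{\eta}_S$ and $\Upsilon'_S\widehat{\vartheta'}e'=\un{\eta}_S$, and no amount of ``orchestrating the two Casimirs'' will fix this, because the identity is false. Concretely, take $\Cc$ to be vector spaces, $R=S=M_n(k)$, $i=\Id$, with $\vartheta=\widetilde{\vartheta}=\mathrm{tr}$ and $e=\widetilde{e}=\sum_{i,j}E_{ij}\ot E_{ji}$. Then \equref{CharFrobExt} holds, your $\vartheta'$ is $\Id_R$, but $q^R_{S,S}\widetilde{e}=\sum_{i,j}E_{ij}E_{ji}\ot_R 1=n\,(1\ot_R 1)$, whereas the Frobenius identities demand $1\ot_R 1$. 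In general, if $(\vartheta',e'')$ is the (unique, once $\vartheta'$ is fixed) Frobenius pair of the extension, the forward direction forces $\widetilde{e}=\psi_{S,S}(e'')$, hence $q^R_{S,S}\widetilde{e}=q^R_{S,S}(\nu^R_S(\Id_S\ot\un{m}_Re)\ot\Id_S)\,e''$; this equals $e''$ only when $\un{m}_Re=\un{\eta}_R$, which a Frobenius pair of a separable algebra need not satisfy. The missing ingredient is exactly the separability morphism $\alpha:\un{1}\ra R$ of \prref{WhenFrobisSep}, which must be inserted to correct the normalization $\un{m}_Re$; this is why the paper's converse routes through \prref{DualAreDualBimod} and \coref{DualBimequalDual}, where $\alpha$ is built into the coevaluation $({\rm coev}'_S)_0$ of the dual object in ${}_R\Cc_R$. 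Your appeal to coflatness and robustness at the end addresses only the bookkeeping with $q^R_{-,-}$, not this normalization problem, which is the actual content of the hypothesis that $R$ be separable \emph{and} Frobenius.
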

\begin{proof}
If $i: R\ra S$ is a Frobenius algebra extension with $R$ a Frobenius separable 
algebra in $\Cc$ then we have seen that $S$ is a Frobenius algebra in $\Cc$ (\coref{FrobExtVsFrobAlg}). 
Furthermore, if $\vartheta: R\ra \un{1}$ and $\vartheta': S\ra R$ are the Frobenius morphisms corresponding to 
$R$ and to extension $i: R\ra S$, respectively, then $S$ is Frobenius with Frobenius morphism given by 
$\widetilde{\vartheta}=\vartheta\vartheta'$. Consider now $e$ the Casimir morphism of the Frobenius algebra 
$R$, corresponding to $\vartheta$. Since $\vartheta'$ is a morphism in ${}_R\Cc_R$ we compute that 
\[
{\footnotesize
\gbeg{4}{12}
\gvac{2}\got{1}{S}\got{1}{R}\gnl
\gvac{2}\gcl{1}\gcl{1}\gnl
\gsbox{2}\gnot{\hspace{5mm}e}\gvac{2}\gcl{1}\gcl{1}\gnl
\gcl{1}\gmp{i}\gcl{1}\gcl{1}\gnl
\gcl{1}\gmu\gcl{1}\gnl
\gcl{1}\gcn{1}{1}{2}{1}\gvac{1}\gcl{1}\gnl
\gcl{1}\gmpcu{\widetilde{\vartheta}}\gvac{1}\gcn{1}{2}{1}{-3}\gnl
\gcl{1}\gnl
\gmu\gnl
\gcn{1}{1}{2}{3}\gnl
\gvac{1}\gmpcu{\vartheta}\gnl
\gvac{1}\gob{1}{\un{1}}
\gend
=
\gbeg{4}{12}
\gvac{2}\got{1}{S}\got{1}{R}\gnl
\gvac{2}\gcl{1}\gcl{1}\gnl
\gsbox{2}\gnot{\hspace{5mm}e}\gvac{2}\gmp{\vartheta'}\gcl{1}\gnl
\gcl{1}\gmu\gcl{1}\gnl
\gcl{1}\gcn{1}{1}{2}{3}\gvac{1}\gcl{1}\gnl
\gcl{1}\gvac{1}\gmpcu{\vartheta}\gcl{1}\gnl
\gcn{1}{1}{1}{5}\gvac{2}\gcl{1}\gnl
\gvac{2}\gmu\gnl
\gvac{2}\gcn{1}{1}{2}{1}\gnl
\gvac{2}\gmpcu{\vartheta}\gnl
\gvac{2}\gob{1}{\un{1}}
\gend
=
\gbeg{2}{6}
\got{1}{S}\got{1}{R}\gnl
\gmp{\vartheta'}\gcl{1}\gnl
\gmu\gnl
\gcn{1}{1}{2}{1}\gnl
\gmpcu{\vartheta}\gnl
\gob{1}{\un{1}}\gnl
\gend
=
\gbeg{2}{6}
\got{1}{S}\got{1}{R}\gnl
\gcl{1}\gmp{i}\gnl
\gmu\gnl
\gcn{1}{1}{2}{1}\gnl
\gmpcu{\widetilde{\vartheta}}\gnl
\gob{1}{\un{1}}
\gend
},
\]
as wished. Conversely, assume that $S$ is a Frobenius algebra in $\Cc$ and let 
$(\widetilde{\vartheta}, \widetilde{e})$ be a Frobenius pair for it. By \thref{charactFrobalg} 
we know that $S$ admits a right dual object ${}^*S$ in $\Cc$ and, moreover, the morphism 
$\Phi: S\ra {}^*S$ given by 
$\Phi={\footnotesize 
\gbeg{3}{6}
\gvac{2}\got{1}{S}\gnl
\gvac{1}\gnot{\hspace*{-4mm}\vspace*{2mm}\bullet}\gvac{-1}\gdb\gcl{1}\gnl
\gcl{1}\gmu\gnl
\gcl{1}\gcn{1}{1}{2}{1}\gnl
\gcl{1}\gmpcu{\widetilde{\vartheta}}\gnl
\gob{1}{{}^*S}
\gend
}$ 
is an isomorphism of left $S$-modules in $\Cc$. This comes out explicitly as    
\[
{\footnotesize
\gbeg{2}{5}
\got{1}{S}\got{1}{S}\gnl
\gmu\gnl
\gcn{1}{1}{2}{1}\gnl
\gmp{\Phi}\gnl
\gob{1}{{}^*S}
\gend
=
\gbeg{4}{6}
\gvac{2}\got{1}{S}\got{1}{S}\gnl
\gvac{1}\gnot{\hspace*{-4mm}\vspace*{2mm}\bullet}\gvac{-1}\gdb\gcl{1}\gcl{1}\gnl
\gcl{1}\gmu\gmp{\Phi}\gnl
\gcl{1}\gcn{1}{1}{2}{3}\gvac{1}\gcl{1}\gnl
\gcl{1}\gvac{2}\gnot{\hspace*{-4mm}\vspace*{-2mm}\bullet}\gvac{-1}\gev\gnl
\gob{1}{{}^*S}
\gend
}~,~\mbox{and implies}~~
{\footnotesize
\gbeg{4}{5}
\got{1}{S}\got{1}{S}\got{1}{S}\gnl
\gmu\gcl{1}\gnl
\gcn{1}{1}{2}{3}\gvac{1}\gmp{\Phi}\gnl
\gvac{2}\gnot{\hspace*{-4mm}\vspace*{-2mm}\bullet}\gvac{-1}\gev\gnl
\gvac{1}\gob{2}{\un{1}}
\gend
=
\gbeg{3}{6}
\got{1}{S}\got{1}{S}\got{1}{S}\gnl
\gcl{1}\gmu\gnl
\gcl{1}\gcn{1}{1}{2}{1}\gnl
\gcl{1}\gmp{\Phi}\gnl
\gvac{1}\gnot{\hspace*{-4mm}\vspace*{-2mm}\bullet}\gvac{-1}\gev\gnl
\gob{2}{\un{1}}
\gend
}.
\]
By \coref{DualBimequalDual} we have that ${}^*S$ is an $R$-bimodule in $\Cc$ via the structure morphisms 
given by 
\[
\mu^R_{{}^*S}=
{\footnotesize
\gbeg{4}{7}
\gvac{2}\got{1}{R}\got{1}{S}\gnl
\gvac{1}\gnot{\hspace*{-4mm}\vspace*{2mm}\bullet}\gvac{-1}\gdb\gcl{1}\gcl{1}\gnl
\gcl{1}\gcl{1}\gmp{i}\gcl{1}\gnl
\gcl{1}\gmu\gcl{1}\gnl
\gcl{1}\gcn{1}{1}{2}{3}\gvac{1}\gcl{1}\gnl
\gcl{1}\gvac{2}\gnot{\hspace*{-4mm}\vspace*{-2mm}\bullet}\gvac{-1}\gev\gnl
\gob{1}{{}^*S}
\gend}
~\mbox{and}~
\nu^R_{{}^*S}=
{\footnotesize
\gbeg{6}{13}
\gvac{4}\got{1}{{}^*S}\got{1}{R}\gnl
\gvac{2}\gnot{\hspace*{-4mm}\vspace*{2mm}\bullet}\gvac{-1}\gdb\gvac{1}\gcl{1}\gcl{1}\gnl
\gcn{1}{1}{3}{1}\gvac{1}\gcn{1}{1}{1}{3}\gvac{1}\gcl{1}\gcl{1}\gnl
\gcl{1}\gsbox{2}\gnot{\hspace{5mm}e}\gvac{2}\gcl{1}\gcl{1}\gcl{1}\gnl
\gcl{1}\gcl{1}\gmp{i}\gcl{1}\gcl{1}\gcl{1}\gnl
\gcl{1}\gcl{1}\gmu\gcl{1}\gcl{1}\gnl
\gcl{1}\gcl{1}\gcn{1}{1}{2}{3}\gvac{1}\gcl{1}\gcl{2}\gnl
\gcl{1}\gcl{1}\gvac{2}\gnot{\hspace*{-4mm}\vspace*{-2mm}\bullet}\gvac{-1}\gev\gnl
\gcl{1}\gcn{1}{1}{1}{5}\gvac{3}\gcn{1}{1}{1}{-1}\gnl
\gcl{1}\gvac{2}\gmu\gnl
\gcl{1}\gvac{2}\gcn{1}{1}{2}{1}\gnl
\gcl{1}\gvac{2}\gmpcu{\vartheta}\gnl
\gob{1}{{}^*S}
\gend}.
\] 
Observe that $\mu^R_{{}^*S}$ is nothing that the restriction at $R$ of the canonical left $S$-module 
structure on ${}^*S$ via the algebra morphism $i: R\ra S$. Thus $\Phi$ is left $R$-linear. 
$\Phi$ is also right $R$-linear since with the help of \equref{CharFrobExt} we compute that  
\[{\footnotesize
\gbeg{6}{12}
\gvac{4}\got{1}{S}\got{1}{R}\gnl
\gvac{2}\gnot{\hspace*{-4mm}\vspace*{2mm}\bullet}\gvac{-1}\gdb\gvac{1}\gcl{1}\gcl{1}\gnl
\gvac{1}\gcn{1}{1}{1}{-1}\gcn{1}{1}{1}{3}\gvac{1}\gmp{\Phi}\gcl{1}\gnl
\gcl{1}\gsbox{2}\gnot{\hspace{5mm}e}\gvac{2}\gcl{1}\gcl{1}\gcl{1}\gnl
\gcl{1}\gcl{1}\gmp{i}\gcl{1}\gcl{1}\gcl{1}\gnl
\gcl{1}\gcl{1}\gmu\gcl{1}\gcl{1}\gnl
\gcl{1}\gcl{1}\gcn{1}{1}{2}{3}\gvac{1}\gcl{1}\gcl{1}\gnl
\gcl{1}\gcn{1}{1}{1}{5}\gvac{2}\gnot{\hspace*{-4mm}\vspace*{-2mm}\bullet}\gvac{-1}\gev\gcn{1}{1}{1}{-1}\gnl
\gcl{1}\gvac{2}\gmu\gnl
\gcl{1}\gvac{2}\gcn{1}{1}{2}{1}\gnl
\gcl{1}\gvac{2}\gmpcu{\vartheta}\gnl
\gob{1}{{}^*S}
\gend
}
=
{\footnotesize
\gbeg{6}{12}
\gvac{4}\got{1}{S}\got{1}{R}\gnl
\gvac{2}\gnot{\hspace*{-4mm}\vspace*{2mm}\bullet}\gvac{-1}\gdb\gvac{1}\gcl{1}\gcl{1}\gnl
\gvac{1}\gcn{1}{1}{1}{-1}\gcn{1}{1}{1}{3}\gvac{1}\gcl{1}\gcl{1}\gnl
\gcl{1}\gsbox{2}\gnot{\hspace{5mm}e}\gvac{2}\gcl{1}\gcl{1}\gcl{1}\gnl
\gcl{1}\gcl{1}\gcl{1}\gmu\gcl{1}\gnl
\gcl{1}\gcl{1}\gmp{i}\gcn{1}{1}{2}{1}\gvac{1}\gcl{1}\gnl
\gcl{1}\gcl{1}\gcl{1}\gmp{\Phi}\gvac{1}\gcl{1}\gnl
\gcl{1}\gcn{1}{1}{1}{3}\gvac{1}\gnot{\hspace*{-4mm}\vspace*{-2mm}\bullet}\gvac{-1}\gev\gvac{1}\gcn{1}{1}{1}{-3}\gnl
\gcl{1}\gvac{1}\gmu\gnl
\gcl{1}\gvac{1}\gcn{1}{1}{2}{1}\gnl
\gcl{1}\gvac{1}\gmpcu{\vartheta}\gnl
\gob{1}{{}^*S}
\gend
}
=
{\footnotesize
\gbeg{6}{13}
\gvac{4}\got{1}{S}\got{1}{R}\gnl
\gvac{2}\gnot{\hspace*{-4mm}\vspace*{2mm}\bullet}\gvac{-1}\gdb\gvac{1}\gcl{1}\gcl{1}\gnl
\gvac{1}\gcn{1}{1}{1}{-1}\gcn{1}{1}{1}{3}\gvac{1}\gcl{1}\gcl{1}\gnl
\gcl{1}\gsbox{2}\gnot{\hspace{5mm}e}\gvac{2}\gmu\gcl{1}\gnl
\gcl{1}\gcl{1}\gmp{i}\gcn{1}{1}{2}{1}\gvac{1}\gcl{1}\gnl
\gcl{1}\gcl{1}\gmu\gvac{1}\gcl{2}\gnl
\gcl{1}\gcl{1}\gcn{1}{1}{2}{1}\gnl
\gcl{1}\gcl{1}\gmpcu{\widetilde{\vartheta}}\gvac{2}\gcl{1}\gnl
\gcl{1}\gcn{1}{1}{1}{5}\gvac{3}\gcn{1}{1}{1}{-1}\gnl
\gcl{1}\gvac{2}\gmu\gnl
\gcl{1}\gvac{2}\gcn{1}{1}{2}{1}\gnl
\gcl{1}\gvac{2}\gmpcu{\vartheta}\gnl
\gob{1}{{}^*S}
\gend
}
=
\footnotesize{
\gbeg{4}{8}
\gvac{2}\got{1}{S}\got{1}{R}\gnl
\gvac{1}\gnot{\hspace*{-4mm}\vspace*{2mm}\bullet}\gvac{-1}\gdb\gcl{1}\gcl{1}\gnl
\gcl{1}\gmu\gcl{1}\gnl
\gcl{1}\gcn{1}{1}{2}{3}\gvac{1}\gmp{i}\gnl
\gcl{1}\gvac{1}\gmu\gnl
\gcl{1}\gvac{1}\gcn{1}{1}{2}{1}\gnl
\gcl{1}\gvac{1}\gmpcu{\widetilde{\vartheta}}\gnl
\gob{1}{{}^*S}
\gend
}
=
{\footnotesize
\gbeg{2}{6}
\got{1}{S}\got{1}{R}\gnl
\gcl{1}\gmp{i}\gnl
\gmu\gnl
\gcn{1}{1}{2}{1}\gnl
\gmp{\Phi}\gnl
\gob{1}{{}^*S}
\gend,
}
\]
as needed. Therefore $\Phi: S\ra {}^*S$ is an isomorphism in ${}_R\Cc_R$ and a left 
$S$-module morphism in $\Cc$. We show next that $\Phi$ is a left $S$-linear morphism in ${}_R\Cc_R$ 
between $S$ and ${}^*S$, where ${}^*S$ is considered this time as the right dual of $S$ in 
${}_R\Cc_R$. According to \coref{CharFrobAlgExt} this would end the proof. 
 
Indeed, recall that ${}^*S$ is also a right dual for 
$S$ in ${}_R\Cc_R$ via the evaluation and coevaluation morphisms in ${}_R\Cc_R$ completely determined 
by 
\[
{\rm ev}'^R_Sq^R_{S, {}^*S}:=({\rm ev}'_S)_0q^R_{S, {}^*S}=\gamma={\footnotesize
\gbeg{4}{8}
\gvac{2}\got{1}{S}\got{1}{{}^*S}\gnl
\gvac{2}\gcl{1}\gcl{1}\gnl
\gsbox{2}\gnot{\hspace{5mm}e}\gvac{2}\gcl{1}\gcl{1}\gnl
\gcl{1}\gmp{i}\gcl{1}\gcl{1}\gnl
\gcl{1}\gmu\gcl{1}\gnl
\gcl{1}\gcn{1}{1}{2}{3}\gvac{1}\gcl{1}\gnl
\gcl{1}\gvac{2}\gnot{\hspace*{-4mm}\vspace*{-2mm}\bullet}\gvac{-1}\gev\gnl
\gob{1}{R}
\gend
}
~\mbox{and}~
{\rm coev}'^R_S:=({\rm coev}'_S)_0=q^R_{{}^*S, S}\circ \d=
q^R_{{}^*S, S}\circ 
{\footnotesize
\gbeg{4}{7}
\gvac{3}\got{1}{R}\gnl
\gvac{2}\gnot{\hspace*{-4mm}\vspace*{2mm}\bullet}\gvac{-1}\gdb\gcl{1}\gnl
\gvac{1}\gcn{1}{1}{1}{-1}\gcl{1}\gmp{i}\gnl
\gcl{1}\gmpu{\alpha}\gmu\gnl
\gcl{1}\gmp{i}\gcn{1}{1}{2}{1}\gnl
\gcl{1}\gmu\gnl
\gob{1}{{}^*S}\gob{2}{S}
\gend
}
~,
\]
respectively. Thus, if $\mu^{R, S}_{{}^*S}$ denotes the left $S$-module structure of 
${}^*S$ in ${}_R\Cc_R$ then 
\begin{eqnarray*}
\mu^{R, S}_{{}^*S}q^R_{S, {}^*S}&=&\Upsilon_{{}^*S}\widetilde{{\rm ev}'^R_S}
\widetilde{\widehat{\un{m}^R_S}}\widetilde{\Gamma'_{S, S, {}^*S}}\Sigma'_{{}^*S, S, S\ot_R{}^*S}
\widehat{{\rm coev}'^R_S}{\Upsilon}'^{-1}_{S\ot_R{}^*S}q^R_{S, {}^*S}\\
&=&\Upsilon_{{}^*S}\widetilde{{\rm ev}'^R_S}
\widetilde{\widehat{\un{m}^R_S}}\widetilde{\Gamma'_{S, S, {}^*S}}
\Sigma'_{{}^*S, S, S\ot_R{}^*S}q^R_{{}^*S\ot_RS, S\ot_R{}^*S}(q^R_{{}^*S, S}\d \ot\Id_{S\ot_R{}^*S})\\
&&(\Id_R\ot q^R_{S, {}^*S})(\un{\eta}_R\ot \Id_{S\ot {}^*S})\\
&=&\Upsilon_{{}^*S}\widetilde{{\rm ev}'^R_S}
\widetilde{\widehat{\un{m}^R_S}}\widetilde{\Gamma'_{S, S, {}^*S}}q^R_{{}^*S, S\ot_R(S\ot_R{}^*S)}
(\Id_{{}^*S}\ot q^R_{S, S\ot_R{}^*S})(\delta\un{\eta}_R\ot q^R_{S, {}^*S})\\
&=&\Upsilon_{{}^*S}\widetilde{{\rm ev}'^R_S}
\widetilde{\widehat{\un{m}^R_S}}q^R_{{}^*S, (S\ot_RS)\ot_R{}^*S}(\Id_{{}^*S}\ot \widehat{q^R_{S, S}}
\theta'^{-1}_{S, S, {}^*S}(\Id_S\ot q^R_{S, {}^*S}))(\delta\un{\eta}_R\ot \Id_{S\ot {}^*S})\\
&=&\Upsilon_{{}^*S}q^R_{{}^*S, R}(\Id_{{}^*S}\ot {\rm ev}'^R_S\widehat{\un{m}^R_Sq^R_{S, S}}q^R_{S\ot S, {}^*S})
(\delta\un{\eta}_R\ot \Id_{S\ot {}^*S})\\
&=&\nu^R_{{}^*S}(\Id_{{}^*S}\ot \gamma(\un{m}_S\ot \Id_{{}^*S}))(\delta\un{\eta}_R\ot \Id_{S\ot {}^*S})\\
&=&{\footnotesize 
\gbeg{8}{17}
\gvac{6}\got{1}{S}\got{1}{{}^*S}\gnl
\gvac{5}\gnot{\hspace*{-4mm}\vspace*{2mm}\bullet}\gvac{-1}\gdb\gcl{1}\gcl{1}\gnl
\gvac{4}\gcn{1}{1}{1}{-7}\gcl{1}\gcl{1}\gcl{1}\gnl
\gcl{1}\gvac{2}\gsbox{2}\gnot{\hspace{5mm}e}\gvac{2}\gcl{1}\gcl{1}\gcl{1}\gnl
\gcl{1}\gvac{2}\gcn{1}{1}{1}{-3}\gmp{i}\gcl{1}\gcl{1}\gcl{1}\gnl
\gcl{1}\gcl{1}\gvac{2}\gmu\gcl{1}\gcl{1}\gnl
\gcl{1}\gcl{1}\gvac{2}\gcn{1}{1}{2}{3}\gvac{1}\gcl{1}\gcl{1}\gnl
\gcl{1}\gcl{1}\gvac{2}\gmpu{\alpha}\gcl{1}\gcl{1}\gcl{1}\gnl
\gcl{1}\gcl{1}\gvac{2}\gmp{i}\gcl{1}\gcl{1}\gcl{1}\gnl
\gcl{1}\gcl{1}\gvac{2}\gmu\gcl{1}\gcl{1}\gnl
\gcl{1}\gcl{1}\gvac{2}\gcn{1}{1}{2}{3}\gvac{1}\gcl{1}\gcl{1}\gnl
\gcl{1}\gcl{1}\gsbox{2}\gnot{\hspace{5mm}e}\gvac{3}\gmu\gcl{1}\gnl
\gcl{1}\gmu\gmp{i}\gcn{1}{1}{4}{1}\gvac{2}\gcl{1}\gnl
\gcl{1}\gcn{1}{1}{2}{3}\gvac{1}\gmu\gvac{2}\gcn{1}{2}{1}{-3}\gnl
\gcl{1}\gvac{1}\gmpcu{\vartheta}\gcn{1}{1}{2}{3}\gnl
\gcl{1}\gvac{4}\gnot{\hspace*{-4mm}\vspace*{-2mm}\bullet}\gvac{-1}\gev\gnl
\gob{1}{{}^*S}
\gend}
=
{\footnotesize
\gbeg{7}{17}
\gvac{5}\got{1}{S}\got{1}{{}^*S}\gnl
\gvac{2}\gnot{\hspace*{-4mm}\vspace*{2mm}\bullet}\gvac{-1}\gdb\gvac{2}\gcl{1}\gcl{1}\gnl
\gvac{1}\gcn{1}{1}{1}{-1}\gcn{1}{1}{1}{5}\gvac{2}\gcl{1}\gcl{1}\gnl
\gcl{1}\gsbox{2}\gnot{\hspace{5mm}e}\gvac{3}\gcl{1}\gcl{1}\gcl{1}\gnl
\gcl{1}\gcl{1}\gcn{1}{1}{1}{3}\gvac{1}\gcl{1}\gcl{1}\gcl{1}\gnl
\gcl{1}\gcl{1}\gmpu{\alpha}\gcl{1}\gcl{1}\gcl{1}\gcl{1}\gnl
\gcl{1}\gcl{1}\gmu\gcl{1}\gcl{1}\gcl{1}\gnl
\gcl{1}\gcl{1}\gcn{1}{1}{2}{1}\gvac{1}\gcl{1}\gcl{1}\gcl{1}\gnl
\gcl{1}\gmu\gvac{1}\gcn{1}{1}{1}{-1}\gcl{1}\gcl{1}\gnl
\gcl{1}\gcn{1}{1}{2}{3}\gvac{1}\gcl{1}\gvac{1}\gcl{1}\gcl{1}\gnl
\gcl{1}\gvac{1}\gmp{i}\gcl{1}\gvac{1}\gcl{1}\gcl{1}\gnl
\gcl{1}\gvac{1}\gmu\gvac{1}\gcl{1}\gcl{1}\gnl
\gcl{1}\gvac{1}\gcn{1}{1}{2}{3}\gvac{2}\gcn{1}{1}{1}{-1}\gcl{1}\gnl
\gcl{1}\gvac{2}\gmu\gvac{1}\gcl{1}\gnl
\gcl{1}\gvac{2}\gcn{1}{1}{2}{3}\gvac{2}\gcn{1}{1}{1}{-1}\gnl
\gcl{1}\gvac{4}\gnot{\hspace*{-4mm}\vspace*{-2mm}\bullet}\gvac{-1}\gev\gnl
\gob{1}{{}^*S}
\gend
}
=
{\footnotesize
\gbeg{4}{6}
\gvac{2}\got{1}{S}\got{1}{{}^*S}\gnl
\gvac{1}\gnot{\hspace*{-4mm}\vspace*{2mm}\bullet}\gvac{-1}\gdb\gcl{1}\gcl{1}\gnl
\gcl{1}\gmu\gcl{1}\gnl
\gcl{1}\gcn{1}{1}{2}{3}\gvac{1}\gcl{1}\gnl
\gcl{1}\gvac{2}\gnot{\hspace*{-4mm}\vspace*{-2mm}\bullet}\gvac{-1}\gev\gnl
\gob{1}{{}^*S}
\gend}
=\mu^S_{{}^*S}.
\end{eqnarray*}
 Using the above equality we have that $\Phi$ is left $S$-linear in ${}_R\Cc_R$ if and only if 
\[
\mu^{R, S}_{{}^*S}\widetilde{\Phi}=\Phi\un{m}_S^R~\Leftrightarrow~ 
\mu^{R, S}_{{}^*S}\widetilde{\Phi}q^R_{S, S}=\Phi\un{m}_S^Rq^R_{S, S}~\Leftrightarrow~ 
\mu^{R, S}_{{}^*S}q^R_{S, {}^*S}(\Id_S\ot \Phi)=\Phi\un{m}_S~\Leftrightarrow~ 
\mu^S_{{}^*S}(\Id_S\ot \Phi)=\Phi\un{m}_S,
\]
that is, if and only if $\Phi$ is left $S$-linear in $\Cc$. So our proof is finished. 
\end{proof}

We look now more carefully at the relation \equref{CharFrobExt}. In the case when $\Cc$ is a category of vector 
spaces it says that the restriction at $R$ of the Nakayama automorphism of $S$ is equal to the 
Nakayama automorphism of $R$. Our next aim is to provide a similar interpretation for 
\equref{CharFrobExt} in a more general setting. Namely, the one provided by sovereign monoidal categories. 
Recall that a monoidal category is called sovereign if it is rigid and the left $()^*$ and 
right ${}^*()$ duality functors coincide. Note that braided monoidal categories are examples of 
sovereign monoidal categories, so the theory below applies to them. 
For more about braided categories and duality in monoidal categories we invite the reader 
to consult \cite{bulacu, k}.   

The Nakayama automorphism of a Frobenius algebra in a sovereign monoidal category $\Cc$ was introduced in 
\cite{fust}. More exactly, if $A$ is a Frobenius algebra then by \thref{charactFrobalg} 
we know that $A$ admits left and right dual objects. Since the category $\Cc$ is sovereign we have 
${}^*A=A^*:=\widehat{A}$, as objects in $\Cc$. Then the Nakayama automorphism of $A$ is defined as being 
\[
\mathcal{N}={\footnotesize
\gbeg{3}{8}
\gvac{2}\got{1}{A}\gnl
\gvac{1}\gnot{\hspace*{-4mm}\vspace*{2mm}\bullet}\gvac{-1}\gdb\gcl{1}\gnl
\gcl{1}\gmu\gnl
\gcl{1}\gcn{1}{1}{2}{3}\gnl
\gcl{1}\gvac{1}\gmpcu{\vartheta}\gnl
\gcl{1}\gsbox{2}\gnot{\hspace{5mm}e}\gnl
\gev\gcl{1}\gnl
\gvac{2}\gob{1}{A}
\gend}: A\ra A~,~
\mbox{and has the property that}~
{\footnotesize
\gbeg{2}{6}
\got{1}{A}\got{1}{A}\gnl
\gmp{\mathcal{N}}\gcl{1}\gnl
\gmu\gnl
\gcn{1}{1}{2}{1}\gnl
\gmpcu{\vartheta}\gnl
\gob{2}{\un{1}}
\gend
}
=
{\footnotesize 
\gbeg{4}{8}
\gvac{2}\got{1}{A}\got{1}{A}\gnl
\gvac{1}\gnot{\hspace*{-4mm}\vspace*{2mm}\bullet}\gvac{-1}\gdb\gcl{1}\gcl{1}\gnl
\gcl{1}\gmu\gcl{1}\gnl
\gcl{1}\gcn{1}{1}{2}{3}\gvac{1}\gcl{1}\gnl
\gcl{1}\gvac{1}\gmpcu{\vartheta}\gcl{1}\gnl
\gcn{1}{1}{1}{3}\gvac{2}\gcn{1}{1}{1}{-1}\gnl
\gvac{1}\gev\gnl
\gvac{1}\gob{2}{\un{1}}
\gend},
\] 
where $(\vartheta, e)$ is a Frobenius pair for $A$. 
$\mathcal{N}$ is a unital algebra isomorphism in $\Cc$, cf. \cite[Proposition 18]{fust}. 

\begin{theorem}\thlabel{FrobExtBaseFSsov}
Let $\Cc$ be a sovereign monoidal category with coequalizers and such that any object of it is 
coflat and left robust. Let $R$ be a Frobenius separable algebra in $\Cc$ and $i: R\ra S$ an algebra 
extension. Then $i: R\ra S$ is Frobenius if and only if $S$ is Frobenius in $\Cc$ and 
$\widetilde{\mathcal{N}}\circ i=i\circ \mathcal{N}$, where 
$\widetilde{\mathcal{N}}$ and $\mathcal{N}$ are the Nakayama automorphisms of $S$ and $R$, 
respectively.  
\end{theorem}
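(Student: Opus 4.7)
The overall strategy is to invoke \thref{FrobExtBaseFS}, so that the problem reduces to proving, in a sovereign monoidal category, that condition \equref{CharFrobExt} is equivalent to the Nakayama compatibility $\widetilde{\mathcal{N}}\circ i = i\circ \mathcal{N}$. All remaining hypotheses of \thref{FrobExtBaseFS} are in force, so once this equivalence is established the theorem follows.

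The principal tool will be the non-degeneracy of the Frobenius pairing $\widetilde{\vartheta}\un{m}_S : S \otimes S \to \un{1}$: by \thref{charactFrobalg} this pairing identifies $S$ with its dual, and consequently two morphisms $f, g : R \to S$ coincide as soon as $\widetilde{\vartheta}\un{m}_S(f \otimes \Id_S) = \widetilde{\vartheta}\un{m}_S(g \otimes \Id_S)$. My plan is to rewrite both sides of \equref{CharFrobExt}, viewed as morphisms $S \otimes R \to \un{1}$, in the form $\widetilde{\vartheta}\un{m}_S(h \otimes \Id_S)$ composed with the canonical reordering $S \otimes R \to R \otimes S$ supplied by the sovereign duality on $R$: with $h = \widetilde{\mathcal{N}}\circ i$ coming from the left-hand side and $h = i\circ \mathcal{N}$ coming from the right-hand side.

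For the left-hand side, cycling the argument $i(r)$ from the second to the first slot of $\un{m}_S$ is exactly what the defining equation of $\widetilde{\mathcal{N}}$ encodes, so the recasting is immediate. For the right-hand side the key intermediate step is a Casimir absorption identity for $R$, asserting that the endomorphism of $R$ built from $e$ by feeding the input into one leg of the Casimir, multiplying with the adjacent leg, and applying $\vartheta$ to the resulting product, equals $\mathcal{N}$. This identity follows from the defining diagram of $\mathcal{N}$ combined with the Frobenius axioms $(\vartheta \otimes \Id_R)e = \un{\eta}_R = (\Id_R \otimes \vartheta)e$ and the snake identities for the sovereign duality on $R$. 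Substituting it into the right-hand side of \equref{CharFrobExt} and using that $i$ is an algebra morphism, this side acquires the form $\widetilde{\vartheta}\un{m}_S(i\circ \mathcal{N} \otimes \Id_S)$ (composed with the same reordering). Equation \equref{CharFrobExt} thus collapses to $\widetilde{\vartheta}\un{m}_S(\widetilde{\mathcal{N}} i \otimes \Id_S) = \widetilde{\vartheta}\un{m}_S(i\mathcal{N} \otimes \Id_S)$, and the non-degeneracy above concludes.

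The main obstacle I anticipate is a clean graphical derivation of the Casimir absorption identity in the abstract sovereign setting: it requires weaving together the defining diagram of $\mathcal{N}$ (phrased via ${\rm ev}'_R$ and ${\rm coev}'_R$) with the Frobenius axioms for $(\vartheta, e)$ and the duality snake identities, with no braiding available to shortcut the computation. Once this identity is established, the remainder of the argument is a formal manipulation in the graphical calculus, and the equivalence of \equref{CharFrobExt} with $\widetilde{\mathcal{N}}\circ i = i\circ \mathcal{N}$ follows at once.
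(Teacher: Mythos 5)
Your proposal is correct and follows essentially the same route as the paper: both reduce via \thref{FrobExtBaseFS} to showing \equref{CharFrobExt} is equivalent to $\widetilde{\mathcal{N}}\circ i=i\circ\mathcal{N}$, and both establish that equivalence by testing against the non-degenerate pairing coming from the isomorphism $\Phi: S\ra {}^*S$ and exploiting the cyclicity encoded in the Nakayama automorphisms together with the Frobenius axioms for $(\vartheta,e)$ and the sovereign identification ${}^*\widetilde{e}=\widetilde{e}^*$. Your ``Casimir absorption identity'' is exactly the unwinding of the paper's definition of $\mathcal{N}$ via the duality snake identities, so the anticipated obstacle is in fact already handled by the definition.
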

\begin{proof}
In view of \thref{FrobExtBaseFS} it is enough to prove that \equref{CharFrobExt} is equivalent to 
$\widetilde{\mathcal{N}}\circ i=i\circ \mathcal{N}$. To this end denote by $(e, \vartheta)$ and 
$(\widetilde{e}, \widetilde{\vartheta})$ a Frobenius pair for $R$ and $S$, respectively. 
Since $\Cc$ is sovereign monoidal we have that ${}^*\widetilde{e}=\widetilde{e}^*$, and this amounts 
to 
$
{\footnotesize
\gbeg{4}{6}
\gvac{2}\got{1}{\widehat{S}}\got{1}{\widehat{S}}\gnl
\gvac{2}\gcl{1}\gcl{1}\gnl
\gsbox{2}\gnot{\hspace{5mm}\widetilde{e}}\gvac{2}\gcl{1}\gcl{1}\gnl
\gcn{1}{1}{1}{3}\gvac{1}\gnot{\hspace*{-4mm}\vspace*{-2mm}\bullet}\gvac{-1}\gev\gcn{1}{1}{1}{-1}\gnl
\gvac{2}\gnot{\hspace*{-4mm}\vspace*{-2mm}\bullet}\gvac{-1}\gev\gnl
\gvac{1}\gob{2}{\un{1}}
\gend}
=
{\footnotesize
\gbeg{4}{6}
\got{1}{\widehat{S}}\got{1}{\widehat{S}}\gnl
\gcl{1}\gcl{1}\gnl
\gcl{1}\gcl{1}\gsbox{2}\gnot{\hspace{5mm}\widetilde{e}}\gnl
\gcn{1}{1}{1}{3}\gev\gcn{1}{1}{1}{-1}\gnl
\gvac{1}\gev\gnl
\gvac{1}\gob{2}{\un{1}}
\gend}
$.   
Also, in the proof of implication $(i)\Rightarrow (iii)$ in \thref{charactFrobalg} 
the isomorphism $\Phi: S\ra {}^*S=\widehat{S}$ of left $S$-modules in $\Cc$ and its inverse 
$\phi$ are given by 
$
\Phi={\footnotesize
\gbeg{3}{6}
\gvac{2}\got{1}{S}\gnl
\gvac{1}\gnot{\hspace*{-4mm}\vspace*{2mm}\bullet}\gvac{-1}\gdb\gcl{1}\gnl 
\gcl{1}\gmu\gnl
\gcl{1}\gcn{1}{1}{2}{1}\gnl
\gcl{1}\gmpcu{\widetilde{\vartheta}}\gnl
\gob{1}{\widehat{S}}\gnl
\gend
}
$ 
and
$ 
\phi={\footnotesize
\gbeg{3}{5}
\gvac{2}\got{1}{\widehat{S}}\gnl
\gvac{2}\gcl{1}\gnl
\gsbox{2}\gnot{\hspace{5mm}\widetilde{e}}\gvac{2}\gcl{1}\gnl
\gcl{1}\gvac{1}\gnot{\hspace*{-4mm}\vspace*{-2mm}\bullet}\gvac{-1}\gev\gnl
\gob{1}{S}
\gend
}$, respectively. 

Thus we have that $\widetilde{\mathcal{N}}\circ i=i\circ \mathcal{N}$ is equivalent to 
\begin{eqnarray*}
&&\hspace*{-1cm}{\footnotesize
\gbeg{3}{8}
\gvac{2}\got{1}{R}\gnl
\gvac{1}\gnot{\hspace*{-4mm}\vspace*{2mm}\bullet}\gvac{-1}\gdb\gmp{i}\gnl
\gcl{1}\gmu\gnl
\gcl{1}\gcn{1}{1}{2}{3}\gnl
\gcl{1}\gvac{1}\gmpcu{\widetilde{\vartheta}}\gnl
\gcl{1}\gsbox{2}\gnot{\hspace{5mm}\widetilde{e}}\gnl
\gev\gcl{1}\gnl
\gvac{2}\gob{1}{S}
\gend
}=
{\footnotesize
\gbeg{3}{8}
\gvac{2}\got{1}{R}\gnl
\gvac{1}\gnot{\hspace*{-4mm}\vspace*{2mm}\bullet}\gvac{-1}\gdb\gcl{1}\gnl
\gcl{1}\gmu\gnl
\gcl{1}\gcn{1}{1}{2}{3}\gnl
\gcl{1}\gvac{1}\gmpcu{\vartheta}\gnl
\gcl{1}\gsbox{2}\gnot{\hspace{5mm}e}\gnl
\gev\gmp{i}\gnl
\gvac{2}\gob{1}{S}
\gend}
~\Leftrightarrow~
{\footnotesize
\gbeg{4}{9}
\got{1}{\widehat{S}}\gvac{2}\got{1}{R}\gnl
\gcl{1}\gvac{1}\gnot{\hspace*{-4mm}\vspace*{2mm}\bullet}\gvac{-1}\gdb\gmp{i}\gnl
\gcl{1}\gcl{1}\gmu\gnl
\gcl{1}\gcl{1}\gcn{1}{1}{2}{3}\gnl
\gcl{1}\gcl{1}\gvac{1}\gmpcu{\widetilde{\vartheta}}\gnl
\gcl{1}\gcl{1}\gsbox{2}\gnot{\hspace{5mm}\widetilde{e}}\gnl
\gcn{1}{1}{1}{3}\gev\gcn{1}{1}{1}{-1}\gnl
\gvac{1}\gev\gnl
\gvac{1}\gob{2}{\un{1}}
\gend
}=
{\footnotesize
\gbeg{4}{9}
\got{1}{\widehat{S}}\gvac{2}\got{1}{R}\gnl
\gcl{1}\gvac{1}\gnot{\hspace*{-4mm}\vspace*{2mm}\bullet}\gvac{-1}\gdb\gcl{1}\gnl
\gcl{1}\gcl{1}\gmu\gnl
\gcl{1}\gcl{1}\gcn{1}{1}{2}{3}\gnl
\gcl{1}\gcl{1}\gvac{1}\gmpcu{\vartheta}\gnl
\gcl{1}\gcl{1}\gsbox{2}\gnot{\hspace{5mm}e}\gnl
\gcn{1}{1}{1}{3}\gev\gcn{1}{1}{1}{-1}\gnl
\gvac{1}\gev\gnl
\gvac{1}\gob{2}{\un{1}}
\gend}
~\Leftrightarrow~
{\footnotesize
\gbeg{6}{6}
\gvac{2}\got{1}{\widehat{S}}\gvac{2}\got{1}{R}\gnl
\gvac{2}\gcl{1}\gvac{1}\gnot{\hspace*{-4mm}\vspace*{2mm}\bullet}\gvac{-1}\gdb\gmp{i}\gnl
\gsbox{2}\gnot{\hspace{5mm}\widetilde{e}}\gvac{2}\gcl{1}\gcl{1}\gmu\gnl
\gcn{1}{1}{1}{3}\gvac{1}\gnot{\hspace*{-4mm}\vspace*{-2mm}\bullet}\gvac{-1}\gev\gcn{1}{1}{1}{-1}\gcn{1}{1}{2}{3}\gnl
\gvac{2}\gnot{\hspace*{-4mm}\vspace*{-2mm}\bullet}\gvac{-1}\gev\gvac{2}\gmpcu{\widetilde{\vartheta}}\gnl
\gvac{1}\gob{2}{\un{1}}
\gend
}=
{\footnotesize
\gbeg{6}{7}
\gvac{2}\got{1}{\widehat{S}}\gvac{2}\got{1}{R}\gnl
\gvac{2}\gcl{1}\gvac{1}\gnot{\hspace*{-4mm}\vspace*{2mm}\bullet}\gvac{-1}\gdb\gcl{1}\gnl
\gsbox{2}\gnot{\hspace{5mm}e}\gvac{2}\gcl{1}\gcl{1}\gmu\gnl
\gcl{1}\gmp{i}\gcl{1}\gcl{1}\gcn{1}{1}{2}{3}\gnl
\gcn{1}{1}{1}{3}\gvac{1}\gnot{\hspace*{-4mm}\vspace*{-2mm}\bullet}\gvac{-1}\gev\gcn{1}{1}{1}{-1}\gvac{1}\gmpcu{\vartheta}\gnl
\gvac{2}\gnot{\hspace*{-4mm}\vspace*{-2mm}\bullet}\gvac{-1}\gev\gnl
\gvac{1}\gob{2}{\un{1}}
\gend
}\\
&&\hspace{1cm}\Leftrightarrow~
{\footnotesize
\gbeg{4}{8}
\gvac{2}\got{1}{\widehat{S}}\got{1}{R}\gnl
\gvac{2}\gcl{1}\gcl{1}\gnl
\gsbox{2}\gnot{\hspace{5mm}\widetilde{e}}\gvac{2}\gcl{1}\gmp{i}\gnl
\gcn{1}{1}{1}{3}\gvac{1}\gnot{\hspace*{-4mm}\vspace*{-2mm}\bullet}\gvac{-1}\gev\gcn{1}{1}{1}{-1}\gnl
\gvac{1}\gmu\gnl
\gvac{1}\gcn{1}{1}{2}{3}\gnl
\gvac{2}\gmpcu{\widetilde{\vartheta}}\gnl
\gvac{2}\gob{1}{\un{1}}
\gend
}=
{\footnotesize
\gbeg{4}{9}
\gvac{2}\got{1}{\widehat{S}}\got{1}{R}\gnl
\gvac{2}\gcl{1}\gcl{1}\gnl
\gsbox{2}\gnot{\hspace{5mm}e}\gvac{2}\gcl{1}\gcl{1}\gnl
\gcl{1}\gmp{i}\gcl{1}\gcl{1}\gnl
\gcn{1}{1}{1}{3}\gev\gcn{1}{1}{1}{-1}\gnl
\gvac{1}\gmu\gnl
\gvac{1}\gcn{1}{1}{2}{3}\gnl
\gvac{2}\gmpcu{\vartheta}\gnl
\gvac{2}\gob{1}{\un{1}}
\gend
}~\Leftrightarrow~
\footnotesize{
\gbeg{2}{6}
\got{1}{\widehat{S}}\got{1}{R}\gnl
\gmp{\phi}\gmp{i}\gnl
\gmu\gnl
\gcn{1}{1}{2}{3}\gnl
\gvac{1}\gmpcu{\widetilde{\vartheta}}\gnl
\gvac{1}\gob{1}{\un{1}}
\gend}=
{\footnotesize
\gbeg{4}{9}
\gvac{2}\got{1}{\widehat{S}}\got{1}{R}\gnl
\gvac{2}\gcl{1}\gcl{1}\gnl
\gsbox{2}\gnot{\hspace{5mm}e}\gvac{2}\gcl{1}\gcl{1}\gnl
\gcl{1}\gmp{i}\gcl{1}\gcl{1}\gnl
\gcn{1}{1}{1}{3}\gev\gcn{1}{1}{1}{-1}\gnl
\gvac{1}\gmu\gnl
\gvac{1}\gcn{1}{1}{2}{3}\gnl
\gvac{2}\gmpcu{\vartheta}\gnl
\gvac{2}\gob{1}{\un{1}}
\gend
}~\Leftrightarrow~
\footnotesize{
\gbeg{2}{6}
\got{1}{\widehat{S}}\got{1}{R}\gnl
\gcl{1}\gmp{i}\gnl
\gmu\gnl
\gcn{1}{1}{2}{3}\gnl
\gvac{1}\gmpcu{\widetilde{\vartheta}}\gnl
\gvac{1}\gob{1}{\un{1}}
\gend}=
{\footnotesize
\gbeg{4}{9}
\gvac{2}\got{1}{\widehat{S}}\got{1}{R}\gnl
\gvac{2}\gmp{\Phi}\gcl{1}\gnl
\gsbox{2}\gnot{\hspace{5mm}e}\gvac{2}\gcl{1}\gcl{1}\gnl
\gcl{1}\gmp{i}\gcl{1}\gcl{1}\gnl
\gcn{1}{1}{1}{3}\gev\gcn{1}{1}{1}{-1}\gnl
\gvac{1}\gmu\gnl
\gvac{1}\gcn{1}{1}{2}{3}\gnl
\gvac{2}\gmpcu{\vartheta}\gnl
\gvac{2}\gob{1}{\un{1}}
\gend
}~.
\end{eqnarray*} 
Finally, we compute that 
\[
{\footnotesize
\gbeg{4}{9}
\gvac{2}\got{1}{\widehat{S}}\got{1}{R}\gnl
\gvac{2}\gmp{\Phi}\gcl{1}\gnl
\gsbox{2}\gnot{\hspace{5mm}e}\gvac{2}\gcl{1}\gcl{1}\gnl
\gcl{1}\gmp{i}\gcl{1}\gcl{1}\gnl
\gcn{1}{1}{1}{3}\gev\gcn{1}{1}{1}{-1}\gnl
\gvac{1}\gmu\gnl
\gvac{1}\gcn{1}{1}{2}{3}\gnl
\gvac{2}\gmpcu{\vartheta}\gnl
\gvac{2}\gob{1}{\un{1}}
\gend
}=
{\footnotesize
\gbeg{6}{10}
\gvac{4}\got{1}{\widehat{S}}\got{1}{R}\gnl
\gvac{3}\gnot{\hspace*{-4mm}\vspace*{2mm}\bullet}\gvac{-1}\gdb\gcl{1}\gcl{1}\gnl
\gsbox{2}\gnot{\hspace{5mm}e}\gvac{2}\gcl{1}\gmu\gcl{1}\gnl
\gcl{1}\gmp{i}\gcl{1}\gcn{1}{1}{2}{3}\gvac{1}\gcl{1}\gnl
\gcl{1}\gvac{1}\gnot{\hspace*{-4mm}\vspace*{-2mm}\bullet}\gvac{-1}\gev\gvac{1}\gmpcu{\widetilde{\vartheta}}\gcl{1}\gnl
\gcn{1}{1}{1}{7}\gvac{4}\gcn{1}{1}{1}{-1}\gnl
\gvac{3}\gmu\gnl
\gvac{3}\gcn{1}{1}{2}{1}\gnl
\gvac{3}\gmpcu{\vartheta}\gnl
\gvac{3}\gob{1}{\un{1}}
\gend
}=
{\footnotesize
\gbeg{4}{12}
\gvac{2}\got{1}{S}\got{1}{R}\gnl
\gvac{2}\gcl{1}\gcl{1}\gnl
\gsbox{2}\gnot{\hspace{5mm}e}\gvac{2}\gcl{1}\gcl{1}\gnl
\gcl{1}\gmp{i}\gcl{1}\gcl{1}\gnl
\gcl{1}\gmu\gcl{1}\gnl
\gcl{1}\gcn{1}{1}{2}{1}\gvac{1}\gcl{1}\gnl
\gcl{1}\gmpcu{\widetilde{\vartheta}}\gvac{1}\gcn{1}{2}{1}{-3}\gnl
\gcl{1}\gnl
\gmu\gnl
\gcn{1}{1}{2}{3}\gnl
\gvac{1}\gmpcu{\vartheta}\gnl
\gvac{1}\gob{1}{\un{1}}
\gend},
\]
finishing the proof of the Theorem.
\end{proof}
 
\section{Applications to wreaths}\selabel{ApplWreaths}
\setcounter{equation}{0}

As we already noticed, \thref{charactFrobalg} can be also specialized either 
for a monad in an arbitrary category or 
for a monad in a 2-category. Actually the former is a particular case of the latter 
if we regard the monoidal category of endofunctors as an one object 2-category. This is why we 
restrict ourselves in presenting some characterizations for (separable) Frobenius monads in 2-categories only. 
These will be used later on for the characterization of a (separable) Frobenius wreath extension 
in a monoidal category. 

Let $U$ be 0-cell in a $2$-category ${\cal K}$. Then ${\cal K}(U):={\cal K}(U, U)$ is a monoidal
category. The objects are 1-cells $U\to U$, morphisms are $2$-cells, and the tensor product
is given by horizontal composition of $2$-cells. The unit is $1_U$, the unit 1-cell on $U$. 
With this simple observation in mind it is immediate that a monad in a 2-category 
${\cal K}$ is nothing than an algebra in a monoidal category ${\cal K}(A)$, for a certain 
0-cell $A$ of ${\cal K}$. 

Recall that a monad $(A, t, \mu, \eta)$ in ${\cal K}$ is called 
Frobenius if there exist 2-cells $\vartheta: t\Rightarrow 1_A$ and $e: 1_A\Rightarrow tt$ 
such that the diagrams below are commutative 
\[
\xymatrix{
t\ar@2{->}[r]^{1_t\odot e}\ar@2{->}[d]_-{e\odot 1_t}&ttt\ar@2{->}[d]^-{\mu\odot 1_t}\\
ttt\ar@2{->}[r]^{1_t\odot \mu}&tt
}~~,~~
\xymatrix{
1_A\ar@2{->}[r]^{e}\ar@2{->}[rd]_-{\eta}&tt\ar@2{->}[d]^-{\vartheta\odot 1_t}\\
&t
}~~,~~
\xymatrix{
1_A\ar@2{->}[r]^{e}\ar@2{->}[rd]_-{\eta}&tt\ar@2{->}[d]^-{1_t\odot \vartheta}\\
&t
}~~.
\]
A simple inspection shows that $(A, t, \mu, \eta)$ is Frobenius if and only if $(t, \mu, \eta)$ 
is a Frobenius algebra in the monoidal category ${\cal K}(A)$, so \thref{charactFrobalg} applies. 
Note that the existence of a left (right) dual for an object $u: A\ra A$ in the monoidal category 
${\cal K}(A)$ reduces to the existence of a left (right) adjunction for $u$ in the 2-categorical sense. 
Namely, $u$ has a left dual if there is a 1-cell $v: A\ra A$ and 2-cells 
$\iota:1_A\Rightarrow uv$ and $j:vu\Rightarrow 1_A$ such that 
$(1_u\odot j)(\iota\odot 1_u)=1_u$ and $(j\odot 1_v)(1_v\odot \iota)=1_v$. In this case we say also that 
$u$ is a right adjoint to $v$ and denote this adjunction as before, $(\iota, j): v\dashv u$. 
 
In what follows the vertical composition in $\mathcal{K}$ will be denoted by juxtaposition. 

\begin{corollary}\colabel{Frob2cat}
Let ${\cal K}$ be a 2-category and $\mathbb{A}=(A, t, \mu, \eta)$ a monad in ${\cal K}$. Then the following 
assertions are equivalent:
\begin{itemize}
\item[(i)] $\mathbb{A}$ is a Frobenius monad;
\item[(ii)] $(t, \mu, \eta)$ is a Frobenius algebra in the monoidal category ${\cal K}(A)$; 
\item[(iii)] $t$ admits a coalgebra structure in the monoidal category ${\cal K}(A)$, say 
$(t, \delta:t\Rightarrow tt, \va: t\Rightarrow 1_A)$, such that 
\[
(1_t\odot \mu)(\d\odot 1_t)=\d\mu=(\mu\odot 1_t)(1_t\odot \d);
\] 
\item[(iv)] There exists an adjunction $(\rho, \l): t\dashv t$ such that $\l(\mu\odot 1_t)=\l(1_t\odot \mu)$;
\item[(v)] There exists an adjunction of the form $(\rho, \vartheta\mu): t\dashv t$, with 
$\vartheta: t\Rightarrow 1_A$ a 2-cell in ${\cal K}$.
\end{itemize}
\end{corollary}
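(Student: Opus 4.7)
The plan is to observe that this corollary is essentially a direct specialization of \thref{charactFrobalg} to the monoidal category $\mathcal{K}(A)$. The 2-categorical data of a monad in $\mathcal{K}$ matches word-for-word the data of an algebra in the strict monoidal category $\mathcal{K}(A)$, whose tensor product is horizontal composition of $1$-cells $A \to A$ and whose unit object is $1_A$. Under this identification, the three commutative diagrams defining a Frobenius monad $(A, t, \mu, \eta)$ with system $(\vartheta, e)$ are exactly the two equalities in \deref{defFrobext}(i) defining a Frobenius algebra $(t, \mu, \eta)$ in $\mathcal{K}(A)$. This gives the equivalence (i) $\Leftrightarrow$ (ii) by direct inspection.

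Once (i) $\Leftrightarrow$ (ii) is settled, the remaining equivalences follow from \thref{charactFrobalg} applied in $\mathcal{K}(A)$. Specifically, (iii) is precisely the translation of \thref{charactFrobalg}(iv): the condition that the comultiplication $\delta: t \Rightarrow tt$ be a $t$-bimodule map in $\mathcal{K}(A)$, with both $t$ and $tt$ viewed as bimodules via $\mu$, unfolds to the double equation $(1_t \odot \mu)(\delta \odot 1_t) = \delta\mu = (\mu \odot 1_t)(1_t \odot \delta)$. Similarly, (iv) is the translation of \thref{charactFrobalg}(vii), and (v) is the translation of \thref{charactFrobalg}(viii), where the condition $\lambda = \vartheta \un{m}_A$ in the general statement becomes $\lambda = \vartheta\mu$ here.

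The only point that requires a small remark is that a left dual in the monoidal category $\mathcal{K}(A)$ is the same thing as a $2$-categorical adjunction in $\mathcal{K}$: a left dual of $u \in \mathcal{K}(A)$ consists of a $1$-cell $v: A \to A$ and $2$-cells $\rho: 1_A \Rightarrow uv$, $\lambda: vu \Rightarrow 1_A$ satisfying the triangle identities, which are precisely the defining data of an adjunction $v \dashv u$ in $\mathcal{K}$. With this book-keeping in place, each of (iii), (iv), (v) follows directly from the corresponding clause of \thref{charactFrobalg}, and no substantive obstacle arises — the entire proof is the translation of the monoidal-categorical statement into $2$-categorical language.
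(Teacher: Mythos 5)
Your proposal is correct and coincides with the paper's own proof: the authors likewise obtain (i) $\Leftrightarrow$ (ii) from the preceding identification of monads in $\mathcal{K}$ with algebras in $\mathcal{K}(A)$ (including the observation that left duals in $\mathcal{K}(A)$ are $2$-categorical adjunctions), and then read off (iii), (iv), (v) as items (iv), (vii), (viii) of \thref{charactFrobalg} specialized to $\mathcal{K}(A)$. Nothing further is needed.
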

\begin{proof}
The equivalence between (i) and (ii) follows from the comments made above. The statements in 
(ii), (iii), (iv) and respectively (v) are precisely the ones in (i), (iv), (vii) and respectively (viii) 
in \thref{charactFrobalg}, specialized for the case when the monoidal category is ${\cal K}(A)$. 
We leave the details to the reader. 
\end{proof}

Of course we could add to \coref{Frob2cat} another four equivalent statements, namely the one 
corresponding to (ii), (iii), (v) and (vi) in \thref{charactFrobalg} but for later use we prefer 
to keep only those that can be formulated in terms of the monad, and so to don't involve the existence 
of a dual object for the monad that is different from the monad itself. Also, remark that (ix) in 
\thref{charactFrobalg} cannot be specialized for monads in 2-categories; it can be applied only 
in the situation when $1_A$, the unit object of ${\cal K}(A)$, is a left $\ot$-generator 
for ${\cal K}(A)$. Nevertheless, a more general treatment 
in this direction can be found in \cite{lauda}. 

In the separable case we have a monoidal interpretation for the notion, too. First, a monad 
$(A, t, \mu, \eta)$ in a 2-category $\mathcal{K}$ is called separable if the multiplication 
$\mu$ splits as a $t$-bimodule, in the sense that there is a $2$-cell 
$\gamma: t\Rightarrow tt$ in $\mathcal{K}$ such that the diagrams below are commutative  
\[
\xymatrix{
tt\ar@2{->}[r]^{1_t\odot \gamma}\ar@2{->}[d]_-{\gamma\odot 1_t}&ttt\ar@2{->}[d]^-{\mu\odot 1_t}\\
ttt\ar@2{->}[r]^{1_t\odot \mu}&tt
}~~,~~
\xymatrix{
tt\ar@2{->}[r]^{\mu}\ar@2{->}[rd]_-{1_{tt}}&t\ar@2{->}[d]^-{\gamma}\\
&tt
}~.
\]

The proof of the next result is immediate, so we will omit it. 

\begin{proposition}
For a monad $\mathbb{A}=(A, t, \mu, \eta)$ in a $2$-category $\mathcal{K}$ the following assertions 
are equivalent:
\begin{itemize}
\item[(i)] $\mathbb{A}$ is separable;
\item[(ii)] $(t, \mu, \eta)$ is a monoidal separable algebra in $\mathcal{K}(A)$;
\item[(iii)] There exists a 2-cell $e: 1_A\Rightarrow tt$ such that the diagrams below are commutative 
\[
\xymatrix{
t\ar@2{->}[r]^{1_t\odot e}\ar@2{->}[d]_-{e\odot 1_t}&ttt\ar@2{->}[d]^-{\mu\odot 1_t}\\
ttt\ar@2{->}[r]^{1_t\odot \mu}&tt
}~~,~~
\xymatrix{
1_A\ar@2{->}[r]^{e}\ar@2{->}[rd]_-{\eta}&tt\ar@2{->}[d]^-{\mu}\\
&t
}~.
\]
\end{itemize}
\end{proposition}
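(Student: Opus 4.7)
The plan is to show (i) $\Leftrightarrow$ (ii) by unwinding definitions and then (ii) $\Leftrightarrow$ (iii) by mimicking, in the monoidal category $\mathcal{K}(A)$, the construction already carried out in \prref{SepAlgSepFun}.

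For (i) $\Leftrightarrow$ (ii), I would simply observe that the monoidal structure on $\mathcal{K}(A)$ is given by horizontal composition, with unit $1_A$, and that $(t,\mu,\eta)$ with the hypothesized unit and associativity laws is tautologically an algebra in $\mathcal{K}(A)$. The $t$-bimodule structures on $t$ and on $tt$ induced by $\mu$ correspond (under the identifications $t\odot t = tt$, $t\odot t\odot t = ttt$) precisely to the $t$-bimodule structures used in \deref{defFrobext}(ii). A $2$-cell $\gamma:t\Rightarrow tt$ is $t$-bilinear in the 2-categorical sense (i.e., the first square in the definition of a separable monad) if and only if it is a morphism of $t$-bimodules in $\mathcal{K}(A)$, and $\mu\gamma = 1_t$ is exactly the splitting condition for $\un{m}_t$. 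So the two notions coincide.

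For (ii) $\Rightarrow$ (iii), given a bimodule splitting $\gamma:t\Rightarrow tt$ of $\mu$, I would define $e := \gamma\eta : 1_A\Rightarrow tt$. The bimodule property of $\gamma$, applied horizontally to $\eta\odot 1_t$ on one side and $1_t\odot\eta$ on the other, together with the unit axioms of the monad, yields $(\mu\odot 1_t)(1_t\odot e) = (1_t\odot \mu)(e\odot 1_t)$ (both equal $\gamma$), which is the first diagram in (iii). The condition $\mu e = \mu\gamma\eta = 1_t\eta = \eta$ gives the second diagram.

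For (iii) $\Rightarrow$ (ii), given $e:1_A\Rightarrow tt$ satisfying the two diagrams, define
\[
\gamma := (\mu\odot 1_t)(1_t\odot e) = (1_t\odot\mu)(e\odot 1_t),
\]
the two expressions being equal by the first diagram of (iii). Right $t$-linearity of $\gamma$ is immediate from the definition via the second equality and associativity of $\mu$; left $t$-linearity is immediate from the first equality and associativity. Finally,
\[
\mu\gamma = \mu(1_t\odot \mu)(e\odot 1_t) = \mu(\mu\odot 1_t)(e\odot 1_t) = (\mu(\mu e))\odot 1_t \cdots
\]
more precisely, $\mu\gamma = \mu(\mu\odot 1_t)(e\odot 1_t)$, and reassociating via the unit law $\mu e = \eta$ collapses this to $\mu(\eta\odot 1_t) = 1_t$. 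Thus $\gamma$ exhibits $(t,\mu,\eta)$ as a separable algebra in $\mathcal{K}(A)$.

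No step is a real obstacle: the only thing to be careful about is keeping track of which horizontal composites and bimodule structures are being used, which is purely notational. The proposition is essentially the 2-categorical translation of \prref{SepAlgSepFun}, and no extra hypothesis (such as $1_A$ being a $\otimes$-generator) is required because the Casimir element $e$ is constructed directly, not via a separability functor argument.
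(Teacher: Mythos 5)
Your proof is correct and is exactly the ``immediate'' argument that the paper omits: the equivalence (i) $\Leftrightarrow$ (ii) is the observation that $2$-categorical separability of the monad is the same as separability of $(t,\mu,\eta)$ as an algebra in the monoidal category $\mathcal{K}(A)$, and (ii) $\Leftrightarrow$ (iii) is the standard passage between a $t$-bilinear section $\gamma$ of $\mu$ and the Casimir $2$-cell $e=\gamma\eta$ (resp.\ $\gamma=(\mu\odot 1_t)(1_t\odot e)=(1_t\odot\mu)(e\odot 1_t)$), precisely as carried out for monoidal algebras in \prref{SepAlgSepFun}. The only point worth flagging is that the second diagram in the paper's definition of a separable monad literally reads $\gamma\mu=1_{tt}$; it must be interpreted as the splitting condition $\mu\gamma=1_t$ (as you did) for the proposition to be true.
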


We move now to the Eilenberg-Moore $2$-categories. Namely, 
to a 2-category ${\cal K}$ we can associate a new 2-category $EM({\cal K})$, called the Eilenberg-Moore 
category associated to ${\cal K}$, cf. \cite{LackRoss}. The $0$-cells in $EM({\cal K})$ are monads in ${\cal K}$, 
$1$-cells are the monad morphisms and $2$-cells 
$
\xymatrix{
(f, \psi)\ar@2{->}[r]^{\rho}&(g, \phi)
}
$ 
are $2$-cells 
$
\xymatrix{
f\ar@2{->}[r]^{\rho}&gt
}
$ 
in ${\cal K}$ obeying the equality
\[
(1_g\odot\mu_t)(\rho\odot 1_t)\psi=(1_g\odot \mu_t)(\phi\odot 1_t)(1_s\odot \rho).
\] 
The vertical composition of two $2$-cells 
$
\xymatrix{
(f, \psi)\ar@2{->}[r]^{\rho}&(g, \phi)\ar@2{->}[r]^{\rho'}&(h, \gamma)
}
$ 
is given by 
\[
\xymatrix{
(f, \psi)\ar@2{->}[r]^-{\rho'\circ \rho}&(h, \gamma)
}~,~ 
\rho'\circ \rho:=(1_h\odot \mu_t)(\rho'\odot 1_t)\rho,
\]
while the horizontal composition of two cells 
\[
\xymatrix{
\mathbb{A}\rtwocell^{(f, \psi)}_{(f', \psi')}{\rho}&\mathbb{B}
\rtwocell^{(g, \phi)}_{(g', \phi')}{\rho'}&\mathbb{C}
}
\]
is defined by $(g, \phi)(f, \psi)=(gf, (1_g\odot \psi)\circ (\phi\odot 1_f))$, etc. and 
$
\xymatrix{
gf\ar@2{->}[r]^{\rho'\oslash \rho}&g'f't
}
$ 
given by 
\[
\rho'\oslash \rho:=(1_{g'}\odot 1_{f'}\odot \mu_t)(1_{g'}\odot \rho\odot 1_t) 
(1_{g'}\odot \psi)(\rho'\odot 1_f).
\]
The identity morphism of the $1$-cell $(f, \psi)$ is $1_f\odot \eta_t$, 
and for any monad $\mathbb{A}=(A, t, \mu_t, \eta_t)$ in ${\cal K}$ we have  
$(1_\mathbb{A}, i_\mathbb{A})=((1_A, 1_t), \eta_t)$. 

Motivated by the theory of entwined modules in $\Cc$-categories \cite{bc4, bc5}, we are 
interested to study when the ``algebra" extension produced by a monad in $EM({\cal K})$ 
is Frobenius, respectively separable. Note that a (co)monad in $EM({\cal K})$ is called a 
(co)wreath, so the main goal of this section is to see when an algebra extension defined by a 
wreath is Frobenius, and respectively separable. 
As we will see we can reduce this problem to the study of an algebra extension in a 
suitable monoidal category.

According to \cite{LackRoss}, a wreath is a monad 
$\mathbb{A}=(A, t, \mu, \eta)$ in ${\cal K}$ together with a $1$-cell 
$
\xymatrix{
A\ar[r]^s&A
}
$  
and $2$-cells 
$
\xymatrix{
ts\ar@2{->}[r]^\psi&st
}, 
$
$
\xymatrix{
1_A\ar@2{->}[r]^\sigma &st
} 
$ 
and 
$
\xymatrix{
ss\ar@2[r]^\zeta&st
}
$ 
satisfying the following conditions:
\begin{eqnarray}
&&(1_s\odot \mu)(\psi\odot 1_t)(1_t\odot \psi)=\psi(\mu\odot 1_s)~,~
\psi(\eta\odot 1_s)=1_s\odot \eta~;\eqlabel{wr1}\\
&&(1_s\odot \mu)(\psi\odot 1_t)(1_t\odot \sigma)=(1_s\odot \mu)(\sigma\odot 1_t)~;\eqlabel{wr2}\\
&&(1_s\odot \mu)(\psi\odot 1_t)(1_t\odot \zeta)=(1_s\odot \mu)(\zeta\odot 1_t) 
(1_s\odot \psi)(\psi\odot 1_s)~;\eqlabel{wr3}\\
&&(1_s\odot \mu)(\zeta\odot 1_t)(1_s\odot\zeta)=(1_s\odot \mu)(\zeta\odot 1_t)
(1_s\odot \psi)(\zeta\odot 1_s)~;\eqlabel{wr4}\\
&&(1_s\odot \mu)(\zeta\odot 1_t)(1_s\odot \sigma)=1_s\odot \eta~;\eqlabel{wr5}\\
&&(1_s\odot \mu)(\zeta\odot 1_t)(1_s\odot \psi)(\sigma\odot 1_s)=1_s\odot \eta~.\eqlabel{wr6} 
\end{eqnarray}

\begin{lemma}
Let ${\cal K}$ be a 2-category. Then a wreath in ${\cal K}$ is nothing that an algebra in a monoidal category of 
the form $EM({\cal K})(\mathbb{A})$, where $\mathbb{A}$ is a 0-cell in $EM({\cal K})$, that is, a monad in ${\cal K}$.   
\end{lemma}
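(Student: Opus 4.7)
The plan is to verify that the definitional data and axioms of a wreath on a 0-cell $\mathbb{A}=(A,t,\mu,\eta)$ match, piece by piece, the data and axioms of an algebra in $EM({\cal K})(\mathbb{A})$. Since ${\cal K}(A)$ is a monoidal category with tensor product $\odot$ and unit $1_A$, the analogous fact ${\cal K}(A)(1_A)\cong {\cal K}(A)$ for monads in ${\cal K}$ is folklore; the point here is to unravel the more elaborate vertical/horizontal compositions of $EM({\cal K})$ and track how the six wreath axioms arise.

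First, I would fix a monad $\mathbb{A}=(A,t,\mu,\eta)$ and observe that objects of $EM({\cal K})(\mathbb{A})={\rm Hom}_{EM({\cal K})}(\mathbb{A},\mathbb{A})$ are exactly pairs $(s,\psi)$ with $s\colon A\to A$ a 1-cell in ${\cal K}$ and $\psi\colon ts\Rightarrow st$ a 2-cell satisfying $(1_s\odot\mu)(\psi\odot 1_t)(1_t\odot\psi)=\psi(\mu\odot 1_s)$ and $\psi(\eta\odot 1_s)=1_s\odot\eta$. These are precisely the two equalities in \equref{wr1}. Thus to give the underlying object of an algebra in $EM({\cal K})(\mathbb{A})$ is to give the 1-cell $s$ together with a distributive law $\psi$ fulfilling \equref{wr1}.

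Next, an algebra structure on $(s,\psi)$ consists of a multiplication $\zeta\colon(s,\psi)\otimes(s,\psi)\Rightarrow(s,\psi)$ and a unit $\sigma\colon\un{I}\Rightarrow(s,\psi)$ in $EM({\cal K})(\mathbb{A})$, where $\un{I}=1_\mathbb{A}=(1_A,1_t)$ and $(s,\psi)\otimes(s,\psi)=(ss,(1_s\odot\psi)(\psi\odot 1_s))$. Unfolding the definition of a 2-cell in $EM({\cal K})$, such $\zeta,\sigma$ are 2-cells $\zeta\colon ss\Rightarrow st$ and $\sigma\colon 1_A\Rightarrow st$ in ${\cal K}$ subject to the compatibility condition of the excerpt; writing that condition out for $\zeta$ yields exactly \equref{wr3}, and writing it out for $\sigma$ yields exactly \equref{wr2}.

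It remains to read off the associative and unital axioms. Using the formula $\rho'\oslash\rho:=(1_{g'}\odot 1_{f'}\odot\mu_t)(1_{g'}\odot\rho\odot 1_t)(1_{g'}\odot\psi)(\rho'\odot 1_f)$ together with the monad identities $\mu(\eta\odot 1_t)=1_t=\mu(1_t\odot\eta)$ and the second half of \equref{wr1}, one computes
\[
\zeta\oslash 1_{(s,\psi)}=(1_s\odot\psi)(\zeta\odot 1_s),\quad 1_{(s,\psi)}\oslash\zeta=1_s\odot\zeta,
\]
\[
\sigma\oslash 1_{(s,\psi)}=(1_s\odot\psi)(\sigma\odot 1_s),\quad 1_{(s,\psi)}\oslash\sigma=1_s\odot\sigma.
\]
Plugging these into the vertical composition $\rho'\circ\rho=(1_h\odot\mu_t)(\rho'\odot 1_t)\rho$ with $\rho'=\zeta$ and $h=s$, the associativity of $\zeta$ becomes
\[
(1_s\odot\mu)(\zeta\odot 1_t)(1_s\odot\psi)(\zeta\odot 1_s)=(1_s\odot\mu)(\zeta\odot 1_t)(1_s\odot\zeta),
\]
which is \equref{wr4}; the left and right unit axioms translate into
\[
(1_s\odot\mu)(\zeta\odot 1_t)(1_s\odot\sigma)=1_s\odot\eta \quad\text{and}\quad (1_s\odot\mu)(\zeta\odot 1_t)(1_s\odot\psi)(\sigma\odot 1_s)=1_s\odot\eta,
\]
i.e.\ \equref{wr5} and \equref{wr6}. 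Since every step is reversible, this establishes the announced bijection between wreaths based at $\mathbb{A}$ and algebras in $EM({\cal K})(\mathbb{A})$. The only real work is the bookkeeping for $\oslash$; there is no genuine obstacle, just care with the source/target 1-cells in each use of the interchange law.
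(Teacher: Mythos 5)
Your proposal is correct and follows essentially the same route as the paper: unwinding the definitions of $EM({\cal K})(\mathbb{A})$ so that the monad-morphism condition gives \equref{wr1}, the 2-cell conditions on $\zeta$ and $\sigma$ give \equref{wr3} and \equref{wr2}, and associativity/unitality give \equref{wr4}--\equref{wr6}. Your explicit computation of the whiskerings $\zeta\ot\Id=1_s\odot\zeta$, $\Id\ot\zeta=(1_s\odot\psi)(\zeta\odot 1_s)$, etc.\ is accurate and merely makes explicit what the paper leaves to the reader.
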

\begin{proof}
As we already have mentioned for several times, since $EM({\cal K})$ is a 2-category it follows that 
for any 0-cell $\mathbb{A}=(A, t, \mu, \eta)$ of $EM({\cal K})$ we have a monoidal 
structure on the category $EM({\cal K})(\mathbb{A})$. Furthermore, since a wreath is a monad in the 
2-category $EM({\cal K})$ it then follows that a wreath is an algebra in a monoidal category having the form mentioned 
in the statement. 

For later use and also for the sake of 
the reader we next describe these structures explicitly. Namely,   
\begin{itemize}
\item[$\bullet$] the objects of $EM({\cal K})(\mathbb{A})$ are the 1-cells of $EM({\cal K})$ 
from $\mathbb{A}$ to $\mathbb{A}$, that is, monad morphisms $(s, \psi)$ from $\mathbb{A}$ to $\mathbb{A}$;
\item[$\bullet$] if $(s, \psi)$ and $(s', \psi')$ are monad morphisms 
from $\mathbb{A}$ to $\mathbb{A}$ then a morphism between $(s, \psi)$ and $(s', \psi')$ 
in $EM({\cal K})(\mathbb{A})$ is a 2-cell $\rho: (s, \psi)\Rightarrow (s', \psi')$ in $EM({\cal K})$;
\item[$\bullet$] the composition of two morphisms in $EM({\cal K})(\mathbb{A})$ is defined by the 
vertical composition of 2-cells in $EM({\cal K})$ and the identity morphism corresponding 
to an object $(s, \psi)$ of $EM({\cal K})(\mathbb{A})$ 
is $1_s\odot \eta$, where $\odot$ stands for the horizontal composition of 2-cells in ${\cal K}$;
\item[$\bullet$] the monoidal structure is defined by the horizontal composition of 2-cells in $EM({\cal K})$. 
More precisely, if $(s, \psi), (s', \psi')$ are objects of $EM({\cal K})(\mathbb{A})$ we then define 
\[
(s, \psi)\ot (s', \psi')=(s', \psi')(s, \psi)=(s's, (\psi'\odot 1_s)(1_{s'}\odot \psi): s'st\Rightarrow ts's),
\] 
and if $\rho: (s, \psi)\Rightarrow (f, \gamma)$ and $\rho': (s', \psi')\Rightarrow (f', \gamma')$ are 
two morphisms in $EM({\cal K})(\mathbb{A})$ we then have $\rho\ot \rho'=\rho'\oslash\rho$ as a 
2-cell in $EM({\cal K})$,
\[
\rho\ot \rho':(s, \psi)\ot (s', \psi')=(s's, (\psi'\odot 1_s)(1_{s'}\odot \psi))\Rightarrow     
(f'f, (\gamma'\odot 1_f)(1_{f'}\odot \gamma))=(f, \gamma)\ot (f', \gamma');
\]
\item[$\bullet$] the unit object of $EM({\cal K})(\mathbb{A})$ is $(1_A, 1_t)$ and for 
any object $(s, \psi)$ of $EM({\cal K})(\mathbb{A})$ we have $\Id_{(s, \psi)}=1_s\odot\eta$. 
\end{itemize} 
Now it can be easily verified that $(s, \psi):\mathbb{A}\ra \mathbb{A}$ is an object 
of $EM({\cal K})(\mathbb{A})$, i.e. a monad morphism, if and only if \equref{wr1} holds. 
Then $(s, \psi)$ has an algebra structure in $EM({\cal K})(\mathbb{A})$ if and only if 
there exist $\zeta: (s, \psi)\ot (s, \psi)\ra (s, \psi)$ and $\sigma: (1_A, 1_t)\ra (s, \psi)$ 
morphisms in $EM({\cal K})(\mathbb{A})$ such that $\zeta$ is associative and 
$\sigma$ is a unit for $\zeta$. The latest assertion can be restated in terms of the 
structure of ${\cal K}$ as follows:
\begin{itemize}
\item[$\bullet$] $\zeta$ and $\sigma$ are 2-cells in ${\cal K}$, 
$\zeta: ss\Rightarrow st$ and $\sigma: 1_A\Rightarrow st$, such that 
\equref{wr3} and \equref{wr2} hold; 
\item[$\bullet$] $\zeta$ is associative if and only if \equref{wr4} is fulfilled;
\item[$\bullet$] $\sigma$ is a unit for $\zeta$ if and only if \equref{wr5} and \equref{wr6} 
are satisfied.   
\end{itemize}
Thus our proof is complete.
\end{proof}

\begin{remark}
Using arguments similar to the ones above we get that a cowreath in ${\cal K}$, that is a 
comonad in the 2-category $EM({\cal K})$, is nothing than a coalgebra in a 
monoidal category of the form $EM({\cal K})(\mathbb{A})$, where $\mathbb{A}$ is a 
suitable monad in ${\cal K}$. 
\end{remark}

From now on we denote a wreath in $EM({\cal K})$ by 
$(A, t, \mu, \eta, s, \psi, \zeta, \sigma)$ or, shortly, by $(\mathbb{A}, s, \psi, \zeta, \sigma)$ 
in the case when the structure of the monad $\mathbb{A}=(A, t, \mu, \eta)$ is fixed from the beginning. 
Following \cite{LackRoss} to such a wreath we can associate the so-called wreath product. That is 
the monad in ${\cal K}$, 
\[
\left(A, st, 
\xymatrix{\mu_{st}: 
stst~~\ar@2{->}[r]^{1_s\odot \psi\odot 1_t}~~&sstt~~\ar@2{->}[r]^{1_s\odot 1_s\odot \mu}~~&
~~sst\ar@2{->}[r]^{\zeta\odot 1_t}~~&~~stt
\ar@2{->}[r]^{1_s\odot \mu}~~&~~st
},
\xymatrix{
\sigma: 
1_A\ar@2{->}[r]&st
}
\right).
\]  
Otherwise stated, the wreath product is a monoidal algebra in ${\cal K}(A)$. The same is 
$(t, \mu, \eta)$ and we have $\iota:=(1_s\odot \mu)(\sigma\odot 1_t): t\Rightarrow st$ 
an algebra morphism in ${\cal K}(A)$ since    
\begin{eqnarray*}
\mu_{st}(\iota\odot\iota)&=&(1_s\odot \mu)(\zeta\odot 1_t)(1_s\odot 1_s\odot \mu)(1_s\odot \mu\odot 1_t)
(1_s\odot 1_t\odot \iota)(\iota\odot 1_t)\\
&=&(1_s\odot \mu)(\zeta\odot 1_t)(1_s\odot (1_s\odot \mu(\mu\odot 1_t))(\psi\odot 1_t\odot 1_t)(1_s\odot \sigma\odot 1_t))
(\iota\odot 1_t)\\
&\equal{\equref{wr2}}&
(1_s\odot \mu)(\zeta\odot 1_t)(1_s\odot (1_s\odot\mu)((1_s\odot \mu)(\sigma\odot 1_t)\odot 1_t))(\iota\odot 1_t)\\
&=&(1_s\odot \mu)(\zeta\odot 1_t)(1_s\odot (1_s\odot\mu)(\sigma\odot 1_t)\mu)(\iota\odot 1_t)\\
&=&(1_s\odot \mu)(1_s\odot 1_t\odot \mu)(\zeta\odot 1_t\odot 1_t)(1_s\odot (\sigma\odot 1_t)\mu)(\iota\odot 1_t)\\
&=&(1_s\odot \mu)((1_s\odot \mu)(\zeta\odot 1_t)(1_s\odot \sigma)\odot 1_t)(1_s\odot \mu)(\iota\odot 1_t)\\
&\equal{\equref{wr5}}&(1_s\odot \mu)(\iota\odot 1_t)\\
&=&(1_s\odot \mu)(1_s\odot 1_t\odot \mu)(\sigma \odot 1_t\odot 1_t)=(1_s\odot \mu)(\sigma \odot 1_t)\mu=\iota\mu 
\end{eqnarray*}
and $\iota\eta=(1_s\odot \mu)(\sigma\odot 1_t)\eta=(1_s\odot \mu(1_t\odot \eta))\sigma=\sigma$. 

\begin{definition}
The canonical monad extension associated to a 
wreath $(\mathbb{A}, s, \psi, \zeta, \sigma)$ in ${\cal K}$ is the monad morphism 
$(1_A, \iota): \mathbb{A}\ra (A, st, \mu_{st}, \sigma)$ in ${\cal K}$. We call this canonical monad extension 
Frobenius, respectively separable, if ${\cal K}(A)$ admits coequalizers and any object of it is coflat, and, 
moreover, the associated algebra extension $\iota: t\Rightarrow st$ is Frobenius, respectively separable, 
in the monoidal category ${\cal K}(A)$.   
\end{definition}   

Due to the monoidal flavor of the above definition we have the following characterizations 
for the canonical monad extension associated to a wreath to be Frobenius.

\begin{theorem}\label{charactextFrob2categ}
Let $(A, t, \mu, \eta, s, \psi, \zeta, \sigma)$ be a wreath in ${\cal K}$. Then the following assertions 
are equivalent:
\begin{itemize}
\item[(i)] $(A, t, \mu, \eta, s, \psi, \zeta, \sigma)$ is a Frobenius monad in $EM({\cal K})$, that is a 
Frobenius wreath in ${\cal K}$;
\item[(ii)] $(s, \psi)$ is a Frobenius algebra in the monoidal category $EM({\cal K})(\mathbb{A})$;
\item[(iii)] $(s, \psi)$ admits a coalgebra structure in $EM({\cal K})(\mathbb{A})$ with the comultiplication 
structure morphism $(s, \psi)$-bilinear, that is there exists a cowreath structure in ${\cal K}$ of the form 
\[
(\mathbb{A}, s, \psi, 
\xymatrix{
s\ar@2{->}[r]^-{\d}&sst
}, 
\xymatrix{
s\ar@2{->}[r]^-{\va}&t
})
\] 
such that  
\begin{eqnarray*}
(1_s\odot 1_s\odot \mu)(\delta\odot 1_t)\zeta
&=&(1s\odot 1_s\odot \mu)(1_s\odot \zeta\odot 1_t)(1_s\odot 1_s\odot \psi)(\delta\odot 1_s)\\
&=&(1_s\odot 1_s\odot \mu)(1_s\odot\psi\odot 1_t)(\zeta\odot 1_s\odot 1_t)(1_s\odot \delta);
\end{eqnarray*}
\item[(iv)] There exists an adjunction $(\r, \l): (s, \psi)\dashv (s, \psi)$ in $EM({\cal K})(\mathbb{A})$ 
such that $\l$ is associative, this means, 
\[
\mu(\l\odot 1_t)(1_s\odot \psi)(\zeta\odot 1_s)=\mu(\l\odot 1_t)(1_s\odot \zeta);
\]
\item[(v)] There is an adjunction $(\r, \l): (s, \psi)\dashv (s, \psi)$ in $EM({\cal K})(\mathbb{A})$ 
with $\l$ having the form $\l=\mu(\vartheta\odot 1_t)\zeta: ss\Rightarrow t$, for some 2-cell 
$\vartheta: s\Rightarrow t$ in ${\cal K}$. 
\end{itemize}
If ${\cal K}(A)$ admits coequalizers and any object of it is coflat then (i)-(v) above are 
also equivalent to 
\begin{itemize}
\item[(vi)] The canonical monad extension associated to $(A, t, \mu, \eta, s, \psi, \zeta, \sigma)$ is Frobenius.
\end{itemize}  
\end{theorem}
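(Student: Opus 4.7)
The plan is to reduce each equivalence to results already established in the paper for algebras and algebra extensions in monoidal categories, leaving as the only substantive work the translation of those abstract statements into explicit 2-cell data in $\mathcal{K}$. First, I would obtain (i)$\Leftrightarrow$(ii) as an immediate instance of Corollary \ref{Frob2cat} applied to the 2-category $EM(\mathcal{K})$ at the 0-cell $\mathbb{A}$: a monad in a 2-category is Frobenius precisely when its underlying 1-cell is a Frobenius algebra in the corresponding endo-hom monoidal category, and here the monad in $EM(\mathcal{K})$ is the wreath, whose underlying 1-cell is $(s,\psi)$ and whose endo-hom category is $EM(\mathcal{K})(\mathbb{A})$.

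For (ii)$\Leftrightarrow$(iii),(iv),(v), my approach is to specialize Theorem \ref{charactFrobalg} (the equivalences (i)$\Leftrightarrow$(iv),(vii),(viii)) to the monoidal category $EM(\mathcal{K})(\mathbb{A})$ and then translate the resulting conditions using the explicit description of this monoidal category recorded after the definition of a wreath. Concretely, a comultiplication $(s,\psi)\to(s,\psi)\otimes(s,\psi)$ in $EM(\mathcal{K})(\mathbb{A})$ unpacks to a 2-cell $\delta\colon s\Rightarrow sst$ in $\mathcal{K}$ together with a compatibility with $\psi$ which encodes that $\delta$ is itself a 2-cell in $EM(\mathcal{K})$; a counit amounts to a compatible 2-cell $\varepsilon\colon s\Rightarrow t$; and the $(s,\psi)$-bilinearity of $\delta$, which says $\delta\zeta=(\zeta\otimes\Id)(\Id\otimes\delta)=(\Id\otimes\zeta)(\delta\otimes\Id)$ in $EM(\mathcal{K})(\mathbb{A})$, unpacks, via the formulas for horizontal composition and for the tensor product of 2-cells in $EM(\mathcal{K})(\mathbb{A})$, to precisely the displayed chain of equalities in (iii). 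An entirely parallel unpacking will recast parts (vii) and (viii) of Theorem \ref{charactFrobalg} as (iv) and (v), with the 2-cell $\vartheta$ appearing through the fact that the unit object of $EM(\mathcal{K})(\mathbb{A})$ is $(1_A,1_t)$, so that a morphism $(s,\psi)\to(1_A,1_t)$ there reduces to a 2-cell $s\Rightarrow t$ in $\mathcal{K}$, and the multiplication on $(s,\psi)$ is $\zeta$.

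Finally, under the extra hypotheses on $\mathcal{K}(A)$, I would obtain (ii)$\Leftrightarrow$(vi) by combining Proposition \ref{Frobalgextcharact} with a monoidal comparison. Proposition \ref{Frobalgextcharact} tells us that the extension $\iota\colon t\Rightarrow st$ in $\mathcal{K}(A)$ is Frobenius iff $st$ is a Frobenius algebra in ${}^{!}_{t}\mathcal{K}(A)_{t}$. What remains is to identify $EM(\mathcal{K})(\mathbb{A})$ with a full monoidal subcategory of ${}^{!}_{t}\mathcal{K}(A)_{t}$ via $(s,\psi)\mapsto st$ equipped with left $t$-action $\mu\odot 1_s\odot 1_t$ and right $t$-action $(1_s\odot\mu)(\psi\odot 1_t)$; under this assignment $\zeta$ and $\sigma$ correspond to the wreath-product multiplication $\mu_{st}$ and its unit. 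The step I expect to be the principal obstacle is verifying that this assignment is \emph{monoidal}: one must show, using the coflatness assumption, that the coequalizer defining $(s't)\otimes_{t}(st)$ in ${}^{!}_{t}\mathcal{K}(A)_{t}$ computes to $(s's)t$ in precisely the way that matches the horizontal composition $(s,\psi)\otimes(s',\psi')=(s's,(\psi'\odot 1_s)(1_{s'}\odot\psi))$ in $EM(\mathcal{K})(\mathbb{A})$. Once this monoidal identification is in hand, transport of Frobenius structures along it is immediate, and (vi) becomes equivalent to (ii), completing the chain.
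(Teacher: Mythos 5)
Your handling of the equivalences among (i)--(v) coincides with the paper's: both reduce to \coref{Frob2cat}, i.e.\ to \thref{charactFrobalg} specialized to the monoidal category $EM(\mathcal{K})(\mathbb{A})$, followed by the routine unpacking of morphisms of that category into $2$-cells of $\mathcal{K}$. Where you genuinely diverge is the equivalence with (vi). The paper proves it by direct computation: it applies \thref{FrobSepExt} to the extension $\iota: t\Rightarrow st$ in $\mathcal{K}(A)$, translates the Frobenius pair $(\vartheta, e)$ into a pair $(\varsigma: s\Rightarrow t,\ \kappa: 1_A\Rightarrow sst)$ via the isomorphism $(st)\ot_t(st)\cong sst$, and then shows --- this is the real computational content, carried out by composing with $\iota$ and with $1_s\odot\eta$ and invoking the wreath axioms \equref{wr1}--\equref{wr6} --- that the single compound Casimir condition for the extension is equivalent to the two separate conditions saying that $\kappa$ is a morphism in $EM(\mathcal{K})(\mathbb{A})$ and a Casimir morphism for $(s,\psi)$. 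You instead factor through \prref{Frobalgextcharact} and a fully faithful strong monoidal embedding $(s,\psi)\mapsto st$ of $EM(\mathcal{K})(\mathbb{A})$ into ${}^!_t\mathcal{K}(A)_t$, transporting the Frobenius structure in both directions. This is sound and arguably more conceptual: it is the $2$-categorical version of the wreath-product/$A$-ring correspondence the paper itself quotes from \cite{bc4} just before \coref{CharFrobWreatsMon}, and once the embedding is in place it delivers the separable analogue (Theorem~\ref{charactextSep2categ}) for free, whereas the paper must redo a parallel computation there. What your route buys in reusability it pays for up front: full faithfulness (a right $t$-linear $2$-cell $st\Rightarrow s't$ is determined by its restriction along $1_s\odot\eta$, and left $t$-linearity becomes exactly the $2$-cell condition in $EM(\mathcal{K})$), the comparison $(s't)\ot_t(st)\cong (s's)t$ together with its coherence, and the hypotheses of \prref{Frobalgextcharact} --- in particular left robustness of $st$, which does hold because its left $t$-module structure is the free one $\mu\odot 1_s\odot 1_t$ but is not among the stated hypotheses of part (vi) and so must be verified --- all have to be checked, which amounts to roughly the same volume of diagram chasing as the paper's direct argument, only organized around a reusable functor rather than around the specific Frobenius pair.
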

\begin{proof}
The equivalences between (ii), (iii), (iv), (v) and (vi) follow from \coref{Frob2cat} applied to 
the monad $(A, t, \mu, \eta, s, \psi, \zeta, \sigma)$ in the 2-category $EM({\cal K})$. The difficult part is to 
prove the equivalence between (i) and (vi), providing that ${\cal K}(A)$ admits coequalizers and any object of it is 
coflat.

$(i)\Leftrightarrow (vi)$. We have that $\iota: t\Rightarrow st$ is a Frobenius extension if and only if there 
exist 2-cells $\vartheta: st\Rightarrow t$ and $e: 1_A\Rightarrow (st)\ot_t(st)$ in ${\cal K}$ such that 
$\vartheta$ is $t$-bilinear and the diagrams in \thref{FrobSepExt} (ii), specialized for the context provided by the 
category $\Cc={\cal K}(A)$, are commutative. 

It is obvious to check that giving a $t$-bilinear morphism $\vartheta: st\Rightarrow t$ in ${\cal K}(A)$ is equivalent 
to giving a morphism $\varsigma: s\Rightarrow t$ in ${\cal K}$ such that 
\begin{equation}\eqlabel{2tsharp1}
\mu(\varsigma\odot 1_t)\psi=\mu(1_t\odot \varsigma): ts\Rightarrow t.
\end{equation}
Indeed to $\vartheta$ corresponds $\varsigma=\vartheta(1_s\odot \eta)$, while $\vartheta$ can be recovered 
from $\varsigma$ as $\vartheta=\mu(\varsigma\odot 1_t)$. Note that \equref{2tsharp1} is imposed by the 
right $t$-linearity of $\vartheta$. 

Similarly, thinking in the monoidal sense, we have 
\[
(st)\ot_t(st)=(t\ot s)\ot_t(t\ot s)
\xymatrix{
\ar@2{->}[r]^-{\Upsilon_{t\ot s, s}}&t\ot s\ot s=sst ,
}
\]
and so to give $e$ is equivalent to give a 2-cell $\kappa: 1_A\Rightarrow sst$ in ${\cal K}$. 
Now, as in the previous proofs involving coequalizers, one can show that the pair $(\vartheta, e)$ 
is a Frobenius pair for the extension $\iota: t\Rightarrow st$ if and only if 
$\varsigma$ satisfies \equref{2tsharp1} and the following equalities hold,  
\begin{eqnarray}
&&\hspace*{-3cm}
(1_{ss}\odot \mu)(1_s\odot \psi\odot 1_t)(1_s\odot \mu\odot 1_{st})(\zeta\odot 1_{tst})(1_s\odot\psi\odot 1_{st})
(1_{st}\odot \kappa)\nonumber\\
&=&(1_{ss}\odot \mu)(1_s\odot \zeta\odot 1_t)(1_{sss}\odot \mu)(1_{ss}\odot \psi\odot 1_t)(\kappa\odot 1{st});\eqlabel{2tsharp2}\\
\sigma&=&(1_s\odot \mu)(1_s\odot\varsigma\odot 1_t)\kappa ~~
=~~(1_s\odot \mu)(\psi\odot 1_t))(\varsigma\odot 1_{st})\kappa .\eqlabel{2tsharp3}
\end{eqnarray}
We claim now that \equref{2tsharp2} is equivalent to the following two equalities:
\begin{eqnarray}
\hspace{5mm}
(1_{ss}\odot \mu)(\kappa\odot 1_t)&=&(1_{ss}\odot \mu)(1_s\odot \psi\odot 1_t)(\psi\odot 1_{st})(1_t\odot\kappa)~{\rm and}~
\eqlabel{2tsharp21}\\
\hspace{5mm}(1_{ss}\odot \mu)(1_s\odot \psi\odot 1_t)(\zeta\odot 1_{st})(1_s\odot \kappa)&=&
(1_{ss}\odot \mu)(1_s\odot \zeta\odot 1_t)(1_{ss}\odot \psi)(\kappa\odot 1_s).\eqlabel{2tsharp22}
\end{eqnarray}
This would have as a consequence the following equivalence: $(\vartheta, e)$ is a Frobenius pair 
for $\iota: t\Rightarrow st$ if and only of there exists a pair $(\varsigma: s\Rightarrow t, \kappa: 1_A\Rightarrow sst)$ 
such that \equref{2tsharp1}, \equref{2tsharp21}, \equref{2tsharp22} and \equref{2tsharp3} hold. 
But these relations have the following meaning: 
\begin{itemize}
\item[$\bullet$] \equref{2tsharp1} says that $\varsigma: (s, \psi)\Rightarrow (1_A, 1_t)$ is a morphism in $EM({\cal K})(\mathbb{A})$;
\item[$\bullet$] \equref{2tsharp21} says that $\kappa: (1_A, 1_t)\Rightarrow (s, \psi)\ot (s, \psi)=(ss, (\psi\odot 1_s)(1_s\odot \psi))$ 
is a morphism in $EM({\cal K})(\mathbb{A})$ as well;
\item[$\bullet$] \equref{2tsharp22} expresses the commutativity of the diagram 
\[
\xymatrix{
(s, \psi)\ar[r]^-{\Id_{(s, \psi)}\ot \kappa}\ar[d]_-{\kappa\ot \Id_{(s, \psi)}}&
(s, \psi)\ot (s, \psi)\ot (s, \psi)\ar[d]^-{\zeta\ot \Id_{(s, \psi)}}\\
(s, \psi)\ot (s, \psi)\ot (s, \psi)\ar[r]^-{\Id_{(s, \psi)}\ot \zeta}&(s, \psi)\ot (s, \psi)
}
\] 
in $EM({\cal K})(\mathbb{A})$, and
\item[$\bullet$] \equref{2tsharp3} can be restated in terms of the monoidal structure of $EM({\cal K})(\mathbb{A})$ as 
\[
\xymatrix{
(
(1_A, 1_t)\ar[r]^-{e}&(s, \psi)\ot (s, \psi)\ar[r]^-{\varsigma\ot \Id_{(s, \psi)}}&(s, \psi)
)
=\sigma=
(
(1_A, 1_t)\ar[r]^-{e}&(s, \psi)\ot (s, \psi)\ar[r]^-{\Id_{(s, \psi)}\ot \varsigma}&(s, \psi)
).
}
\]
Thus $(\vartheta, e)$ is a Frobenius pair for $i:t\Rightarrow st$ in ${\cal K}(A)$ if and only if 
$(\varsigma, \kappa)$ is a Frobenius pair for $(s, \psi)$ in $EM({\cal K})(\mathbb{A})$, as desired. 
\end{itemize}

So it remains to show that \equref{2tsharp2} is equivalent to \equref{2tsharp21} and \equref{2tsharp22}. 
Towards this end, we first show that by composing the both sides of \equref{2tsharp2} to the right with 
$(1_s\odot \mu)(\sigma\odot 1_t)$ we get \equref{2tsharp21}. Indeed, on one hand we have 
\begin{eqnarray*}
&&\hspace*{-1.5cm}
(1_{ss}\odot \mu)(1_s\odot \psi\odot 1_t)(1_s\odot \mu\odot 1_{st})(\zeta\odot 1_{tst})(1_s\odot\psi\odot 1_{st})
(1_{st}\odot \kappa)(1_s\odot \mu)(\sigma\odot 1_t)\\
&=&(1_{ss}\odot \mu)(1_s\odot \psi\odot 1_t)(1_s\odot \mu\odot 1_{st})(\zeta\odot 1_{tst})
(1_s\odot \psi(\mu\odot 1_s)\odot 1_{st})(1_{stt}\odot \kappa)(\sigma\odot 1_t)\\
&\equal{\equref{wr1}}&
(1_{ss}\odot \mu)(1_s\odot\psi\odot 1_t)(1_s\odot \mu(1_t\odot \mu)\odot 1_{st})\\
&&\hspace*{5mm}
(\zeta\odot 1_{ttst})
(1_s\odot (\psi\odot 1_t)(1_t\odot \psi)\odot 1_{st})(\sigma\odot 1_{stst})(1_t\odot \kappa)\\
&=&(1_{ss}\odot \mu)(1_s\odot \psi\odot 1_t)(1_s\odot \mu\odot 1_{st})\\
&&\hspace*{5mm}
((1_s\odot \mu)(\zeta\odot 1_t)(1_s\odot \psi)(\sigma\odot 1_s)\odot 1_{tst})
(\psi\odot 1_{st})(1_t\odot \kappa)\\
&\equal{\equref{wr6}}&
(1_{ss}\odot \mu)(1_s\odot \psi\odot 1_t)(\psi\odot 1_{st})(1_t\odot \kappa).
\end{eqnarray*}
On the other hand, using \equref{wr2} and \equref{wr5} we deduce that    
\[
(1_{ss}\odot \mu)(1_s\odot \zeta\odot 1_t)(1_{sss}\odot \mu)(1_{ss}\odot \psi\odot 1_t)(\kappa\odot 1{st})
(1_s\odot \mu)(\sigma\odot 1_t)=(1_{ss}\odot \mu)(\kappa\odot 1_t),
\]   
as required. In a similar manner, if we compose the both sides of \equref{2tsharp2} to the 
right with $1_s\odot \eta$ we obtain \equref{2tsharp22}, and this is essentially due to the 
second equality in \equref{wr1}.

For the converse, \equref{2tsharp21} and \equref{2tsharp22} imply \equref{2tsharp2} since 
\begin{eqnarray*}
&&\hspace*{-1.5cm}
(1_{ss}\odot \mu)(1_s\odot \zeta\odot 1_t)(1_{sss}\odot \mu)(1_{ss}\odot \psi\odot 1_t)(\kappa\odot 1_{st})\\
&=&(1_{ss}\odot \mu)((1_{ss}\odot \mu)(1_s\odot \zeta\odot 1_t)(1_{ss}\odot\psi)(\kappa\odot 1_s)\odot 1_t)\\
&\equal{\equref{2tsharp22}}&
(1_{ss}\odot \mu)(1_{sst}\odot \mu)
(1_s\odot\psi\odot 1_{tt})(\zeta\odot 1_{stt})(1_s\odot\kappa\odot 1_t)\\
&=&(1_{ss}\odot \mu)(1_s\odot\psi\odot 1_t)(\zeta\odot 1_{st})
(1_s\odot (1_{ss}\odot \mu)(\kappa\odot 1_t))\\
&\equal{\equref{2tsharp21}}&
(1_{ss}\odot \mu)(1_{ss}\odot \mu\odot 1_t)
(1_s\odot \psi\odot 1_{tt})(\zeta\odot 1_{stt})(1_{ss}\odot \psi\odot 1_t)(1_s\odot \psi\odot 1_{st})
(1_{st}\odot \kappa)\\
&=&(1_{ss}\odot \mu)(1_s\odot (1_s\odot \mu)(\psi\odot 1_t)(1_t\odot \psi)\odot 1_t)(\zeta\odot 1_{tst})
(1_s\odot\psi\odot 1_{st})(1_{st}\odot \kappa)\\
&\equal{\equref{wr1}}&
(1_{ss}\odot \mu)(1_s\odot \psi\odot 1_t)(1_s\odot \mu\odot 1_{st})(\zeta\odot 1_{tst})(1_s\odot\psi\odot 1_{st})
(1_{st}\odot \kappa),
\end{eqnarray*} 
as desired. This finishes the proof of the theorem. 
\end{proof}

We focus now on the separability case. Similar to Theorem \ref{charactextFrob2categ} we have the 
following characterizations for the canonical monad extension associated to a wreath to be separable.

\begin{theorem}\label{charactextSep2categ}
Let $(A, t, \mu, \eta, s, \psi, \zeta, \sigma)$ be a wreath in ${\cal K}$. Then the following assertions 
are equivalent:
\begin{itemize}
\item[(i)] $(A, t, \mu, \eta, s, \psi, \zeta, \sigma)$ is a separable monad in $EM({\cal K})$, that is a 
separable wreath in ${\cal K}$;
\item[(ii)] $(s, \psi)$ is a separable algebra in the monoidal category $EM({\cal K})(\mathbb{A})$.
\end{itemize}
If ${\cal K}(A)$ admits coequalizers and any object of it is coflat then (i)-(ii) above are 
also equivalent to 
\begin{itemize}
\item[(iii)] The canonical monad extension associated to $(A, t, \mu, \eta, s, \psi, \zeta, \sigma)$ is separable.
\end{itemize}
\end{theorem}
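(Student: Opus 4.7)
The plan is to mimic, in the separable setting, the strategy employed for Theorem~\ref{charactextFrob2categ}, but with considerably less bookkeeping since only one morphism (no Frobenius counit) must be tracked.

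For (i)$\Leftrightarrow$(ii), I would simply invoke the observation (made just before the definition of the canonical monad extension) that a monad in the $2$-category $EM(\mathcal{K})$ is the same datum as an algebra in the monoidal category $EM(\mathcal{K})(\mathbb{A})$, together with the proposition preceding the discussion of Eilenberg--Moore $2$-categories, which identifies separable monads in a $2$-category with monoidal separable algebras in the corresponding hom-category. Under this identification the wreath $(A,t,\mu,\eta,s,\psi,\zeta,\sigma)$ corresponds to the algebra $((s,\psi),\zeta,\sigma)$ in $EM(\mathcal{K})(\mathbb{A})$, so the two notions of separability match by definition.

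For the equivalence of (ii) and (iii) (under the coequalizer/coflatness hypotheses on $\mathcal{K}(A)$), I would apply Theorem~\ref{th:SepExtMon}(i) in the monoidal category $\mathcal{K}(A)$ to the algebra extension $\iota:t\Rightarrow st$. This says that $\iota$ is separable iff there exists a $2$-cell $e:1_A\Rightarrow (st)\otimes_t(st)$ in $\mathcal{W}$ such that $\underline{m}^t_{st}\,e=\sigma$. I would then transport $e$ along the canonical isomorphism $(st)\otimes_t(st)=(t\otimes s)\otimes_t(t\otimes s)\cong sst$ provided by the unit constraint $\Upsilon_{t\otimes s,s}$, obtaining a $2$-cell $\kappa:1_A\Rightarrow sst$.

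The key reduction, which I would borrow verbatim from the proof of Theorem~\ref{charactextFrob2categ}, is that membership $e\in\mathcal{W}$ translates under this transport into the conjunction of equations \eqref{eq:2tsharp21} and \eqref{eq:2tsharp22}, i.e.\ that $\kappa$ is a morphism $(1_A,1_t)\Rightarrow(s,\psi)\otimes(s,\psi)$ in $EM(\mathcal{K})(\mathbb{A})$ which is moreover an $(s,\psi)$-bimodule morphism with respect to $\zeta$. It remains to observe that the multiplicative condition $\underline{m}^t_{st}\,e=\sigma$ becomes, after the same transport and using the explicit form of $\mu_{st}$, exactly
\[
(1_s\odot\mu)(\zeta\odot 1_t)\,\kappa=\sigma,
\]
which by the vertical composition formula in $EM(\mathcal{K})$ is the equality $\zeta\circ\kappa=\sigma$ in $EM(\mathcal{K})(\mathbb{A})$. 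Hence $e$ is a separability morphism for the extension $\iota$ iff $\kappa$ is a separability morphism for the algebra $(s,\psi)$ in $EM(\mathcal{K})(\mathbb{A})$, which is (ii).

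The main obstacle is the careful computation of $\underline{m}^t_{st}$ under the identification $(st)\otimes_t(st)\cong sst$: one has to verify that the composite $(1_s\odot\mu)(\zeta\odot 1_t)(1_s\odot 1_s\odot\mu)(1_s\odot\psi\odot 1_t)$ defining $\mu_{st}$ factors through $\underline{m}^t_{st}$ exactly as $(1_s\odot\mu)(\zeta\odot 1_t)$ composed with the coequalizer projection, so that pre-composing with $e$ yields the displayed equation for $\kappa$. This computation is essentially the separable shadow of the argument already carried out in Theorem~\ref{charactextFrob2categ} and uses the wreath axioms \eqref{eq:wr1}--\eqref{eq:wr6} in the same way; no new technique is required.
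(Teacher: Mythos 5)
Your proposal is correct and follows essentially the same route as the paper: identify (i) and (ii) via the general dictionary between monads in $EM(\mathcal{K})$ and algebras in $EM(\mathcal{K})(\mathbb{A})$, then obtain (iii) by applying \thref{SepExtMon} to $\iota:t\Rightarrow st$, transporting $e$ through $(st)\ot_t(st)\cong sst$, and reducing the $\mathcal{W}$-membership condition to the pair of $2$-cell equations exactly as in the proof of Theorem~\ref{charactextFrob2categ} (by composing with $\iota$ and with $1_s\odot\eta$), with the extra splitting condition becoming $(1_s\odot\mu)(\zeta\odot 1_t)e=\sigma$. The paper writes out these computations afresh rather than citing the Frobenius case, but the argument is the same.
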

\begin{proof}
The wreath $(A, t, \mu, \eta, s, \psi, \zeta, \sigma)$ is actually a monad $(s, \psi)$ in 
$\mathrm{EM}(\mathcal{K})((A, t, \mu, \eta))$ and moreover, it is separable if and only if 
$(s, \psi)$ is so within the monoidal category $\mathrm{EM}(\mathcal{K})((A, t, \mu, \eta))$. This shows 
the equivalences between (i) and (ii). Furthermore, one can see easily that both are equivalent to 
the fact that there exists a $2$-cell  
$
\xymatrix{
(1_A, 1_t)\ar@2{->}[r]^-{e}&(ss, (1_s\odot \psi)(\psi\odot 1_s))
}
$ 
in $\mathrm{EM}(\mathcal{K})(\mathbb{A})$, that is, a $2$-cell $e: 1_A\Rightarrow sst$ 
in $\mathcal{K}$ for which the diagram 
\begin{equation}\eqlabel{sepwreath1}
\xymatrix{
t\ar@2{->}[r]^-{e\odot 1_t}\ar@2{->}[d]_-{1_t\odot e}&sstt\ar@2{->}[rr]^-{1_{ss}\odot \mu}&&sst\\
tsst\ar@2{->}[r]^-{\psi\odot 1_{st}}&stst\ar@2{->}[rr]^-{1_s\odot \psi\odot 1_t}&&sstt\ar@2{->}[u]_-{1_{ss}\odot \mu}
}
\end{equation}
is commutative, such that the following equalities hold: 
\begin{eqnarray}
(1_s\odot(1_s\odot\mu)(\zeta\odot 1_t)(1_s\odot \psi))(e\odot 1_s)&=&
(1_s\odot (1_s\odot\mu)(\psi\odot 1_t))(\zeta\odot 1_{st})(1_s\odot e), 
\eqlabel{sepwreath2}\\
(1_s\odot \mu)(\zeta\odot 1_t)e&=&\sigma .\eqlabel{sepwrath3}
\end{eqnarray}

We prove now the equivalence between (i) and (iii), so we are in the hypothesis that 
$\mathrm{EM}(\mathcal{K})(\mathbb{A})$ admits coequalizers and every object of it is coflat. 
Then the extension $\iota: t\Rightarrow st$ is separable in $\mathcal{K}(A)$ if and only if 
there exists a $2$-cell $e: 1_A:\Rightarrow (st)\ot_t(st)$ in $\mathcal{K}$ that belongs to 
$\mathcal{W}$ and obeys $\zeta^te=\sigma$, where $\mathcal{W}$ is the set defined in 
\thref{SepExtMon}, specialized for the category $\mathcal{K}(A)$. As in the proof of 
Theorem \ref{charactextFrob2categ} one can easily verify that this is equivalent to 
the existence of a $2$-cell $e: 1_A\Rightarrow sst$ in $\mathcal{K}$ such that 
\equref{sepwrath3} is satisfied and 
\begin{eqnarray}
&&\hspace*{-1cm}
(1_s\odot (1_s\odot \mu)(\zeta\odot \mu)(1_s\odot \psi\odot 1_t))(e\odot 1_{st})\nonumber\\
&&\hspace{1cm}= 
(1_s\odot (1_s\odot \mu)(\psi\odot 1_t))(\zeta\odot 1_s\odot \mu)(1_s\odot (1_s\odot \psi)(\psi\odot 1_s)\odot 1_t)
(1_{st}\odot e).\eqlabel{sepwreath1and2}
\end{eqnarray}
Actually, by \equref{wr1} it follows that \equref{sepwreath1and2} is equivalent to 
\begin{eqnarray}
&&\hspace*{-1cm}
(1_s\odot (1_s\odot \mu)(\zeta\odot \mu)(1_s\odot \psi\odot 1_t))(e\odot 1_{st})\nonumber\\
&&\hspace{1cm}=
(1_s\odot (1_s\odot \mu)(\psi\odot 1_t)(\mu\odot 1_{st}))((\zeta\odot 1_t)(1_s\odot \psi)\odot 1_{st})(1_{st}\odot e).
\eqlabel{sepwreath1and2ech}
\end{eqnarray}

We state that \equref{sepwreath1and2ech} is equivalent to \equref{sepwreath1} and \equref{sepwreath2}, and this 
would end the proof. Indeed, if we compose the both sides of \equref{sepwreath1and2ech} to the right 
with $1_s\odot \eta$ we then get \equref{sepwreath2}. To get \equref{sepwreath1} we compose the both sides 
of \equref{sepwreath1and2ech} to the right with $\iota=(1_s\odot \mu)(\psi\odot 1_t)(1_t\odot \sigma): t\Rightarrow st$. 
On one hand we have 
\begin{eqnarray*}
&&\hspace{-1.3cm}
(1_s\odot (1_s\odot \mu)(\psi\odot 1_t)(\mu\odot 1_{st}))((\zeta\odot 1_t)(1_s\odot \psi)\odot 1_{st})(1_{st}\odot e)
(1_s\odot \mu)(\psi\odot 1_t)(1_t\odot \sigma)\\
&=&(1_s\odot (1_s\odot \mu)(\psi\odot 1_t)(\mu\odot 1_{st}))(\zeta\odot 1_{tst})
(1_s\odot \psi(\mu\odot1_s)\odot 1_{st})(\psi\odot _{tsst})(1_{tst}\odot e)(1_t\odot \sigma)\\
&\equal{\equref{wr1}}&
(1_s\odot (1_s\odot \mu)(\psi\odot 1_t)(\mu\odot 1_{st})(1_t\odot \mu\odot 1_{st}))\\
&&((\zeta\odot 1_{tt})(1_s\odot \psi\odot 1_t)(1_{st}\odot \psi)(\psi\odot 1_{ts})\odot 1_{st})
(1_{tst}\odot e)(1_t\odot \sigma)\\
&=&(1_s\odot (1_s\odot\mu)(\psi\odot 1_t)(\mu\odot 1_{st}))((1_s\odot \mu)(\zeta\odot 1_t)(1_s\odot \psi)(\psi\odot 1_s)\odot 1_{tst})\\
&&(1_{ts}\odot \psi\odot 1_{st})(1_{tst}\odot e)(1_t\odot \sigma)\\
&\equal{\equref{wr3}}&
(1_s\odot (1_s\odot \mu)(\psi\odot 1_t)(\mu\odot 1_{st})(1_t\odot\mu\odot 1_{st}))
((\psi\odot 1_t)(1_t\odot \zeta)\odot 1_{tst})(1_{ts}\odot \psi\odot 1_{st})\\
&&(1_t\odot \sigma\odot 1_{sst})(1_t\odot e)\\
&=&(1_s\odot (1_s\odot \mu)(\psi\odot 1_t)(\mu\odot 1_{st}))(\psi\odot 1_{tst})\\
&&(1_t\odot (1_s\odot \mu)(\zeta\odot 1_t)(1_s\odot \psi)(\sigma\odot 1_s)\odot 1_{st})(1_t\odot e)\\
&\equal{\equref{wr6}}&
(1_s\odot (1_s\odot \mu)(\psi\odot 1_t))(\psi\odot 1_{st})(1_t\odot e).
\end{eqnarray*}
On the other hand,
\begin{eqnarray*}
&&\hspace{-1.3cm}
(1_s\odot (1_s\odot \mu)(\zeta\odot \mu)(1_s\odot \psi\odot 1_t))(e\odot 1_{st})(1_s\odot \mu)(\psi\odot 1_t)(1_t\odot \sigma)\\
&=&(1_s\odot (1_s\odot \mu)(\zeta\odot \mu))(1_{ss}\odot (\psi\odot 1_t)(1_{ts}\odot \mu))
(1_{sst}\odot (\psi\odot 1_t)(1_t\odot \sigma))(e\odot 1_t)\\
&=&(1_s\odot (1_s\odot \mu)(\zeta\odot 1_t))(1_{sss}\odot \mu)
(1_{ss}\odot (1_{st}\odot \mu)(\psi\odot 1_{tt}))(1_{sst}\odot (\psi\odot 1_t)(1_{t}\odot \sigma))(e\odot 1_t)\\
&\equal{\equref{wr1}}&
(1_s\odot (1_s\odot \mu)(\zeta\odot 1_t))
(1_{ss}\odot (1_s\odot \mu)(\psi\odot 1_t)(1_t\odot \sigma))(1_{ss}\odot \mu)(e\odot 1_t)\\
&\equal{\equref{wr2}}&
(1_{ss}\odot \mu(1_t\odot \mu))(1_s\odot\zeta\odot 1_{tt})(1_{ss}\odot (\sigma\odot 1_t)\mu)(e\odot 1_t)\\
&=&(1_{ss}\odot \mu)(1_s\odot (1_s\odot \mu)(\zeta\odot 1_t)(1_s\odot \sigma)\odot 1_t)(1_{ss}\odot \mu)(e\odot 1_t)\\
&\equal{\equref{wr5}}&
(1_{ss}\odot \mu)(e\odot 1_t).
\end{eqnarray*}
Comparing the results of the above two computations we deduce \equref{sepwreath2}, as stated. 
The converse is also true since 
\begin{eqnarray*}
&&\hspace{-1.3cm}
(1_s\odot (1_s\odot \mu)(\zeta\odot \mu)(1_s\odot \psi\odot 1_t))(e\odot 1_{st})\\
&=&(1_{ss}\odot \mu)(1_s\odot (1_s\odot \mu)(\zeta\odot 1_t)(1_s\odot \psi)\odot 1_t)(e\odot 1_{st})\\
&\equal{\equref{sepwreath2}}&
(1_{ss}\odot \mu(\mu\odot 1_t))(1_s\odot\psi\odot 1_{tt})(\zeta\odot 1_{stt})(1_s\odot e\odot 1_t)\\
&=&(1_{ss}\odot\mu)(1_s\odot\psi\odot 1_t)(\zeta\odot 1_{st})(1_s\odot (1_{ss}\odot \mu)(e\odot 1_t))\\
&\equal{\equref{sepwreath1}}&
(1_{ss}\odot \mu)(1_s\odot \psi\odot 1_t)(1_{sts}\odot \mu)(\zeta\odot 1_{sts})(1_{ss}\odot \psi\odot 1_t)
(1_s\odot \psi\odot 1_{st})(1_{st}\odot e)\\
&=&(1_{ss}\odot \mu(1_t\odot \mu))(1_s\odot (\psi\odot 1_t)(1_t\odot \psi)\odot 1_t)(\zeta\odot 1_{tst})
(1_s\odot \psi\odot 1_{st})(1_{st}\odot e)\\
&\equal{\equref{wr1}}&
(1_s\odot (1_s\odot \mu)(\psi\odot 1_t)(\mu\odot 1_{st}))((\zeta\odot 1_t)(1_s\odot \psi)\odot 1_{st})(1_{st}\odot e),
\end{eqnarray*}
as required. So our proof is complete. 
\end{proof}

We end this section by specializing Theorems \ref{charactextFrob2categ} and \ref{charactextSep2categ} 
to the case when ${\cal K}=\Cc$, a monoidal category regarded as an one object 2-category. 
It was explained in \cite{bc4} that a wreath in $\Cc$ is a pair $(A, X)$ with 
$A$ an algebra in $\Cc$ and $(X, \psi, \zeta, \sigma)$ an algebra ${\cal T}_A^\#$. 
Here ${\cal T}_A^\#$ denotes the monoidal category $EM(\Cc)(A)$, and the notation is justified by 
the fact that ${\cal T}_A^\#$ is a generalization of the category of transfer morphisms 
through the algebra $A$ in $\Cc$, ${\cal T}_A$, previously introduced by Tambara in 
\cite{tambara}. Note that $\psi: X\ot A\ra A\ot X$, $\zeta: X\ot X\ra A\ot X$ and $\sigma: \un{1}\ra A\ot X$ 
are morphisms in $\Cc$ satisfying seven compatibility relations, namely the ones in \equref{wr1}-\equref{wr6} 
specialized for this particular situation.  

For a wreath $(A, X)$ in $\Cc$ we denote by $A\#_{\psi, \zeta, \sigma}X$ the associated wreath product.  
$A\#_{\psi, \zeta, \sigma}X$ is an algebra in $\Cc$ and $\sigma$ induces an algebra morphism 
$\iota=\un{m}_A(\Id_A\ot \sigma): A\ra A\#_{\psi, \zeta, \sigma}X$ in $\Cc$. Furthermore, 
the wreath algebra $A\#_{\psi, \zeta, \sigma}X$ is also an $A$-ring in $\Cc$, that is an 
algebra in the monoidal category ${}_A^!{\cal C}_A$, providing that $A, X$ are left coflat objects. In fact, 
one of the main results in \cite{bc4} asserts that we have an $A$-ring structure on $A\ot X$ with the 
left $A$-module structure given by $\un{m}_A$ if and only if $(A, X)$ is a wreath in $\Cc$. 
Anyway, when $A\#_{\psi, \zeta, \sigma}X$ is considered as an $A$-ring we will denote it by $A\ot X$. 
Remark that this double structure is possible in view of the comments made at the beginning of the proof of 
\prref{Frobalgextcharact}.

We have now the following characterizations for Frobenius/separable wreaths in monoidal categories. 

\begin{corollary}\colabel{CharFrobWreatsMon}
Let $\Cc$ be a monoidal category and $(A, X, \psi, \zeta, \sigma)$ a wreath in $\Cc$. 
Then the following assertions are equivalent:
\begin{itemize}
\item[(i)] $(A, X, \psi, \zeta, \sigma)$ is a Frobenius wreath in $\Cc$; 
\item[(ii)] $(X, \psi)\in {\cal T}_A^\#$ is a Frobenius monoidal algebra;
\item[(iii)] $(X, \psi)$ admits a coalgebra structure $(X, \psi, \delta, f)$ in ${\cal T}_A^\#$ 
such that $\d$ is $X$-bilinear, that is there is a cowreath structure in $\Cc$ of the form 
$(A, X, \psi, \d, f)$ such that 
\[{\footnotesize
\gbeg{4}{7}
\got{1}{X}\gvac{1}\got{1}{X}\gnl
\gcl{1}\gvac{1}\gcl{1}\gnl
\gcl{1}\gsbox{3}\gnl
\gbrc\gcl{1}\gcl{2}\gnl
\gcl{1}\gsbox{2}\gnl
\gmu\gcl{1}\gcl{1}\gnl
\gob{2}{A}\gob{1}{X}\gob{1}{X}
\gend}
={\footnotesize
\gbeg{4}{7}
\got{1}{X}\got{1}{X}\gnl
\gcl{1}\gcl{1}\gnl
\gsbox{2}\gnl
\gcl{1}\gcn{1}{1}{1}{3}\gnl
\gcl{1}\gsbox{3}\gnl
\gmu\gcl{1}\gcl{1}\gnl
\gob{2}{A}\gob{1}{X}\gob{1}{X}
\gend}
={\footnotesize
\gbeg{4}{8}
\gvac{1}\got{1}{X}\gvac{1}\got{1}{X}\gnl
\gvac{1}\gcl{1}\gvac{1}\gcl{1}\gnl
\gsbox{3}\gvac{3}\gcl{1}\gnl
\gcl{1}\gcl{1}\gcl{1}\gcl{1}\gnl
\gcl{1}\gcl{1}\gsbox{2}\gnl
\gcl{1}\gbrc\gcl{1}\gnl
\gmu\gcl{1}\gcl{1}\gnl
\gob{2}{A}\gob{1}{X}\gob{1}{X}
\gend}~,~~
\mbox{where}~~
\d={\footnotesize
\gbeg{3}{5}
\got{3}{X}\gnl
\gvac{1}\gcl{1}\gnl
\gsbox{3}\gnl
\gcl{1}\gcl{1}\gcl{1}\gnl
\gob{1}{A}\gob{1}{X}\gob{1}{X}
\gend}
~~\mbox{and}~~
\zeta={\footnotesize
\gbeg{2}{5}
\got{1}{X}\got{1}{X}\gnl
\gcl{1}\gcl{1}\gnl
\gsbox{2}\gnl
\gcl{1}\gcl{1}\gnl
\gob{1}{A}\gob{1}{X}
\gend}~.
\]
\item[(iv)] There is an adjunction $(\r, \l): (X, \psi)\dashv (X, \psi)$ in ${\cal T}_A^\#$ 
with $\l$ associative or, otherwise stated, there exist morphisms 
$\rho: \un{1}\ra A\ot X\ot X$ and $\l: X\ot X\ra A$ in $\Cc$ such that the following 
relations are satisfied:
\begin{eqnarray*}
&&{\footnotesize
\gbeg{4}{7}
\gvac{3}\got{1}{A}\gnl
\gvac{2}\gvac{1}\gcl{1}\gnl
\gnot{\hspace{9mm}\rho}\gsbox{3}\gvac{3}\gcl{1}\gnl
\gcl{1}\gcl{1}\gbrc\gnl
\gcl{1}\gbrc\gcl{1}\gnl
\gmu\gcl{1}\gcl{1}\gnl
\gob{2}{A}\gob{1}{X}\gob{1}{X}
\gend}
={\footnotesize
\gbeg{4}{5}
\got{1}{A}\gnl
\gcl{1}\gnl
\gcl{1}\gnot{\hspace{9mm}\rho}\gsbox{3}\gnl
\gmu\gcl{1}\gcl{1}\gnl
\gob{2}{A}\gob{1}{X}\gob{1}{X}
\gend}
~~\hspace{2mm},~~\hspace{2mm}
{\footnotesize
\gbeg{4}{6}
\got{1}{X}\got{1}{X}\got{1}{A}\gnl
\gcl{1}\gcl{1}\gcl{3}\gnl
\gnot{\hspace{4mm}\l}\gsbox{2}\gnl
\gcn{1}{1}{2}{3}\gnl
\gvac{1}\gmu\gnl
\gvac{1}\gob{2}{A}
\gend}
={\footnotesize
\gbeg{3}{7}
\got{1}{X}\got{1}{X}\got{1}{A}\gnl
\gcl{1}\gbrc\gnl
\gbrc\gcl{1}\gnl
\gcl{1}\gnot{\hspace{4mm}\l}\gsbox{2}\gnl
\gcl{1}\gcn{1}{1}{2}{1}\gnl
\gmu\gnl
\gob{2}{A}
\gend}
~~\hspace{2mm},~~\hspace{2mm}
{\footnotesize
\gbeg{3}{8}
\got{1}{X}\got{1}{X}\got{1}{A}\gnl
\gcl{1}\gcl{1}\gcl{3}\gnl
\gsbox{2}\gnl
\gcl{1}\gcl{1}\gnl
\gcl{1}\gnot{\hspace{4mm}\l}\gsbox{2}\gnl
\gcl{1}\gcn{1}{1}{2}{1}\gnl
\gmu\gnl
\gob{2}{A}
\gend}
={\footnotesize
\gbeg{3}{8}
\got{1}{X}\got{1}{X}\got{1}{X}\gnl
\gcl{1}\gcl{1}\gcl{1}\gnl
\gcl{1}\gsbox{2}\gnl
\gbrc\gcl{1}\gnl
\gcl{1}\gnot{\hspace{4mm}\l}\gsbox{2}\gnl
\gcl{1}\gcn{1}{1}{2}{1}\gnl
\gmu\gnl
\gob{2}{A}
\gend}
~~\hspace{2mm},\\
&&\hspace{3.5cm}
{\footnotesize
\gbeg{4}{8}
\got{1}{X}\gnl
\gcl{1}\gnl
\gcl{1}\gnot{\hspace{9mm}\rho}\gsbox{3}\gnl
\gbrc\gcl{1}\gcl{4}\gnl
\gcl{1}\gnot{\hspace{4mm}\l}\gsbox{2}\gnl
\gcl{1}\gcn{1}{1}{2}{1}\gnl
\gmu\gnl
\gob{2}{A}\gvac{1}\gob{1}{X}
\gend}
={\footnotesize
\gbeg{2}{3}
\gvac{1}\got{1}{X}\gnl
\gu{1}\gcl{1}\gnl
\gob{1}{A}\gob{1}{X}\gnl
\gend}
={\footnotesize
\gbeg{4}{9}
\gvac{3}\got{1}{X}\gnl
\gvac{2}\gvac{1}\gcl{2}\gnl
\gnot{\hspace{9mm}\rho}\gsbox{3}\gnl
\gcl{1}\gcl{1}\gcl{1}\gcl{1}\gnl
\gcl{1}\gcl{1}\gnot{\hspace{4mm}\l}\gsbox{2}\gnl
\gcl{1}\gcl{1}\gcn{1}{1}{2}{1}\gnl
\gcl{1}\gbrc\gnl
\gmu\gcl{1}\gnl
\gob{2}{A}\gob{1}{X}
\gend}~.
\end{eqnarray*}
\item[(v)] There is an adjunction $(\r, \l): (X, \psi)\dashv (X, \psi)$ in ${\cal T}_A^\#$ 
with $\l$ of the form $\l=\un{m}_A(\Id_A\ot \varsigma)\zeta$, for some morphism 
$\varsigma: X\ra A$ in $\Cc$, or, in other words, there exist morphisms 
$\r: \un{1}\ra A\ot X\ot X$ and $\varsigma: X\ra A$ in $\Cc$ such that the conditions 
in (iv) above are fulfilled if we keep $\rho$ and replace $\l$ with 
$\un{m}_A(\Id_A\ot \varsigma)\zeta$.  
\end{itemize}
If $\Cc$ admits coequalizers and any object of it is coflat then (i)-(v) are equivalent to 
\begin{itemize}
\item[(vi)] $\iota: A\ra A\#_{\psi, \zeta, \sigma}X$ is an algebra Frobenius extension in $\Cc$;
\item[(vii)] $A\ot X$ is a Frobenius algebra in ${}^!_A{\cal C}_A$, i.e., a Frobenius $A$-ring, 
\end{itemize}
and if, moreover, $\un{1}$ is a left $\ot$-generator for $\Cc$ then all the statements from 
(i) to (vii) are also equivalent to  
\begin{itemize}
\item[(viii)] The functor restriction of scalars $F: \Cc_{A\#_{\psi, \zeta, \sigma}X}\ra \Cc_A$ is a 
Frobenius functor. 
\end{itemize} 
\end{corollary}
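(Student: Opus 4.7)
The plan is to view the monoidal category $\Cc$ as a $2$-category with a single $0$-cell and then specialize the already established results to this setting. Under this identification, an algebra in $\Cc$ is precisely a monad in $\Cc$ viewed as a $2$-category, and $EM(\Cc)(A)$ coincides with the transfer-type category ${\cal T}_A^\#$. Accordingly, a wreath $(A, X, \psi, \zeta, \sigma)$ in $\Cc$ is nothing but a monad in $EM(\Cc)$ sitting over the $0$-cell $(A, A, \un{m}_A, \un{\eta}_A)$. With this translation, the horizontal composition $\odot$ is replaced by $\ot$ twisted by $\psi$, and the associated wreath product $A\#_{\psi, \zeta, \sigma}X$ becomes the algebra-in-$\Cc$ incarnation of the wreath product in the $2$-categorical sense.

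The equivalences (i)$\Leftrightarrow$(ii)$\Leftrightarrow$(iii)$\Leftrightarrow$(iv)$\Leftrightarrow$(v) then follow by directly specializing the corresponding equivalences (i)$\Leftrightarrow$(ii)$\Leftrightarrow$(iii)$\Leftrightarrow$(iv)$\Leftrightarrow$(v) of Theorem \ref{charactextFrob2categ} to the one $0$-cell case. The only nontrivial task here is to verify that the abstract $2$-categorical relations in each item, after replacing the horizontal composition by the corresponding tensor/braiding expressions involving $\psi$, $\zeta$ and $\un{m}_A$, produce exactly the diagrammatic identities displayed in (iii), (iv) and (v). This is a routine but careful diagrammatic translation that I would carry out step by step, using that $\iota = \un{m}_A(\Id_A\ot \sigma)$ is the incarnation of the unit of the wreath product seen as an algebra morphism in $\Cc$.

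For the additional equivalence with (vi), I would identify the canonical monad extension associated to $(A, X, \psi, \zeta, \sigma)$ with the algebra morphism $\iota: A\ra A\#_{\psi, \zeta, \sigma}X$ in $\Cc$; then the equivalence (i)$\Leftrightarrow$(vi) is precisely statement (vi) of Theorem \ref{charactextFrob2categ} under the one $0$-cell specialization, once $\Cc$ has coequalizers and any object is coflat. The equivalence (vi)$\Leftrightarrow$(vii) is an immediate application of \prref{Frobalgextcharact}(i) to the algebra extension $\iota$, noting that $A\#_{\psi, \zeta, \sigma}X$ is exactly $A\ot X$ equipped with its $A$-ring structure, i.e.\ an algebra in ${}^!_A\Cc_A$. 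Finally, the equivalence of (vi) with (viii), under the extra hypothesis that $\un{1}$ is a left $\ot$-generator for $\Cc$, follows directly from \thref{FrobSepExt}, since the left $\ot_A$-generator hypothesis on $A$ in ${}_A\Cc_A$ required by \coref{CharFrobAlgExt} is automatic once $\un{1}$ is a left $\ot$-generator for $\Cc$ (apply the defining test to $X\ot Y$ with $X, Y\in {}_A\Cc_A$ and use the canonical epimorphism $q^A_{X,Y}$).

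The main obstacle is not conceptual but bookkeeping: making the $2$-categorical/diagrammatic dictionary between $\odot$-expressions in ${\cal K} = \Cc$ (viewed as a one-object $2$-category) and $\ot$-expressions in $\Cc$ fully explicit, in order to recognize the displayed diagrammatic identities of (iii), (iv) and (v) as literal rewrites of the corresponding items of Theorem \ref{charactextFrob2categ}. Once this dictionary is fixed, no additional computation is required beyond invoking the already proved theorems.
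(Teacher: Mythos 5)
Your proposal is correct and follows essentially the same route as the paper: specialize Theorem \ref{charactextFrob2categ} to $\Cc$ viewed as a one-object $2$-category (so that ${\cal K}(A)=\Cc$) for (i)--(vi), then invoke \prref{Frobalgextcharact} for (vii) and \thref{FrobSepExt} for (viii). The only quibble is your parenthetical justification for (viii): \thref{FrobSepExt} only needs $\un{1}$ to be a left $\ot$-generator for $\Cc$, so the claim that $A$ is a left $\ot_A$-generator for ${}_A\Cc_A$ is neither needed nor obviously established by the test you sketch (morphisms $A\ra Y$ in ${}_A\Cc_A$ correspond only to those $\un{1}\ra Y$ that are balanced, not to all of them).
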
     
\begin{proof}
The equivalences between (i)-(v) follow from Theorem \ref{charactextFrob2categ}, as well as 
their equivalences with (vi), providing that $\Cc$ admits coequalizers and any object of it 
is coflat (since, in the notations of Theorem \ref{charactextFrob2categ}, we have ${\cal K}(A)=\Cc$). 

From \prref{Frobalgextcharact} we get the desired equivalences with (vii), and 
in the extra hypothesis that $\un{1}$ is a left $\ot$-generator for $\Cc$ we can apply 
\thref{FrobSepExt} to get the ones with (viii), respectively. Note that, since $A\ot X$ is always 
left robust in $\Cc$, for the equivalences with (vii) we need only $A, X$ to be coflat objects.  
\end{proof}

\begin{corollary}\colabel{CharSepWreatsMon}
In the hypothesis and notations of \coref{CharFrobWreatsMon} the following statements are equivalent:
\begin{itemize}
\item[(i)] $(A, X, \psi, \zeta, \sigma)$ is a separable wreath in $\Cc$;
\item[(ii)] The algebra $(X, \psi, \sigma)$ in $\mathcal{T}_A^\#$ is separable;
\item[(iii)] There exists a morphism $e: \un{1}\ra A\ot X\ot X$ 
such that 
\[
{\footnotesize
\gbeg{4}{5}
\got{1}{A}\gnl
\gcl{1}\gnl
\gcl{1}\gsbox{3}\gnot{\hspace{1cm}e}\gnl
\gmu\gcl{1}\gcl{1}\gnl
\gob{2}{A}\gob{1}{X}\gob{1}{X}
\gend}
=
{\footnotesize
\gbeg{4}{7}
\gvac{3}\got{1}{A}\gnl
\gvac{3}\gcl{1}\gnl
\gsbox{3}\gnot{\hspace{1cm}e}\gvac{3}\gcl{1}\gnl
\gcl{1}\gcl{1}\gbrc\gnl
\gcl{1}\gbrc\gcl{1}\gnl
\gmu\gcl{1}\gcl{1}\gnl
\gob{2}{A}\gob{1}{X}\gob{1}{X}
\gend
}~,~
{\footnotesize
\gbeg{4}{7}
\got{1}{X}\gnl
\gcl{1}\gnl
\gcl{1}\gsbox{3}\gnot{\hspace{1cm}e}\gnl
\gbrc\gcl{1}\gcl{1}\gnl
\gcl{1}\gsbox{2}\gnot{\hspace{5mm}\zeta}\gvac{2}\gcl{1}\gnl
\gmu\gcl{1}\gcl{1}\gnl
\gob{2}{A}\gob{1}{X}\gob{1}{X}
\gend
}
=
{\footnotesize
\gbeg{4}{8}
\gvac{3}\got{1}{X}\gnl
\gvac{3}\gcl{1}\gnl
\gsbox{3}\gnot{\hspace{1cm}e}\gvac{3}\gcl{1}\gnl
\gcl{1}\gcl{1}\gcl{1}\gcl{1}\gnl
\gcl{1}\gcl{1}\gsbox{2}\gnot{\hspace{5mm}\zeta}\gnl
\gcl{1}\gbrc\gcl{1}\gnl
\gmu\gcl{1}\gcl{1}\gnl
\gob{2}{A}\gob{1}{X}\gob{1}{X}
\gend
}~,~
{\footnotesize
\gbeg{3}{7}
\gvac{1}\got{1}{\un{1}}\gnl
\gnl
\gsbox{3}\gnot{\hspace{1cm}e}\gnl
\gcl{1}\gcl{1}\gcl{1}\gnl
\gcl{1}\gsbox{2}\gnot{\hspace{5mm}\zeta}\gnl
\gmu\gcl{1}\gnl
\gob{2}{A}\gob{1}{X}
\gend}
=
\footnotesize{
\gbeg{2}{5}
\got{2}{\un{1}}\gnl
\gnl
\gsbox{2}\gnot{\hspace{5mm}\sigma}\gnl
\gcl{1}\gcl{1}\gnl
\gob{1}{A}\gob{1}{X}
\gend
}~.
\]
\end{itemize}
If $\Cc$ admits coequalizers and any object of it is coflat then (i)-(iii) are 
equivalent to 
\begin{itemize}
\item[(iv)] $\iota: A\ra A\#_{\psi, \zeta, \sigma}X$ is a separable algebra extension in $\Cc$; 
\item[(v)] $A\ot X$ is a separable algebra in ${}^!_A\Cc_A$, that is, a separable $A$-ring,
\end{itemize}
and if, moreover, $\un{1}$ is a left $\ot$-generator for $\Cc$ then (i)-(v) are also equivalent to 
\begin{itemize}
\item[(vi)] The restriction of scalars functor $F: \Cc_{A\#_{\psi, \zeta, \sigma}X}\ra \Cc_A$ 
is separable. 
\end{itemize}
\end{corollary}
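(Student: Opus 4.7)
The plan is to follow the same blueprint used in the proof of \coref{CharFrobWreatsMon}, treating each equivalence by specializing a result established earlier in the paper. The equivalences split naturally into two groups: the first three statements (i), (ii), (iii) are characterizations of a separable wreath that live entirely inside the $2$-category $EM(\Cc)$ and its hom-category $\mathcal{T}_A^\#=EM(\Cc)(A)$; the last three (iv), (v), (vi) require additional structural assumptions on $\Cc$ and translate separability of the wreath into separability of the associated $A$-ring $A\otimes X=A\#_{\psi,\zeta,\sigma}X$ and the functoriality statement.

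First I would handle the equivalence $(i)\Leftrightarrow (ii)$ by recalling, exactly as in \coref{CharFrobWreatsMon}, that a wreath in $\Cc$ (viewed as a $1$-object $2$-category) is precisely a monad $(X,\psi,\zeta,\sigma)$ in $EM(\Cc)$ based at the $0$-cell $A$, and that such a monad is separable in the $2$-categorical sense iff $(X,\psi,\zeta,\sigma)$ is a separable algebra in the monoidal category $\mathcal{T}_A^\#$; this is the specialization to ${\cal K}=\Cc$ of the equivalence $(i)\Leftrightarrow (ii)$ in \thref{charactextSep2categ}. Next, for $(ii)\Leftrightarrow (iii)$ I would unfold what a separability morphism $\mathfrak{e}:(1_A,1_A)\Rightarrow (X,\psi)\otimes (X,\psi)$ in $\mathcal{T}_A^\#$ amounts to in terms of $\Cc$: such a $\mathfrak{e}$ is the same datum as a morphism $e:\un{1}\to A\otimes X\otimes X$ in $\Cc$, and the condition that it be a morphism in $\mathcal{T}_A^\#$ (compatibility with the entwining $\psi$ composed twice, viz.\ $(\psi\odot 1_X)(1_X\odot\psi)$) becomes the first displayed diagram in (iii). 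The two remaining conditions in (iii) then correspond respectively to $\mathfrak{e}$ being a bimodule section of the multiplication $\zeta$ (middle two equalities) and to $\zeta\circ\mathfrak{e}=\sigma$ (last equality). This translation is the step that requires most diagrammatic care but it is purely mechanical, following exactly the pattern used to derive (iii)--(v) of \coref{CharFrobWreatsMon} from the monoidal structure of $\mathcal{T}_A^\#$.

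For the equivalences involving (iv), (v), (vi) I would invoke the results of Sections \ref{se:ResIndFun}--\ref{se:FrobSepAlgMon}. Under the hypothesis that $\Cc$ admits coequalizers and every object is coflat, \thref{charactextSep2categ} (specialized to ${\cal K}=\Cc$, so that ${\cal K}(A)=\Cc$) yields $(i)\Leftrightarrow (iv)$, identifying the canonical monad extension with the algebra morphism $\iota:A\to A\#_{\psi,\zeta,\sigma}X$. The equivalence $(iv)\Leftrightarrow (v)$ follows from \prref{Frobalgextcharact}(ii) applied to the extension $\iota:A\to A\otimes X$, since $A\otimes X$ is always left robust over $A$ as noted in the proof of \coref{CharFrobWreatsMon}, and so the coflatness assumptions on $A$ and $X$ suffice to place $A\otimes X$ in ${}_A^!\Cc_A$. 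Finally, assuming in addition that $\un{1}$ is a left $\otimes$-generator for $\Cc$, the equivalence $(iv)\Leftrightarrow (vi)$ is exactly the content of \thref{SepExtMon}(i) applied to the algebra extension $\iota$.

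The only nontrivial step is the translation $(ii)\Leftrightarrow (iii)$, where one must carefully write the three defining separability conditions inside $\mathcal{T}_A^\#$ using the horizontal composition there (i.e.\ the tensor product $(X,\psi)\otimes(X,\psi)=(X^{\otimes 2},(\psi\odot 1_X)(1_X\odot\psi))$), and verify that they precisely match the three diagrammatic equations displayed in (iii); everything else is a direct appeal to results already proven. Consequently I expect no substantive obstacle beyond the bookkeeping inherent in the diagrammatic rewriting, and the corollary will follow by the same chain of implications used for \coref{CharFrobWreatsMon}, with the Frobenius data replaced throughout by the separability datum $e$.
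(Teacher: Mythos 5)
Your proposal is correct and follows essentially the same route as the paper: (i)$\Leftrightarrow$(ii) and (i)$\Leftrightarrow$(iv) by specializing Theorem \ref{charactextSep2categ} to ${\cal K}=\Cc$, (iii) as the explicit unfolding of the separability morphism in $\mathcal{T}_A^\#$ (the three displayed equalities being the $2$-cell condition, the Casimir/bimodule condition, and the splitting $\zeta\circ\mathfrak{e}=\sigma$), and the equivalences with (v) and (vi) via \prref{Frobalgextcharact} and \thref{SepExtMon}. The remark that $A\ot X$ is always left robust, so only coflatness of $A$ and $X$ is needed for (v), also matches the paper.
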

\begin{proof}
The equivalences between (i), (ii) and (iv) follow by specializing Theorem \ref{charactextSep2categ} 
for the case when $\mathcal{K}$ is $\Cc$, a monoidal category. The statement (iii) is  
an explicit description of the separability morphism of a separable algebra in 
$\mathcal{T}_A^\#$. Finally, the 
equivalences of (i)-(iv) with (v) and (vi) follow because of \prref{Frobalgextcharact} and 
\thref{FrobSepExt}, respectively. Notice that, as in the Frobenius case,  
since $A\ot X$ is always left robust in $\Cc$, for the equivalences with (v) we need only $A, X$ to be 
coflat objects.  
\end{proof}

In \cite{bc5} we will apply the results in the last two corollaries to the wreath extensions 
produced by generalized entwining structures, previously introduced in \cite{bc4}. As a consequence 
we will obtain a set of Frobenius properties  
and Maschke-type theorems for generalized entwined module categories. Specializing them for 
the contexts provided by Hopf algebras and their generalizations we get at the end most of the 
Frobenius properties and Maschke-type theorems known for different sorts of entwined modules. We also  
get new ones, specially in the case when we consider contexts coming from quasi-Hopf algebra theory.     

\end{document}